\newtheorem{thm}{Theorem}[section]
\newtheorem{lemma}{Lemma}[section]
\newtheorem{cor}{Corollary}[section]
\newtheorem{rem}{Remark}[section]
\newtheorem{exa}{Example}[section]
\newtheorem{assu}{Assumption}[section]
\numberwithin{equation}{section}
\numberwithin{thm}{section}
\numberwithin{lemma}{section}
\numberwithin{pro}{section}
\numberwithin{cor}{section}
\numberwithin{definition}{section}
\numberwithin{cons}{section}
\numberwithin{rem}{section}
\numberwithin{exa}{section}
\numberwithin{table}{section}
\numberwithin{figure}{section}
\newcommand{\mR}{\mathbb{R}}
\newcommand{\mZ}{\mathbb{Z}}
\newcommand{\mC}{\mathbb{C}}
\newcommand{\mE}{\mathbb{E}}
\newcommand{\mP}{\mathbb{P}}
\newcommand{\p}{\partial}
\newcommand{\A}{\mathbf{A}}
\newcommand{\C}{\mathbf{C}}
\newcommand{\D}{\mathbf{D}}
\newcommand{\I}{\mathbf{I}}
\renewcommand{\S}{\mathbf{S}}
\newcommand{\U}{\mathbf{U}}
\newcommand{\X}{\mathbf{X}}
\newcommand{\Z}{\mathbf{Z}}
\newcommand{\Y}{\tilde{Y}}
\def\bs{\boldsymbol}
\def\ic{\mathbf{i}}
\def\tr{\mathrm{Tr}}
\begin{document}

\title{Spectral analysis of linear time series in moderately high dimensions
\footnote{This research was partially supported by NSF grants DMR 1035468, DMS 1106690, DMS 1209226, DMS 1305858 and DMS 1407530
}}

\author{
Lili Wang\footnote{Department of Mathematics, Zhejiang University, Hangzhou 310027, China, email: \tt{liliwang@zju.edu.cn}}
%\footnote{Department of Mathematics and Statistics, University of Melbourne, Parkville, Victoria 3010, Melbourne, Australia, email: \tt{lili.wang@unimelb.edu.au}}
\and Alexander Aue\footnote{Department of Statistics, University of California, Davis, CA 95616, USA, emails: \tt{[aaue,debpaul]@ucdavis.edu}}
\and Debashis Paul$^\ddagger$
}
%\date{\today}
\maketitle

\begin{abstract}
\setlength{\baselineskip}{1.8em}
This article is concerned with the spectral behavior of $p$-dimensional linear processes in the moderately high-dimensional case when both dimensionality $p$ and sample size $n$ tend to infinity so that $p/n\to0$. It is shown that, under an appropriate set of assumptions, the empirical spectral distributions of the renormalized and symmetrized sample autocovariance matrices converge almost surely to a nonrandom limit distribution supported on the real line. The key assumption is that the linear process is driven by a sequence of $p$-dimensional real or complex random vectors with i.i.d. entries possessing zero mean, unit variance and finite fourth moments, and that the $p\times p$ linear process coefficient matrices are Hermitian and simultaneously diagonalizable. Several relaxations of these assumptions are discussed. The results put forth in this paper can help facilitate inference on model parameters, model diagnostics and prediction of future values of the linear process.
\medskip \\
\noindent {\bf Keywords:} Empirical spectral distribution; High-dimensional statistics; Limiting spectral distribution; Stieltjes transform

\noindent {\bf MSC 2010:} Primary: 62H25, Secondary: 62M10
\end{abstract}

\setlength{\baselineskip}{1.8em}

%%%%%%%%%%%%%%%%%%%%%%%%%%%%%%%%%%%%%%%%%%%%%%%%%%%

\section{Introduction}
\label{sec:intro}

In this article, the spectral properties of a class of multivariate linear time series are studied
through the bulk behavior of the eigenvalues of renormalized and symmetrized sample autocovariance matrices when
both the dimension $p$ and sample size $n$ are large but the dimension increases at a much slower rate compared to the sample size,
so that the dimension-to-sample size ratio $p/n$ converges to zero. The latter asymptotic regime will be referred to as
moderately high-dimensional scenario. Under this framework, the existence of
limiting spectral distributions (LSD) of the matrices $\mathbf{C}_\tau = \sqrt{n/p}(\mathbf{S}_\tau -
\mathbf{\Sigma}_\tau)$ is proved, where $\mathbf{S}_\tau$ is the symmetrized lag-$\tau$ sample autocovariance matrix and
$\mathbf{\Sigma}$ the lag-$\tau$ population autocovariance matrix, for $\tau =0,1,\ldots$
The analysis takes into account both temporal and dimensional correlation and the LSD is described in terms of a kernel that
is determined by the transfer function of a univariate linear time series. The results derived in this paper are
natural extensions of the work of \cite{Bai:Yin:1988}, who proved that the empirical spectral distribution of
normalized sample covariance matrices based on i.i.d.\ observations with zero mean and unit variance
converges to the semi-circle law under the same asymptotic regime. It also extends the work of \cite{Pan:Gao:2009} and
\cite{Wang:Paul:2014} in two different ways, first, by allowing nontrivial temporal dependence among the observation
vectors, and secondly, by describing the LSDs of renormalized sample autocovariance matrices of all lag orders.
We need to impose a certain structural assumption on the linear process, namely, that its coefficient matrices are symmetric (Hermitian for complex-valued data) and simultaneously diagonalizable. However, various ways to relax the latter assumption are discussed.

The results derived in this paper can be seen as natural counterparts of the works of \citet{Liu:Aue:Paul:2015},
who proved the existence of LSDs of symmetrized sample autocovariance matrices under the same structural assumptions
on the linear process but in the asymptotic regime $p,n\to \infty$ such that $p/n\to c\in (0,\infty)$. \cite{Jin:Wang:Bai:Nair:Harding:2014}
derived similar results under the assumption of i.i.d.\ observations, using them for detecting the presence of
factors in a class of dynamic factor models. Under the same asymptotic framework, the existence of the LSD of sample covariance matrices
when the different coordinates of the observed process are i.i.d.\ linear processes has been proved by \citet{Pfaffel:Schlemm:2011} and \citet{Yao:2012}.

The main results in this paper originally formed a part of the Ph.D.\ thesis of the first author \citep{Wang:2014}.
Very recently, we came to know through personal
communication from Arup Bose that \cite{Bhattacharjee:Bose:2015}
proved the existence of the LSD of symmetrized and normalized autocovariance matrices
for an MA($q$) process with fixed $q$, under a weaker assumption on the coefficient matrices
involving existence of limits of averaged traces of polynomials of these matrices, where
the limits satisfy certain requirements associated with a $*$-probability space.
They use free probability theory for their derivations and
therefore their approach is very different from the one presented in this paper, which relies on the
characterization of distributional convergence through the convergence of the corresponding Stieltjes transforms.

%linear time series model have been investigated in  . They obtain the Marcenko-Pastur law by assuming that the
%data matrix $\mathbf{X}$ has independent rows and each row is an univariate real-valued linear process given
%as $X_{jt}=\sum_{l=0}^{\infty}a_{l}Z_{j,t-l}~ j=1,\cdots p; t=1\cdots n $ which indeed deal with the
%coefficient matrices $\mathbf{A}_{l}$ to be multiplicity of an identity matrix in \eqref{eq:expression_maq_process}.\citet{Hachem:Loubaton:Najim:2005}.

The main contribution of this paper is the precise description of the bulk behavior of the eigenvalues
of the matrices $\mathbf{C}_\tau$. These are natural objects to study if one is interested in the understanding
the fluctuations of the sample autocovariance matrices from their population counterparts, since the latter provide
useful information about the various characteristics of the observed process. Under the asmyptotic regime $p,n\to \infty$
with $p/n \to 0$, and under fairly weak regularity conditions,
the symmetrized sample autocovariance matrices converge to the corresponding population autocovariance
matrices in operator norm. However, stronger statements about the quality of the estimates are usually not possible
without imposing further restrictions on the process.
The results stated here provide a way to quantify the fluctuations of the estimated autocovariance matrices
from the population versions, and can be seen as analogous to the standard error bounds in univariate problems.
Indeed, if the quality of estimates is assessed through the Frobenius norm of $\mathbf{C}_\tau$,
or some other measure that can be expressed as a linear functional of the spectral distribution of $\mathbf{C}_\tau$,
the results presented in this paper give a precise description about the asymptotic behavior of such a measure in terms of
integrals of the LSD of $\mathbf{C}_\tau$. Some specific applications of the results are discussed in Section \ref{sec:app}.
A further importance of the results derived here is that they form the building block of further investigations
on the fluctuations of linear spectral statistics of matrices such as $\mathbf{C}_\tau$, thus raising the
possibility of generalizing results such as those obtained recently by \cite{Chen:Pan:2015}.

The rest of the paper is organized as follows. Section \ref{sec:main} gives the main results
are develops intuition. Section \ref{sec:examples} discusses some specific examples to
elucidate the main results. Section \ref{sec:app} discusses a number of potential applications. Sections
\ref{sec:proof:thm_maq}%, \ref{sec:proof:lin_proc} and
--\ref{sec:proof:non-gauss} are devoted to describing
the key steps in the proofs of the main results. Further technical details are relegated to the technical Appendix.

%%%%%%%%%%%%%%%%%%%%%%%%%%%%%%%%%%%%%%%%%%%%%%%%%%%

\section{Main results}
\label{sec:main}

%%%%%%%%%%%%%%%%%%%%%%%%%%%%%%%%%%%%%%%%%%%%%%%%%%%

\subsection{Assumptions}
\label{sec:main:assu}

Let $\mathbb{Z}$, $\mathbb{N}_0$ and $\mathbb{N}$ denote integers, nonnegative integers and
positive integers, respectively. In the following, the linear process $(X_t\colon t\in\mathbb{Z})$
is studied, given by the set of equations
\begin{equation}\label{eq:lin_proc}
X_t=\sum_{\ell=0}^\infty\A_{\ell}Z_{t-\ell},
\qquad t\in\mathbb{Z},
\end{equation}
where $(\A_\ell\colon\ell\in\mathbb{N}_0)$ are coefficient matrices with $\A_0=\I_p$, the $p$-dimensional identity matrix, and $(Z_t\colon t\in\mathbb{Z})$ are innovations for which more specific assumptions are given below. If $\A_\ell=\mathbf{0}_p$, the $p$-dimensional zero matrix, for all $\ell>q$, then one has the $q$th order moving average, MA($q$), process
\begin{equation}\label{eq:expression_maq_process}
X_t=\sum_{\ell=0}^q\A_\ell Z_{t-\ell},
\qquad t\in\mathbb{Z}.
\end{equation}
In the following results will be stated and motivated first for the MA($q$) process and then extended to linear processes. Throughout the following set of conditions are assumed to hold.

\begin{assu}
\label{assu:innovations}
The innovations $(Z_t\colon t\in\mathbb{Z})$ consist of real- or complex-valued entries $Z_{jt}$ which are independent, identically distributed (iid) across time $t$ and dimension $j$ and satisfy
\vspace{-.3cm}
\begin{itemize}
\itemsep-.5ex
\item[{\bf Z1}]
$\mathbb{E}[Z_{jt}]=0$, $\mathbb{E}[|Z_{jt}|^2]=1$ and\/ $\mathbb{E}[|Z_{jt}|^4] < \infty$;
\item[{\bf Z2}]
In case of complex-valued innovations, the real and imaginary parts of $Z_{jt}$ are independent with $\mathbb{E}[\Re(Z_{jt})] =\mathbb{E}[\Im(Z_{jt})] = 0$ and\/ $\mathbb{E}[\Re(Z_{jt})^2] = \mathbb{E}[\Im(Z_{jt})^2] = 1/2$.
\end{itemize}
\end{assu}

\begin{assu}
\label{assu:coef_matrices}
Suppose that
\vspace{-.3cm}
\begin{itemize}
\itemsep-.5ex
\item[{\bf A1}]
$(\A_\ell\colon\ell\in\mathbb{N})$ are Hermitian and simultaneously diagonalizable, that is, there exists a unitary matrix $\U$ such that $\U^* \mathbf{A}_\ell \mathbf{U} = {\bs\Lambda}_\ell$, where ${\bs\Lambda}_\ell$ is a
diagonal matrix with real-valued diagonal entries;
\item[{\bf A2}]
The $j$th diagonal entry of ${\bs\Lambda}_\ell$ is given by $f_\ell(\bs\alpha_j)$, where $\bs\alpha_j \in \mathbb{R}^{m_0}$ for $j=1,\ldots,p$, where $m_0$ is fixed, and
$(f_\ell\colon\ell\in\mathbb{N})$ are continuous functions from $\mathbb{R}^{m_0}$ to $\mathbb{R}$;
\item[{\bf A3}]
As $p \to \infty$, the empirical distribution of $(\bs\alpha_j\colon j=1,\ldots,p)$
converges to a distribution on $\mathbb{R}^{m_0}$ denoted by $F^{\cal A}$;
\item[{\bf A4}]
There exist constants $\bar{a}_{0}=1$ and $(\bar{a}_\ell\colon \ell\in\mathbb{N})$ such that $\|f_\ell\|_\infty \leq \bar{a}_\ell$ for all $\ell\in\mathbb{N}$;
\item[{\bf A5}]
For some $r_0\geq 4$, there are positive constants $L_{j+1}$ such that $\sum_{\ell=0}^{\infty}\ell^j \bar{a}_{\ell}<L_{j+1}$
for $j=0,\ldots,r_0$. The conditions for $j>1$ are only needed for the extension of the results for MA$(q)$ processes to
linear processes; see Section \ref{sec:main:lin_proc}.
\end{itemize}
\end{assu}
\noindent The assumptions on the innovations $(Z_t\colon t\in\mathbb{Z})$ are standard in time series
and high-dimensional statistics contexts. The assumptions on the coefficient matrices $(A_\ell\colon\ell\in\mathbb{N})$
are similar to the ones imposed in \citet{Liu:Aue:Paul:2015} and generalize condition sets previously established
in the literature, for example, the ones in \citet{Pfaffel:Schlemm:2011}, \citet{Yao:2012} and \citet{Jin:Wang:Bai:Nair:Harding:2014}.

%\begin{rem}
%\textcolor{green}{Assumptions {\bf A5} and {\bf A6} are required for extending the results from MA$(q)$ to MA$(\infty)$.
%If $q$ is finite, {\bf A6} can be replaced with the condition that $\{f_l\}_{l=1}^q$ are bounded and uniformly
%continuous.}
%Assumption {\bf A7} is essentially used to facilitate the extension to MA$(\infty)$ processes
%through a truncation of the MA$(\infty)$ process to an MA$(q)$ process with $q=o(p^{1/3})$.}
%\end{rem}

%%%%%%%%%%%%%%%%%%%%%%%%%%%%%%%%%%%%%%%%%%%%%%%%%%%

\subsection{Result for MA($q$) processes}
\label{sec:main:maq}

The objective of this section is to study the spectral behavior of the the lag-$\tau$ symmetrized
sample autocovariance matrices associated with the MA($q$) process $(X_t\colon t\in\mathbb{Z})$
defined in \eqref{eq:expression_maq_process} in the moderately high dimensional setting
\begin{equation}
\label{eq:mod_dim}
p,n\to\infty\qquad\mbox{such that}\qquad \frac{p}{n}\to 0.
\end{equation}
Extensions to the linear process \eqref{eq:lin_proc} are discussed in Section \ref{sec:main:lin_proc} below.
The symmetrized sample autocovariance matrices are given by the equations
\begin{equation}\label{eq:symmetrized_autocavariance_matrix_lag_tau}
\S_\tau
=\frac{1}{2(n-\tau)}\sum_{t=\tau+1}^{n}\big(X_{t}X_{t-\tau}^{*}+X_{t-\tau}X_{t}^{*}\big),
\qquad \tau\in\mathbb{N}_0,
\end{equation}
where $^*$ signifies complex conjugate transposition of both vectors and matrices. It should be noted
that $\S_0$ is simply the sample covariance matrix. Using the defining equations of the MA($q$) process, one can show that
\[
{\bs\Sigma}_\tau
=\mE[\S_\tau]
=\frac{1}{2}\bigg(\sum_{\ell=0}^{q-\tau}\big[\A_{\ell+\tau}\A_\ell^*+\A_\ell\A_{\ell+\tau}^*\big]\bigg),
\qquad \tau\in\mathbb{N}_0.
\]
Since, under \eqref{eq:mod_dim}, $\S_\tau$ is a consistent estimator for ${\bs\Sigma}_\tau$, one studies appropriately rescaled fluctuations of $\S_\tau$ about its mean ${\bs\Sigma}_\tau$. This leads to the renormalized matrices
\begin{equation}\label{normalized_c}
\C_\tau
=\sqrt{\frac np}\big(\S_\tau-{\bs\Sigma}_\tau\big),
\qquad \tau\in\mathbb{N}_0.
\end{equation}
To study the spectral behavior of $\C_\tau$, introduce its empirical spectral distribution (ESD) $\hat F_\tau$ given by
\[
\hat F_\tau(\lambda)
=\frac 1p\sum_{j=1}^p\mathbb{I}_{\{\lambda_{\tau,j}\leq \lambda\}},
\]
where $\lambda_{\tau,1},\ldots,\lambda_{\tau,p}$ are the eigenvalues of $\C_\tau$. In the RMT
literature, proofs of large-sample results about $\hat F_\tau$ are often based on convergence
properties of Stieltjes transforms \citep{Bai:Silverstein:2010}. The Stieltjes transform of a distribution
function $F$ on the real line is the function
\[
s_F\colon\mathbb{C}^+\to\mathbb{C}^+,\; z\mapsto s_F(z)
=\int\frac{1}{\lambda-z}dF(\lambda),
\]
where $\mathbb{C}^+=\{x+\ic y\colon x\in\mathbb{R},y>0\}$ denotes the upper complex half plane. Note that $s_F$
is analytic on $\mathbb{C}^+$ and that the distribution function $F$ can be obtained from $s_F$ using an inversion formula.

Let $f_0\colon \mathbb{R}^{m_0} \to \mathbb{R}$ be defined as $f_0(\mathbf{a}) = 1$ for all $\mathbf{a} \in \mathbb{R}^{m_0}$.
Define the MA($q$) transfer function
\begin{equation}\label{eq:pre_spectrum}
g(\mathbf{a},\nu) = \sum_{\ell=0}^q f_\ell(\mathbf{a}) e^{\ic \ell \nu},
\qquad \nu \in[0,2\pi], ~\mathbf{a} \in \mathbb{R}^{m_0},
\end{equation}
and the corresponding power transfer function
\begin{equation}\label{eq:spectrum}
\psi(\mathbf{a},\nu) = |g(\mathbf{a},\nu)|^2,
\qquad\nu\in[0,2\pi], ~\mathbf{a} \in \mathbb{R}^{m_0}.
\end{equation}
The effect of the temporal dependence on the spectral behavior of $\C_\tau$ is encoded through the
power transfer function $\psi(\mathbf{a},\nu)$. Keeping $\mathbf{a}$ fixed, it can be seen that
$\psi(\mathbf{a},\nu)$ is up to normalization the spectral density of a univariate MA($q$) process
with coefficients $f_1(\mathbf{a}),\ldots,f_q(\mathbf{a})$. This leads to the following result.

\begin{thm}\label{thm:LSD_variance}
If the MA($q$) process $(X_t\colon t\in\mathbb{Z})$ satisfies {\bf Z1}, {\bf Z2} and {\bf A1}--{\bf A5},
then, with probability one and in the moderately high-dimensional setting \eqref{eq:mod_dim}, $\hat F_\tau$
converges in distribution to a nonrandom distribution $F_\tau$ whose Stieltjes transform $s_\tau$ is given by
\begin{equation}\label{eq:Stieltjes_LSD_covariance}
s_\tau(z)
=  - \int \frac{d F^{\cal A}(\mathbf{b})}{z +\beta_\tau(z,\mathbf{b})},
\qquad z \in \mathbb{C}^+,
\end{equation}
where
\begin{equation}\label{eq:Stieltjes_kernel_covariance}
\beta_\tau(z,\mathbf{a})
= - \int \frac{{\cal R}_\tau(\mathbf{a},\mathbf{b})
dF^{\cal A}(\mathbf{b})}{z + \beta_\tau(z,\mathbf{b})},
\qquad z \in \mathbb{C}^+,~\mathbf{a} \in \mathbb{R}^{m_0},
\end{equation}
and
\begin{equation}
\label{eq:spectral_kernel_covariance}
{\cal R}_\tau(\mathbf{a},\mathbf{b}) = \frac{1}{2\pi} \int_0^{2\pi} \cos^2(\tau
\theta) \psi(\mathbf{a},\theta) \psi(\mathbf{b},\theta) d\theta,
\qquad \mathbf{a},\mathbf{b} \in \mathbb{R}^{m_0}.
\end{equation}
Moreover, $\beta_\tau(z,\mathbf{a})$ is the unique solution to (\ref{eq:Stieltjes_kernel_covariance})
subject to the condition that it is a Stieltjes kernel, that is, for each $\mathbf{a} \in \mathrm{supp}(F^{\cal A})$,
$\beta_\tau(z,\mathbf{a})$ is the Stieltjes transform of a measure on the real line with mass
$\int {\cal R}_\tau(\mathbf{a},\mathbf{b}) dF^{\cal A}(\mathbf{b})$.
\end{thm}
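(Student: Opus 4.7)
The plan is to pin down the Stieltjes transform $\hat s_\tau(z)=p^{-1}\tr(\C_\tau-z\I)^{-1}$ and show $\hat s_\tau(z)\to s_\tau(z)$ almost surely for every $z\in\mC^+$, then verify that the system (\ref{eq:Stieltjes_LSD_covariance})--(\ref{eq:Stieltjes_kernel_covariance}) admits a unique Stieltjes-kernel solution. The first reduction uses {\bf A1}: writing $\tilde Z_t=\U^*Z_t$ and $\tilde X_t=\U^*X_t=\sum_\ell\bs\Lambda_\ell\tilde Z_{t-\ell}$ produces $\tilde\C_\tau=\U^*\C_\tau\U$ with the same ESD, and the rotated $j$th coordinate $\tilde X_{jt}=\sum_{\ell=0}^q f_\ell(\bs\alpha_j)\tilde Z_{j,t-\ell}$ is a scalar MA$(q)$ process whose power spectrum is $(2\pi)^{-1}\psi(\bs\alpha_j,\nu)$. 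I then truncate the innovation entries at level $\eta_n\sqrt{n}$ with $\eta_n\to 0$, using {\bf Z1} to control the tail, re-center and re-scale; finally a four-moment Lindeberg swap (whose per-entry error is $O(n^{-1})$ summed over $np$ entries, which is $o(1)$ thanks to $p/n\to 0$) allows me to assume without loss of generality that $\tilde Z_t$ is Gaussian, real or complex as dictated by {\bf Z2}.

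For such a Gaussian model I expand the diagonal of the resolvent $G(z)=(\tilde\C_\tau-z\I)^{-1}$ via the Schur complement,
\begin{equation*}
G_{jj}(z)=\bigl((\tilde\C_\tau)_{jj}-z-\mathbf{c}_j^{*}G^{(j)}(z)\mathbf{c}_j\bigr)^{-1},
\end{equation*}
where $\mathbf{c}_j$ is the $j$th column of $\tilde\C_\tau$ with its diagonal entry removed and $G^{(j)}$ is the resolvent of the corresponding minor. Applying Gaussian integration by parts to $\mathbf{c}_j^{*}G^{(j)}\mathbf{c}_j$ with respect to each coordinate of the innovations that feed into row $j$ isolates a self-energy term of the form $p^{-1}\sum_k\sigma_{jk}^2 G_{kk}^{(j)}(z)$, plus remainders that I expect to be $o(1)$ in operator norm because of the $p/n\to0$ scaling. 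A direct second-moment computation yields, for $j\ne k$,
\begin{equation*}
\sigma_{jk}^2=\frac{n}{p}\,\mE\bigl|(\tilde\S_\tau-\tilde{\bs\Sigma}_\tau)_{jk}\bigr|^2\longrightarrow \mathcal{R}_\tau(\bs\alpha_j,\bs\alpha_k),
\end{equation*}
the $\cos^2(\tau\theta)$ kernel arising because the symmetrization in $\tilde\S_\tau$ contributes the factor $|\tfrac12(e^{\ic\tau\theta}+e^{-\ic\tau\theta})|^2$ once the scalar cross-covariance of the two independent rotated coordinates is written by Parseval against $\psi(\bs\alpha_j,\theta)\psi(\bs\alpha_k,\theta)$. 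Identifying $\beta_\tau(z,\mathbf{a})$ as the (a.s.)\ limit of $p^{-1}\sum_k\mathcal{R}_\tau(\mathbf{a},\bs\alpha_k)G_{kk}(z)$ and invoking {\bf A3} to pass empirical sums over $\bs\alpha_k$ to integrals against $F^{\cal A}$ then gives $G_{jj}(z)\approx-(z+\beta_\tau(z,\bs\alpha_j))^{-1}$, which reproduces (\ref{eq:Stieltjes_LSD_covariance})--(\ref{eq:Stieltjes_kernel_covariance}).

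The passage from $\mE\hat s_\tau(z)\to s_\tau(z)$ to almost sure convergence follows from the martingale decomposition $\hat s_\tau(z)-\mE\hat s_\tau(z)=\sum_{t=1}^n(\mE_t-\mE_{t-1})\hat s_\tau(z)$, where $\mE_t$ denotes conditioning on $(Z_s\colon s\le t)$. Since each $Z_t$ enters at most $q+1$ of the $X_s$, replacing it by an independent copy perturbs $\tilde\C_\tau$ by a matrix of rank at most $2(q+1)$ and appropriately small norm, so each martingale difference is bounded in $L^2$ by $O((np)^{-1/2}|\Im z|^{-2})$; Burkholder's inequality together with Borel--Cantelli then delivers the a.s.\ conclusion. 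Uniqueness of the fixed point (\ref{eq:Stieltjes_kernel_covariance}) is obtained by showing that for $\Im z$ sufficiently large the mapping $\beta\mapsto-\int\mathcal{R}_\tau(\mathbf{a},\mathbf{b})[z+\beta(z,\mathbf{b})]^{-1}dF^{\cal A}(\mathbf{b})$ is a strict contraction on the space of bounded Stieltjes kernels of the prescribed mass, and extending to all $z\in\mC^+$ by analytic continuation.

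The hard part, by some distance, is the Gaussian IBP step: because a single innovation vector $Z_t$ sits inside up to $q+1$ consecutive $X_s$ and therefore couples many rows of $\tilde\C_\tau$ simultaneously, the entries of the matrix are genuinely dependent and the IBP generates a sizeable tree of correction terms. Showing that everything outside the self-energy contribution is $o(1)$ as $p/n\to 0$ requires careful bookkeeping of the MA-convolution indices $\ell$, leaning on {\bf A5} for absolute summability of the coefficients, on {\bf A4} to bound $\|\bs\Lambda_\ell\|$ uniformly in $p$, and on the $p/n\to 0$ scaling to kill the surviving cross-row contractions.
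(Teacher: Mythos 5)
Your overall architecture (reduce to Gaussian, derive a self-consistent equation for the resolvent, concentrate, then prove uniqueness by contraction for large $\Im z$ plus analytic continuation) matches the paper's, and your identification of the kernel ${\cal R}_\tau$ via the $\cos^2(\tau\theta)$ factor is correct. But the step you yourself flag as "the hard part" is precisely where the proof lives, and your proposal leaves it undone. The paper does not fight the dependence among the entries of $\U^*\C_\tau\U$ in the time domain at all: it first replaces the lag operator $\mathbf{L}$ by the circulant $\tilde{\mathbf{L}}=\mathbf{F}_n\tilde{\bs\Lambda}_n\mathbf{F}_n^*$ (a rank-$O(q)$ perturbation, harmless for the ESD), and then uses the fact that for Gaussian innovations $\tilde{\Z}=\U^*\Z\mathbf{F}_n$ is again i.i.d. After this double rotation the approximating matrix becomes $\tilde{\S}_\tau=\mathbf{V}\bs\Delta_\tau\mathbf{V}^*$ with $v_{jt}=n^{-1/2}g(\bs\alpha_j,\nu_t)\tilde Z_{jt}$, i.e.\ a quadratic form in a matrix with \emph{independent} entries and variance profile $\psi(\bs\alpha_j,\nu_t)/n$; the self-consistent equation then follows from rank-one row perturbations and the resolvent identity, with no Gaussian integration by parts and no "tree of correction terms" to control. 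Without this diagonalization your IBP expansion has to absorb all the cross-row contractions created by each $Z_t$ entering $q+1$ consecutive columns of $\X$, and you give no argument that these are $o(1)$; that is a genuine gap, not bookkeeping.

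Two further steps would fail as written. First, your concentration argument conditions on the time indices $t=1,\dots,n$: replacing $Z_t$ by an independent copy perturbs $\S_\tau$ by a matrix of rank $O(q)$, so the crude bound on each martingale difference of $\hat s_\tau(z)$ is $O(q/(p\,\Im z))$, and the sum of $n$ squared differences is of order $nq^2/p^2$, which need not vanish under \eqref{eq:mod_dim} (take $p=\log n$); your claimed per-difference bound $O((np)^{-1/2})$ is not substantiated. The paper instead applies McDiarmid's inequality over the $p$ independent \emph{rows} of $\Z$, each of which again induces a rank-$O(q)$ perturbation, so the sum of squared differences is $O(q^2/p)\to 0$ and Borel--Cantelli applies. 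Second, your Lindeberg accounting is arithmetically off: an $O(n^{-1})$ per-entry error summed over $np$ entries is $O(p)\to\infty$, not $o(1)$; the actual replacement argument (the paper's Appendix F) needs truncation at level $n^{1/4}\epsilon_p$ and third-derivative bounds on the resolvent whose total contribution is of order $\max\{\epsilon_p(q+1)n^{-1/4},\,\epsilon_p^2(q+1)/p,\,p^{-1/2}\}$. Finally, you assert rather than prove the existence of the limits $\beta_\tau(z,\mathbf{a})$; the paper establishes this via equicontinuity in $\mathbf{a}$, a diagonal-subsequence/Arzel\`a--Ascoli argument, and tightness of $F^{\C_\tau}$, which your proposal omits.
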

Since it only differs from the spectral density of an MA$(q)$ process by a constant,
it follows that $\psi(\mathbf{a},\theta)$ is strictly positive for all arguments.
Consequently, ${\cal R}_\tau(\mathbf{a},\mathbf{b})$ and $\int{\cal R}_\tau(\mathbf{a},\mathbf{b})
dF^{\cal A}(\mathbf{b})$ are always strictly positive as well. The intuition for the proof of
Theorem \ref{thm:LSD_variance} is given in the next section and will then be completed in Section \ref{sec:proof:thm_maq}.
\begin{rem}\label{rem:MA_q_kernel}
It is easily checked, that for an MA$(q)$ process, the kernel ${\cal R}_\tau(\mathbf{a},\mathbf{b})$ is the
same for all $\tau \geq q+1$. This implies that the Stieltjes transforms $s_\tau(z)$, and hence the LSDs
(limiting spectral distributions) of $\sqrt{n/p}(\mathbf{S}_\tau - \Sigma_\tau)$ are the same for $\tau \geq q+1$.
\end{rem}

%%%%%%%%%%%%%%%%%%%%%%%%%%%%%%%%%%%%%%%%%%%%%%%%%%%

\subsection{Intuition for Gaussian MA($q$) processes}
\label{sec:main:intuition}

Assume for now that the innovations $(Z_t\colon t\in\mathbb{Z})$ are complex Gaussian,
the extension to general innovations will be established in the Appendix.
Define the $p\times n$ data matrix $\mathbf{X}=[X_1:\cdots:X_n]$ and the
$p\times n$ innovations matrix $\mathbf{Z}=[Z_1:\cdots:Z_n]$. Using the
$n\times n$ lag operator matrix $\mathbf{L}=[o:e_1:\cdots:e_{n-1}]$,
where $o$ and $e_j$ denote the zero vector and the $j$th canonical unit vector,
respectively, it follows that
\begin{equation}
\mathbf{X}=\sum_{\ell=0}^q\mathbf{A}_\ell\mathbf{Z}\mathbf{L}^\ell + \sum_{\ell=1}^q \mathbf{A}_\ell \mathbf{Z}_{[-q]}\mathbf{L}^{\ell-q},
\end{equation}
where $\mathbf{Z}_{[-q]} = [Z_{-q+1}:\cdots:Z_0: 0:\cdots:0]$ is a $p\times n$ matrix and $\mathbf{L}^{\ell-q} = (\mathbf{L}^{q-\ell})^{-1}$.
In the next step, $\mathbf{L}$ is approximated by the circulant matrix
$\tilde{\mathbf{L}}=[e_n:e_1:\cdots:e_{n-1}]$. As in \citet{Liu:Aue:Paul:2015}, one defines the matrix $\bar{\mathbf{X}}=\sum_{\ell=0}^q\A_\ell\mathbf{Z}\tilde{\mathbf{L}}^\ell$ that differs from
$\mathbf{X}$ only in the first $q$ columns. Let $\mathbf{F}_{n}=\left[e^{\mathbf{i}s\nu_{t}}\right]_{s,t =1}^{n}$, with $\nu_{t}=2\pi t/n$, be a Fourier rotation matrix and  $\tilde{\bs\Lambda}_{n}=\mbox{diag}(e^{\mathbf{i}\nu_{1}},\ldots,e^{\mathbf{i}\nu_{n}})$. Then
\begin{equation}\label{L_decomposition}
\tilde{\mathbf{L}}= \mathbf{F}_{n}\tilde{\bs\Lambda}_{n}\mathbf{F}_{n}^{*}.
\end{equation}
Using this and noticing that $\mathbf{X}$ and $\bar{\mathbf{X}}$ differ by a matrix of rank
$q$, it can be seen that as long as $q$ small compared to $p$, $\S_\tau=(n-\tau)^{-1}\X\D_\tau\X^*$ can be approximated by $\bar{\S}_\tau=(n-\tau)^{-1}\bar{\X}\bar{\D}_\tau\bar{\X}^*$, where  $\D_{\tau} =[\mathbf{L}^{\tau}+(\mathbf{L}^{\tau})^{*}]/2$ and
$\bar{\D}_{\tau} =[\tilde{\mathbf{L}}^{\tau}+(\tilde{\mathbf{L}}^{\tau})^{*}]/2$. Notice next that, due to the assumed
Gaussianity of the innovations, the entries of $\tilde{\Z}=\mathbf{U}^*\Z \mathbf{F}_{n}$ are iid copies of the entries
of $\mathbf{Z}$, with $\mathbf{U}$ denoting the matrix diagonalizing the coefficient matrices $(\A_\ell\colon\ell\in\mathbb{N})$.
Define then $\mathbf{\tilde{S}}_{\tau}=\mathbf{U}^{*}\bar{\mathbf{S}}_{\tau}\mathbf{U}$ and
\begin{equation}\label{eq:transformed_covariances}
\tilde{\mathbf{C}}_\tau = \sqrt{\frac{n}{p}} (\tilde{\mathbf{S}}_\tau - \bs\tilde{\bs\Sigma}_\tau),
\end{equation}
where $\tilde{\bs\Sigma}_{\tau}=\mE[\tilde{\mathbf{S}}_{\tau}]$ is a diagonal matrix. It will
be shown in Section \ref{proof:thm_maq:lsd_ctau} that the LSD of
$\mathbf{C}_\tau^U=\mathbf{U}^{*}\mathbf{C}_\tau\mathbf{U}$ is the same as that of $\tilde{\mathbf{C}}_\tau$.

%%%%%%%%%%%%%%%%%%%%%%%%%%%%%%%%%%%%%%%%%%%%%%%%%%%

\subsection{Extensions to linear processes}
\label{sec:main:lin_proc}

In this section, Theorem \ref{thm:LSD_variance} is extended to cover linear processes
as defined in \eqref{eq:lin_proc}. To do so, the continuity condition {\bf A2} is
strengthened to assumption {\bf A6} below. In order to approximate the linear
process with MA$(q)$ models of increasing order, a rate on $q$ is imposed.

\begin{assu}
\label{assu:lin-proc}
The following conditions are assumed to hold.
\vspace{-.3cm}
\begin{itemize}
\itemsep-.5ex
\item[{\bf A6}]
($f_\ell\colon\ell\in\mathbb{N})$ are Lipschitz functions such that $|f_\ell(\mathbf{a})-f_\ell(\mathbf{b})|\leq
C\ell^{r_1}\|\mathbf{a}-\mathbf{b}\|$ for $\mathbf{a},\mathbf{b}\in \mR^{m_0}$ and $\ell\in\mathbb{N}$, where $r_1\leq r_0$ and $r_0$
is as in {\bf A5};
\item[{\bf A7}]
The moving average order $q$ satisfies $q=O(p^{1/4})$.
\end{itemize}
\end{assu}
\noindent Analogously \eqref{eq:pre_spectrum} and \eqref{eq:spectrum} are extended to the linear process transfer function and power transfer function
\begin{equation}
\label{eq:spectrum_lin_proc}
g(\mathbf{a},\nu)=\sum_{\ell=0}^\infty f_\ell(\mathbf{a})e^{\ic\ell\nu}
\qquad\mbox{and}\qquad
\psi(\mathbf{a},\nu)=|g(\mathbf{a},\nu)|^2,
\qquad \nu\in[0,2\pi],~\mathbf{a}\in\mathbb{R}^{m_0},
\end{equation}
respectively. Then, the following result holds.

\begin{thm}
\label{thm:lin_proc}
If the linear process $(X_t\colon t\in\mathbb{Z})$ satisfies {\bf Z1}, {\bf Z2} and {\bf A1}--{\bf A7}, then, the result of Theorem \ref{thm:LSD_variance} is retained if \eqref{eq:spectrum_lin_proc} is used in place of \eqref{eq:pre_spectrum} and \eqref{eq:spectrum}.
\end{thm}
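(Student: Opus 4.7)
The plan is a truncation-and-diagonal argument built on Theorem \ref{thm:LSD_variance}. For each $p$, pick $q=q(p)\to\infty$ satisfying $q=O(p^{1/4})$ from {\bf A7}, and decompose $X_t=X_t^{(q)}+Y_t^{(q)}$ with $X_t^{(q)}=\sum_{\ell=0}^{q}\A_\ell Z_{t-\ell}$ and $Y_t^{(q)}=\sum_{\ell>q}\A_\ell Z_{t-\ell}$. Let $\S_\tau^{(q)}$, $\bs\Sigma_\tau^{(q)}$, $\C_\tau^{(q)}=\sqrt{n/p}(\S_\tau^{(q)}-\bs\Sigma_\tau^{(q)})$ and $\hat s_\tau^{(q)}$ be the finite-$p$ objects built from $X_t^{(q)}$, and let $s_\tau^{(q)}$ denote the limit Stieltjes transform furnished by Theorem \ref{thm:LSD_variance} with truncated power transfer function $\psi^{(q)}(\mathbf{a},\nu)=|\sum_{\ell=0}^{q}f_\ell(\mathbf{a})e^{\ic\ell\nu}|^2$; the target $s_\tau$ is defined identically with the full $\psi$ from \eqref{eq:spectrum_lin_proc}.

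First I would revisit the proof of Theorem \ref{thm:LSD_variance} to check that it remains valid when $q$ is allowed to grow with $p$ along $q=O(p^{1/4})$. In the outline of Section \ref{sec:main:intuition}, the substitution $\mathbf{L}\leadsto\tilde{\mathbf{L}}$ perturbs $\mathbf{X}$ by a matrix of rank at most $q$ whose operator norm is controlled by $\sum_{\ell\le q}\bar a_\ell<\infty$; after the $\sqrt{n/p}$ renormalization this contributes $O(\sqrt{nq/p})$ to the Frobenius norm of the corresponding piece of $\C_\tau$, and the subsequent trace/moment expansions leading to \eqref{eq:Stieltjes_LSD_covariance}--\eqref{eq:spectral_kernel_covariance} carry polynomial factors in $q$ that the rate $q=O(p^{1/4})$ comfortably absorbs. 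Consequently $\hat s_\tau^{(q)}(z)\to s_\tau^{(q)}(z)$ almost surely for every $z\in\mathbb{C}^+$ along the sequence $q=q(p)$.

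Next, the deterministic convergence $s_\tau^{(q)}(z)\to s_\tau(z)$ follows from {\bf A4}--{\bf A5}: uniformly on $\mathrm{supp}(F^{\cal A})\times[0,2\pi]$, $|\psi^{(q)}(\mathbf{a},\nu)-\psi(\mathbf{a},\nu)|\le 2(\sum_{\ell\ge 0}\bar a_\ell)(\sum_{\ell>q}\bar a_\ell)\to 0$, whence ${\cal R}_\tau^{(q)}\to {\cal R}_\tau$ uniformly on $\mathrm{supp}(F^{\cal A})^{2}$. A continuity argument applied to the fixed-point equation \eqref{eq:Stieltjes_kernel_covariance}, leveraging the uniqueness clause of Theorem \ref{thm:LSD_variance} (which supplies a contraction on the space of Stieltjes kernels for each fixed $z\in\mathbb{C}^+$), then yields $\beta_\tau^{(q)}(z,\mathbf{a})\to\beta_\tau(z,\mathbf{a})$ and hence $s_\tau^{(q)}(z)\to s_\tau(z)$.

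The third and most delicate piece is the stochastic control of $\hat s_\tau-\hat s_\tau^{(q)}$. By the resolvent identity, $|\hat s_\tau(z)-\hat s_\tau^{(q)}(z)|\le (\sqrt{p}\,|\mathrm{Im}\,z|^2)^{-1}\|\C_\tau-\C_\tau^{(q)}\|_F$, so it suffices to show $np^{-2}\,\mathbb E\|(\S_\tau-\S_\tau^{(q)})-(\bs\Sigma_\tau-\bs\Sigma_\tau^{(q)})\|_F^2\to 0$. Writing the summand as cross/tail products $X_t^{(q)}Y_{t-\tau}^{(q)*}$, $Y_t^{(q)}X_{t-\tau}^{(q)*}$, $Y_t^{(q)}Y_{t-\tau}^{(q)*}$ minus their means, diagonalizing by $\mathbf{U}$, and evaluating entrywise variances in the Fourier domain exactly as in Section \ref{sec:main:intuition}, each $(j,k)$-entry contributes a variance of order $n^{-1}\|\psi-\psi^{(q)}\|_\infty^2$; summing over the $p^2$ entries and multiplying by $n/p^2$ gives the bound $O(\|\psi-\psi^{(q)}\|_\infty^2)$, which vanishes as $q\to\infty$. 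An $\varepsilon$-diagonal argument combining the three pieces then delivers $\hat s_\tau(z)\to s_\tau(z)$ almost surely for every $z\in\mathbb{C}^+$, and the theorem follows from the continuity theorem for Stieltjes transforms. The main obstacle will be the uniform-in-$q$ version of Theorem \ref{thm:LSD_variance} in the first step: tracking the polynomial-in-$q$ factors that appear in its moment/trace expansions and verifying that the rate $q=O(p^{1/4})$ in {\bf A7} is precisely enough to absorb them without disturbing the Stieltjes system.
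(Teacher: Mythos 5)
Your overall architecture mirrors the paper's proof: truncate to an MA($q$) with $q=O(p^{1/4})$, verify that the MA($q$) analysis survives growing $q$, pass to the limit in the kernel via the continuous dependence of the fixed point on $\mathcal{R}_\tau$ (this is exactly what \eqref{eq:beta_diff_norm_bound} provides), and control the Frobenius norm of $\mathbf{C}_\tau-\mathbf{C}_\tau^{(q)}$. However, there is a genuine gap in your third step. You reduce the stochastic comparison to showing $np^{-2}\,\mathbb{E}\|(\mathbf{S}_\tau-\mathbf{S}_\tau^{(q)})-(\bs\Sigma_\tau-\bs\Sigma_\tau^{(q)})\|_F^2\to 0$, but convergence of this expectation to zero only yields convergence in probability of $\hat s_\tau-\hat s_\tau^{(q)}$, whereas the theorem asserts almost sure convergence of the ESD. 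The paper instead proves the \emph{summability} $\sum_{p}\frac{n}{p^2}\mathbb{E}\|\mathbf{S}_{\tau,i}-\mathbb{E}[\mathbf{S}_{\tau,i}]\|_F^2<\infty$ for each of the three cross/tail pieces and invokes Borel--Cantelli (see \eqref{eq:series_summation} and Appendix \ref{app:lin_proc}). This is precisely where the quantitative interplay between the rate $q=\lceil p^{1/4}\rceil$ and the moment condition $r_0\geq 4$ in {\bf A5} does real work: the bounds take the form $C(\sum_{m>q}m^{r_0}\bar a_m)(\sum_{m>q}\bar a_m)$, and summing over $p$ uses $\sum_p\sum_{m>p^{1/4}}\bar a_m\leq\sum_m m^4\bar a_m\leq L_5<\infty$. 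Your qualitative bound ``$O(\|\psi-\psi^{(q)}\|_\infty^2)\to 0$'' does not supply this summability, so the claimed ``$\varepsilon$-diagonal argument delivering a.s.\ convergence'' is not yet justified.

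A secondary but related issue: you propose to evaluate the entrywise variances ``in the Fourier domain exactly as in Section \ref{sec:main:intuition}.'' That circulant diagonalization rests on $\mathbf{L}^\ell-\tilde{\mathbf{L}}^\ell$ being a perturbation of rank at most $\ell$, which is harmless for the MA($q$) head but not for the tail $\sum_{\ell>q}\mathbf{A}_\ell Z_{t-\ell}$, where the ranks are unbounded. The paper therefore computes $\mathbb{E}\|\mathbf{S}_{\tau,i}-\mathbb{E}[\mathbf{S}_{\tau,i}]\|_F^2$ directly for the original (non-circulant) process, which is why the discrete convolution bound of Lemma \ref{lem:app:D1} and the weighted tail sums $\sum_{m>q}m^{r_0}\bar a_m$ appear; your heuristic per-entry variance $n^{-1}\|\psi-\psi^{(q)}\|_\infty^2$ understates what must be controlled. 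Repairing both points --- carrying out the direct moment computation and establishing summability over $p$ --- would bring your argument in line with the paper's.
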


The proof of Theorem \ref{thm:lin_proc} is based on a truncation argument, approximating the
linear process with MA($q$) processes of increasing order $q$. More delicate arguments are needed for this case
as the intuitive arguments outlined in the previous section do not carry over to this case. Indeed conditions
on the approximating MA($q$) processes are needed that ensure that $q$ does not grow too fast or too slow in
order for the LSD of the linear process and its truncated version to be the same. The proof details are
given in Section \ref{sec:proof:lin_proc} below, where it turns out that one can choose $q=O(p^{1/4})$ as specified in {\bf A7}.

As a further generalization, consider the process $(Y_t\colon t\in\mathbb{Z})$ that is obtained from the linear
process $(X_t\colon t\in\mathbb{Z})$ through
\begin{equation}
\label{eq:yt}
Y_t=\mathbf{B}^{1/2}X_t,
\qquad t\in\mathbb{Z},
\end{equation}
where it is assumed that
\begin{itemize}
\item[{\bf A8}] $\mathbf{B}^{1/2}$ is a square root of the nonnegative definite Hermitian matrix $\mathbf{B}$ with $\|\mathbf{B}\|\leq \bar{b}_{0}<\infty$, and there is a nonnegative measurable function $g_{B}$, not identically zero
    on supp$(F^{\cal A})$, such that for each $p$, $\mathbf{U}^{*}\mathbf{B}\mathbf{U}=\mbox{diag}(g_{B}(\bs \alpha_{1}),\cdots, g_{B}(\bs \alpha_{p}))=\mathbf{\Lambda}_{B}$, with $\mathbf{U}$ and $\bs \alpha_1, \cdots, \bs \alpha_p$ as defined in {\bf A1} and {\bf A2}.
\end{itemize}
Observe that the autocovariance matrices of the process $(Y_t\colon t\in\mathbb{Z})$ are given by $\S_\tau^Y=\mathbf{B}^{1/2}\S_\tau\mathbf{B}^{1/2}$ and have expectation $\bs\Sigma^{Y}_{\tau}=\mathbf{B}^{1/2}\bs\Sigma_{\tau}\mathbf{B}^{1/2}$. Assumption {\bf A8} shows that the approximating autocovariance matrix obtained from replacing the lag operator matrix with the corresponding circulant matrix takes on the form
\begin{equation}
\tilde{\S}_\tau^Y
=\frac{1}{n-\tau}
\bigg(\sum_{\ell=0}^{\infty}\sqrt{\mathbf{\Lambda}_{B}}\mathbf{\Lambda}_{\ell}\tilde{\mathbf{Z}}\tilde{\mathbf{\Lambda}}_{n}^{\ell}\bigg)
\bigg(\frac{\tilde{\mathbf{\Lambda}}_{n}^{\tau}+(\tilde{\mathbf{\Lambda}}_{n}^{\tau})^*}{2}\bigg)
\bigg(\sum_{\ell=0}^{\infty}\sqrt{\mathbf{\Lambda}_{B}}\mathbf{\Lambda}_{\ell}\tilde{\mathbf{Z}}\tilde{\mathbf{\Lambda}}_{n}^{\ell}\bigg)^{*}
\end{equation}
with expectation $\bs\tilde{\bs\Sigma}_{\tau}^{Y}=\mbox{diag}(\tilde{\sigma}_{\tau, 1}^{Y},\ldots,\tilde{\sigma}_{\tau, p}^{Y})$ and
\[
\tilde{\sigma}_{\tau, j}^{Y}
=\frac{1}{n-\tau}\sum_{t=1}^{n}g_{B}(\bs \alpha_{j})\cos(\tau \nu_t)\psi(\bs \alpha_{j}, \nu_{t}),
\]
in which $\psi(\bs \alpha_{j}, \nu_{t})$ is defined in (\ref{eq:spectrum_lin_proc}). Following similar arguments as in the finite and infinite order MA cases, it can be shown that the LSD of $\mathbf{C}_{\tau}^{Y}=\sqrt{n/p}(\mathbf{S}_{\tau}^{Y}-\bs\Sigma_{\tau}^{Y})$ is the same as that of $\tilde{\C}_{\tau}^{Y}=\sqrt{n/p}(\tilde{\mathbf{S}}_{\tau}^{Y}-\bs\tilde{\bs\Sigma}_{\tau}^{Y})$. Then, the following theorem is established.

%\begin{thm}
%\label{thm:LSD_generalized_process_B}
%If the process $(Y_t\colon t\in\mathbb{Z})$ defined in \eqref{eq:yt} satisfies {\bf Z1}, {\bf Z2} and {\bf A1}--{\bf A8},
%then, with probability one and in the moderately high-dimensional setting \eqref{eq:mod_dim}, $\hat F_\tau^Y$
%converges in distribution to a nonrandom distribution $F_\tau^Y$ whose Stieltjes transform $s_\tau^Y$ is given by \begin{equation}
%\label{eq:Stieltjes_LSD_generalized_process_B}
%s_\tau^Y(z)
%=  - \int \frac{g_{B}(\mathbf{a})d F^{\cal A}(\mathbf{a})}{z +
%\beta_\tau^Y(z,\mathbf{a})},
%\qquad z \in \mathbb{C}^+,
%\end{equation}
%where
%\begin{equation}\label{eq:Stieltjes_kernel_generalized_process_B}
%\beta_\tau^Y(z,\mathbf{a})
%= - \int \frac{g_{B}(\mathbf{b}){\cal R}_\tau(\mathbf{a},\mathbf{b})dF^{\cal A}(\mathbf{b})}
%{z + g_{B}(\mathbf{b})\beta_\tau^Y(z,\mathbf{b})},
%\qquad z \in \mathbb{C}^+,
%~\mathbf{a} \in \mathbb{R}^{m_0},
%\end{equation}
%and ${\cal R}_\tau(\mathbf{a},\mathbf{b})$ is defined in \eqref{eq:spectral_kernel_covariance}.
%Moreover, $\beta_\tau^Y(z,\mathbf{a})$ is the unique solution to (\ref{eq:Stieltjes_kernel_generalized_process_B})
%subject to the condition that it is a Stieltjes kernel, that is, for each $\mathbf{a} \in \mathbb{R}^{m_0}$, $\beta_\tau^Y(z,\mathbf{a})$
%is the Stieltjes transform of a measure on the real line with mass $\int g_B(\mathbf{b}) {\cal R}_\tau(\mathbf{a},\mathbf{b})
%dF^{\cal A}(\mathbf{b})$.
%\end{thm}

\begin{thm}
\label{thm:LSD_generalized_process_B}
If the process $(Y_t\colon t\in\mathbb{Z})$ defined in \eqref{eq:yt} satisfies {\bf Z1}, {\bf Z2} and {\bf A1}--{\bf A8},
then, with probability one and in the moderately high-dimensional setting \eqref{eq:mod_dim}, $\hat F_\tau^Y$ converges
in distribution to a nonrandom distribution $F_\tau^Y$ whose Stieltjes transform $s_\tau^Y$ is given by
\begin{equation}
\label{eq:Stieltjes_LSD_generalized_process_B}
s_\tau^Y(z)
=  - \int \frac{d F^{\cal A}(\mathbf{a})}{z + \beta_\tau^Y(z,\mathbf{a})},
\qquad z \in \mathbb{C}^+,
\end{equation}
where
\begin{equation}\label{eq:Stieltjes_kernel_generalized_process_B}
\beta_\tau^Y(z,\mathbf{a})
= - \int \frac{g_{B}(\mathbf{a})g_{B}(\mathbf{b}){\cal R}_\tau(\mathbf{a},\mathbf{b})dF^{\cal A}(\mathbf{b})}
{z + \beta_\tau^Y(z,\mathbf{b})},
\qquad z \in \mathbb{C}^+,
~\mathbf{a} \in \mathbb{R}^{m_0},
%\mbox{supp}(F^{\cal A}),
\end{equation}
and ${\cal R}_\tau(\mathbf{a},\mathbf{b})$ is defined in \eqref{eq:spectral_kernel_covariance}.
Moreover, $\beta_\tau^Y(z,\mathbf{a})$ is the unique solution to (\ref{eq:Stieltjes_kernel_generalized_process_B})
subject to the condition that it is a Stieltjes kernel, that is, for each $\mathbf{a} \in \mathrm{supp}(F^{\cal A})$,
$\beta_\tau^Y(z,\mathbf{a})$ is the Stieltjes transform of a measure on the real line with mass
$g_B(\mathbf{a})  \int g_B(\mathbf{b}) {\cal R}_\tau(\mathbf{a},\mathbf{b}) dF^{\cal A}(\mathbf{b})$
whenever $g_B(\mathbf{a}) > 0$.
\end{thm}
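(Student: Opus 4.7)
The approach parallels the proofs of Theorems \ref{thm:LSD_variance} and \ref{thm:lin_proc}, carrying the diagonal weight matrix $\mathbf{\Lambda}_B = \mathbf{U}^*\mathbf{B}\mathbf{U}$ through the resolvent analysis as an extra multiplicative factor. Since the paper already argues that $\mathbf{C}_\tau^Y$ and $\tilde{\mathbf{C}}_\tau^Y = \sqrt{n/p}(\tilde{\mathbf{S}}_\tau^Y - \tilde{\bs\Sigma}_\tau^Y)$ share the same LSD, the remaining work is to derive the Stieltjes equation for $\tilde{\mathbf{C}}_\tau^Y$. The essential structural observation is that, in the $\mathbf{U}$-basis, $\tilde{\mathbf{S}}_\tau^Y = \mathbf{\Lambda}_B^{1/2}\tilde{\mathbf{S}}_\tau\mathbf{\Lambda}_B^{1/2}$ and hence $\tilde{\mathbf{C}}_\tau^Y = \mathbf{\Lambda}_B^{1/2}\tilde{\mathbf{C}}_\tau\mathbf{\Lambda}_B^{1/2}$; every quadratic form that appeared in the MA$(q)$ derivation is now sandwiched between two diagonal factors $\mathbf{\Lambda}_B^{1/2}$.

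\textbf{Truncation to MA$(q)$.} First, approximate the linear process by its MA$(q_p)$ truncation at $q_p = O(p^{1/4})$, exactly as in Section \ref{sec:main:lin_proc}. Because $\|\mathbf{B}^{1/2}\| \leq \bar{b}_0^{1/2}$ uniformly in $p$, conjugating every matrix in the truncation-error estimates by $\mathbf{B}^{1/2}$ preserves the operator- and Frobenius-norm inequalities used there, up to a multiplicative constant. Consequently, it suffices to establish the theorem for an MA$(q)$ process with fixed but arbitrary $q$, then pass $q \to \infty$ inside the kernel ${\cal R}_\tau$ using dominated convergence together with {\bf A4} and {\bf A5}.

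\textbf{Resolvent analysis and the fixed-point equation.} For fixed $q$, study $s_{\tau,p}^Y(z) = p^{-1}\tr R(z)$, where $R(z) = (\tilde{\mathbf{C}}_\tau^Y - z\mathbf{I})^{-1}$. The martingale-difference and leave-one-out manipulations from the proof of Theorem \ref{thm:LSD_variance}, combined with the Sherman--Morrison formula, give
\[
[R(z)]_{jj} \;=\; -\frac{1}{z + \beta_{\tau,p}^Y(z,\bs\alpha_j)} + o(1) \qquad \text{a.s.,}
\]
uniformly in $j$, where $\beta_{\tau,p}^Y(z,\bs\alpha_j)$ is a weighted trace of the deflated resolvent $R^{(j)}(z)$. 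The outer $\mathbf{\Lambda}_B^{1/2}$ factor contributes $g_B(\bs\alpha_j)$ from the $j$th column that is pulled out in the Sherman--Morrison step, while the inner $\mathbf{\Lambda}_B^{1/2}$ factor contributes the weights $g_B(\bs\alpha_k)$ inside the averaged trace over the surviving coordinates. Combined with the MA$(q)$ quadratic-form concentration bounds and the empirical convergence in {\bf A3}, this yields \eqref{eq:Stieltjes_LSD_generalized_process_B}--\eqref{eq:Stieltjes_kernel_generalized_process_B}.

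\textbf{Uniqueness and the degenerate locus.} Uniqueness of $\beta_\tau^Y(z,\mathbf{a})$ as a Stieltjes kernel on $\{\mathbf{a}: g_B(\mathbf{a})>0\}$ follows from the standard contraction argument on $\mathbb{C}^+$ for $\Im(z)$ large, extended by analyticity, as in Theorem \ref{thm:LSD_variance}; the factor $g_B(\mathbf{a})g_B(\mathbf{b})$ is bounded by $\bar b_0^2$ and therefore only modulates the Lipschitz constant without disturbing contractivity on the relevant function space. The main technical obstacle lies at points with $g_B(\mathbf{a}) = 0$, where \eqref{eq:Stieltjes_kernel_generalized_process_B} forces $\beta_\tau^Y(z,\mathbf{a}) \equiv 0$ and the naive contraction argument degenerates. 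These indices correspond to coordinates of $Y_t$ that are identically zero; they can be split off by a preliminary unitary reduction into the direct sum of the null space and range of $\mathbf{B}$. The nondegenerate block is handled by the contraction argument, and the null block contributes only a point mass at zero with weight $F^{\cal A}(\{g_B = 0\})$, automatically accounted for by the $-1/z$ term in \eqref{eq:Stieltjes_LSD_generalized_process_B} arising when $\beta_\tau^Y(z,\mathbf{a}) = 0$.
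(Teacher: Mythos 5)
Your proposal is correct and follows essentially the same route the paper takes: the paper itself only sketches the proof of this theorem by noting that, since $\mathbf{\Lambda}_B$ is diagonal in the $\mathbf{U}$-basis, the factor $\sqrt{\mathbf{\Lambda}_B}$ can be carried through the circulant approximation, the leave-one-out resolvent analysis and the uniqueness/contraction argument of Theorems \ref{thm:LSD_variance} and \ref{thm:lin_proc}, producing exactly the weighted kernel $g_B(\mathbf{a})g_B(\mathbf{b}){\cal R}_\tau(\mathbf{a},\mathbf{b})$ you derive. Your explicit treatment of the degenerate locus $\{g_B=0\}$ (forcing $\beta_\tau^Y(z,\mathbf{a})\equiv 0$ and yielding the $-1/z$ point-mass contribution) is a small but accurate elaboration consistent with the theorem's restriction of the mass identity to $g_B(\mathbf{a})>0$.
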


%%%%%%%%%%%%%%%%%%%%%%%%%%%%%%%%%%%%%%%%%%%%%%%%%%%

\subsection{Relaxation of commutativity condition}\label{subsec:relax_commutative}

The assumption of commutativity or simultaneous diagonalizability of the coefficients (assumption {\bf A1})
indeed restricts the class of linear processes for which the main result of existence and uniqueness of the
limiting ESD applies. However, this assumption can be relaxed to a milder one in which the coefficients of the
linear processes are only approximately Hermitian and commutative. Two such scenarios are discussed below,
which are natural but by no means exhaustive. In both settings, it is assumed that the linear process
\begin{equation}\label{eq:linear_process_B}
X_t = \sum_{\ell=0}^\infty \mathbf{B}_\ell Z_{t-\ell},
\qquad t \in \mathbb{Z},
\end{equation}
is observed with the standard assumptions {\bf Z1} and {\bf Z2} on the sequence $(Z_t\colon t\in\mathbb{Z})$,
whereas $\mathbf{B}_0=\mathbf{I}_p$ and the sequence $(\mathbf{B}_\ell\colon\ell\in\mathbb{N})$ satisfies the conditions:
\begin{itemize}
\itemsep-.5ex
\item[{\bf B1}] For some $r_0\geq 1$, there are $\bar{b}_0=1$ and $(\bar{b}_\ell\colon\ell\in\mathbb{N})$ such that $\|\mathbf{B}_\ell\|\leq \bar{b}_\ell$ for $\ell\in\mathbb{N}$ and $L_{j+1}^\prime:=\sum_{\ell=0}^\infty \ell^j \bar{b}_\ell < \infty$ for $j=0,\ldots,r_0$.
\item[{\bf B2}] There is a sequence of Hermitian matrices $(\mathbf{A}_\ell\colon\ell\in\mathbb{N})$ approximating the sequence $(\mathbf{B}_\ell\colon\ell\in\mathbb{N})$ and satisfying {\bf A1} -- {\bf A6}.
\end{itemize}
In addition to {\bf B1} and {\bf B2}, it is assumed that the sequence $(\mathbf{A}_\ell\colon\ell\in\mathbb{N})$ satisfies one of the following conditions specifying the approximation property in {\bf B2}:
\begin{itemize}
\itemsep-.5ex
\item[{\bf B3}] For some $1 \leq \beta < 4$, $p^{-1} \sum_{\ell=1}^{\lceil p^{1/\beta}\rceil} \mbox{rank}(\mathbf{B}_\ell -
\mathbf{A}_\ell) \to 0$ under \eqref{eq:mod_dim}.
\item[{\bf B4}] For some $1 \leq \beta < 4$, $\sqrt{n/p} \sum_{\ell=1}^{\lceil p^{1/\beta}\rceil} \parallel
\mathbf{B}_\ell - \mathbf{A}_\ell \parallel \to 0$ under \eqref{eq:mod_dim}.
\end{itemize}
The importance of these conditions is discussed. First, restricting the sums involving $\mathbf{B}_\ell -
\mathbf{A}_\ell$ to first $p^{1/\beta}$ terms is sufficient in view of {\bf B1} ensuring that the process $(X_t\colon t\in\mathbb{Z})$ can be approximated by the truncated process given by $X_t^{q} =\sum_{\ell=0}^{q} \mathbf{B}_\ell Z_{t-\ell}$ with $q = O(p^{1/4})$ without changing the LSD of $\sqrt{n/p}(\mathbf{S}_\tau - \mE[\mathbf{S}_\tau])$. This can be verified by following the derivation in Section \ref{sec:main:lin_proc}. The condition {\bf B3}, on the other hand,  says that the coefficient matrices $(\mathbf{B}_\ell\colon\ell\in\mathbb{N})$ can be seen as
low-rank perturbations of a sequence of Hermitian and commutative matrices $(\mathbf{A}_\ell\colon\ell\in\mathbb{N})$. The condition {\bf B4}, which bounds the norms of differences between the coefficients and
their approximations, is a bit restrictive in the sense that it depends on $n$. Presence of the factor
$\sqrt{n/p}$ suggests that this condition is non-trivial essentially
if $n$ is moderately large compared to $p$.

We state the result in the form of the following corollary.
\begin{cor}
\label{cor:LSD_linear_process_B}
Suppose that the linear process $(X_t\colon t\in\mathbb{Z})$ satisfies conditions {\bf B1}, {\bf B2} and either {\bf B3}
or {\bf B4}, and let $\mathbf{S}_\tau$ denote the lag-$\tau$ symmetrized sample
autocovariance matrix. Then the limiting ESD of the matrix $\sqrt{n/p}(\mathbf{S}_\tau-\mE[\mathbf{S}_\tau])$
exists and its Siteltjes transform $s_\tau(z)$ satisfies \eqref{eq:Stieltjes_LSD_covariance}--\eqref{eq:spectral_kernel_covariance}.
\end{cor}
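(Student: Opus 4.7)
I would reduce the corollary to Theorem \ref{thm:lin_proc} in two stages: first truncate the infinite-order process to an MA$(q)$ with $q = \lceil p^{1/4}\rceil$, then replace the coefficients $\mathbf{B}_\ell$ by their Hermitian commutative proxies $\mathbf{A}_\ell$. For the first stage, set $X_t^q = \sum_{\ell=0}^q \mathbf{B}_\ell Z_{t-\ell}$ and $\mathbf{C}_\tau^q = \sqrt{n/p}(\mathbf{S}_\tau^q - \mathbb{E}[\mathbf{S}_\tau^q])$. The tail-truncation argument underlying Theorem \ref{thm:lin_proc} (Section \ref{sec:main:lin_proc}) uses only the operator-norm summability of the coefficients and the rate {\bf A7}, both of which are supplied here by {\bf B1}; hence the ESD of $\mathbf{C}_\tau$ is almost surely asymptotically the same as that of $\mathbf{C}_\tau^q$. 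The identical truncation applied to $\tilde{X}_t = \sum_{\ell=0}^\infty \mathbf{A}_\ell Z_{t-\ell}$ (a well-defined linear process by {\bf B2}) produces $\tilde{\mathbf{C}}_\tau^q$ whose LSD coincides with that of $\tilde{\mathbf{C}}_\tau$ and, by Theorem \ref{thm:lin_proc}, is the $F_\tau$ prescribed in \eqref{eq:Stieltjes_LSD_covariance}--\eqref{eq:spectral_kernel_covariance}.

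\textbf{Comparison of the two MA$(q)$ processes.} Setting $D_\ell = \mathbf{B}_\ell - \mathbf{A}_\ell$ and $W_t = \sum_{\ell=1}^q D_\ell Z_{t-\ell}$, the identity $X_t^q (X_{t-\tau}^q)^* - \tilde{X}_t^q (\tilde{X}_{t-\tau}^q)^* = W_t (\tilde{X}_{t-\tau}^q)^* + \tilde{X}_t^q W_{t-\tau}^* + W_t W_{t-\tau}^*$ yields a factorized expression for $\mathbf{S}_\tau^q - \tilde{\mathbf{S}}_\tau^q$. Under {\bf B3}, each $W_t$ has its column range contained in $\sum_{\ell=1}^q \mathrm{Range}(D_\ell)$, so every cross-term matrix (and hence the entire difference, together with its deterministic mean correction) has rank at most $C\sum_{\ell=1}^q \mathrm{rank}(D_\ell) = o(p)$; the rank inequality (Theorem A.43 of \cite{Bai:Silverstein:2010}) then yields $\|F^{\mathbf{C}_\tau^q} - F^{\tilde{\mathbf{C}}_\tau^q}\|_\infty \to 0$. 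Under {\bf B4}, I would use the almost-sure bound $\|\mathbf{Z}\|_{\mathrm{op}} = O(\sqrt{n})$ (valid since $p/n \to 0$) together with the block expression $\mathbf{S}_\tau^q - \tilde{\mathbf{S}}_\tau^q = (n-\tau)^{-1}[\mathbf{W}\mathbf{D}_\tau\tilde{\mathbf{X}}^* + \tilde{\mathbf{X}}\mathbf{D}_\tau\mathbf{W}^* + \mathbf{W}\mathbf{D}_\tau\mathbf{W}^*]$ and submultiplicativity $\|\mathbf{W}\|_{\mathrm{op}} \leq \|\mathbf{Z}\|_{\mathrm{op}}\sum_\ell \|D_\ell\|$, $\|\tilde{\mathbf{X}}\|_{\mathrm{op}} \leq \|\mathbf{Z}\|_{\mathrm{op}}\sum_\ell \|\mathbf{A}_\ell\|$ to obtain $\sqrt{n/p}\|\mathbf{S}_\tau^q - \tilde{\mathbf{S}}_\tau^q\|_{\mathrm{op}} \leq C\sqrt{n/p}\sum_{\ell=1}^q \|D_\ell\| \to 0$ by {\bf B4}. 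The Hoffman--Wielandt inequality then forces $p^{-1}\sum_j (\lambda_j(\mathbf{C}_\tau^q) - \lambda_j(\tilde{\mathbf{C}}_\tau^q))^2 \to 0$, hence weak convergence of the two ESDs to the same limit.

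\textbf{Combining and main obstacle.} Chaining the two approximation steps with the limit supplied by Theorem \ref{thm:lin_proc} for the $\mathbf{A}$-process completes the proof. The delicate part is the {\bf B4} case, where the $\sqrt{n/p}$ factor in the renormalization is precisely the factor appearing in {\bf B4}: the argument succeeds only because one can bound $\|\mathbf{Z}\|_{\mathrm{op}}$ linearly in $\sqrt{n}$ (not $\sqrt{np}$), which in turn requires preserving the product structure throughout and avoiding any crude entrywise bookkeeping. A secondary but routine check is that the choice $q = \lceil p^{1/4}\rceil$ lies below the range $\lceil p^{1/\beta}\rceil$ appearing in {\bf B3}/{\bf B4} (guaranteed since $\beta < 4$), and that the mean-centering introduces no new rank or norm contribution beyond what is already controlled by the same bounds.
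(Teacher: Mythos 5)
Your plan is correct and follows essentially the same route as the paper's proof in Appendix \ref{sec:proof_cor_linear_process}: truncate to order $q=\lceil p^{1/4}\rceil$ using the Section \ref{sec:proof:lin_proc} argument (which only needs the {\bf B1} summability), then compare the $\mathbf{B}_\ell$- and $\mathbf{A}_\ell$-driven MA$(q)$ processes via the rank inequality under {\bf B3} and an operator-norm bound with $\|\mathbf{Z}\|=O(\sqrt{n})$ under {\bf B4}. The only cosmetic difference is that you pass from the operator-norm bound to ESD convergence via Hoffman--Wielandt, whereas the paper invokes the L\'evy-distance bound $L(F^{\mathbf{A}},F^{\mathbf{B}})\leq\|\mathbf{A}-\mathbf{B}\|$ (Lemma \ref{lemma:Levy_norm_inequality}) directly; both are valid.
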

Proof of Corollary \ref{cor:LSD_linear_process_B} is given in Appendix \ref{sec:proof_cor_linear_process}.

The conditions imposed in Corollary \ref{cor:LSD_linear_process_B} can be used to prove that results
hold for processes $(X_t\colon t\in\mathbb{Z})$ satisfying \eqref{eq:linear_process_B} and whose
coefficient matrices are certain classes of symmetric (Hermitian) Toeplitz matrices. Specifically,
if the matrix $\mathbf{B}_\ell$ is determined by the sequence $(b_{\ell k}\colon k \in \mathbb{Z})$,
satisfying the condition $\sup_{\ell \geq 1} \sum_{|k| \geq m} |k|^s |b_{\ell k}| \to 0$ as $m\to \infty$ for
some $s \geq 1$, and {\bf B1} holds, then the LSDs of the corresponding normalized sample autocovariance
matrices exist under \eqref{eq:mod_dim} provided $n = O(p^{s+1/2})$. In this case, the symmetric (Hermitian)
Toeplitz matrices $\mathbf{B}_\ell$ can be approximated by symmetric (Hermitian) circulant matrices whose
eigenvalues are precisely the symbols associated with the sequence $(b_{\ell k}\colon k \in \mathbb{Z})$
evaluated at the discrete Fourier frequencies $2\pi j/p$, $j=1,\ldots,p$.

\section{Examples}
\label{sec:examples}

%%%%%%%%%%%%%%%%%%%%%%%%%%%%%%%%%%%%%%%%%%%%%%%%%%%

In this section, a number of special cases are presented for which the results stated in
Section \ref{sec:main} take on an easier form.
%----------------------------------------------------------------------------------------------------------
\begin{exa}
Consider the MA(1) process
\[
X_{t} = Z_t+\A Z_{t-1},
\]
with $\A = \mathrm{diag}(\alpha_1, \cdots, \alpha_p)$ for $\alpha_{j}\in\mR$ (thus choosing $m=1$ here). 
Suppose further that $f_{1}(a)=a$. Then, the transfer function \eqref{eq:pre_spectrum} is given by
$g(a, \theta)=1+ae^{\ic \theta}$ and the power transfer function \eqref{eq:spectrum} by 
$\psi(a,\theta) = 1+a^2+2a\cos(\theta)$. This yields the explicit expressions
\[
\mathcal{R}_{\tau}(a,b)=
\begin{cases}
(1+a^2)(1+b^2)+2ab, & \tau=0. \\
(1+a^2)(1+b^2)/2+3ab/2, & \tau=1. \\
(1+a^2)(1+b^2)/2+ab, & \tau\geq 2.
\end{cases}
\]
\end{exa}
%----------------------------------------------------------------------------------------------------------
\begin{exa}
\label{exa:separable}
Consider the special case of an MA($q$) process with $\mathbf{A}_\ell=\gamma_\ell\I_{p}$, $\ell =1,\ldots,q$, and $f_{\ell}(\bs \alpha_{j})=\gamma_{\ell}$ with ${\bs\alpha}_{j} =\mathbf{1}$ for all $j=1,\ldots,p$. Then $F^{\mathcal{A}}$ is a $\delta$-function at $\mathbf{1}$. Since
\begin{equation*}
g(\mathbf{a},\nu )
=\sum_{\ell=0}^{\infty}f_{\ell}(\mathbf{a})e^{\ic \ell\nu}
=\sum_{\ell=0}^{\infty}\gamma_{\ell}e^{\ic \ell\nu}
=\tilde{g}(\nu)
\end{equation*}
and therefore also $\psi(\mathbf{a},\nu)=\tilde{\psi}(\nu)$ do not depend on $\mathbf{a}$, it follows that
\begin{equation*}
\mathcal{R}_{\tau}(\mathbf{a},\mathbf{1})
=\frac{1}{2\pi}\int_{0}^{2\pi}\cos^2(\tau\nu)(\tilde{\psi}(\nu))^2d\nu
=\bar{\mathcal{R}}_\tau,
\end{equation*}
so that equations \eqref{eq:Stieltjes_kernel_covariance} and \eqref{eq:Stieltjes_LSD_covariance} reduce respectively to $\beta_{\tau}(z,\mathbf{a}) = \beta_{\tau}(z) = \bar{\mathcal{R}}_{\tau}s_{\tau}(z)$ and
\[
s_{\tau}(z) = -\frac{1}{z+\bar{\mathcal{R}}_{\tau}s_{\tau}(z)}.
\]
For\/ $\tau = 0$, the latter equation coincides with that for the Stieltjes transform for the case of independent observations with separable covariance structure discussed in \citet{Wang:Paul:2014}. Indeed, taking in their notation $\A_{p} = \I_{p}$  and $\mathbf{B}_{n}^{1/2} =\mathrm{diag}(\tilde{g}(\nu_1), \cdots, \tilde{g}(\nu_n))$,  equation (2.1) of Theorem 2.1 in \citet{Wang:Paul:2014} reduces to $s(z) = -[z+\bar{b}_2 s(z)]^{-1}$, where
\begin{equation*}
\bar{b}_{2}
=\lim_{n\to \infty} \frac{1}{n}\tr(\mathbf{B}_{n}^2)
=\lim_{n\to \infty}\frac{1}{n}\sum_{t=1}^{n}|\tilde{g}(\nu_{t})|^4
=\frac{1}{2\pi}\int_{0}^{2\pi}\tilde{\psi}(\nu)^2d\nu
= \bar{\mathcal{R}}_{0}.
\end{equation*}
\end{exa}
%----------------------------------------------------------------------------------------------------------
\begin{exa}
Consider the AR(1) process
\[
X_{t} = \mathbf{A}X_{t-1}+Z_{t},
\]
with $\A = \mathrm{diag}(\alpha_1, \cdots, \alpha_p)$ for $\alpha_j\in\mathbb{R}$ such that $|\alpha_j|<1$. The AR(1) process then admits the linear process representation $X_t=\sum_{\ell=0}^\infty\A^\ell Z_{t-\ell}$. With $f_\ell(a)=a^\ell$, the transfer function \eqref{eq:pre_spectrum} is given by $g(a,\theta)=(1-ae^{\ic\theta})^{-1}$ and the power transfer function \eqref{eq:spectrum} by $\psi(a,\theta)=(1+a^2-2a\cos\theta)^{-1}$.
\end{exa}
%---------------------------------------------------------------------------------------------------------
\begin{exa} Consider the causal ARMA(1,1) process
\begin{equation*}
\mathbf{\Phi} (L)X_{t}
= \mathbf{\Theta}(L)Z_{t},
\end{equation*}
where $\mathbf{\Phi} (L) = \mathbf{I}-\mathbf{\Phi}_{1}(L)$ and $\mathbf{\Theta}(L) = \mathbf{I}+\mathbf{\Theta}_{1}$ are matrix-valued autoregressive and moving average polynomials in the lag operator $L$ such that $\|\Phi_1\|<\infty $ and $\|\Theta_1\|< \infty$. Then $(X_{t}\colon t \in \mZ)$ can be represented as the linear process
\begin{equation*}
X_{t}
=\mathbf{A}(L) Z_{t},
\end{equation*}
in which $\mathbf{A}(L) = \sum_{\ell=0}^{\infty}\mathbf{A}_{\ell}L^{\ell} = (\mathbf{I}-\mathbf{\Phi}_{1}L)^{-1}(\mathbf{I}+\mathbf{\Theta}_{1}L)$. Assume further that $\mathbf{\Phi}_{1}$ and $\mathbf{\Theta}_{1}$ are simultaneously diagonalizable by $\mathbf{U}$, that is, $\mathbf{U}^{*}\mathbf{\Phi}_{1}\mathbf{U} = \mathrm{diag}(\phi_{1}, \ldots, \phi_{p})$ and $\mathbf{U}^{*}\mathbf{\Theta}_{1}\mathbf{U} = \mathrm{diag}(\theta_1, \ldots, \theta_{p})$. Let $\bs \alpha_{j} = (\phi_{j}, \theta_{j})^{T}\in \mR^2$. Assumption {\bf{A3}} requires that the empirical distribution of $\{\bs \alpha_1, \ldots, \bs \alpha_p\}$ converges weakly to a non-random distribution function defined on $\mR^2$. Note that
\begin{equation*}
\mathbf{U}^{*}\mathbf{A}_{\ell}\mathbf{U}
= \mathrm{diag}(f_{\ell}(\bs \alpha_1), \ldots, f_{\ell}(\bs\alpha_{p})),
\end{equation*}
with $f_{\ell}(\bs \alpha_j) = 1$ and $f_{\ell}(\bs \alpha_j) = (\theta_{j} + \phi_{j})\phi^{\ell-1}_{j}$ for $\ell\in \mathbb{N}$. Thus, the transfer function \eqref{eq:pre_spectrum} is given by
\begin{equation*}
g(\bs \alpha_{j}, \nu)
=\sum_{\ell=0}^{\infty}f_{\ell}(\bs \alpha_{j})e^{\ic \ell \nu}
=1+\sum_{\ell=1}^{\infty}(\theta_{j}+\phi_{j})\phi^{\ell-1}_{j}e^{\ic \ell \nu}
=\frac{1+\theta_{j}e^{\ic \nu}}{1-\phi_{j}e^{\ic \nu}}
\end{equation*}
and the power transfer function \eqref{eq:spectrum} is the squared modulus of the ratio on right-hand side of the last equation.
\end{exa}
%----------------------------------------------------------------------------------------------------------
\begin{exa}\label{ex:multi_block}
Suppose that  for each $\ell \geq 1$, $\mathbf{A}_\ell$ is a block diagonal matrix with $B$ (a fixed number) diagonal blocks
such that the $b$th block of $\mathbf{A}_\ell$  is of the form
$a_{\ell b} \mathbf{I}_{p_b}$, for $b=1,\ldots,B$, where $\sum_{b=1}^B p_b = p$, and
$\sum_{\ell=1}^\infty \ell^3 \max_{1\leq b \leq B} |a_{\ell b}| < \infty$. Suppose further that for
each $b$, $p_b/p \to \omega_b$ as $p \to \infty$, where $\omega_b > 0$ for all $b$.
In this case, one can take $\boldsymbol{\alpha}_j = b/(m+1)$ if\/ $\sum_{b^\prime=1}^{b-1} p_{b^\prime} +1 \leq j \leq \sum_{b^\prime=1}^{b} p_{b^\prime}$
and define $f_\ell$ to be a function on $[0,1]$ that smoothly interpolates the values $\{(b/(m+1),a_{\ell b})\colon b=1,\ldots,B\}$.
Then, Theorem \ref{thm:lin_proc} applies and the Stieltjes transform $s_\tau(z)$ of the LSD of $\sqrt{n/p}(\mathbf{S}_\tau-\mathbf{\Sigma}_\tau)$ is given by
\begin{equation}\label{eq:s_tau_multi_block}
s_\tau(z) = - \sum_{b=1}^B \omega_b \frac{1}{z+\beta_{\tau,b}(z)},
\qquad z \in \mathbb{C}^+,
\end{equation}
where the functions (Stieltjes transforms) $\beta_{\tau,b}(z)$ are determined by the system of nonlinear equations
\begin{equation}\label{eq:beta_tau_multi_block}
\beta_{\tau,b}(z)
= - \sum_{b^\prime=1}^B \omega_{b^\prime} \frac{\bar{R}_{\tau,bb^\prime}}{z + \beta_{\tau,b^\prime}(z)},
\qquad z \in \mathbb{C}^+,
~~b=1,\ldots,B,
\end{equation}
where
\begin{equation}
\bar{R}_{\tau,bb^\prime}
= \frac{1}{2\pi}\int_{0}^{2\pi} \cos^2(\tau \theta)\tilde\psi_b(\theta) \tilde\psi_{b^\prime}(\theta)d\theta
\end{equation}
with $\tilde\psi_b(\theta) = |1+\sum_{\ell=1}^\infty a_{\ell b} e^{i\ell \theta}|^2$. Note that,
using the notations of Theorem \ref{thm:lin_proc}, $\beta_{\tau,b}(z) \equiv \beta_\tau(z,\mathbf{a})$ for $\mathbf{a} = b/(m+1)$,
and $F^{\cal A}$ is the discrete distribution that associates probability $\omega_b$ to the point $b/(m+1)$, for $b=1,\ldots,B$.
This example illustrates that often the precise description of $f_\ell$'s is not necessary in order for the LSDs to exist.
Numerical methods, such as a fixed point method, for solving (\ref{eq:beta_tau_multi_block}), while ensuring that $\Im(\beta_{\tau,b}(z)) >0$
whenever $z \in \mathbb{C}^+$, are easy to implement, and can be used to compute $s_\tau(z)$ for any given $z$.
\end{exa}

%%%%%%%%%%%%%%%%%%%%%%%%%%%%%%%%%%%%%%%%%%%%%%%%%%%

\section{Applications}
\label{sec:app}

%%%%%%%%%%%%%%%%%%%%%%%%%%%%%%%%%%%%%%%%%%%%%%%%%%%

The main result (Theorem \ref{thm:lin_proc}) gives a description of the bulk behavior
of the eigenvalues of the matrices $\mathbf{C}_\tau = \sqrt{n/p}(\mathbf{S}_\tau - \mathbf{\Sigma}_\tau)$
under the stated assumptions on the process and the asymptotic regime $p/n \to 0$.
Thus, this result provides a building block for further investigation of the behavior of
spectral statistics of the same matrix. It can also be used to investigate
potential departures from a hypothesized model.

An immediate application of Theorem \ref{thm:lin_proc} is that it provides a way of calculating
an error bound on $\mathbf{S}_\tau$ as an estimate of $\mathbf{\Sigma}_\tau$.
Indeed, if the quality of estimates is assessed through the Frobenius norm of $\mathbf{C}_\tau$,
or some other measure that can be expressed as a linear functional of the spectral distribution of $\mathbf{C}_\tau$,
our result gives a precise description about the asymptotic behavior of such a measure in terms of
integrals of the LSD of $\mathbf{C}_\tau$.  This can be seen as analogous to the standard error bounds
in univariate problems.

One potential application is in the context of model diagnostics.
Using the results for the LSD of the normalized symmetrized autocovariance matrices,
one can check whether the residuals from a time series regression model
have i.i.d.\ realizations. This can be done by graphically comparing the eigenvalue
distributions of $\sqrt{n/p}\S_1^e, \sqrt{n/p}\S_2^e, \ldots$, where $\S_{\tau}^e$
is the lag-$\tau$ symmetrized autocovariance matrix of the residuals obtained from fitting a
time series regression model, with the LSDs of the renormalized autocovariances
of the same orders corresponding to i.i.d.\ data.

Further, these results can also be used to devise a formal test for the hypothesis
$H_{0}\colon X_1, \ldots, X_n$ are i.i.d.\ with zero mean and known covariance versus
$H_{1}\colon X_1, \ldots, X_n$ follow a stationary linear time series model. If
an MA($q_{0}$) process ($q_{0}$ can be $\infty$) is specified, another type of test may be proposed, say,
$H_{0}\colon X_{t}$~{is the given MA}($q_{0}$)~{process (satisfying the assumptions of Theorem \ref{thm:lin_proc})},
versus the alternative that $X_t$ is a different process than the one specified under $H_{0}$. This can be done
through the construction of
%a Cram\'er--von Mises statistics, that is,
a class of test statistics that equal the squared
integrals of the differences between the ESDs of observed renormalized sample covariance and
autocovariance matrices and the corresponding LSDs under $H_{0}$, for certain lag orders.
The LSDs under $H_{0}$ are computable by using the inversion formula of Stieltjes transforms
whenever the Stieltjes transform of the LSDs can be computed numerically.
An example of such a setting is given by Example \ref{ex:multi_block}. The actual numerical calculations
of the LSD can be done along the lines of \cite{Wang:Paul:2014}.
The test of whether a time series follows a given MA$(q_0)$ model, with a fixed $q_0$, can be further facilitated
by making use of the observation in Remark \ref{rem:MA_q_kernel} which shows that if the
process is indeed MA$(q_0)$, then the LSDs of the renormalized lag-$\tau$ symmetrized
sample autocovariances will all be the same for $\tau \geq q_0 + 1$.

Calculation of the theoretical LSD under the null model requires inversion of the corresponding
Stieltjes transform, which is somewhat challenging due to the need for selection
of the correct root, as it is necessary to let the imaginary part of the argument of the Stieltjes transform converge
to zero. A simpler alternative is to compute the differences $|s_{\tau,p}(z)-s_\tau(z)|$ between
the Stieltjes transforms of the ESD and the LSDs for a finite, pre-specified set of $z \in \mathbb{C}^+$, and then combine
them through some norm (like $l_\infty$, $l_1$ or $l_2$) and use the latter as a test statistic.
The null distribution of this statistic can be simulated from a Gaussian ensemble, which can then be
used to determine the critical values of the test.

If the linear process $(X_t\colon t\in\mathbb{Z})$ satisfies all the assumptions
of Theorem \ref{thm:lin_proc} and all the coefficient matrices are determined by a finite
dimensional parameter, then under suitable regularity conditions, it may be possible
to estimate that parameter with error rate $O_P(1/\sqrt{n})$
through the use of method of moments or maximum likelihood (under the working assumption of Gaussianity).
Supposing $\theta$ to be the parameter, assuming that $\Sigma(\theta)$ is twice continuously differentiable and $\frac{\partial^2}{\partial\theta\partial \theta^T}\mathbf{\Sigma}_\tau(\theta)$ has uniformly
bounded norm in a neighborhood of the true parameter $\theta_0$,
and denoting any $\sqrt{n}$-consistent estimate by $\hat\theta$,
%(in fact, it is only needed that the estimation error is $o_P(1/\sqrt{n})$),
%if $\widehat\Sigma_\tau$ denotes the lag-$\tau$ autcovariance matrix with the true parameter
%being replaced by this estimate, then
it can be shown by a simple application of Lemma \ref{lemma:Levy_norm_inequality}
that the ESD of $\sqrt{n/p}(\mathbf{S}_\tau - \mathbf{\Sigma}_\tau(\hat\theta))$ converges in probability
to the same distribution as the LSD of $\sqrt{n/p}(\mathbf{S}_\tau - \mathbf{\Sigma}_\tau(\theta_0))$. Therefore,
the hypothesis testing framework described in the previous paragraphs is applicable
even if the parameter governing the system is estimated at a suitable precision and plugged into
the expressions for the population autocovariances.

Another interesting application is in analyzing the effects of a linear
filter applied to the observed time series. Linear filters are commonly used to extract
signals  from a time series through modulating its spectral characteristics and
also for predicting future observations. Suppose that $W_{t}=\sum_{\ell=0}^{\infty}c_{\ell}X_{t-\ell}$
where $(X_t\colon t\in\mathbb{Z})$ is the MA$(q)$ process defined in Section \ref{sec:main:assu}
and $(c_\ell\colon \ell\in\mathbb{N}_0)$  a sequence of filter coefficients satisfying
$\sum_{\ell=0}^{\infty}|c_{\ell}|<\infty$. Then, the LSDs of the normalized symmetrized
autocovariances of the filtered process $(W_t\colon t\in\mathbb{Z})$ exist and have the
same structure as that of the process $(X_t\colon t\in\mathbb{Z})$, except that in the
description of their Stieltjes transforms (equations (\ref{eq:Stieltjes_LSD_covariance})
and (\ref{eq:Stieltjes_kernel_covariance})), the spectral density $\psi(\mathbf{a}, \nu)$
is replaced by the function $\tilde \psi(\mathbf{a},\nu;\boldsymbol{c})
= |\sum_{\ell=0}^{\infty}c_\ell e^{\ic \ell\nu}|^2 |g(\mathbf{a}, \nu)|^2$.

%%%%%%%%%%%%%%%%%%%%%%%%%%%%%%%%%%%%%%%%%%%%%%%%%%%

\section{Proof of Theorem \ref{thm:LSD_variance}}
\label{sec:proof:thm_maq}

%%%%%%%%%%%%%%%%%%%%%%%%%%%%%%%%%%%%%%%%%%%%%%%%%%%

The concern of this paper is in the spectral properties of sample autocovariance matrices.
Since spectral properties are unaffected by this change, in all of the proofs the scaling
factor $1/n$ is preferred over $1/(n-\tau)$ for simplicity of exposition.
Throughout this section, it is assumed that the $Z_{jt}$ are complex-valued and the $\mathbf{A}_\ell$
Hermitian matrices. If the $Z_{jt}$ are real-valued and the $\mathbf{A}_\ell$ real, symmetric
matrices, then the arguments need to be modified very slightly, as indicated in Section 11 of
\cite{Liu:Aue:Paul:2015}. The key arguments in the proof of the real valued case remain the same,
since as in the complex valued case, for Gaussian entries, after appropriate orthogonal transformations,
the data matrix can be assumed to have independent Gaussian entries with zero mean and a variance
profile determined by the spectrum of the process. We omit the details due to space constraints.

%%%%%%%%%%%%%%%%%%%%%%%%%%%%%%%%%%%%%%%%%%%%%%%%%%%

\subsection{LSDs of $\mathbf{C}_{\tau}$ and $\tilde{\mathbf{C}}_{\tau}$}
\label{proof:thm_maq:lsd_ctau}

Recall that $\mathbf{C}_\tau$ defined in \eqref{normalized_c} is the renormalized
version of the symmetrized autocovariance matrix $\S_\tau$. In this subsection it
is shown that the LSDs of $\mathbf{C}_\tau^U=\mathbf{U}^{*}\mathbf{C}_{\tau}\mathbf{U}$
and $\tilde{\mathbf{C}}_{\tau}$ coincide, where the latter matrix is the renormalized
version of $\tilde{\S}_\tau$ and defined in \eqref{eq:transformed_covariances}.
Observe that the expectation of $\tilde{\mathbf{S}}_\tau$ is the diagonal matrix
$\tilde{\bs\Sigma}_\tau = \mbox{diag}(\tilde{\sigma}_{\tau,1},\ldots,\tilde{\sigma}_{\tau,p})$ given by
\begin{equation}\label{eq:sigma_tau_j}
\tilde{\sigma}_{\tau,j}
= \frac{1}{n} \sum_{t=1}^n \cos(\tau \nu_t) \psi(\bs\alpha_j,\nu_t),
\qquad j=1,\ldots,p.
\end{equation}
Now write $\mathbf{C}_\tau^U=\sqrt{n/p}(\mathbf{U}^{*}\mathbf{S}_{\tau}\mathbf{U}-\mathbf{\Sigma}_{\tau}^{U})$, where $\mathbf{\Sigma}_{\tau}^{U}=\mathbf{U}^{*}\mathbf{\Sigma}_{\tau}\mathbf{U}
=\mbox{diag}(\sum_{\ell=0}^{q-\tau}f_\ell(\bs \alpha_{j})f_{\ell+\tau}(\bs \alpha_{j}))_{j=1}^{p}$, and
define $\mathbf{C}_\tau^{(1)}=\sqrt{n/p}(\mathbf{U}^*\S_\tau\mathbf{U}-\tilde{\bs\Sigma}_\tau)$.

We first show that $\mathbf{\Sigma}_{\tau}^{U} = \tilde{\mathbf{\Sigma}}_\tau$, which implies equality of the
ESDs of $\mathbf{C}_\tau^U$ and $\mathbf{C}_\tau^{(1)}$.
For each $j=1,\ldots,p$,
\begin{eqnarray}\label{eq:Approximation_error_integral_summation}
\tilde\sigma_{\tau,j} &=&
\frac{1}{n}\sum_{t=1}^{n}\cos(\tau\nu_{t})\psi(\bs{\alpha}_{j},\nu_{t}) \nonumber\\
&=& \frac{1}{n}\sum_{t=1}^{n}\cos(\tau\nu_{t})\sum_{\ell,\ell'=0}^{q}
f_\ell(\bs \alpha_{j})f_{\ell'}(\bs \alpha_{j})e^{\mathbf{i} (\ell-\ell')\nu_t} \nonumber\\
&=& \frac{1}{2n}\sum_{\ell,\ell'=0}^{q}f_\ell(\bs \alpha_{j})f_{\ell'}(\bs \alpha_{j})
\left(\sum_{t=1}^{n} e^{\mathbf{i} (\ell-\ell'+\tau)\nu_t}
+ \sum_{t=1}^{n} e^{\mathbf{i} (\ell-\ell'-\tau)\nu_t}\right)
~=~ \sum_{\ell=0}^{q-\tau} f_\ell(\bs \alpha_{j})f_{\ell+\tau}(\bs \alpha_{j}),
\end{eqnarray}
since $\sum_{t=1}^{n} e^{\mathbf{i} k\nu_t} = n \delta_0(k)$ for $k=0,1,\ldots,n-1$
where $\delta_0$ denotes the Kronecker's delta function. This proves the assertion.

\begin{lemma}
\label{lem:aux:2}
If the conditions of Theorem \ref{thm:LSD_variance} are satisfied, then
$\|F^{\mathbf{C}_\tau^{U}}-F^{\tilde{\C}_\tau}\|\to 0$ almost surely under \eqref{eq:mod_dim},
where $F^{\mathbf{C}_\tau^{U}}$ and $F^{\tilde{\C}_\tau}$ denote the ESDs
of $\mathbf{C}_\tau^{U}$ and $\tilde{\C}_\tau$, respectively, and $\parallel \cdot \parallel$ denotes the sup-norm.
\end{lemma}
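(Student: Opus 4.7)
The plan is to reduce the claim to a rank inequality for empirical spectral distributions. Having just shown that $\bs\Sigma_\tau^U = \tilde{\bs\Sigma}_\tau$, the difference $\mathbf{C}_\tau^U - \tilde{\mathbf{C}}_\tau$ equals $\sqrt{n/p}\,\mathbf{U}^*(\mathbf{S}_\tau - \bar{\mathbf{S}}_\tau)\mathbf{U}$, which is unitarily equivalent to $\sqrt{n/p}(\mathbf{S}_\tau - \bar{\mathbf{S}}_\tau)$. Since the Kolmogorov sup-norm between the ESDs of two $p\times p$ Hermitian matrices $A,B$ is bounded by $\mathrm{rank}(A-B)/p$ (a standard rank inequality; see \citet{Bai:Silverstein:2010}, and the fact that $\sqrt{n/p}$ is a nonzero scalar does not affect rank), it suffices to show that $\mathrm{rank}(\mathbf{S}_\tau - \bar{\mathbf{S}}_\tau)/p \to 0$ almost surely. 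Note that this bound is deterministic given the entries, so once the rank is bounded by a nonrandom quantity that is $o(p)$, the almost sure statement is immediate.

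The next step is to decompose $\mathbf{S}_\tau - \bar{\mathbf{S}}_\tau$ into low-rank pieces. Using the identity
\[
\mathbf{S}_\tau - \bar{\mathbf{S}}_\tau
= \tfrac{1}{n-\tau}\bigl[(\mathbf{X} - \bar{\mathbf{X}})\mathbf{D}_\tau \mathbf{X}^* + \bar{\mathbf{X}}(\mathbf{D}_\tau - \bar{\mathbf{D}}_\tau)\mathbf{X}^* + \bar{\mathbf{X}}\bar{\mathbf{D}}_\tau(\mathbf{X} - \bar{\mathbf{X}})^*\bigr],
\]
I would separately bound the ranks of the three terms. For the first and third terms, since $\mathbf{L}$ and $\tilde{\mathbf{L}}$ differ only in the first column, $\mathbf{L}^\ell$ and $\tilde{\mathbf{L}}^\ell$ differ only in their first $\ell$ columns, so $\mathbf{X} - \bar{\mathbf{X}}$ is supported on its first $q$ columns (including the contribution of the boundary term involving $\mathbf{Z}_{[-q]}$, which is nonzero only in the first $q$ columns after the action of $\mathbf{L}^{\ell-q}$). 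Hence $\mathrm{rank}(\mathbf{X} - \bar{\mathbf{X}})\leq q$, and the first and third terms have rank at most $q$. For the middle term, $\mathbf{D}_\tau - \bar{\mathbf{D}}_\tau$ is supported on a $2\tau\times 2\tau$ block at the corners, so has rank at most $2\tau$. Summing, $\mathrm{rank}(\mathbf{S}_\tau - \bar{\mathbf{S}}_\tau) \leq 2q + 2\tau$.

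Dividing by $p$ and using that $q$ and $\tau$ are fixed while $p\to\infty$, one obtains $\|F^{\mathbf{C}_\tau^U} - F^{\tilde{\mathbf{C}}_\tau}\|_\infty \leq (2q+2\tau)/p \to 0$, which is a deterministic bound, so the convergence holds almost surely. The only place where I expect any subtlety is the accurate rank count for the boundary contribution $\sum_{\ell=1}^q \mathbf{A}_\ell \mathbf{Z}_{[-q]} \mathbf{L}^{\ell-q}$: one must check that this correction, together with $\sum_{\ell=0}^q \mathbf{A}_\ell \mathbf{Z}(\mathbf{L}^\ell - \tilde{\mathbf{L}}^\ell)$, is jointly supported on at most $q$ columns so that the $O(q)$ rank bound is genuinely uniform in $n$. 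Since this is a direct structural observation about the first $q$ columns of the two data matrices, it should add only a short lemma-level verification to the argument.
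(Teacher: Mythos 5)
Your proposal is correct and follows essentially the same route as the paper: establish $\bs\Sigma_\tau^U=\tilde{\bs\Sigma}_\tau$, then apply the rank inequality of \citet{Bai:Silverstein:2010} after observing that $\mathbf{S}_\tau-\bar{\mathbf{S}}_\tau$ has rank $O(q+\tau)$ because $\mathbf{X}-\bar{\mathbf{X}}$ is supported on the first $q$ columns and $\mathbf{D}_\tau-\bar{\mathbf{D}}_\tau$ on two $\tau\times\tau$ corner blocks. Your explicit three-term decomposition even yields a slightly sharper count ($2q+2\tau$ versus the paper's $4(q+\tau+1)$), and your observation that the bound is deterministic correctly disposes of the almost-sure qualifier.
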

%---------------------------------------------------------------------------------------------------------
\begin{proof}
Recall that $\tilde{\mathbf{C}}_{\tau}=\sqrt{n/p}(\mathbf{U}^{*}(\bar{\mathbf{S}}_{\tau}-\tilde{\bs\Sigma}_{\tau})\mathbf{U})$.
Exploiting the relation between $\mathbf{L}$ and $\tilde{\mathbf{L}}$, it can be shown that $\tilde{\mathbf{S}}_\tau=\mathbf{U}^*\bar{\mathbf{S}}_\tau\mathbf{U}$ can be written as at most $4(q+\tau+1)$ rank-one perturbations of $\mathbf{S}_\tau$. Hence, an application of the rank inequality given in Lemma \ref{lemma:rank_inequality_rectangular} implies that
\begin{equation}
\|F^{\mathbf{C}_\tau^{(1)}}-F^{\tilde{\mathbf{C}}_{\tau}}\|
\leq \frac{1}{p}\mbox{rank}(\tilde{\mathbf{S}}_{\tau}-\mathbf{S}_{\tau})\leq \frac{4(q+\tau+1)}{p}
\to 0
\end{equation}
under \eqref{eq:mod_dim}, which is the assertion since $F^{\mathbf{C}_\tau^{(1)}} = F^{\mathbf{C}_\tau^{U}}$.
\end{proof}

Define the Stieltjes transforms $s_{\tau,p}^U(z)=p^{-1}\tr(\mathbf{C}_{\tau}^{U}-zI)^{-1}$ and $\tilde{s}_{\tau,p}(z)=p^{-1}\tr(\tilde{\mathbf{C}}_{\tau}-zI)^{-1}$. Repeatedly
applying Lemma \ref{lemma:rank_one_inverse} to each of the rank-one perturbation matrices
used in the proof of Lemma \ref{lem:aux:2}, it follows that, for any fixed $z=w+\mathbf{i}v\in \mC^{+}$, $|s^{U}_{p,\tau}(z)-\tilde{s}_{p,\tau}(z)|\leq 4(q+\tau+1)/(vp)$ almost surely.
It is therefore verified that the LSDs of $\mathbf{C}_\tau^U$ and $\tilde{\mathbf{C}}_\tau$ are almost surely identical.

%%%%%%%%%%%%%%%%%%%%%%%%%%%%%%%%%%%%%%%%%%%%%%%%%%%

\subsection{Deterministic equation}
\label{proof:thm_maq:det_eq}

In this section a set of deterministic equations is derived that is asymptotically equivalent to the
set of equations determining the Stieltjes transform of the limiting ESD of $\tilde{\mathbf{C}}_\tau$.
The following decomposition will be useful in the proofs. Using assumptions {\bf A1} and {\bf A2} in
combination with \eqref{L_decomposition} and some matrix algebra, it can be shown that
\[
\mathbf{\tilde{S}}_{\tau}
=\mathbf{U}^*\bar{\mathbf{S}}_\tau\mathbf{U}
=\mathbf{V} {\bs\Delta}_\tau\mathbf{V}^*,
\]
where the $p\times n$ matrix $\mathbf{V}$ is defined through its entries
\begin{equation}
\label{eq:transformed_data_matrix}
v_{jt}
= \frac{1}{\sqrt{n}} g(\bs\alpha_j,\nu_t) \tilde Z_{jt},
\qquad j=1,\ldots,p,~t=1,\ldots,n,
\end{equation}
and ${\bs\Delta}_\tau = \mbox{diag}(\cos(\tau\nu_1),\ldots,\cos(\tau\nu_n))$. Let $\mathbf{V}_k$
denote the matrix obtained by replacing the $k$th row of $\mathbf{V}$ with zeros, and let the
$n\times 1$ vector $v_k$ be the $k$th column of the matrix $\mathbf{V}^{*}=(v_{1}: v_{2}:\cdots: v_{p})$.
Let further $\tilde{\bs\Sigma}_{\tau,k}$ be the matrix obtained from $\tilde{\bs\Sigma}_\tau$
by replacing its $k$th diagonal entry with $0$. Denote by $\mathbf{D}_k$, respectively, $\mathbf{D}_{(k)}$
the matrices resulting from $\tilde{\mathbf{C}}_\tau$ from replacing the entries of its $k$th row,
respectively, its $k$th row and $k$th column with zeros, that is,
\[
\mathbf{D}_k
=\sqrt{\frac np}(\mathbf{V}_k\mathbf{\Delta}_\tau\mathbf{V}^*-\tilde{\mathbf{\Sigma}}_{\tau,k})
\qquad\mbox{and}\qquad
\mathbf{D}_{(k)}
=\sqrt{\frac np}(\mathbf{V}_k\mathbf{\Delta}_\tau\mathbf{V}_k^*-\tilde{\mathbf{\Sigma}}_{\tau,k}).
\]
Then,
\begin{equation}
\label{eq:covariance_second_decomp}
\tilde{\mathbf{C}}_\tau
=\mathbf{D}_k+\mathbf{H}_k
=\mathbf{D}_{(k)}+\mathbf{H}_{(k)},
\end{equation}
where $\mathbf{H}_k=e_kh_k^*$ and $\mathbf{H}_{(k)}=\mathbf{H}_k+w_{k}e_k^T$ with $e_k$ being the $k$th canonical unit vector of dimension $p$, $h_k = w_k + \eta_ke_k$,
\begin{equation}
\label{eq:h_k_def}
w_k = \sqrt{\frac np} \mathbf{V}_k\mathbf{\Delta}_\tau v_k
\qquad\mbox{and}\qquad
\eta_k = \sqrt{\frac np} (v_k^*\mathbf{\Delta}_\tau v_k-\tilde{\sigma}_{\tau,k}),
\end{equation}
where $\tilde{\sigma}_{\tau,j}$ is defined in (\ref{eq:sigma_tau_j}),
thereby ensuring that the $k$th entry of $w_k$ is zero and collecting the $k$th diagonal element of
$\tilde{\mathbf{C}}_\tau$ in the term $\eta_k$. Successively replacing rows of $\tilde{\mathbf{C}}_\tau$ with
rows of zeros and noticing that $\tilde{\mathbf{C}}_\tau=\tilde{\mathbf{C}}_\tau^*$ as well as
$\mathbf{H}_k^*=(e_kh_k^*)^*=h_ke_k^T$, the same arguments also yield
\begin{equation}
\label{eq:covariance_rowwise_decomp}
\tilde{\mathbf{C}}_\tau
= \sum_{k=1}^p e_k h_k^*
= \sum_{k=1}^p h_k e_k^T.
\end{equation}

Observe next that, since its $k$th row and column consist of zero entries, $e_k$ is an eigenvector of
$\mathbf{D}_{(k)}$ with eigenvalue $0$. If now, for $z\in\mathbb{C}^+$, $\mathbf{R}_{(k)}(z) =(\mathbf{D}_{(k)}- z \mathbf{I}_p)^{-1}$ denotes the resolvent of $\mathbf{D}_{(k)}$, then
\begin{equation}
\label{eq:resolvent_D_k_eigenvector}
\mathbf{R}_{(k)}(z) e_k
= - \frac{1}{z} e_k,
\end{equation}
that is, $e_k$ is an eigenvector of $\mathbf{R}_{(k)}(z)$ with eigenvalue $-z^{-1}$. Let $\mathbf{R}_k(z) = (\mathbf{D}_{k}- z \mathbf{I}_p)^{-1}$ be the resolvent of $\mathbf{D}_k$. Utilizing (\ref{eq:covariance_second_decomp}) and Lemma \ref{lemma:rank_one_inverse}, it follows that
\[
\mathbf{R}_k(z) e_k
= \mathbf{R}_{(k)}(z) e_k
- \frac{\mathbf{R}_{(k)}(z) w_k e_k^T \mathbf{R}_{(k)}(z) e_k}{1 + e_k^T \mathbf{R}_{(k)}(z) w_k}
= - \frac{1}{z} e_k + \frac{1}{z} \mathbf{R}_{(k)}(z) w_k,
\]
where the second step follows from invoking (\ref{eq:resolvent_D_k_eigenvector}), for the denominator part in the middle expression additionally noticing that $\mathbf{R}_{(k)}(z)=\mathbf{R}_{(k)}^*(z)$ and that $e_k^Tw_k=0$ by construction. Now, all preliminary statements are collected that allow for a detailed study the resolvent and the Stieltjes transform of $\tilde{\mathbf{C}}_\tau$.
%---------------------------------------------------------------------------------------------------------
\begin{lemma}
\label{lem:aux:3}
Under the assumptions of Theorem~\ref{thm:LSD_variance}, it follows that the
Stieltjes transform $\tilde{s}_{\tau,p}$ of\/ $\tilde{\mathbf{C}}_\tau$ satisfies the equality
\[
\tilde{s}_{\tau,p}(z)
=-\frac 1p\sum_{k=1}^p\frac{1}{z+w_k^*\mathbf{R}_{(k)}(z)w_k-\eta_k}
\]
for any fixed $z\in\mathbb{C}^+$.
\end{lemma}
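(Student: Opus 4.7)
The plan is to use the Schur complement formula to compute the diagonal entries of the resolvent $\mathbf{R}(z) = (\tilde{\mathbf{C}}_\tau - z I_p)^{-1}$ one at a time and then average. Since
\begin{equation*}
\tilde{s}_{\tau,p}(z) = \frac{1}{p}\tr \mathbf{R}(z) = \frac{1}{p}\sum_{k=1}^p e_k^T \mathbf{R}(z) e_k,
\end{equation*}
it suffices to establish, for each $k \in \{1,\dots,p\}$, the identity
\begin{equation*}
e_k^T \mathbf{R}(z) e_k = -\frac{1}{z + w_k^*\mathbf{R}_{(k)}(z) w_k - \eta_k},
\end{equation*}
after which the claim follows by summing and dividing by $p$.

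First I would isolate the $k$-th row and column of $\tilde{\mathbf{C}}_\tau$. Combining \eqref{eq:covariance_rowwise_decomp} with $h_k = w_k + \eta_k e_k$ and the fact that the $k$-th entry of $w_k$ vanishes (cf.\ \eqref{eq:h_k_def}), the $(k,k)$ entry of $\tilde{\mathbf{C}}_\tau$ equals $\eta_k$, while the remaining entries of the $k$-th row and column coincide with the corresponding entries of $w_k$. Let $u_k$ denote the restriction of $w_k$ to the indices $j\ne k$, and let $N_k$ be the principal $(p-1)\times(p-1)$ submatrix of $\tilde{\mathbf{C}}_\tau$ obtained by deleting the $k$-th row and column. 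After permuting so that index $k$ appears first, $\tilde{\mathbf{C}}_\tau - zI_p$ takes the block form $\bigl(\begin{smallmatrix} \eta_k - z & u_k^* \\ u_k & N_k - zI_{p-1} \end{smallmatrix}\bigr)$, and the standard Schur complement (block-inverse) formula yields
\begin{equation*}
e_k^T \mathbf{R}(z) e_k = \bigl[(\eta_k - z) - u_k^*(N_k - zI_{p-1})^{-1} u_k\bigr]^{-1}.
\end{equation*}

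It remains to identify the Schur-complement quadratic form with $w_k^*\mathbf{R}_{(k)}(z) w_k$. Since $\mathbf{D}_{(k)}$ agrees with $\tilde{\mathbf{C}}_\tau$ except that its $k$-th row and column have been zeroed out, in the same permuted basis $\mathbf{D}_{(k)} - zI_p$ is block-diagonal with blocks $-z$ and $N_k - zI_{p-1}$, and correspondingly $\mathbf{R}_{(k)}(z)$ is block-diagonal with blocks $-z^{-1}$ and $(N_k - zI_{p-1})^{-1}$ (consistent with the eigenvector identity \eqref{eq:resolvent_D_k_eigenvector}). Because the $k$-th entry of $w_k$ vanishes, this block structure immediately gives $w_k^*\mathbf{R}_{(k)}(z) w_k = u_k^*(N_k - zI_{p-1})^{-1} u_k$, and substituting back rearranges the Schur formula into the desired identity. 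The argument is purely algebraic and deterministic, so I do not foresee any serious obstacle; the only point that needs care is the last identification, where the orthogonality $w_k\perp e_k$ is what reconciles the two quadratic forms.
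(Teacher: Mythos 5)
Your proof is correct, but it takes a different algebraic route from the paper's. The paper never inverts block-by-block: it starts from the row-wise decomposition \eqref{eq:covariance_rowwise_decomp}, writes $\mathbf{I}_p + z(\tilde{\mathbf{C}}_\tau - z\mathbf{I}_p)^{-1} = \sum_k (\tilde{\mathbf{C}}_\tau - z\mathbf{I}_p)^{-1}e_k h_k^*$, and uses the rank-one update formula (Lemma \ref{lemma:rank_one_inverse}) twice --- once to pass from $\tilde{\mathbf{C}}_\tau$ to $\mathbf{D}_k$ and obtain \eqref{eq:resolvent_covariance_exact_decomp}, and once more to pass from $\mathbf{D}_k$ to $\mathbf{D}_{(k)}$ and evaluate $h_k^*\mathbf{R}_k(z)e_k = z^{-1}(-\eta_k + w_k^*\mathbf{R}_{(k)}(z)w_k)$ --- before taking the trace. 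Your argument instead applies the Schur complement formula to the $2\times 2$ block partition isolating index $k$, which yields the \emph{stronger} entrywise identity $e_k^T\mathbf{R}(z)e_k = -\bigl(z + w_k^*\mathbf{R}_{(k)}(z)w_k - \eta_k\bigr)^{-1}$; the lemma then follows by averaging. The two routes produce identical summands (as they must, since the paper's $k$-th term is also $-e_k^T\mathbf{R}(z)e_k$), and your key identifications are all sound: the $(k,k)$ entry of $\tilde{\mathbf{C}}_\tau$ is indeed $\eta_k$, the off-diagonal part of its $k$-th column is $w_k$, and $\mathbf{R}_{(k)}(z)$ is block-diagonal with blocks $-z^{-1}$ and $(N_k - z\mathbf{I}_{p-1})^{-1}$, so $w_k \perp e_k$ gives $w_k^*\mathbf{R}_{(k)}(z)w_k = u_k^*(N_k - z\mathbf{I}_{p-1})^{-1}u_k$. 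The only cosmetic omission is a one-line remark that the Schur complement is invertible for $z \in \mathbb{C}^+$ (its imaginary part is at most $-\Im(z) < 0$ because $\eta_k$ is real and $u_k^*(N_k - z\mathbf{I}_{p-1})^{-1}u_k$ is a Stieltjes-type quadratic form with nonnegative imaginary part), which is the same justification the paper needs for its denominators. What your approach buys is directness and an entrywise resolvent formula that is reusable elsewhere; what the paper's approach buys is that the intermediate identity \eqref{eq:resolvent_covariance_exact_decomp} and the quantity $h_k^*\mathbf{R}_k(z)e_k$ are reused verbatim in Step 2 of Lemma \ref{lem:aux:4} to derive the analogous equation for $\tilde\beta_{\tau,p}(z,\mathbf{a})$, where one multiplies by $\mathbf{\Gamma}_\tau(\mathbf{a})$ before tracing --- a step that does not reduce to diagonal entries of $\mathbf{R}(z)$ alone.
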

%---------------------------------------------------------------------------------------------------------
\begin{proof}
Writing $\mathbf{I}_p + z (\tilde{\mathbf{C}}_\tau - z\mathbf{I}_p)^{-1} = (\tilde{\mathbf{C}}_\tau - z\mathbf{I}_p)^{-1} \tilde{\mathbf{C}}_\tau$, invoking (\ref{eq:covariance_rowwise_decomp}) and Lemma \ref{lemma:rank_one_inverse} implies that
\begin{align}
\label{eq:resolvent_covariance_exact_decomp}
\mathbf{I}_p + z (\tilde{\mathbf{C}}_\tau - z\mathbf{I}_p)^{-1}
&= \sum_{k=1}^p (\tilde{\mathbf{C}}_\tau - z\mathbf{I}_p)^{-1} e_k h_k^* \nonumber\\
&= \sum_{k=1}^p \mathbf{R}_k(z)e_k \left(
1 - \frac{h_k^* \mathbf{R}_k(z) e_k}{1 + h_k^* \mathbf{R}_k(z) e_k}\right) h_k^* \nonumber\\
&= \sum_{k=1}^p \frac{ \mathbf{R}_k(z) e_k h_k^*}{1 + h_k^* \mathbf{R}_k(z) e_k}.
\end{align}
Recall that the Stieltjes transform of $\tilde{\mathbf{C}}_\tau$ is given by $p^{-1}\tr((\tilde{\mathbf{C}}_\tau-z I_p)^{-1})$. Therefore, taking trace on both sides of (\ref{eq:resolvent_covariance_exact_decomp}) and dividing by $p$ leads to
\begin{equation}
\label{eq:estimation_s_n}
\tilde s_{\tau,p}(z)
= \frac{1}{zp}\sum_{k=1}^{p}\left(\frac{h_k^*\mathbf{R}_k(z)e_k}{1+h_k^*\mathbf{R}_k(z)e_k}-1\right)
=-\frac{1}{zp}\sum_{k=1}^p\frac{1}{1+h_{k}^{*}\mathbf{R}_{k}(z) e_k}.
\end{equation}
In order to complete the proof of the lemma, it remains to study $h_k^*\mathbf{R}_k(z) e_k$.  Using Lemma \ref{lemma:rank_one_inverse} on $\mathbf{R}_{k}(z)$ and subsequently first utilizing (\ref{eq:resolvent_D_k_eigenvector}) and then inserting the definition of $w_k$ given in (\ref{eq:h_k_def}), it follows that
\begin{align}
\label{eq:estimation_on_h_k_R_k_e_k}
h_{k}^{*}\mathbf{R}_{k}(z)e_k
&= h_{k}^{*}\mathbf{R}_{(k)}(z)e_k
-h_k^*\frac{\mathbf{R}_{(k)}(z) w_ke_k^T\mathbf{R}_{(k)}(z)e_k}{1+ e_k^T\mathbf{R}_{(k)}(z) w_k} \nonumber \\
&= -\frac{1}{z} h_k^*e_k + \frac{1}{z} h_k^*\mathbf{R}_{(k)}(z)w_k \nonumber \\
&= -\frac{1}{z}\eta_k+\frac{1}{z} w_k^*\mathbf{R}_{(k)}(z)w_k,
\end{align}
where the third step also makes use of $e_{k}^{T}w_{k}=0$. Plugging \eqref{eq:estimation_on_h_k_R_k_e_k} into \eqref{eq:estimation_s_n} finishes the proof.
\end{proof}
%---------------------------------------------------------------------------------------------------------
In the next auxiliary lemma, the expected value of the Stieltjes transform of $\tilde{\mathbf{C}}_\tau$ is determined.
More generally, equations for the kernel
\begin{equation}
\label{eq:definition_beta_tau}
\tilde\beta_{\tau, p}(z, \mathbf{a})
=\frac{1}{p}\tr((\tilde{\mathbf{C}}_\tau -zI_p)^{-1} \mathbf{\Gamma}_\tau(\mathbf{a}))
\end{equation}
are introduced, where $\mathbf{\Gamma}_\tau(\mathbf{a})=\mathrm{diag}(\mathcal{R}_\tau(\mathbf{a},\bs{\alpha}_k)\colon k=1,\ldots,p)$ with $\mathcal{R}_\tau(\mathbf{a},\bs{\alpha}_k)$ defined in \eqref{eq:spectral_kernel_covariance}. It is a central object of this study and the (approximate) finite-sample companion of the Stieltjes kernel $\beta_\tau(z,\mathbf{a})$ appearing in the statement of Theorem~\ref{thm:LSD_variance}. Its properties will be further scrutinized in Sections~\ref{sec:conv_random_part} and \ref{sec:ex_conv_cont}.
%---------------------------------------------------------------------------------------------------------
\begin{lemma}
\label{lem:aux:4}
Under the assumptions of Theorem~\ref{thm:LSD_variance}, it follows that the expected value of the Stieltjes transform $\tilde{s}_{\tau,p}$ of\/ $\tilde{\mathbf{C}}_\tau$ satisfies the equality
\begin{equation}
\label{eq:expression_on_expectation_s_tau_n}
\mathbb{E}[\tilde{s}_{\tau,p}(z)]
=-\frac 1p\sum_{k=1}^p\frac{1}{z+\mathbb{E}[\tilde\beta_{\tau,p}(z,\bs{\alpha}_k)]}+\tilde\delta_n
\end{equation}
for any fixed $z\in\mathbb{C}^+$, where the remainder term $\tilde\delta_n$ converges to zero under \eqref{eq:mod_dim}. Moreover,
\begin{equation}
\label{eq:expression_on_expectation_beta_tau_p}
\mE[\tilde\beta_{\tau,p} (z, \mathbf{a})]
=-\frac{1}{p}
\sum_{k=1}^p\frac{\mathcal{R}_{\tau}(\mathbf{a},\bs\alpha_{k})}{z+\mE[\tilde\beta_{\tau,p} (z, \bs{\alpha}_{k})]}
+\delta_{n}
\end{equation}
for any fixed $z\in\mathbb{C}^+$, where the remainder term $\delta_n$ converges to zero under \eqref{eq:mod_dim}.
\end{lemma}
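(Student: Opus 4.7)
My approach is to derive both equations from a pointwise refinement of Lemma~\ref{lem:aux:3}, namely, the identity
\[
[\tilde{\mathbf{R}}(z)]_{kk} = -\frac{1}{z + w_k^*\mathbf{R}_{(k)}(z)w_k - \eta_k},
\qquad k=1,\ldots,p,
\]
where $\tilde{\mathbf{R}}(z) = (\tilde{\mathbf{C}}_\tau - z\mathbf{I}_p)^{-1}$. This follows from the Schur complement formula applied to the partition of $\tilde{\mathbf{C}}_\tau - z\mathbf{I}_p$ that isolates the $k$-th row and column (equivalently, two applications of Sherman--Morrison to $\tilde{\mathbf{C}}_\tau = \mathbf{D}_{(k)} + e_k h_k^* + w_k e_k^T$), using $\mathbf{R}_{(k)}(z)e_k = -z^{-1}e_k$ and $e_k^T w_k = 0$ to eliminate cross-terms. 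Summing this identity with weight $1/p$ reproduces Lemma~\ref{lem:aux:3}; summing with weight $\mathcal{R}_\tau(\mathbf{a},\bs\alpha_k)/p$ and using the representation $\tilde\beta_{\tau,p}(z,\mathbf{a}) = p^{-1}\tr(\tilde{\mathbf{R}}(z)\mathbf{\Gamma}_\tau(\mathbf{a})) = p^{-1}\sum_k \mathcal{R}_\tau(\mathbf{a},\bs\alpha_k)[\tilde{\mathbf{R}}(z)]_{kk}$ yields the parallel exact formula for $\tilde\beta_{\tau,p}(z,\mathbf{a})$. Taking expectations reduces both assertions to establishing, uniformly in $k$,
\[
\mathbb{E}\!\left[\frac{1}{z + w_k^*\mathbf{R}_{(k)}(z)w_k - \eta_k}\right]
 = \frac{1}{z + \mathbb{E}[\tilde\beta_{\tau,p}(z,\bs\alpha_k)]} + o(1).
\]

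Since $\mathbf{D}_{(k)}$ is Hermitian and $\eta_k$ is real, both denominators have imaginary part at least $v=\Im z>0$, so the elementary bound $|1/(z+X) - 1/(z+Y)| \leq v^{-2}|X-Y|$ valid for $\Im X,\Im Y \geq 0$ reduces the task to showing $w_k^*\mathbf{R}_{(k)}(z)w_k - \eta_k - \mathbb{E}[\tilde\beta_{\tau,p}(z,\bs\alpha_k)] \to 0$ in $L^1$, uniformly in $k$. The $\eta_k$ piece is the easy half: using \eqref{eq:sigma_tau_j} and {\bf Z1}, $\eta_k = \sqrt{n/p}\cdot n^{-1}\sum_t \cos(\tau\nu_t)\psi(\bs\alpha_k,\nu_t)(|\tilde Z_{kt}|^2 - 1)$ is a centered sum of independent bounded-variance terms, yielding $\mathrm{Var}(\eta_k) = O(1/p) \to 0$.

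The substance is to prove $w_k^*\mathbf{R}_{(k)}(z)w_k - \tilde\beta_{\tau,p}(z,\bs\alpha_k) = o_{L^2}(1)$, which I would carry out in three stages. \emph{Stage 1.} Since $\mathbf{V}_k$ has zero $k$-th row, $v_k$ is independent of $\mathbf{V}_k$ (and hence of $\mathbf{R}_{(k)}$), with $\mathrm{Cov}(v_k) = n^{-1}\Psi_k$, where $\Psi_k = \mathrm{diag}(\psi(\bs\alpha_k,\nu_t))_{t=1}^n$. The Bai--Silverstein trace lemma (Lemma~B.26 of \cite{Bai:Silverstein:2010}), applied to the quadratic form $(n/p)\,v_k^*\mathbf{\Delta}_\tau\mathbf{V}_k^*\mathbf{R}_{(k)}\mathbf{V}_k\mathbf{\Delta}_\tau v_k$, gives
\[
w_k^*\mathbf{R}_{(k)}(z)w_k = \frac{1}{p}\tr\!\bigl(\mathbf{R}_{(k)}(z)\mathbf{V}_k\mathbf{\Delta}_\tau\Psi_k\mathbf{\Delta}_\tau\mathbf{V}_k^*\bigr) + o_{L^2}(1),
\]
with error controlled by $\|\mathbf{R}_{(k)}\|^2/n \leq 1/(v^2 n)$ and the finite fourth moment {\bf Z1}. \emph{Stage 2.} The matrix $\mathbf{V}_k\mathbf{\Delta}_\tau\Psi_k\mathbf{\Delta}_\tau\mathbf{V}_k^*$ has vanishing expectations off the diagonal and, on the diagonal at position $j \neq k$, the entry $n^{-1}\sum_t \cos^2(\tau\nu_t)\psi(\bs\alpha_j,\nu_t)\psi(\bs\alpha_k,\nu_t)$, a Riemann sum for $\mathcal{R}_\tau(\bs\alpha_j,\bs\alpha_k)$ with error $O(q/n)$ by the Lipschitz smoothness of $\psi$ (a trigonometric polynomial of degree $2q$). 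Thus its expectation differs from $\mathbf{\Gamma}_\tau(\bs\alpha_k)$ by a diagonal matrix of $o(1)$ operator norm plus a rank-one correction at position $k$; both contribute negligibly under $p^{-1}\tr(\mathbf{R}_{(k)}\,\cdot\,)$. Fluctuations of the trace about its mean are handled by a martingale decomposition along the rows of $\mathbf{V}_k$ together with Burkholder's inequality, using only the independence of the rows and {\bf Z1}. \emph{Stage 3.} Because $\tilde{\mathbf{C}}_\tau - \mathbf{D}_{(k)}$ has rank at most two and $\|\mathbf{\Gamma}_\tau(\bs\alpha_k)\|$ is uniformly bounded by {\bf A4}, the rank inequality (Lemma~\ref{lemma:rank_inequality_rectangular}) replaces $\mathbf{R}_{(k)}$ by $\tilde{\mathbf{R}}$ in the trace with $O(1/(vp))$ error, producing $\tilde\beta_{\tau,p}(z,\bs\alpha_k)$ exactly.

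To pass from $\tilde\beta_{\tau,p}(z,\bs\alpha_k)$ to $\mathbb{E}[\tilde\beta_{\tau,p}(z,\bs\alpha_k)]$ inside the denominator, I would establish the self-averaging statement $\tilde\beta_{\tau,p}(z,\mathbf{a}) - \mathbb{E}[\tilde\beta_{\tau,p}(z,\mathbf{a})] = o_{L^2}(1)$, uniformly in $\mathbf{a}$, by the same Burkholder-based martingale argument on the filtration generated by the rows of $\mathbf{V}$. Combining this with the Lipschitz bound on $1/(z+\cdot)$ and averaging over $k$ with weights $1$ and $\mathcal{R}_\tau(\mathbf{a},\bs\alpha_k)$ respectively delivers \eqref{eq:expression_on_expectation_s_tau_n} and \eqref{eq:expression_on_expectation_beta_tau_p} simultaneously. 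The hardest step is Stage 2: both $\mathbf{R}_{(k)}(z)$ and the ``signal'' matrix $\mathbf{V}_k\mathbf{\Delta}_\tau\Psi_k\mathbf{\Delta}_\tau\mathbf{V}_k^*$ depend on $\mathbf{V}_k$, so the martingale filtration must be chosen so that each row-increment genuinely linearizes the dependence and no uncontrolled cross-terms survive.
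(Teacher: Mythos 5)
Your proposal is correct and follows essentially the same route as the paper's proof: the Schur-complement identity for $[\tilde{\mathbf{R}}(z)]_{kk}$ is precisely what underlies \eqref{eq:resolvent_covariance_exact_decomp}--\eqref{eq:estimation_on_h_k_R_k_e_k} and \eqref{eq:expression_on_beta}, and your reduction to the uniform $L^1$-smallness of $w_k^*\mathbf{R}_{(k)}(z)w_k-\eta_k-\mathbb{E}[\tilde\beta_{\tau,p}(z,\bs\alpha_k)]$ is exactly the paper's $\epsilon_k$-analysis ($\max_k|\mathbb{E}[\epsilon_k]|\to0$ and $\max_k\mathbb{E}[|\epsilon_k-\mathbb{E}[\epsilon_k]|^2]\to0$). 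The only deviations are interchangeable tool choices---you invoke the Bai--Silverstein trace lemma and Burkholder/martingale bounds where the paper computes second moments of the relevant quadratic forms directly and uses McDiarmid's inequality for the self-averaging of $\tilde\beta_{\tau,p}$ and of $p^{-1}\tr(\mathbf{R}_{(k)}(z)\mathbf{\Gamma}_{\tau,k})$---plus one simplification you missed: by the discrete orthogonality $\sum_{t=1}^n e^{\mathbf{i}k\nu_t}=n\delta_0(k)$, the sum $n^{-1}\sum_t\cos^2(\tau\nu_t)\psi(\bs\alpha_j,\nu_t)\psi(\bs\alpha_k,\nu_t)$ equals $\mathcal{R}_\tau(\bs\alpha_j,\bs\alpha_k)$ exactly for the MA$(q)$ case, not merely up to a Riemann-sum error of order $O(q/n)$.
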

%---------------------------------------------------------------------------------------------------------
\begin{proof}
The proof of the lemma is given in three parts. In view of the expression for $\tilde{s}_{\tau,p}$
derived in Lemma~\ref{lem:aux:3}, $\mathbb{E}[w_k^*\mathbf{R}_{(k)}(z)w_k]$ is estimated first and
in the second step related to $\tilde\beta_{\tau,p}(z,\mathbf{a})$. The third step is concerned
with the estimation of remainder terms $\delta_n$ and $\tilde\delta_n$.

{\em Step 1:} For $k=1,\ldots,p$, let $\bs\Sigma_{k,v} = \mathrm{Var}(v_k)=n^{-1}\mathrm{diag}(\psi(\bs\alpha_k,\nu_t)\colon t=1,\ldots,n)$ and further $\bs\Xi_{\tau,k} = \mathbf{\Delta}_\tau \bs\Sigma_{k,v} \mathbf{\Delta}_\tau=n^{-1}\mathrm{diag}(\cos^2(\tau\nu_t)\psi(\bs\alpha_k,\nu_t)\colon t=1,\ldots,n)$.
Define
% $\mathbf{\Gamma}_{\tau}^n(\bs{a}) = \mathrm{diag}(\gamma_{\tau,j}^n(\bs{a})\colon j=1,\ldots,p)$, with
\[
\gamma_{\tau,j}(\bs{a}) := \frac{1}{n}
\sum_{t=1}^n\cos^2(\tau\nu_t)\psi(\bs{a},\nu_t)\psi(\bs\alpha_j,\nu_t),
\]
and observe that $\gamma_{\tau,j}(\bs{a}) = {\cal R}(\bs{a},\bs\alpha_j)$ for all $j=1,\ldots,p$. This follows
calculations similar to those leading to \ref{eq:Approximation_error_integral_summation}. Define
the matrix $\Gamma_{\tau,k}(\bs{a})$ as the one obtained from $\Gamma_{\tau}(\bs{a})$
by replacing its $k$th diagonal entry with zero. Observe next that the definition of $w_k$
in \eqref{eq:h_k_def} implies that it suffices to estimate the following expectation, for which it holds that
\begin{eqnarray}
\label{eq:w_k_R_k_quad_expectation}
&& \frac{n}{p}
\mE \left[v_{k}^{*}\mathbf{\Delta}_{\tau}\mathbf{V}_{k}^{*}\mathbf{R}_{(k)}(z)\mathbf{V}_{k}\Delta_{\tau}v_k\right]
~=~ \frac{n}{p}\mE\left[\tr(\mathbf{\Delta}_{\tau}v_kv_k^*\mathbf{\Delta}_{\tau}
\mathbf{V}_{k}^*\mathbf{R}_{(k)}(z)\mathbf{V}_{k})\right] \nonumber \\
&=&  \frac{n}{p}\tr\left(\mathbf{\Delta}_{\tau}\bs\Sigma_k\mathbf{\Delta}_{\tau}
\mE[\mathbf{V}_{k}^*\mathbf{R}_{(k)}(z)\mathbf{V}_{k}]\right)
~=~ \frac{n}{p}\mE\left[\tr(\mathbf{V}_{k} \bs\Xi_{\tau,k}\mathbf{V}_{k}^* \mathbf{R}_{(k)}(z))\right]\nonumber\\
&=& \frac{n}{p}\sum_{j \neq k} \mE\left[v_j^*\bs\Xi_{\tau,k} v_j (\mathbf{R}_{(k)}(z))_{jj}\right]
~=~ \frac{1}{p} \sum_{j \neq k} \mE \left[\gamma_{\tau,j}(\bs\alpha_k) (\mathbf{R}_{(k)}(z))_{jj}\right] + d_k^{(0)}\nonumber\\
&=& \frac{1}{p}\mE\left[ \tr(\mathbf{R}_{(k)}(z)\mathbf{\Gamma}_{\tau, k}(\bs{\alpha}_{k})) \right]+ d_k^{(0)},
\end{eqnarray}
where independence between $v_k$ and $\mathbf{V}_k$ was used to obtain the second equality and
\begin{equation}\label{eq:d_k_0}
d_k^{(0)}  = \frac{1}{p} \sum_{j \neq k}
\mE\left[(n v_j^* \bs\Xi_{\tau,k} v_j - \gamma_{\tau,j}(\bs\alpha_k))(\mathbf{R}_{(k)}(z))_{jj}\right].
\end{equation}
An application of the Cauchy--Schwarz inequality to the expectation on the right-hand side of \eqref{eq:d_k_0},
subsequently using the fact that $\max_j |(\mathbf{R}_{(k)}(z))_{jj}|\leq \Im(z)^{-1}$ and squaring the resulting estimate, yields that
\[
|d_k^{(0)}|^2
\leq\frac{1}{p\,\Im(z)^2}
\sum_{j \neq k} \mE\left[\left| nv_j^* \bs\Xi_{\tau,k} v_j - \gamma_{\tau,j}(\bs\alpha_k)\right|^2\right]
=\frac{1}{p\,\Im(z)^2}\sum_{j \neq k} \mathrm{Var}\left(nv_j^*\bs\Xi_{\tau,k} v_j\right)
\leq \frac{C^2}{p\,\Im(z)^{2}},
\]
where the equality follows from recognizing that $\mE[nv_j^* \bs\Xi_{\tau,k} v_j]  = \gamma_{\tau,j}(\bs\alpha_k)$ and the inequality from observing that each $nv_j^*\bs\Xi_{\tau,k}v_j$ is a quadratic form in the i.i.d. standard Gaussians $\tilde Z_{j1},\ldots\tilde Z_{jn}$ and has bounded variance. Taking the square root gives
\begin{equation}
\label{eq:d_k_0_bound}
|d_k^{(0)}| \leq \frac{C}{\sqrt{p}\,\Im(z)}
\end{equation}
for some constant $C > 0$.

{\em Step 2:}
%Notice first that $\mathbf{\Gamma}_\tau(\mathbf{a})$ defined in \eqref{eq:definition_beta_tau} is
%the limit of $\mathbf{\Gamma}_{\tau,k}^n(\mathbf{a})$.
Multiplying $\mathbf{\Gamma}_\tau(\mathbf{a})$ to both sides of the equation $\mathbf{I}_p+z(\tilde{\mathbf{C}}_\tau-z\mathbf{I}_p)^{-1}=\tilde{\mathbf{C}}_\tau(\tilde{\mathbf{C}}_\tau-z\mathbf{I}_p)^{-1}$,
then following the arguments that led to \eqref{eq:resolvent_covariance_exact_decomp}, and making use of $\mathbf{\Gamma}_\tau(\mathbf{a})e_k=\mathcal{R}(\mathbf{a},\bs{\alpha}_k)e_k$ gives
\[
\mathbf{\Gamma}_{\tau}(\mathbf{a})+ z\mathbf{\Gamma}_\tau(\mathbf{a})(\tilde{\mathbf{C}}_\tau -z\mathbf{I}_p)^{-1}
=\sum_{k=1}^{p}\mathcal{R}_{\tau}(\mathbf{a},\bs\alpha_{k})e_kh_k^*(\tilde{\mathbf{C}}_\tau -z\mathbf{I}_p)^{-1}
=\sum_{k=1}^{p}\frac{\mathcal{R}_{\tau}(\mathbf{a},\bs\alpha_{k})e_kh_k^*\mathbf{R}_{k}(z)}{1+h_k^*\mathbf{R}_{k}(z)e_k}.
\]
Further taking trace on both sides and invoking (\ref{eq:estimation_on_h_k_R_k_e_k}) yields
\begin{align}
\nonumber
\tilde\beta_{\tau,p}( z, \mathbf{a})
&= -\frac{1}{p}
\sum_{k=1}^p\frac{\mathcal{R}_{\tau}(\mathbf{a},\bs\alpha_{k})}{z+w_k^*\mathbf{R}_{(k)}(z)w_k-\eta_{k}} \\
&=-\frac{1}{p}
\sum_{k=1}^p\frac{\mathcal{R}_{\tau}(\mathbf{a},\bs\alpha_{k})}{z+\mE[\tilde\beta_{\tau,p}(z, \bs{\alpha}_{k})]-\epsilon_k},
\label{eq:expression_on_beta}
\end{align}
where $\epsilon_{k}=\mE[\tilde\beta_{\tau,p}(z, \bs{\alpha}_{k})] - w_k^*\mathbf{R}_{(k)}(z)w_k+\eta_{k}$.
Taking expectation on the left- and right-hand side of \eqref{eq:expression_on_beta} leads to equation
\eqref{eq:expression_on_expectation_beta_tau_p} with the remainder term having the explicit form
\[
\delta_{n}
=-\frac{1}{p}\sum_{k=1}^{p}
\frac{\mathcal{R}_{\tau}(\mathbf{a},\bs\alpha_{k})\mE[\epsilon_{k}]}{(z+\mE \tilde\beta_{\tau,p} (z,\bs \alpha_{k}))^2}
-\frac{1}{p}\sum_{k=1}^{p}\mE\left(\frac{\mathcal{R}_{\tau}(\mathbf{a},\bs\alpha_{k})\epsilon_{k}^{2}}
{(z+\mE[\tilde\beta_{\tau,p} (z, \bs \alpha_{k})])^2(z+\mE[\tilde\beta_{\tau,p} (z, \bs{\alpha}_{k})]-\epsilon_{k})}\right)
= \delta_{n,1}+\delta_{n,2}.
\]
It remains to show that $\delta_n\to 0$ under \eqref{eq:mod_dim}. This will be done in the next step.

%Taking expectation on the left- and right-hand side of \eqref{eq:expression_on_beta} leads to equation \eqref{eq:expression_on_expectation_beta_tau_p} with the remainder term having the explicit form
%\[
%\delta_{n}
%=-\frac{1}{p}\sum_{k=1}^{p}
%\frac{\mathcal{R}_{\tau}(\mathbf{a},\bs\alpha_{k})\mE[\epsilon_{k}]}{(z+\mE \tilde\beta_{\tau,p} (z,\bs \alpha_{k}))^2}
%-\frac{1}{p}\sum_{k=1}^{p}\frac{\mathcal{R}_{\tau}(\mathbf{a},\bs\alpha_{k})\mE[\epsilon_{k}^{2}]}
%{(z+\mE[\tilde\beta_{\tau,p} (z, \bs \alpha_{k})])^2(z+\mE[\tilde\beta_{\tau,p} (z, \bs{\alpha}_{k})]+\epsilon_{k})}
%= \delta_{n,1}+\delta_{n,2}.
%\]
%It remains to show that $\delta_n\to 0$ under \eqref{eq:mod_dim}. This will be done in the next step.

{\em Step 3:} To show that $\delta_{n}\to 0$, it suffices to verify that
$\delta_{n,1}\to 0$ and $\delta_{n,2}\to 0$. Note that, since $\tilde\beta_{\tau,p}(z, \bs\alpha_{k})$
is a Stieltjes transform of a measure,
\[
\left| z+\mE [\tilde\beta_{\tau,p}(z, \bs\alpha_{k})]\right|
\geq \Im\left(z+\mE [\tilde\beta_{\tau,p}(z, \bs\alpha_{k})]\right)
\geq \Im(z)+\mE \left[\Im\left(\tilde\beta_{\tau,p}(z, \bs\alpha_{k})\right)\right]
\geq \Im(z)
\]
and since $\eta_k \in \mathbb{R}$, and $\mathbf{w}_{k}^{*}\mathbf{R}_{(k)}(z)\mathbf{w}_{k}$ is a
Stieltjes transform of a measure,
\[
\left| z+\mE [\tilde\beta_{\tau,p}(z, \bs\alpha_{k})]-\epsilon_{k} \right|
= \left|z+\mathbf{w}_{k}^{*}\mathbf{R}_{(k)}(z)\mathbf{w}_{k}-\eta_{k}\right|
\geq \Im(z) + \Im(\mathbf{w}_{k}^{*}\mathbf{R}_{(k)}(z)\mathbf{w}_{k})
\geq \Im(z).
\]
Thus, since moreover $|\mathcal{R}_{\tau}(\mathbf{a},\mathbf{b})|\leq L^2_{1}$ with $L_1$ from {\bf A5},
it only needs to be shown that $\max_k |\mE[\epsilon_{k}]|\to 0$ and $\max_k \mE[|\epsilon_{k} - \mE[\epsilon_{k}]|^2] \to 0$.

%To show that $\delta_{n}\to 0$, it suffices to verify that $\delta_{n,1}\to 0$ and $\delta_{n,2}\to 0$. Note that
%\[
%\left| z+\mE [\tilde\beta_{\tau,p}(z, \bs\alpha_{k})]\right|
%\geq \Im\left(z+\mE [\tilde\beta_{\tau,p}(z, \bs\alpha_{k})]\right)
%\geq \Im(z)+\mE \left[\Im\left(\tilde\beta_{\tau,p}(z, \bs\alpha_{k})\right)\right]
%\geq \Im(z)
%\]
%and
%\[
%\left| z+\mE [\tilde\beta_{\tau,p}(z, \bs\alpha_{k})]-\epsilon_{k} \right|
%\geq \Im(z+\mE [\tilde\beta_{\tau,p}(z, \bs \alpha_{k})]) - |\mE[\epsilon_k]|
%\geq \Im(z) - |\mE[\epsilon_k]|.
%\]
%Thus, since moreover $|\mathcal{R}_{\tau}(\mathbf{a},\mathbf{b})|\leq L^2_{1}$ with $L_1$ from {\bf A5}, it only needs to be shown that $\max_k %|\mE[\epsilon_{k}]|\to 0$ and $\max_k \mE[|\epsilon_{k}|^2] \to 0$.

Let $\tilde{\mathbf{R}}(z)=(\tilde{\mathbf{C}}_{\tau}-z\mathbf{I})^{-1}$. Since $\mE[\eta_{k}]=0$,
it follows from (\ref{eq:w_k_R_k_quad_expectation}) and (\ref{eq:definition_beta_tau}) that
\begin{align}
\label{eq:estimation_expectation_epsilon_k}
|\mE[\epsilon_{k}]|
&=\left|\frac{1}{p} \mE[\tr(\tilde{\mathbf{R}}(z)\mathbf{\Gamma}_{\tau}(\bs\alpha_k))]
-\frac 1p \mE [\tr(\mathbf{R}_{(k)}(z)\mathbf{\Gamma}_{\tau,k}(\bs\alpha_k))]
- d_k^{(0)}\right| \nonumber \\
&\leq \frac 1p \left|\mE[\tr(\tilde{\mathbf{R}}(z)\mathbf{\Gamma}_{\tau}(\bs\alpha_k))]
-\mE[\tr(\mathbf{R}_{(k)}(z)\mathbf{\Gamma}_{\tau}(\bs\alpha_k))]\right| \nonumber \\
& \qquad + \frac 1p \left|\mE[\tr(\mathbf{R}_{(k)}(z)\{\mathbf{\Gamma}_{\tau}(\bs\alpha_k)
-\mathbf{\Gamma}_{\tau,k}(\bs\alpha_k)\})]\right|
%\nonumber \\& \qquad +  \frac 1p \left|\mE[\tr(\mathbf{R}_{(k)}(z)\mathbf{\Gamma}_{\tau,k}(\bs \alpha_{k}))]-\mE [\tr(\mathbf{R}_{(k)}\mathbf{\Gamma}_{\tau,k}^n(\bs \alpha_{k}))]\right|
+ |d_k^{(0)}|\nonumber\\
&= d_k^{1,1}+ d_k^{1,2} %+ d_k^{1,3}
+ |d_k^{(0)}|,
\end{align}
where $\mathbf{\Gamma}_{\tau,k}(\bs\alpha_k)=\mathbf{\Gamma}_\tau(\bs\alpha_k)-\mathcal{R}_\tau(\bs\alpha_k,\bs\alpha_k)e_ke_k^T$.
Arguments as the more general ones leading to (\ref{eq:bound_trace_resolvent_H}), imply that $\max_k d_k^{1,1} \leq 6qL_1^2(p\Im(z))^{-1}$.
Since $\|\mathbf{R}_{(k)}(z) \| \leq (\Im(z))^{-1}$ and ${\mathcal R}_\tau(\bs\alpha_k,\bs\alpha_k)$ is uniformly bounded,
it follows that $\max_k d_k^{1,2} \leq  L_1^2(p\Im(z))^{-1}$.
%Finally \eqref{eq:Approximation_error_integral_summation} gives $\max_k d_k^{1,3} \leq C(n\Im(z))^{-1}$ for some $C > 0$.
Together with (\ref{eq:d_k_0_bound}) and
(\ref{eq:estimation_expectation_epsilon_k}), these guarantee that $\max_k |\mE[\epsilon_{k}]|\to 0$ and thus $|\delta_{n,1}|\leq L_1^2(\Im(z))^{-2}\max_k|\mE[\epsilon_{k}]|\to 0$.

Observe next that, by (\ref{eq:w_k_R_k_quad_expectation}),
\begin{align*}
%\label{eq:variance_epsilon_k}
\mE\left[|\epsilon_{k}-\mE [\epsilon_{k}]|^{2}\right]
&=\mE\Big[\Big|-w_k^*\mathbf{R}_{(k)}(z)w_k
+ \frac{1}{p} \mE [\tr(\mathbf{R}_{(k)}(z)\mathbf{\Gamma}_{\tau,k}(\bs \alpha_{k}))]
+ d_k^{(0)} +\eta_{k} \Big|^{2}\Big] \nonumber \\
&\leq 3\mE\Big[\Big|-w_k^*\mathbf{R}_{(k)}(z)w_k
+ \frac{1}{p} \tr(\mathbf{R}_{(k)}(z)\mathbf{\Gamma}_{\tau,k}(\bs \alpha_{k})) + \eta_{k}\Big|^{2}\Big] \nonumber \\
& \qquad + 3\mE\Big[\Big|\frac{1}{p} \tr(\mathbf{R}_{(k)}(z)\mathbf{\Gamma}_{\tau,k}(\bs \alpha_{k}))
- \mE \Big[\frac{1}{p} \tr(\mathbf{R}_{(k)}(z)\mathbf{\Gamma}_{\tau,k}(\bs \alpha_{k}))\Big]\Big|^2\Big]
+ 3|d_k^{(0)}|^2 \nonumber \\
&= d_k^{2,1}+d_k^{2,2}+3|d_k^{(0)}|^2,
\end{align*}
where
\begin{align*}
d_k^{2,1}
&\leq 6\mE\Big[\Big|-w_k^*\mathbf{R}_{(k)}(z)w_k+\frac{1}{p} \tr(\mathbf{R}_{(k)}(z)
\mathbf{\Gamma}_{\tau,k}(\bs \alpha_{k}))\Big|^2\Big] + 6\mE[|\eta_{k}|^2] \\
&= 6 d_k^{2,3}+6\mE[|\eta_{k}|^2].
\end{align*}
Now, $\max_k \mE[|\eta_{k}|^2]<Cp^{-1}$ for some $C > 0$ as proved in Section \ref{subsec:aux:deterministic:1}. It is shown in Sections \ref{subsec:estimation_on_d_4} and \ref{subsec:bound_on_d_31} that $\max_k d_k^{2,2}\to 0$ and $\max_k d_k^{2,3} \to 0$, respectively. Consequently, $\max_k \mE[|\epsilon_k - \mE[\epsilon_k]|^2] \to 0$ and hence also $\delta_{n,2}\to 0$.

{\em Step 4:} Using the expression for $\tilde s_{\tau,p}(z)$ derived in Lemma \ref{lem:aux:3}, relation
\eqref{eq:expression_on_expectation_s_tau_n} can be obtained from similar arguments as in Steps 1--3 of this proof.
In particular, it can be shown that $\tilde\delta_{n}\to 0$.
\end{proof}

%%%%%%%%%%%%%%%%%%%%%%%%%%%%%%%%%%%%%%%%%%%%%%%%%%%

\subsection{Convergence of random part}
\label{sec:conv_random_part}

In this section, it is shown that, almost surely $s_{\tau,p}(z)-\mE [s_{\tau,p}(z)]\to 0$ and
$\beta_{\tau, p}(z, \mathbf{a})-\mE[\beta_{\tau, p}(z,\mathbf{a})]\to 0$ for any $z\in \mathbb{C}^{+}$
when the entries of $\mathbf{Z}$ are i.i.d. standardized random variables with arbitrary distributions.
The concentration inequalities on $s_{\tau,p}(z)$ and $\beta_{\tau, p}(z, \mathbf{a})$ are derived
by using the McDiarmid's inequality given in Lemma \ref{lemma:mcdiarmid} and the proof of almost
sure convergence is obtained through the use of the Borel--Cantelli lemma. To apply the McDiarmid
inequality, treat $\mathbf{C}_{\tau}$ as a function of the independent rows of $\mathbf{Z}$, say,
$\mathbf{z}_1^*,\ldots,\mathbf{z}_p^*$. Let
\[
\mathbf{Z}_{(j)}
= \mathbf{Z} - e_je_j^T \mathbf{Z}
= \mathbf{Z} - e_j\mathbf{z}_j^*,
\qquad j=1,\ldots,p,
\]
where $\mathbf{Z}=[\mathbf{z}_1^*:\cdots:\mathbf{z}_p^*]^*$. Let further $\mathbf{X}_{(j)}$ be the
$p\times n$ matrix obtained from the original data matrix $\mathbf{X}$ with the $j$th row removed, that is,
\[
\mathbf{X}_{(j)}
= \sum_{\ell=0}^{q}\mathbf{A}_{\ell}\mathbf{Z}_{(j)}\mathbf{L}^{\ell}.
\]
Define $\mathbf{S}_{\tau}^{(j)}=n^{-1}\mathbf{X}_{(j)}\mathbf{D}_\tau\mathbf{X}_{(j)}^*$ and $\mathbf{C}_{\tau}^{(j)}=\sqrt{n/p} (\mathbf{S}_{\tau}^{(j)}-\mathbf{\Sigma}_{\tau})$, where $\mathbf{D}_\tau=[\mathbf{L}^\tau+(\mathbf{L}^\tau)^*]/2$.
It follows then from the relation
\begin{align*}
\mathbf{S}_{\tau}
&=\frac{1}{n}\bigg(\sum_{\ell=0}^{q}\mathbf{A}_{\ell}(\Z_{(j)}+e_j\mathbf{z}_j^*)\mathbf{L}^{\ell}\bigg)
\mathbf{D}_\tau \bigg(\sum_{\ell=0}^{q}\mathbf{A}_{\ell}(\Z_{(j)}+e_j\mathbf{z}_j^*)\mathbf{L}^{\ell}\bigg)^* \\
&=\mathbf{S}_{\tau}^{(j)}+\frac{1}{n}\bigg(
\sum_{\ell=0}^{q}a_{j\ell}y_{j\ell}^{*}\mathbf{D}_\tau\mathbf{X}_{(j)}^{*}
+\sum_{\ell=0}^{q}\mathbf{X}_{(j)}\mathbf{D}_\tau y_{j\ell}a_{j\ell}^{*}
+\sum_{\ell,\ell^{\prime}=0}^{q}a_{j\ell}y_{j\ell}^{*}\mathbf{D}_\tau y_{j\ell^{\prime}}a_{j\ell^{\prime}}^{*}\bigg),
\end{align*}
where $a_{j\ell}=\mathbf{A}_{\ell}e_j$, $y_{j\ell}^{*}=\mathbf{z}_j^*\mathbf{L}^{\ell}$, that
\begin{equation}
\label{eq:expression_relation_C_tau_and_C_tau_i}
\mathbf{C}_{\tau}
=\mathbf{C}_{\tau}^{(j)}
+\sum_{\ell=0}^{q}a_{j\ell}{\zeta}_{j\ell}^{*}
+\sum_{\ell=0}^{q}\zeta_{j\ell}a_{j\ell}^{*}
+\sum_{\ell,\ell^{\prime}=0}^{q}\omega_{\ell,\ell^{\prime}}^{j}a_{j\ell}a_{j\ell^{\prime}}^{*},
\end{equation}
making use of the notations $\zeta_{j\ell}=(np)^{-1/2}y_{j\ell}^{*}\mathbf{\Delta}\mathbf{X}_{(j)}^{*}$ and $\omega_{\ell,\ell^{\prime}}^j=(pn)^{-1/2}y_{j\ell}^{*}\mathbf{\Delta}y_{j\ell^{\prime}}$.
The following lemma will be instrumental in determining the convergence of the random part.

\begin{lemma}
\label{lem:aux:5}
Under the assumptions of Theorem \ref{thm:LSD_variance}, it follows that
\[
\mathrm{diff}_{\tau,j}(\mathbf{H})
=\frac{1}{p}\left|\tr\big((\mathbf{C}_{\tau}-z\mathbf{I})^{-1}\mathbf{H}\big)
-\frac{1}{p}\tr\big((\mathbf{C}^{(j)}_{\tau}-z\mathbf{I})^{-1}\mathbf{H}\big)\right|
\leq \frac{3(q+1)\|\mathbf{H}\|}{p\,\Im(z)},
\]
where $\mathbf{H}$ is an arbitrary $p\times p$ Hermitian matrix with $\|\mathbf{H}\|$ bounded.
\end{lemma}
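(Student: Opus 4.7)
The statement is a resolvent-perturbation bound that should follow directly from the fact that $\mathbf{C}_\tau^{(j)}$ arises from $\mathbf{C}_\tau$ by removing one row of innovations, so the perturbation $\mathbf{C}_\tau-\mathbf{C}_\tau^{(j)}$ is of low rank. My plan is to (i) read off the rank from the explicit decomposition \eqref{eq:expression_relation_C_tau_and_C_tau_i}, (ii) use the resolvent identity to transfer this rank bound to the difference of resolvents, and (iii) finish with a singular-value trace inequality.

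\emph{Step 1: rank count.} By \eqref{eq:expression_relation_C_tau_and_C_tau_i},
\[
\mathbf{C}_\tau-\mathbf{C}_\tau^{(j)}
= T_1+T_2+T_3,
\qquad
T_1=\sum_{\ell=0}^{q}a_{j\ell}\zeta_{j\ell}^{*},\;
T_2=T_1^{*},\;
T_3=\sum_{\ell,\ell'=0}^{q}\omega_{\ell,\ell'}^{j}a_{j\ell}a_{j\ell'}^{*}.
\]
The column space of $T_1$ lies in $\mathrm{span}\{a_{j\ell}:\ell=0,\ldots,q\}$, the column space of $T_2$ lies in $\mathrm{span}\{\zeta_{j\ell}:\ell=0,\ldots,q\}$, and the column space of $T_3$ again lies in $\mathrm{span}\{a_{j\ell}:\ell=0,\ldots,q\}$, so each $T_i$ has rank at most $q+1$. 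Therefore $\mathrm{rank}(\mathbf{C}_\tau-\mathbf{C}_\tau^{(j)})\le 3(q+1)$.

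\emph{Step 2: resolvent identity.} Write $\mathbf{R}=(\mathbf{C}_\tau-z\mathbf{I})^{-1}$ and $\mathbf{R}^{(j)}=(\mathbf{C}_\tau^{(j)}-z\mathbf{I})^{-1}$. The identity
\[
\mathbf{R}-\mathbf{R}^{(j)}=-\mathbf{R}\bigl(\mathbf{C}_\tau-\mathbf{C}_\tau^{(j)}\bigr)\mathbf{R}^{(j)}
\]
immediately yields $\mathrm{rank}(\mathbf{R}-\mathbf{R}^{(j)})\le 3(q+1)$, while $\|\mathbf{R}\|,\|\mathbf{R}^{(j)}\|\le 1/\Im(z)$ for $z\in\mathbb{C}^+$.

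\emph{Step 3: bounding the trace.} Applying the singular-value trace inequality $|\tr(KH)|\le\sum_i\sigma_i(K)\sigma_i(H)\le\mathrm{rank}(K)\,\|K\|_{op}\|H\|_{op}$ to $K=\mathbf{R}-\mathbf{R}^{(j)}$ and dividing by $p$ gives
\[
\frac{1}{p}\bigl|\tr((\mathbf{R}-\mathbf{R}^{(j)})\mathbf{H})\bigr|
\le \frac{3(q+1)\,\|\mathbf{R}-\mathbf{R}^{(j)}\|_{op}\,\|\mathbf{H}\|}{p},
\]
which is the claim up to the constant. The triangle-inequality bound $\|\mathbf{R}-\mathbf{R}^{(j)}\|_{op}\le 2/\Im(z)$ already suffices for the order of magnitude $(q+1)\|\mathbf{H}\|/(p\,\Im(z))$; to sharpen this to the stated constant $3$, one can alternatively add the three summands $T_1,T_2,T_3$ one rank-one update at a time, applying the Sherman--Morrison formula at each step and exploiting that, for a Hermitian rank-one update $B\mapsto B+\alpha uu^{*}$, the denominator $1+\alpha u^{*}(B-z\mathbf{I})^{-1}u$ has imaginary part of the same sign as the numerator in the resulting trace-difference formula, so that each step contributes at most $\|\mathbf{H}\|/\Im(z)$.

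\emph{Expected obstacle.} The rank accounting and resolvent identity are routine; the delicate point is only the precise numerical constant. Writing the three low-rank updates as a sequence of $3(q+1)$ rank-one Hermitian perturbations and carefully tracking the Herglotz-type positivity of the Sherman--Morrison denominators is where the care is needed, and is essentially the only non-mechanical part of the argument.
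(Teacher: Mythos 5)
Your proposal is correct and, in its primary form, takes a genuinely more elementary route than the paper. The paper does not use the rank inequality on the resolvent difference at all: it decomposes the perturbation in \eqref{eq:expression_relation_C_tau_and_C_tau_i} into exactly $3(q+1)$ signed Hermitian rank-one updates --- writing $a_{j\ell}\zeta_{j\ell}^{*}+\zeta_{j\ell}a_{j\ell}^{*}=u_{j\ell}u_{j\ell}^{*}-v_{j\ell}v_{j\ell}^{*}$ for the cross terms and spectrally decomposing the Hermitian block $T_3$ as $\sum_{\ell}\tilde\omega_{j\ell}b_{j\ell}b_{j\ell}^{*}$ --- and then peels them off one at a time via Lemma \ref{lemma:quad_bound}, each step costing $\|\mathbf{H}\|/\Im(z)$. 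That is precisely the ``sharpening'' you sketch at the end, so your fallback coincides with the paper's actual proof. What your main route buys is simplicity (one resolvent identity plus the singular-value trace bound), but it pays with the constant: $\|\mathbf{R}-\mathbf{R}^{(j)}\|\le 2/\Im(z)$ gives $6(q+1)\|\mathbf{H}\|/(p\,\Im(z))$ rather than the stated $3(q+1)$. Since the lemma is only used to show quantities of order $(q+1)/p$ vanish and to feed bounded-difference constants into McDiarmid's inequality, the factor of $2$ is immaterial downstream, but it does mean your primary argument proves a slightly weaker statement than the one displayed. The only point you leave implicit that the paper spells out is that $T_1$ and $T_2$ are individually non-Hermitian, so obtaining a chain of $3(q+1)$ \emph{Hermitian} rank-one perturbations (to which the Herglotz/Sherman--Morrison sign argument of Lemma \ref{lemma:quad_bound} applies) requires pairing them first; your rank count of $T_1$ and $T_2$ separately does not by itself produce that decomposition.
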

\begin{proof}
First observe that $\sum_{\ell,\ell^{\prime}=0}^{q}\omega_{\ell,\ell^{\prime}}^{j}a_{j\ell}a_{j\ell^{\prime}}^{*}$ is a Hermitian matrix
of rank $q+1$ and hence we can write it as
%\sum_{\ell,\ell^{\prime}=0}^{q}\omega_{\ell,\ell^{\prime}}^{j}a_{j\ell}a_{j\ell^{\prime}}^{*}
$\sum_{\ell=0}^{q}\tilde\omega_{j\ell} b_{j\ell}b_{j\ell}^{*}$, where each $\tilde\omega_{j\ell} \in \{-1,+1\}$
and observe that $a_{j\ell}\zeta_{\j\ell}^{*}+\zeta_{j\ell}a_{j\ell}^{*}=u_{j\ell}u_{j\ell}^{*}-v_{j\ell}v_{j\ell}^{*}$ where $u_{j\ell}=2^{-1/2}(\zeta_{j\ell}+a_{j\ell})$ and $v_{j\ell}=2^{-1/2}(\zeta_{j\ell}-a_{j\ell})$. Define the matrices $\mathbf{D}_{1j}=\mathbf{C}_{\tau}^{(j)}+\sum_{\ell=0}^{q}u_{j\ell}u_{j\ell}^{*}$ and $\mathbf{D}_{2j}=\mathbf{D}_{1j}-\sum_{\ell=0}^{q}v_{j\ell}v_{j\ell}^{*}$, and notice that it then follows from (\ref{eq:expression_relation_C_tau_and_C_tau_i}) that $\mathbf{C}_{\tau}=\mathbf{D}_{2j}+\sum_{\ell=0}^{q}
\tilde\omega_{j\ell}  b_{j\ell}b_{j\ell}^{*}$. Therefore,
 \begin{align*}
\mathrm{diff}_{\tau,j}(\mathbf{H})
\leq& \frac 1p \Big|\tr\big((\mathbf{C}_{\tau}-z\mathbf{I})^{-1}\mathbf{H}\big)
-\tr\big((\mathbf{D}_{2j}-z\mathbf{I})^{-1}\mathbf{H}\big)\Big| \\
&+\frac 1p \Big|\tr\big((\mathbf{D}_{2j}-z\mathbf{I})^{-1}\mathbf{H}\big)
-\tr\big((\mathbf{D}_{1j}-z\mathbf{I})^{-1}\mathbf{H}\big)\Big| \\
&+\frac 1p \Big|\tr\big((\mathbf{D}_{1j}-z\mathbf{I})^{-1}\mathbf{H}\big)
-\tr\big((\mathbf{C}^{(j)}_{\tau}-z\mathbf{I})^{-1}\mathbf{H}\big)\Big| \\
=&K_{j1}+K_{j2}+K_{j3}.
 \end{align*}
In the following an estimate for $K_{j2}$ is given. For $1\leq k\leq q+1$, let then $\mathbf{T}_{j}^{(k)}=\mathbf{D}_{2j}+\sum_{\ell=0}^{k-1}v_{j\ell}v_{j\ell}^{*}$, so that $\mathbf{T}_{j}^{(0)}=\mathbf{D}_{2j}$ and $\mathbf{T}_{j}^{(q+1)}=\mathbf{D}_{1j}$. An application of Lemmas \ref{lemma:rank_one_inverse} and \ref{lemma:quad_bound} implies that
\begin{align*}
K_{j2}
&=\frac 1p\sum_{k=1}^{q+1}\left|\tr\big((\mathbf{T}_{j}^{(k)}-z\mathbf{I})^{-1}\mathbf{H}\big)
-\tr\big((\mathbf{T}_{j}^{(k-1)}-z\mathbf{I})^{-1}\mathbf{H}\big)\right| \\
&\leq \frac{1}{p}\sum_{k=1}^{q+1} \left|
\frac{v_{jk}^{*}(\mathbf{T}_{j}^{(k-1)}-z\mathbf{I})^{-1}\mathbf{H}(\mathbf{T}_{j}^{(k-1)}-z\mathbf{I})^{-1}v_{jk}}
{1+v_{jk}^{*}(\mathbf{T}_{j}^{(k-1)}-z\mathbf{I})^{-1}v_{jk}}\right|
\leq \frac{(q+1)\|\mathbf{H}\|}{p\Im(z)}.
 \end{align*}
Estimates for $K_1$ and $K_3$ can be obtained in a similar way, leading to the bound $(q+1)(p\Im(z))^{-1}\|\mathbf{H}\|$
in each case.
This proves the lemma.
\end{proof}

Lemma \ref{lem:aux:5} gives the bound $\mathrm{diff}_{\tau,j}(\mathbf{I}_p)\leq 3(q+1)(p\Im(z))^{-1}$ and $\mathrm{diff}_{\tau,j}(\mathbf{\Gamma}_\tau(\mathbf{a}))\leq 3(q+1)(p\Im(z))^{-1}L_1^2$. Let $\mathrm{diff}^\prime_{\tau,j}$
be defined as $\mathrm{diff}_{\tau,j}$ with $\mathbf{C}_\tau$ replaced with $\mathbf{C}_\tau^\prime$, where the
latter matrix in turn is obtained from the former replacing its $j$th's row $\mathbf{z}_j^*$ with an independent
copy $(\mathbf{z}_j^\prime)^*$. From Lemma \ref{lem:aux:5} it follows then that
\[
\frac{1}{p}\left|\tr\big((\mathbf{C}_{\tau}-z\mathbf{I})^{-1}\big)
-\tr\big((\mathbf{C}^{\prime}_{\tau}-z\mathbf{I})^{-1}\big)\right|
\leq \frac{6(q+1)}{p\Im(z)}
\]
and
\begin{equation}
\label{eq:bound_trace_resolvent_H}
\frac{1}{p}\left|\tr\big((\mathbf{C}_{\tau}-z\mathbf{I})^{-1}\Gamma_{\tau}(\mathbf{a})\big)
-\tr\big((\mathbf{C}^\prime_{\tau}-z\mathbf{I})^{-1}\Gamma_{\tau}(\mathbf{a})\big)\right|
\leq \frac{6(q+1){L_1}^2}{p\Im(z)}.
\end{equation}
Recognizing that $s_{\tau,p}(z)=p^{-1}\tr((\mathbf{C}_{\tau}-z\mathbf{I})^{-1})$ and $\beta_{\tau, p}(z, \mathbf{a})=p^{-1}\tr((\mathbf{C}_{\tau}-z\mathbf{I})^{-1}\Gamma_{\tau}(\mathbf{a}))$ and applying the McDiarmid's inequality
(Lemma \ref{lemma:mcdiarmid}) yields that, for any $\epsilon>0$,
\begin{equation}\label{eq:applying_mcdiarmid_s_n_tau_0}
\mP\left(|s_{\tau,p}(z)-\mE [s_{\tau,p}(z)]|>\epsilon\right)
\leq 4\exp\left(-\frac{p\Im(z)\epsilon^2}{18(q+1)^2}\right)
\end{equation}
and
\begin{equation}
\label{eq:mc-diarmid-2}
\mP\left(|\beta_{\tau, p}(z, \mathbf{a}) -\mE [\beta_{\tau, p}(z, \mathbf{a})]|>\epsilon\right)
\leq 4\exp\left(-\frac{p\Im(z)\epsilon^2}{18(q+1)^2{L_{1}}^2}\right).
\end{equation}
Now the Borel--Cantelli lemma implies that $|s_{\tau,p}(z)-\mE [s_{\tau,p}(z)]|\to 0$ and $|\beta_{\tau,p}(z, \mathbf{a})-\mE[\beta_{\tau, p}(z,\mathbf{a})]|\to 0$ almost surely under \eqref{eq:mod_dim}. Moreover, it can be readily seen that
these almost sure convergence results also hold for $\tilde s_{\tau,p}$ and $\tilde\beta_{\tau,p}$.
%The corresponding statements follow from observing that
%\[
%|\tilde s_{\tau,p}(z)-\mE [\tilde s_{\tau,p}(z)]|
%\leq |\tilde s_{\tau,p}(z)-s_{\tau,p}(z)|
%+|s_{\tau,p}(z)-\mE[s_{\tau,p}(z)]|
%+|\mE [\tilde s_{\tau,p}(z)]-\mE [s_{\tau,p}(z)]|,
%\]
%the right-hand side of which converges to zero on account of $|\tilde s_{\tau,p}(z)-s_{\tau,p}(z)|\leq(p\Im(z))^{-1}(2q+2\tau)$ almost surely and $|\mE[\tilde s_{\tau,p}(z)]-\mE[s_{\tau,p}(z)]|\to 0$ in a pointwise manner.

%%%%%%%%%%%%%%%%%%%%%%%%%%%%%%%%%%%%%%%%%%%%%

\subsection{Existence, uniqueness and continuity of the solution}
\label{sec:ex_conv_cont}

This section provides a proof of the existence of a unique solution $s_\tau(z)$ and $\beta_{\tau}(z, \mathbf{a})$, for $\mathbf{a}\in \mathrm{supp}(F^{\mathcal{A}})$ and $z\in\mathbb{C}^+$, to the set of equations (\ref{eq:Stieltjes_LSD_covariance})--(\ref{eq:spectral_kernel_covariance}). Assuming that these solutions exist,
it can be shown that $\tilde s_{\tau,p}(z)\stackrel{a.s.}{\longrightarrow} s_{\tau}(z)$ and
$\tilde\beta_{\tau, p}(z, \mathbf{a})\stackrel{a.s.}{\longrightarrow} \beta_{\tau}(z, \mathbf{a})$
for any $\mathbf{a}\in \mathrm{supp}(F^{\mathcal{A}})$ and $z\in \mC^{+}$. In view of the results
derived in Section \ref{sec:conv_random_part} and Lemma \ref{lem:aux:4},
it suffices to show that for every sequence $\{p_j \colon j \in \mathbb{N}\}$ there exists a further
subsequence $\{\tilde p_j\colon j \in \mathbb{N}\}$ such that $\mathbb{E}(\tilde\beta_{\tau,\tilde p_j}(z,\mathbf{a}))$ converges
to a limit $\beta_\tau(z,\mathbf{a})$ satisfying (\ref{eq:Stieltjes_LSD_covariance})--(\ref{eq:spectral_kernel_covariance}).
The verification is based on a diagonal subsequence argument and the
Arzel\`{a}--Ascoli theorem.

%Since for any $\mathbf{a}$, $\mE \beta_{\tau, p}(z,\mathbf{a})$ is bounded for each fixed $z\in\mC^{+}$, by considering along any subsequence $\{n_{j}\}$ such that for $p_{j}=p(n_{j})$, $\mE\beta_{\tau, p_{j}}(z, \mathbf{a})$ converges. From the fact that (\ref{eq:expression_on_expectation_beta_tau_p}), using the Dominated Convergence Theorem (DCT), we have that $\mE \beta_{\tau, p}(z, \mathbf{a})$ converges to $\beta_{\tau}(z, \mathbf{a})$ satisfying (\ref{eq:Stieltjes_LSD_covariance}), (\ref{eq:Stieltjes_kernel_covariance}) and (\ref{eq:spectral_kernel_covariance}). Again, since $\mE s_{n, \tau}(z)$ is bounded for any fixed $z\in \mC^{+} $ and $\mE\beta_{\tau, p_j}(z, \mathbf{a})\to \beta_{\tau}(z, \mathbf{a})$, by DCT and using (\ref{eq:expression_on_expectation_s_tau_n}). Further, we need to show that for any subsequence $p_{j}$, such that $\mE\beta_{\tau, p_{j}(\mathbf{a})}(z,\mathbf{a})$ converges uniformly in $\mathbf{a}\in \mathrm{supp}(F^{\mathcal{A}})$ to a limit $\beta_{\tau}(z,\mathbf{a})$ that is analytic in $z\in \mC^{+}$ as a Stieltjes kernel satisfying (\ref{eq:Stieltjes_kernel_covariance}).

\begin{lemma}
Let $\{p_j \colon j\in\mathbb{N}\}$ denote a subsequence of the integers $\mathbb{N}$ and define
$\rho_{\tau, p_{j}}(z,\mathbf{a})=\mE[\tilde\beta_{\tau, p_{j}}(z,\mathbf{a})]$. Then the following statements hold. \\[-1cm]
\begin{itemize}
\item[($a$)] There is a further subsequence $\{\tilde p_{j} \colon j\in\mathbb{N}\}$ such that
$\rho_{\tau, \tilde p_{j}}(z,\mathbf{a})$ convergences uniformly in $\mathbf{a}\in\mathrm{supp}(F^{\mathcal{A}})$
and pointwise in $z\in \mC^{+}$ to a limit $\rho_{\tau}(z, \mathbf{a})$ which is analytic in $z$ and continuous in $\mathbf{a}$;

\item[($b$)] The limit $\rho_{\tau}(z,\mathbf{a})$ in (a) coincides with $\beta_{\tau}(z,\mathbf{a})$ and is the Stieltjes
transform of a  measure on the real line with mass $\int \mathcal{R}_{\tau}(\mathbf{a},\mathbf{b})dF^{\mathcal{A}}(\mathbf{b})$
satisfying (\ref{eq:Stieltjes_kernel_covariance}).
 \end{itemize}
\end{lemma}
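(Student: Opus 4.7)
The plan is to handle part ($a$) by a normal-families argument combined with Arzel\`a--Ascoli in $\mathbf{a}$, and then to derive part ($b$) by passing to the limit in the deterministic equation \eqref{eq:expression_on_expectation_beta_tau_p} of Lemma \ref{lem:aux:4}, invoking the uniqueness clause of Theorem \ref{thm:LSD_variance} at the end to identify the limit with $\beta_\tau$.

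For part ($a$), three estimates on $\rho_{\tau,p}(z,\mathbf{a})$ would be established. First, uniform boundedness: the bound $\|\bs\Gamma_\tau(\mathbf{a})\| \le L_1^2$ (from {\bf A5} and the definition of $\mathcal{R}_\tau$) and the resolvent bound $\|(\tilde{\mathbf{C}}_\tau - z\mathbf{I})^{-1}\| \le \Im(z)^{-1}$ yield $|\rho_{\tau,p}(z,\mathbf{a})| \le L_1^2/\Im(z)$ uniformly in $p$ and $\mathbf{a}$. Second, equicontinuity in $\mathbf{a}$:
\[ |\rho_{\tau,p}(z,\mathbf{a}) - \rho_{\tau,p}(z,\mathbf{a}')| \le \frac{1}{\Im(z)}\sup_{\mathbf{b}\in\mathrm{supp}(F^{\cal A})}\bigl|\mathcal{R}_\tau(\mathbf{a},\mathbf{b}) - \mathcal{R}_\tau(\mathbf{a}',\mathbf{b})\bigr|, \]
and the right-hand side tends to zero as $\mathbf{a}\to\mathbf{a}'$ by continuity of the $f_\ell$ ({\bf A2}) together with dominated convergence using the envelope $\sum_\ell \bar a_\ell \le L_1$. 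Third, analyticity in $z$: $\rho_{\tau,p}(\cdot,\mathbf{a})$ is holomorphic on $\mathbb{C}^+$ as the expectation of a Stieltjes transform. Picking a countable dense $D\subset\mathbb{C}^+$, Arzel\`a--Ascoli applied to $\rho_{\tau,p}(z,\cdot)$ on the compact set $\mathrm{supp}(F^{\cal A})$ yields for each $z\in D$ a convergent subsequence; a diagonal extraction produces a single subsequence $\{\tilde p_j\}$ along which $\rho_{\tau,\tilde p_j}(z,\cdot)$ converges uniformly in $\mathbf{a}$ for every $z\in D$ to a continuous limit $\rho_\tau(z,\cdot)$. Vitali's theorem for normal families then upgrades this to pointwise convergence for every $z\in\mathbb{C}^+$, with analytic $z$-limit, while continuity in $\mathbf{a}$ is inherited from the uniform-in-$\mathbf{a}$ convergence.

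For part ($b$), take $p=\tilde p_j\to\infty$ in \eqref{eq:expression_on_expectation_beta_tau_p}. Since $\bs\Gamma_\tau(\mathbf{b})$ is a nonnegative diagonal matrix and $\tilde{\mathbf{C}}_\tau$ is Hermitian, $\rho_{\tau,p}(\cdot,\mathbf{b})$ is the Stieltjes transform of a nonnegative measure on $\mathbb{R}$ with total mass $p^{-1}\sum_k\mathcal{R}_\tau(\mathbf{b},\bs\alpha_k)$. Hence $\Im(\rho_{\tau,p}(z,\mathbf{b}))\ge 0$ and $|z+\rho_{\tau,\tilde p_j}(z,\mathbf{b})|\ge \Im(z)$ uniformly in $\mathbf{b}$ and $j$. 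Combined with the uniform-in-$\mathbf{b}$ convergence of $\rho_{\tau,\tilde p_j}$ from part ($a$) and the weak convergence of the empirical distribution of $(\bs\alpha_k)$ to $F^{\cal A}$ from {\bf A3}, the arithmetic average in \eqref{eq:expression_on_expectation_beta_tau_p} converges to $-\int \mathcal{R}_\tau(\mathbf{a},\mathbf{b})/(z+\rho_\tau(z,\mathbf{b}))\,dF^{\cal A}(\mathbf{b})$, and since $\delta_n\to 0$, $\rho_\tau$ satisfies \eqref{eq:Stieltjes_kernel_covariance}. The Stieltjes-kernel property for the limit follows from tightness: the underlying measures sit in a bounded interval (the operator norm of $\tilde{\mathbf{C}}_\tau$ is bounded in probability, and an extra truncation handles the complement), and their masses $p^{-1}\sum_k\mathcal{R}_\tau(\mathbf{a},\bs\alpha_k)$ converge to $\int\mathcal{R}_\tau(\mathbf{a},\mathbf{b})\,dF^{\cal A}(\mathbf{b})$ by {\bf A3} and continuity of $\mathcal{R}_\tau(\mathbf{a},\cdot)$; thus $\rho_\tau(\cdot,\mathbf{a})$ is the Stieltjes transform of a positive measure with the asserted total mass. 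Uniqueness from Theorem \ref{thm:LSD_variance} then forces $\rho_\tau=\beta_\tau$, and since the limit does not depend on the subsequence, the full sequence $\mE[\tilde\beta_{\tau,p}(z,\mathbf{a})]$ converges to $\beta_\tau(z,\mathbf{a})$.

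The main obstacle is the uniqueness statement used at the end. I would establish it by a contraction argument: two Stieltjes-kernel solutions $\beta^{(1)},\beta^{(2)}$ of \eqref{eq:Stieltjes_kernel_covariance} satisfy
\[ \beta^{(1)}(z,\mathbf{a}) - \beta^{(2)}(z,\mathbf{a}) = \int \mathcal{R}_\tau(\mathbf{a},\mathbf{b})\,\frac{\beta^{(2)}(z,\mathbf{b}) - \beta^{(1)}(z,\mathbf{b})}{\bigl(z+\beta^{(1)}(z,\mathbf{b})\bigr)\bigl(z+\beta^{(2)}(z,\mathbf{b})\bigr)}\,dF^{\cal A}(\mathbf{b}), \]
and, since both denominators are bounded below by $\Im(z)$, the right-hand side contracts in $\sup_\mathbf{a}|\cdot|$ once $\Im(z)$ is sufficiently large, so that $\beta^{(1)}\equiv\beta^{(2)}$ there. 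Analytic continuation in $z$ extends the identity to all of $\mathbb{C}^+$. The other delicate point is ensuring that the total mass in the tightness argument is transported correctly across the weak limit, which requires continuity of $\mathcal{R}_\tau(\mathbf{a},\cdot)$ on $\mathrm{supp}(F^{\cal A})$ and could otherwise be handled via $\lim_{y\to\infty}(-\mathbf{i}y)\rho_{\tau,p}(\mathbf{i}y,\mathbf{a})=p^{-1}\sum_k\mathcal{R}_\tau(\mathbf{a},\bs\alpha_k)$ uniformly in $p$.
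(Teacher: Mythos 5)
Your proposal follows essentially the same route as the paper's proof: uniform boundedness and equicontinuity of $\rho_{\tau,p}(z,\cdot)$, a diagonal extraction combining a normal-families argument in $z$ with Arzel\`a--Ascoli in $\mathbf{a}$, passage to the limit in \eqref{eq:expression_on_expectation_beta_tau_p} using the lower bound $|z+\rho_{\tau,p}(z,\mathbf{b})|\geq \Im(z)$ and {\bf A3}, identification of the mass via tightness of the dominating spectral measures, and uniqueness by a contraction for large $\Im(z)$ followed by analytic continuation (the paper contracts in $L^2(F^{\mathcal{A}})$ rather than the sup norm, and diagonalizes over a dense set of $\mathbf{a}$'s rather than of $z$'s — immaterial differences). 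The one step to tighten is tightness: the paper does not claim $\|\tilde{\mathbf{C}}_\tau\|$ is bounded in probability, but instead proves (Lemma \ref{lem:tightness}) that $p^{-1}\tr(\bar{\mathbf{C}}_\tau^2)$ is almost surely bounded, dominates $\mu_{p,\mathbf{a}}$ by a multiple of $F^{\tilde{\mathbf{C}}_\tau}$, and then applies Lemma \ref{lemma:prob_measure_convergence}; your alternative via $\lim_{y\to\infty}(-\mathbf{i}y)\rho_{\tau,p}(\mathbf{i}y,\mathbf{a})$ would likewise need a uniform-in-$p$ justification.
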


\begin{proof}
\textit{Step 1:}\ %We first claim that the limit $\rho_{\tau}(z,\mathbf{a})$ is analytic in $z\in \mC^{+}$ for all $\mathbf{a}\in \mathrm{supp}(F^{\mathcal{A}})$.
Define $\mathcal{F}=\{\rho_{\tau, p_{j}(\mathbf{a})}(\cdot,\mathbf{a})\colon \mathbf{a}\in \mathrm{supp}(F^{\mathcal{A}})\}$.
For any compact set $K\subset \mC^{+}$,
\[
|\rho_{\tau,p_{j}(\mathbf{a})}(z,\mathbf{a})|\leq L_{1}^2/\min_{z\in K}\Im(z)=M(K).
\]
Let $\{\mathbf{a}_1,\mathbf{a}_2,\ldots\}$ be an enumeration of the dense subset
$\mathrm{supp}(F^{\mathcal{A}})\cap \mathbb{Q}^m$ of $\mathrm{supp}(F^{\mathcal{A}})$. An application of Lemma \ref{lem:Uniform_convergence_analytic_limit} yields that for any $\mathbf{a}_\ell$ there exists a further
subsequence $\{p_{j}(\mathbf{a}_\ell) \colon  j\in\mathbb{N}\}$ such that $\cdots\subset\{p_{j}(\mathbf{a}_\ell)\}\subset\{p_{j}(\mathbf{a}_{\ell-1})\}\subset\cdots\subset \{p_{j}(\mathbf{a}_1)\}$
such that $\rho_{\tau, p_{j}(\mathbf{a}_{\ell})}(z, \mathbf{a}_{\ell})$ converges
uniformly on compact subsets of $\mC^+$ to a limit denoted by $\rho_\tau(z,\mathbf{a}_\ell)$, which is an
analytic function of $z\in\mC^{+}$ for each $\ell\in\mathbb{N}$. Choosing the diagonal subsequence
$\{p_j(\mathbf{a}_j)\colon\mathbb{N}\}$, it follows that
\[
\rho_{\tau,p_j(\mathbf{a}_j)}(z,\mathbf{a}_\ell)
\to \rho_\tau(z,\mathbf{a}_\ell)
\qquad (j\to\infty)
\]
for all $\ell\in\mathbb{N}$ uniformly on compact subsets of $\mC^+$. Note that the limit is defined on $\mC^{+}\times (\mathrm{supp}(F^{\mathcal{A}})\cap \mathbb{Q}^m)$.

\textit{Step 2:}\ It is shown in Appendix \ref{sec:app:equicont} that, for any fixed $z\in \mC^{+}$ and
subsequence $\{p_j\}$, $\{\rho_{\tau, p_j}(z, \mathbf{a})\}$ are equicontinuous functions.
Since $\rho_{\tau, p_\ell(\mathbf{a}_\ell)}(z, \mathbf{a})$ converges pointwise to $\rho_{\tau}(z, \mathbf{a})$
on the dense subset $\mathrm{supp}(F^{\mathcal{A}})\cap \mathbb{Q}^{m}$ of $\mathrm{supp}(F^{\mathcal{A}})$,
the Arzel\`{a}-Ascoli theorem (Lemma \ref{lemma:Arzela_Ascoli_theorem}) implies that $\rho_{\tau, p_{\ell}(\mathbf{a}_{\ell})}(z, \mathbf{a})$ uniformly converges to a limit, a continuous function of $\mathbf{a}\in \mathrm{supp}(F^{\mathcal{A}})$,
that coincides with $\rho_{\tau}(z, \mathbf{a})$ for $\mathbf{a}\in \mathrm{supp}(F^{\mathcal{A}})\cap \mathbb{Q}^{m}$.
Thus, the limit $\rho_{\tau}(z, \mathbf{a})$ is now defined on $\mC^{+}\times\mathrm{supp}(F^{\mathcal{A}})$ and
is analytic in $z\in\mC^{+}$. From \eqref{eq:expression_on_expectation_beta_tau_p} it follows that the
limit $\rho_{\tau}(z, \mathbf{a})$ coincides with $\beta_{\tau}(z,\mathbf{a})$ for $\mathbf{a}\in \mathrm{supp}(F^{\mathcal{A}})$.

\textit{Step 3:}\ It remains to show that $\beta_{\tau}(z,\mathbf{a})$ is the Stieltjes transform of a measure on the real line with mass $m_{\tau}(\mathbf{a}):=\int\mathcal{R}_{\tau}(\mathbf{a},\mathbf{b})dF^{\mathcal{A}}(\mathbf{b})$. This is equivalent to showing that $(m_{\tau}(\mathbf{a}))^{-1}\beta_{\tau}(z,\mathbf{a})$ is the Stieltjes transform of a Borel probability measure.
The proof relies on the Lemma \ref{lem:tightness}, stated below. From the definition of $\tilde\beta_{\tau,p}(z,\mathbf{a})$
and the fact that $\Gamma_\tau(\mathbf{a})$ is a positive definite matrix with bounded norm, it follows that
$(m_{\tau,p}(\mathbf{a}))^{-1}\tilde\beta_{\tau,p}(z,\mathbf{a})$ is the Stieltjes transform of a probability measure
$\mu_{p,\mathbf{a}}$ where $m_{\tau,p}(\mathbf{a}) = p^{-1}\tr(\Gamma_\tau(\mathbf{a}))$. The measure $\mu_{p,\mathbf{a}}$ is
such that $\mu_{p,\mathbf{a}}((x,\infty)) \leq \| \Gamma_\tau(\mathbf{a})\| (m_{\tau,p}(\mathbf{a}))^{-1}
F^{\tilde{\mathbf{C}}_\tau}((x,\infty))$ for all $x$. Now, by the tightness of the sequence $\{F^{\tilde{\mathbf{C}}_\tau}\}$ (by Lemma
\ref{lem:tightness}), it follows that $\{\mu_{p,\mathbf{a}}\}$ is a tight sequence of probability measures. Now, by Step 2 and
the conclusion in Section \ref{sec:conv_random_part}, it follows  there is a subsequence $\{p_\ell\}$ such that
the Stieltjes transform of $(m_{\tau,p_\ell}(\mathbf{a}))^{-1}\tilde\beta_{\tau,p_\ell}(z,\mathbf{a})$ converges almost surely
to $(m_{\tau}(\mathbf{a}))^{-1}\beta_\tau(z,\mathbf{a})$ for each $z\in \mathbb{C}^+$. The conclusion that
$(m_{\tau}(\mathbf{a}))^{-1}\beta_{\tau}(z,\mathbf{a})$ is the Stieltjes transform of a Borel probability measure
then follows from Lemma \ref{lemma:prob_measure_convergence}.
\end{proof}

\begin{lemma}\label{lem:tightness}
Under the conditions of Theorem \ref{thm:lin_proc}, $F^{\mathbf{C}_\tau}$ is a tight sequence.
\end{lemma}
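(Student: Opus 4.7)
The plan is to prove tightness of $\{F^{\mathbf{C}_\tau}\}$ by establishing that the second moments of these ESDs are uniformly bounded in expectation, then upgrade to almost sure tightness via a concentration argument. By Lemma \ref{lem:aux:2}, the sup-norm distance between the ESDs of $\mathbf{C}_\tau^U$ (which has the same ESD as $\mathbf{C}_\tau$) and $\tilde{\mathbf{C}}_\tau$ is bounded by $4(q+\tau+1)/p \to 0$, so it suffices to establish tightness of $\{F^{\tilde{\mathbf{C}}_\tau}\}$.

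The heart of the argument is the bound
\[
\mathbb{E}\Big[\tfrac{1}{p}\tr(\tilde{\mathbf{C}}_\tau^2)\Big] = \frac{n}{p^2}\,\mathbb{E}\|\tilde{\mathbf{S}}_\tau - \tilde{\boldsymbol{\Sigma}}_\tau\|_F^2 = O(1).
\]
Using the representation $\tilde{\mathbf{S}}_\tau = \mathbf{V}\boldsymbol{\Delta}_\tau\mathbf{V}^*$ from \eqref{eq:transformed_data_matrix}, with $v_{jt} = n^{-1/2}g(\boldsymbol{\alpha}_j,\nu_t)\tilde{Z}_{jt}$ being zero-mean, uncorrelated across $(j,t)$ with $\mathbb{E}|v_{jt}|^2 = n^{-1}\psi(\boldsymbol{\alpha}_j,\nu_t)$, I would compute entrywise that
\[
\mathbb{E}\,|(\tilde{\mathbf{S}}_\tau - \tilde{\boldsymbol{\Sigma}}_\tau)_{jk}|^2 \leq C/n
\]
uniformly in $j,k$. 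For $j \neq k$ this follows from independence between the $j$th and $k$th rows of $\mathbf{V}$ together with the uniform bound $\psi \leq L_1^2$ implied by assumption {\bf A5}; for $j=k$ the same bound is obtained from the finite fourth moment assumption {\bf Z1}. Summing over the $p^2$ entries gives $\mathbb{E}\|\tilde{\mathbf{S}}_\tau - \tilde{\boldsymbol{\Sigma}}_\tau\|_F^2 = O(p^2/n)$, and hence the $O(1)$ bound on the expected second moment. Markov's inequality then yields $\mathbb{E}[F^{\tilde{\mathbf{C}}_\tau}((-M,M)^c)] \leq C/M^2$ uniformly in $p,n$, which is tightness in expectation; combining this with a McDiarmid-style concentration inequality for $\tfrac{1}{p}\tr(\tilde{\mathbf{C}}_\tau^2)$ around its mean (analogous to those derived in Section \ref{sec:conv_random_part} for the Stieltjes transform) together with the Borel--Cantelli lemma yields almost sure tightness.

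The main obstacle is treating the general linear process case: here one truncates to an MA$(q)$ approximation with $q = O(p^{1/4})$ as in {\bf A7} and uses the summability condition in {\bf A5} to control the truncation error in Frobenius norm, so that the $O(1)$ second-moment bound transfers to the limiting linear process. A secondary technicality is the non-Gaussian regime, where the entries of $\tilde{\mathbf{Z}} = \mathbf{U}^*\mathbf{Z}\mathbf{F}_n$ need no longer be independent, but since the computation above only uses uncorrelatedness and uniformly bounded fourth moments---both of which are preserved under the unitary transformation---the argument goes through essentially unchanged.
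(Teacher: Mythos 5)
Your overall strategy --- bound $\tfrac{1}{p}\tr(\mathbf{C}_\tau^2)=\int x^2\,dF^{\mathbf{C}_\tau}$ in expectation and then upgrade to an almost sure bound via concentration and Borel--Cantelli --- is exactly the paper's, and your entrywise computation of $\mathbb{E}\|\tilde{\mathbf{S}}_\tau-\tilde{\boldsymbol{\Sigma}}_\tau\|_F^2=O(p^2/n)$ is a clean and correct way to get the $O(1)$ expectation bound in the Gaussian case. However, there are two genuine gaps. First, the concentration step: McDiarmid's inequality does not apply to $\tfrac{1}{p}\tr(\tilde{\mathbf{C}}_\tau^2)$ the way it applies to $s_{\tau,p}(z)$. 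The resolvent trace has deterministically bounded differences because $\|(\mathbf{C}_\tau-z\mathbf{I})^{-1}\|\leq \Im(z)^{-1}$ no matter what the matrix is, so each row replacement changes it by $O(q/(p\Im(z)))$. By contrast, replacing a row $\mathbf{z}_j^*$ of $\mathbf{Z}$ changes $\tr(\mathbf{C}_\tau^2)$ by an amount involving $\|\mathbf{z}_j\|^2$ and $\|\mathbf{X}_{(j)}\|^2$, which is not uniformly bounded (even after truncation at $n^{1/4}\epsilon_p$ the worst-case increments are far too large to give summable tails). The paper instead controls $\mathrm{Var}\big(\tfrac{1}{p}\tr(\bar{\mathbf{C}}_\tau^2)\big)\leq M/p^2$ by a direct fourth-moment calculation on the quadratic forms $\mathbf{U}^{\ell\ell'}=\mathbf{Z}\mathbf{E}^{\ell\ell'}\mathbf{Z}^*-\tr(\mathbf{E}^{\ell\ell'})\mathbf{I}_p$ (using Lemma \ref{lemma:moments_of_quadratic_forms}), and then invokes Borel--Cantelli. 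You would need some such moment-based argument; the ``McDiarmid-style'' route as stated fails.

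Second, the non-Gaussian extension is not as painless as you claim. Your second-moment computation for the off-diagonal entries requires the joint fourth moments $\mathbb{E}\big[\tilde{Z}_{jt}\overline{\tilde{Z}_{kt}}\,\overline{\tilde{Z}_{jt'}}\tilde{Z}_{kt'}\big]$ for $t\neq t'$ to vanish (or be summably small); pairwise uncorrelatedness of the entries of $\tilde{\mathbf{Z}}=\mathbf{U}^*\mathbf{Z}\mathbf{F}_n$ gives no control over these mixed fourth moments, and outside the Gaussian case the entries of $\tilde{\mathbf{Z}}$ are neither independent nor do these cross moments automatically factor. The paper sidesteps this entirely by never rotating: it works with $\bar{\mathbf{C}}_\tau=(np)^{-1/2}\sum_{\ell_1,\ell_2}\mathbf{A}_{\ell_1}\mathbf{U}^{\ell_1\ell_2}\mathbf{A}_{\ell_2}$ expressed in the original i.i.d.\ matrix $\mathbf{Z}$, so that independence of the rows $\mathbf{z}_i$ and the bound $\mathbb{E}|Z_{11}|^4=\mu_4<\infty$ can be used directly. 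To repair your argument you would either have to redo the moment computation in the unrotated coordinates, or expand the mixed fourth moments of $\tilde{\mathbf{Z}}$ in terms of those of $\mathbf{Z}$ and verify the extra fourth-cumulant contributions are $O(1/n)$ --- which amounts to the same work.
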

It should be noted that Lemma \ref{lem:tightness}, together with $s_{\tau,p}(z)\stackrel{a.s.}{\longrightarrow} s_{\tau}(z)$
for $z \in \mathbb{C}^+$, proves the existence of the LSD of $\mathbf{C}_\tau$. The proof of Lemma \ref{lem:tightness} is given
in Appendix \ref{sec:proof_lemma_tightness}.

% we get along the same subsequence ${n_{j}},$ $\mE s_{n_{j}, \tau}(z)\to s_{\tau}(z).$ So the assertion that $s_{n, \tau}(z)\stackrel{a.s.}{\longrightarrow} s_{\tau}(z)$ holds relies on the fact that  $s_{\tau,p}(z)-\mE s_{\tau,p}(z)\stackrel{a.s.}{\longrightarrow} 0.$
%(\textcolor{red}{add the tightness of sequence $F^{\tilde{\mathbf{C}}_{\tau}}$ so that we derive that $F^{\tilde{\mathbf{C}}_{\tau}}$ weakly converges to $F_{\tau}$ almost surely. })

Next, we prove the uniqueness of the solutions $\beta(z,\mathbf{a})$ under the constraint that the solutions belong to the class of
Stieltjes kernels that are analytic on $\mC^{+}$ for all $\mathbf{a}\in \mathrm{supp}(F^{\mathcal{A}})$. First, we verify the uniqueness of the solution for $z\in \mC^{+}(v_0)=\{z\in \mC^{+}\colon\Im(z)>v_{0}\}$ for sufficiently large $v_0 > 0$. At the same time, continuity of the solution with respect to $F^{\cal A}$ is verified. Accordingly, let $\beta_{\tau}(z, \mathbf{a})$ satisfy (\ref{eq:Stieltjes_kernel_covariance}) for any $\mathbf{a}\in \mathrm{supp}(F^{\mathcal{A}})$. In view of establishing the continuous dependence of $\beta_{\tau}(z, \mathbf{a})$, and hence $s_{\tau}(z)$, on $F^{\mathcal{A}}$, on $F^{\mathcal{A}}$ and the kernel ${\cal R}_\tau$, suppose that there is a possibly different distribution $F^{\bar{\mathcal{A}}}$ and a possibly different kernel $\bar{\cal R}_\tau$ (but having the same properties as
${\cal R}_\tau$) such that $\bar{\beta}_{\tau}(z, \mathbf{a})$ satisfies
\[
\bar{\beta}_{\tau}(z, \mathbf{a})
= - \int \frac{\bar{\cal R}_\tau(\mathbf{a},\mathbf{b})dF^{\bar{\mathcal{A}}}(\mathbf{b})}{z + \bar{\beta}_{\tau}(z, \mathbf{b})}, \qquad \mathbf{a}
\in \mathbb{R}^{m_0},
\]
and is a Stieltjes transform of a measure for all $\mathbf{a} \in \mathrm{supp}(F^{\bar{\mathcal{A}}})$.
Note that, by the defining equations and the continuity of ${\cal R}_\tau(\mathbf{a},\mathbf{b})$,
and $\bar{\cal R}_\tau(\mathbf{a},\mathbf{b})$, the functions
$\beta(z,\mathbf{a})$ and $\bar\beta(z,\mathbf{a})$ are continuous in $\mathbf{a}$
for all $z \in \mC^{+}$. Also,
\begin{eqnarray}\label{eq:uniqueness_on_beta_n}
\beta_{\tau}(z, \mathbf{a}) -\bar{\beta}_{\tau}(z, \mathbf{a})
&=& \int \frac{{\cal R}_\tau(\mathbf{a},\mathbf{b})(\beta_{\tau}(z,\mathbf{b})-\bar{\beta}_{\tau}(z,\mathbf{b}))
dF^{\mathcal{A}}(\mathbf{b})}{(z + \beta_{\tau}(z, \mathbf{b}))(z + \bar{\beta}_{\tau}(z, \mathbf{b}))}
- \int \frac{({\cal R}_\tau(\mathbf{a},\mathbf{b}) - \bar{\cal R}_\tau(\mathbf{a},\mathbf{b}))dF^{\cal A}(\mathbf{b})}
{z + \bar{\beta}_{\tau}(z, \mathbf{b})}\nonumber\\
&&
-\int \frac{\bar{\cal R}_\tau(\mathbf{a},\mathbf{b})d(F^{\mathcal{A}}(\mathbf{b})-F^{\bar{\mathcal{A}}})}{z +\bar{\beta}_{\tau}(z, \mathbf{b})}.
\end{eqnarray}
Define
\begin{equation}
\|\beta_{\tau}(z,\cdot)-\bar{\beta}_{\tau}(z,\cdot)\|^2_{\mathcal{A}}=\int |\beta(z,\mathbf{a})-\bar{\beta}_{\tau}(z,\mathbf{a})|^2 dF^{\mathcal{A}}(\mathbf{a}).
\end{equation}
Then, by Cauchy--Schwarz inequality,
\begin{align}
\big|\beta_{\tau}(z,\mathbf{a})
&-\bar{\beta}_{\tau}(z,\mathbf{a})\big|^2\nonumber\\
&\leq 3\left|\int\frac{\mathcal{R}_{\tau}(\mathbf{a},\mathbf{b})(\beta_{\tau}(z,\mathbf{b})-\bar{\beta}_{\tau}(z,\mathbf{b}))
dF^{\mathcal{A}}(\mathbf{b})}{(z+\beta_{\tau}(z,\mathbf{b}))(z+\bar{\beta}_{\tau}(z,\mathbf{b}))}\right|^2
+ r_\tau^{(1)}(\mathbf{a}) + r_\tau^{(2)}(\mathbf{a})\nonumber\\
%3 \left|\int\frac{\mathcal{R}_{\tau}(\mathbf{a},\mathbf{b})d(F^{\bar{\mathcal{A}}}-F^{\mathcal{A}})(\mathbf{b})}
%{z+\bar{\beta}_{\tau}(z,\mathbf{b})}\right|^2\nonumber\\
&\leq  3 \left[\int |\beta_{\tau}(z,\mathbf{b})-\bar{\beta}_{\tau}(z,\mathbf{b})|^2dF^{\mathcal{A}}(\mathbf{b})\right]
\left[\int \frac{\mathcal{R}^2_{\tau}(\mathbf{a},\mathbf{b})dF^{\mathcal{A}}(\mathbf{b})}
{|z+\beta_{\tau}(z,\mathbf{b})|^2|z+\bar{\beta}_{\tau}(z,\mathbf{b})|^2}\right]+ r_\tau^{(1)}(\mathbf{a}) + r_\tau^{(2)}(\mathbf{a}),
\label{eq:beta_diff_norm_inequality}
\end{align}
where
\[
r_{\tau}^{(1)}(\mathbf{a}) = 3 \left| \int \frac{({\cal R}_\tau(\mathbf{a},\mathbf{b}) - \bar{\cal R}_\tau(\mathbf{a},\mathbf{b}))dF^{\cal A}(\mathbf{b}) }{z + \bar{\beta}_{\tau}(z, \mathbf{b})}\right|^2
\leq \frac{3}{v^2}\|{\cal R}_\tau - \bar{\cal R}_\tau\|_\infty^2,
\]
where $\|{\cal R}_\tau - \bar{\cal R}_\tau\|_\infty
= \sup_{\mathbf{a},\mathbf{b} \in \mathbb{R}^{m_0}}|{\cal R}_{\tau}(\mathbf{a},\mathbf{b}) - \bar{\cal R}_{\tau}(\mathbf{a},\mathbf{b})|$,
and
\[
r_{\tau}^{(2)}(\mathbf{a})
=3 \left|\int\frac{\bar{\cal R}_{\tau}(\mathbf{a},\mathbf{b})d(F^{\bar{\mathcal{A}}}-F^{\mathcal{A}})(\mathbf{b})}
{z+\bar{\beta}_{\tau}(z,\mathbf{b})}\right|^2
\leq \frac{6(L_1^4 + \|{\cal R}_\tau - \bar{\cal R}_\tau\|_\infty^2)}{v^2} \| F^{\mathcal{A}}- F^{\bar{\mathcal{A}}}\|_{TV}^2,
\]
where $\parallel \cdot \parallel_{TV}$ denotes the total variation distance.
Taking $v_{0}= \max\{1, \sqrt{2} L_1\}$, if follows for $v > v_{0}$ that
\[
\int\frac{\mathcal{R}_{\tau}^2(\mathbf{a},\mathbf{b})dF^{\mathcal{A}}(\mathbf{b})}
{\left|z+\beta_{\tau}(z,\mathbf{b})\right|^{2}\left|z+\bar{\beta}_{\tau}(z,\mathbf{b})\right|^{2}}\leq \frac{L_1^4}{v^4} < \frac{1}{4}.
\]
Therefore, by (\ref{eq:beta_diff_norm_inequality}), for $v > v_0$,
\begin{eqnarray}\label{eq:beta_diff_norm_bound}
\parallel \beta_\tau(z,\cdot) - \bar\beta_\tau(z,\cdot)\parallel_{\mathcal{A}}^2 &\leq& 4\int (r_\tau^{(1)}(\mathbf{a})+r_\tau^{(2)}(\mathbf{a})) dF^{\mathcal{A}}(\mathbf{a}) \nonumber\\
&\leq& \frac{12}{v^2}\left(\|{\cal R}_\tau - \bar{\cal R}_\tau\|_\infty^2 +
2(L_1^4+\|{\cal R}_\tau - \bar{\cal R}_\tau\|_\infty^2) \| F^{\mathcal{A}} - F^{\bar{\mathcal{A}}}\|_{TV}^2\right).
\end{eqnarray}
If $F^{\mathcal{A}}=F^{\bar{\mathcal{A}}}$, and ${\cal R}_\tau = \bar{\cal R}_\tau$,
(\ref{eq:beta_diff_norm_bound}) and the continuity of $\beta_{\tau}(z, \mathbf{a})$
and $\bar{\beta}_{\tau}(z,\mathbf{a})$ in $\mathbf{a}$ imply that $\beta_{\tau}(z, \mathbf{a})=\bar{\beta}_{\tau}(z,\mathbf{a})$ for $z\in\mC^{+}(v_{0})$ and $\mathbf{a} \in \mbox{supp}(F^{\mathcal{A}})$.
Then, since both are analytic functions on $\mC^{+}$ for every fixed $\mathbf{a}\in \mbox{supp}(F^{\mathcal{A}})$,
the uniqueness of the solution in $z\in \mC^{+}$ follows.
Moreover, (\ref{eq:beta_diff_norm_bound}) proves the continuous dependence of the solution $\beta_\tau(z,\cdot)$ on
on ${\cal R}_\tau$ and $F^{\mathcal{A}}$, with respect to the topology of uniform convergence and
that of total variation norm, respectively. From this, similar properties for $s_\tau$ are easily deduced.

\section{Proof of Theorem \ref{thm:lin_proc}}
\label{sec:proof:lin_proc}

%%%%%%%%%%%%%%%%%%%%%%%%%%%%%%%%%%%%%%%%%%%%%%%%%%%

In this section, the results are extended to the setting that $q$ is not fixed, but
tends to infinity at certain rate. In fact, $q=O(p^{1/4})$ is an appropriate choice. This
rate plays a crucial role in two places of the derivations. First in verifying properties
(such as continuity) of the solution and then in transitioning from the Gaussian to the
non-Gaussian case. The latter situation requires the $1/4$ power, while the former can
be worked out under the weaker assumption that $q=o(p^{1/2})$. It is shown here that the
LSD of the truncated process is the same as that of the linear process almost surely. Denote then by
\begin{equation}
\mathbf{S}^\mathrm{tr}_{\tau}=\frac{1}{2n}\left(\sum_{t=\tau+1}^{n}X^\mathrm{tr}_{t}{X^\mathrm{tr}_{t-\tau}}^{*} +\sum_{t=\tau+1}^{n}X^\mathrm{tr}_{t-\tau}{X^\mathrm{tr}_{t}}^{*}\right)
\end{equation}
 the symmetrized auto-covariance matrix for the truncated process
 $X^\mathrm{tr}_{t} =\sum_{\ell=0}^{q}\mathbf{A}_{\ell}Z_{t-\ell}$, $t\in \mZ$.
 Let $L(F,G)$ denote the Levy distance between distribution function $F$ and $G$, defined by
\begin{equation*}
L(F,G) = \inf\{\epsilon>0\colon F(x-\epsilon)-\epsilon\leq G(x)\leq F(x+\epsilon)+\epsilon\}.
\end{equation*}
In view of Lemma \ref{lem:levy_distance_normal_matrices}, the aim is to show that
\begin{equation}
\label{eq:inequality_levy_distance_truncated}
L^{3}(F^{\mathbf{C}_{\tau}},F^{\mathbf{C}^\mathrm{tr}_{\tau}})
\leq \frac{1}{p}\tr(\mathbf{C}_{\tau}-\mathbf{C}^\mathrm{tr}_{\tau})^2 \to 0
\qquad\mbox{a.s.}
\end{equation}
To this end, define $\bar{X}_t=X_t-X_t^\mathrm{tr}=\sum_{\ell=q+1}^\infty \mathbf{A}_\ell Z_{t-\ell}$ and notice that
\begin{align*}
\mathbf{S}_{\tau}-\mathbf{S}_{\tau}^\mathrm{tr}
=&
\frac{1}{2n}\sum_{t=1}^{n-\tau}(X_{t}X^*_{t+\tau} + X_{t+\tau}X^{*}_{t})
-\frac{1}{2n}\sum_{t=1}^{n-\tau}(X^\mathrm{tr}_{t}{X^\mathrm{tr}_{t+\tau}}^*+X_{t+\tau}^\mathrm{tr}{X^\mathrm{tr}_{t}}^{*}) \\
%=&
%\frac{1}{2n}\sum_{t=1}^{n-\tau}{(\bar{X}_{t}X^{*}_{t+\tau}}+X_{t}^\mathrm{tr}{\bar{X}_{t+\tau}}^{*})
%+\frac{1}{2n}\sum_{t=1}^{n-\tau}(\bar{X}_{t+\tau}X_{t}^{*}+X^\mathrm{tr}_{t+\tau}\bar{X}^{*}_{t}) \\
=&
\frac{1}{2n}\sum_{t=1}^{n-\tau}(\bar{X}_{t}{X^\mathrm{tr}_{t+\tau}}^{*}+X^\mathrm{tr}_{t+\tau}\bar{X}^{*}_{t})
+\frac{1}{2n}\sum_{t=1}^{n-\tau}(X^\mathrm{tr}_{t}\bar{X}^{*}_{t+\tau}+\bar{X}_{t+\tau}{X^\mathrm{tr}_{t}}^{*}) \\
& ~~~~~~~~~~~~~
+\frac{1}{2n}\sum_{t=1}^{n-\tau}(\bar{X}_{t}\bar{X}^{*}_{t+\tau}+\bar{X}_{t+\tau}\bar{X}^{*}_{t}) \\
=& \mathbf{S}_{\tau,1}+\mathbf{S}_{\tau,2}+\mathbf{S}_{\tau,3}.
\end{align*}
Therefore,
\begin{equation}\label{eq:S_trunc_diff_norm_sum}
\|\mathbf{C}_{\tau}-\mathbf{C}^\mathrm{tr}_{\tau}\|^{2}_{F}
\leq 3 \left(
\frac{n}{p}\|\mathbf{S}_{\tau,1}-\mE [\mathbf{S}_{\tau,1}]\|^{2}_{F}
+\frac{n}{p}\|\mathbf{S}_{\tau,2}-\mE [\mathbf{S}_{\tau,2}]\|^{2}_{F}
+\frac{n}{p}\|\mathbf{S}_{\tau,3}-\mE [\mathbf{S}_{\tau,3}]\|^{2}_{F}
\right).
\end{equation}
Hence, to prove that (\ref{eq:inequality_levy_distance_truncated}) holds, it suffices to show that
\begin{equation}\label{eq:series_summation}
\sum_{p=1}^{\infty}\frac{n}{p^2}\mE\big[\|\mathbf{S}_{\tau,i}-\mE [\mathbf{S}_{\tau,i}]\|^{2}_{F}\big]< \infty,
\qquad i=1,2,3,
\end{equation}
due to the Borel-Cantelli lemma. The corresponding detailed calculations are performed in Appendix \ref{app:lin_proc}.

%%%%%%%%%%%%%%%%%%%%%%%%%%%%%%%%%%%%%%%%%%%%%%%%%%%%

\section{Extension to non-Gaussian settings}
\label{sec:proof:non-gauss}

%%%%%%%%%%%%%%%%%%%%%%%%%%%%%%%%%%%%%%%%%%%%%%%%%%%%

In this section, it is shown that Theorem \ref{thm:LSD_variance} and Theorem \ref{thm:LSD_generalized_process_B}
extend beyond the Gaussian setting. In order to show this, Lindeberg's replacement strategy as developed in
\cite{Chatterjee:2006} is applied to a process consisting of truncated, centered and rescaled versions of the
original innovation entries $Z_{tj}$. To formally define this transformation, let $\epsilon_p>0$ be such that $\epsilon_p\to 0$,
$p^{1/4}\epsilon_p\to\infty$ and $\mathbb{P}(|Z_{11}|\geq n^{1/4}\epsilon_p)\leq n^{-1}\epsilon_p$. The existence
of such an $\epsilon_p$ follows from {\bf Z1} and {\bf Z2}. Let then
$\breve{Z}_{tj}^{c}=Z_{tj}^{c}I_{\{|Z_{tj}^{c}|\leq n^{1/4}\epsilon_{p}\}}$ denote the truncated innovations and $\hat{Z}_{tj}^{c}=(\breve{Z}_{tj}^{c}-\mE[\breve{Z}_{tj}^{c}])/(2\mbox{sd}(\breve{Z}_{ij}^{c}))$ the standardized versions
where $c \in \{\mathbf{R},\mathbf{I}\}$ with the superscripts $\mathbf{R}$ and $\mathbf{I}$ denoting the real and imaginary parts.
Let further $\hat X_t=\sum_{\ell=0}^q\mathbf{A}_\ell\hat Z_{t-\ell}$, $t\in\mathbb{Z}$, and define the
autocovariance matrix of $(\hat X_t\colon t\in\mathbb{Z})$ be defined by
\[
\hat{\mathbf{C}}_{\tau}:=\sqrt{\frac{n}{p}}(\hat{\mathbf{S}}_{\tau}-\mE [\hat{\mathbf{S}}_{\tau}]),
\]
where
\begin{equation}
\hat{\mathbf{S}}_{\tau}
 = \frac{1}{2(n-\tau)}\bigg(
 \sum_{t=\tau+1}^{n}\hat{X}_t\hat{X}_{t-\tau}^{*}+\sum_{t=\tau+1}^{n}\hat{X}_{t-\tau}\hat{X}_t^{*}
 \bigg).
\end{equation}
The LSD of the auto-covariance matrix of $\mathbf{C}_\tau$ is the same as that of $\hat{\mathbf{C}}_\tau$,
since, according to \cite{Bai:Yin:1988} and \cite{Liu:Aue:Paul:2015}, an application of a rank inequality
and Bernstein's inequality implies that
\[
\sup_{x}\big|F^{\mathbf{C}_{\tau}}(x)-F^{\hat{\mathbf{C}}_{\tau}}(x)\big|
\to 0
\qquad\mbox{a.s.}
\]
For notational simplicity, the truncated, centered and rescaled variables are therefore henceforth
still denoted by $Z_{jt}$ (correspondingly, $X_{jt}$) and it is assumed that they are i.i.d.\ with $|Z_{11}| \leq n^{1/4}\epsilon_p$,
$\mathbb{E}[Z_{11}] = 0$, $\mathbb{E}[|Z_{11}|^2] = 1$, the real and imaginary parts are independent
with equal variance, and $\mathbb{E}[|Z_{11}|^4] = \mu_4$ for some
finite constant $\mu_4$.

Consider now the process $(X_t^\prime\colon t\in\mathbb{Z})$ given by
\begin{equation}
X^\prime_{t}=\sum_{\ell=0}^{q}\mathbf{A}_{\ell}W_{t-\ell},
\qquad t\in\mathbb{Z},
\end{equation}
with the innovations $(W_t\colon t\in\mathbb{Z})$ consisting of i.i.d.\ real- or complex-valued (not necessarily Gaussian) entries $W_{jt}$ satisfying
\begin{itemize}
\itemsep-.2ex
\item[{\bf T1}] $\mathbb{E}[W_{jt}] = 0$, $\mathbb{E}[|W_{jt}|^2] = 1$ and $\mathbb{E}[|W_{jt}|^4] \leq C$ for some finite constant $C>0$;
\item[{\bf T2}] In case of complex-valued innovations, the real and imaginary parts of $W_{jt}$ are independent with $\mathbb{E}[\Re(W_{jt})]
=\mathbb{E}[\Im(W_{jt})] = 0$ and $\mathbb{E}[\Re(W_{jt})^2]=\mathbb{E}[\Im(W_{jt})^2] = 1/2$;
\item[{\bf T3}] $|W_{jt}|\leq n^{1/4}\epsilon_{p}$ with $\epsilon_p > 0$ such that $\epsilon_{p}\to 0$ and $p^{1/4}\epsilon_{p} \to \infty$;
\item[{\bf T4}] The $W_{jt}$ are independent of the $Z_{tj}$ defined in Theorem \ref{thm:LSD_variance}.
\end{itemize}
%The condition $p^{1/4}\epsilon_p \to \infty$ becomes clear from the calculations carried out in Section
%\ref{app:non-gauss} to show that the LSDs for the Gaussian and non-Gaussian cases are the same.
It is assumed that the coefficient matrices $(\mathbf{A}_{\ell}\colon\ell\in\mathbb{N})$ satisfy conditions
{\bf A1}--{\bf A5}.  Define the lag-$\tau$ auto-covariance matrix of $(X_{t}^\prime\colon t\in\mathbb{Z})$ by
\begin{equation}
\mathbf{S}^\prime_{\tau}
=\frac{1}{2(n-\tau)}\bigg(
\sum_{t=\tau+1}^{n}X_t^\prime{X_{t-\tau}^\prime}^{*}+\sum_{t=\tau+1}^{n}X_{t-\tau}^\prime{X_{t}^\prime}^{*}
\bigg),
\end{equation}
so that the corresponding renormalized lag-$\tau$ auto-covariance matrix is given by
\begin{equation*}
\mathbf{C}_{\tau}^\prime = \sqrt{\frac{n}{p}}(\mathbf{S}_{\tau}^\prime-\mE[\mathbf{S}_{\tau}^\prime])
\end{equation*}
and the lag-$\tau$ Stieltjes transform by
%\begin{equation}
$s_{\tau,p}^\prime(z)=\frac{1}{p}\tr(\mathbf{C}_{\tau}^\prime-zI)^{-1}$, $z\in \mC^{+}$. We denote the Stieltjes
transform of $\mathbf{C}_\tau$, defined in terms of the bounded (after trunctation and normalization)
$Z_{jt}$'s, by $s_{\tau,p}$. Since we have proved the existence and uniqueness
of LSD in the case where $Z_{jt}$'s are i.i.d. standard Gaussian, it follows that for all $z\in \mC^+$, $s_{\tau,p}(z)$ converges a.s.
to the Stieltjes transform of the LSD determined by (\ref{eq:Stieltjes_LSD_covariance}) and (\ref{eq:Stieltjes_kernel_covariance}).
%\end{equation}
Thus, proving that the results hold for non-Gaussian innovations means showing that
(i) $s_{\tau,p}^\prime(z)-\mE [s_{\tau,p}^\prime(z)]\to 0$ a.s.\ and
(ii) $\mE[s_{\tau,p}(z)-s_{\tau,p}^\prime(z)]\to 0$ for all $z\in\mC^{+}$ under \eqref{eq:mod_dim}.
Since (\ref{eq:applying_mcdiarmid_s_n_tau_0}) has been derived without invoking Gaussianity of the innovations,
(i) follows readily. To show that (ii) holds requires an application of the Linderberg principle developed
in \cite{Chatterjee:2006}. This task is equivalent to verifying that the difference
\begin{equation}\label{eq:difference_gaussian_stieltjes_nongaussian_stieltjes}
\mE\left(\frac{1}{p}\tr(\mathbf{C}_\tau-zI)^{-1}\right)-\mE\left(\frac{1}{p}\tr(\mathbf{C}_{\tau}^\prime-zI)^{-1}\right)
\end{equation}
tends to zero. The arguments for (ii) to hold are provided in Appendix \ref{app:non-gauss}.

%%%%%%%%%%%%%%%%%%%%%%%%%%%%%%%%%%%%%%%%%%%%%%%%%%%%
%%%%%%%%%%%%%%%%%%%%%%%%%%%%%%%%%%%%%%%%%%%%%%%%%%%%

\appendix

%%%%%%%%%%%%%%%%%%%%%%%%%%%%%%%%%%%%%%%%%%%%%%%%%%%%
%%%%%%%%%%%%%%%%%%%%%%%%%%%%%%%%%%%%%%%%%%%%%%%%%%%%

%%%%%%%%%%%%%%%%%%%%%%%%%%%%%%%%%%%%%%%%%%%%%%%%%%%%
\section{Technical lemmas}
\label{sec:tech_lemmas}
%%%%%%%%%%%%%%%%%%%%%%%%%%%%%%%%%%%%%%%%%%%%%%%%%%%%

\begin{lemma}
\label{lemma:rank_one_inverse}
Supposing that $\mathbf{A}$ is invertible and $c^*\mathbf{A}^{-1} b \neq - 1$, it holds
\[
(\mathbf{A} + bc^*)^{-1}
= \mathbf{A}^{-1} - \frac{\mathbf{A}^{-1} bc^* \mathbf{A}^{-1}}{1+c^*\mathbf{A}^{-1}b}.
\]
\end{lemma}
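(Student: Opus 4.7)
The plan is to verify the identity directly by multiplication: compute the product of $\mathbf{A}+bc^*$ with the proposed inverse and check that it equals $\mathbf{I}$. Since $\mathbf{A}$ is square and invertible, it suffices to verify a one-sided inverse; the other side is automatic.

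First I would expand
\begin{equation*}
(\mathbf{A}+bc^*)\left(\mathbf{A}^{-1}-\frac{\mathbf{A}^{-1}bc^*\mathbf{A}^{-1}}{1+c^*\mathbf{A}^{-1}b}\right)
= \mathbf{I}+bc^*\mathbf{A}^{-1}-\frac{bc^*\mathbf{A}^{-1}+bc^*\mathbf{A}^{-1}bc^*\mathbf{A}^{-1}}{1+c^*\mathbf{A}^{-1}b}.
\end{equation*}
The crucial observation is that $c^*\mathbf{A}^{-1}b$ is a scalar, call it $\alpha$, so the middle factor in $bc^*\mathbf{A}^{-1}bc^*\mathbf{A}^{-1}=b\,\alpha\,c^*\mathbf{A}^{-1}=\alpha\,bc^*\mathbf{A}^{-1}$ can be pulled out. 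Then the last fraction collapses to $(1+\alpha)\,bc^*\mathbf{A}^{-1}/(1+\alpha)=bc^*\mathbf{A}^{-1}$, which cancels the middle term on the right, leaving $\mathbf{I}$. The assumption $c^*\mathbf{A}^{-1}b\neq -1$ is exactly what makes the denominator nonzero and the computation legitimate.

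There is essentially no obstacle: the identity is purely algebraic and follows in one line once one recognizes the scalar nature of $c^*\mathbf{A}^{-1}b$. If desired, one could also present the argument through the $2\times 2$ block-matrix identity
\begin{equation*}
\begin{pmatrix}\mathbf{A} & b\\ -c^* & 1\end{pmatrix}
\end{equation*}
and Schur complements, which gives the formula as a special case of the Woodbury identity, but the direct verification above is shorter and self-contained.
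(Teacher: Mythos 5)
Your verification is correct: the expansion is right, the key step of treating $c^*\mathbf{A}^{-1}b$ as a scalar so that $bc^*\mathbf{A}^{-1}bc^*\mathbf{A}^{-1}=(c^*\mathbf{A}^{-1}b)\,bc^*\mathbf{A}^{-1}$ is exactly what makes the fraction collapse, and invertibility of $\mathbf{A}+bc^*$ follows since a square matrix with a one-sided inverse is invertible. The paper states this lemma (the standard Sherman--Morrison identity) without proof, so your direct one-line verification is precisely the intended argument and nothing further is needed.
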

%---------------------------------------------------------------------------------------------------------
\begin{lemma}[\citet{McDiarmid:1989} Inequality]
\label{lemma:mcdiarmid}
Let $X_{1},\ldots,X_{m}$ be independent random variables taking values in $\mathcal{X}$.
Suppose that $f\colon\mathcal{X}^{m}\to\mR$ is a function of $X_{1},\ldots,X_{m}$ satisfying, for all $x_{1},\ldots, x_{m}$ and $x^{\prime}_{j}$,
\[
|f(x_1,\ldots,x_{j},\ldots, x_{m}) - f(x_1,\ldots,x_{j}^{\prime},\ldots,x_{m})|\leq c_{j}.
\]
Then, for all $\epsilon>0,$
\begin{equation*}
\mP\left(|f(X_{1},\ldots, X_{m})-\mE [f(X_{1}, \ldots, X_{m})]|>\epsilon\right)
\leq 2 \exp\left(-\frac{2\epsilon^{2}}{\sum_{j=1}^{m}c_{j}^{2}}\right).
\end{equation*}
\end{lemma}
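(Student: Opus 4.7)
The plan is to prove this via the classical Doob martingale decomposition combined with the Azuma--Hoeffding inequality, which is the standard route to McDiarmid-type concentration. Let $\mathcal{F}_0$ be the trivial $\sigma$-algebra and $\mathcal{F}_k = \sigma(X_1,\ldots,X_k)$ for $k=1,\ldots,m$. Define $M_k = \mathbb{E}[f(X_1,\ldots,X_m)\mid \mathcal{F}_k] - \mathbb{E}[f(X_1,\ldots,X_m)]$, so that $(M_k)_{k=0}^m$ is a martingale with $M_0=0$ and $M_m = f(X_1,\ldots,X_m) - \mathbb{E}[f(X_1,\ldots,X_m)]$. Let $D_k = M_k - M_{k-1}$ be the martingale difference sequence. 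The goal reduces to showing $\mathbb{P}(|M_m|>\epsilon) \leq 2\exp(-2\epsilon^2/\sum_{j=1}^m c_j^2)$.

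The first key step is to establish that, conditionally on $\mathcal{F}_{k-1}$, the difference $D_k$ takes values in an interval of length at most $c_k$. By independence of the $X_j$'s, I can write $D_k = g_k(X_1,\ldots,X_k) - \mathbb{E}[g_k(X_1,\ldots,X_k)\mid \mathcal{F}_{k-1}]$ where $g_k(x_1,\ldots,x_k) = \mathbb{E}[f(x_1,\ldots,x_k,X_{k+1},\ldots,X_m)]$. Defining
\[
U_k = \sup_{x} g_k(X_1,\ldots,X_{k-1},x) - \mathbb{E}[g_k(X_1,\ldots,X_k)\mid \mathcal{F}_{k-1}],
\qquad
L_k = \inf_{x} g_k(X_1,\ldots,X_{k-1},x) - \mathbb{E}[g_k(X_1,\ldots,X_k)\mid \mathcal{F}_{k-1}],
\]
the bounded differences hypothesis on $f$ transfers to $g_k$ (by swapping out a single coordinate inside the expectation) and gives $U_k - L_k \leq c_k$, with $L_k \leq D_k \leq U_k$ almost surely.

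The second step is Hoeffding's lemma applied conditionally: for any $\lambda\in\mathbb{R}$, a random variable $D$ satisfying $L \leq D \leq U$ with $\mathbb{E}[D]=0$ obeys $\mathbb{E}[e^{\lambda D}] \leq \exp(\lambda^2(U-L)^2/8)$. Applying this to $D_k$ conditionally on $\mathcal{F}_{k-1}$ yields $\mathbb{E}[e^{\lambda D_k}\mid \mathcal{F}_{k-1}] \leq \exp(\lambda^2 c_k^2/8)$. Iterating via the tower property then gives $\mathbb{E}[e^{\lambda M_m}] \leq \exp(\lambda^2 \sum_{j=1}^m c_j^2/8)$.

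The final step is the standard Chernoff argument: for any $\lambda > 0$,
\[
\mathbb{P}(M_m > \epsilon) \leq e^{-\lambda\epsilon} \mathbb{E}[e^{\lambda M_m}] \leq \exp\!\left(-\lambda \epsilon + \tfrac{\lambda^2}{8}\sum_{j=1}^m c_j^2\right),
\]
which is optimized by $\lambda = 4\epsilon/\sum_{j=1}^m c_j^2$, producing $\mathbb{P}(M_m > \epsilon) \leq \exp(-2\epsilon^2/\sum_{j=1}^m c_j^2)$. Applying the same argument to $-f$ handles the lower tail, and a union bound over the two tails supplies the factor of $2$. The main subtle point is the conditional range bound on $D_k$; the rest is a mechanical appeal to Hoeffding's lemma and the Chernoff method, so I would emphasize that step and cite Hoeffding's lemma and the Azuma--Hoeffding inequality for the remaining ingredients.
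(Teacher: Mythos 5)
Your proof is correct: it is the standard Doob-martingale/Azuma--Hoeffding argument, with the one genuinely non-mechanical step (the conditional range bound $U_k - L_k \leq c_k$ for the martingale differences, obtained by transferring the bounded-differences hypothesis through the partial averaging $g_k$ using independence) handled properly, and the Chernoff optimization and two-sided union bound yielding exactly the stated constant $2\exp(-2\epsilon^2/\sum_j c_j^2)$. The paper itself offers no proof of this lemma --- it is stated as a known result with a citation to \citet{McDiarmid:1989} --- and your argument is precisely the classical proof from that source, so there is nothing to compare beyond noting that you have correctly reconstructed it.
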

%---------------------------------------------------------------------------------------------------------
\begin{lemma}[\citet{Silverstein:Bai:1995}, Lemma 2.6]
\label{lemma:quad_bound}
Let $z \in \mathbb{C}^+$ with $v = \Im(z)$. Let $\mathbf{A}$ and $\mathbf{B}$ be $n \times n$ matrices with $\mathbf{A}$ Hermitian, and let $r \in \mathbb{C}^n$. Then,
\[
\left|\tr\left(\{(\mathbf{A}-z\mathbf{I})^{-1} - (\mathbf{A}+rr^* - z\mathbf{I})^{-1}\}\mathbf{B}\right)\right|
=\left|\frac{r^* (\mathbf{A}-z\mathbf{I})^{-1}\mathbf{B}(\mathbf{A}-z\mathbf{I})^{-1}r}
{1+r^*(\mathbf{A}-z\mathbf{I})^{-1}r}\right|
\leq \frac{\|\mathbf{B}\|}{v}.
\]
\end{lemma}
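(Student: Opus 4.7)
The statement has two parts: an exact identity and an absolute-value bound. The plan is to establish each in turn, both by elementary matrix manipulations, with no asymptotics involved.

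For the equality, I would apply the Sherman--Morrison identity (Lemma~\ref{lemma:rank_one_inverse}) to $(\mathbf{A} - z\mathbf{I}) + rr^*$ with $b = c = r$. This yields
\[
(\mathbf{A} + rr^* - z\mathbf{I})^{-1}
= (\mathbf{A} - z\mathbf{I})^{-1} - \frac{(\mathbf{A} - z\mathbf{I})^{-1} rr^* (\mathbf{A} - z\mathbf{I})^{-1}}{1 + r^*(\mathbf{A} - z\mathbf{I})^{-1}r},
\]
and multiplying by $\mathbf{B}$, taking the trace, and using the cyclic property $\tr(rr^* \mathbf{M}) = r^* \mathbf{M} r$ immediately yields the claimed equality. (The denominator is nonzero because its imaginary part is strictly positive, as verified below.)

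For the bound, set $M := (\mathbf{A} - z\mathbf{I})^{-1}$. Since $\mathbf{A}$ is Hermitian, $M$ and $M^*$ commute and share the spectral resolution $M M^* = M^* M = \sum_i |\lambda_i - z|^{-2} e_i e_i^*$, where $\{(\lambda_i, e_i)\}$ are the eigenpairs of $\mathbf{A}$. I would first lower-bound the denominator by its imaginary part:
\[
|1 + r^* M r|
\;\geq\; \Im(r^* M r)
\;=\; \sum_i |r_i|^2 \,\frac{v}{|\lambda_i - z|^2}
\;=\; v\, r^* M M^* r,
\]
where $r_i = e_i^* r$ and the computation uses $\Im((\lambda_i - z)^{-1}) = v/|\lambda_i - z|^2$.

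For the numerator, writing $u = M^* r$ and $w = M r$ gives $r^* M \mathbf{B} M r = u^* \mathbf{B} w$. By the Cauchy--Schwarz inequality and the operator-norm bound $\|\mathbf{B} w\| \leq \|\mathbf{B}\|\|w\|$,
\[
|r^* M \mathbf{B} M r| \;\leq\; \|u\|\,\|\mathbf{B}\|\,\|w\| \;=\; \|\mathbf{B}\|\, r^* M M^* r,
\]
using the aforementioned commutativity $M M^* = M^* M$ to identify $\|u\|^2 = \|w\|^2 = r^* M M^* r$. Dividing numerator by denominator cancels the common factor $r^* M M^* r$ (trivially if $r = 0$) and leaves the bound $\|\mathbf{B}\|/v$. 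The main (minor) subtlety is ensuring the commutativity $M M^* = M^* M$, which rests crucially on $\mathbf{A}$ being Hermitian; without this hypothesis, the two norms $\|u\|$ and $\|w\|$ would not simplify to the same quantity and the cancellation with the denominator would fail.
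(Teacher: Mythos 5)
Your proof is correct. Note, however, that the paper does not prove this statement at all: it is quoted verbatim as Lemma 2.6 of \citet{Silverstein:Bai:1995} and used as an imported tool, so there is no in-paper argument to compare against. What you have written is essentially the standard proof from the random matrix literature: the identity follows from the rank-one resolvent formula (the paper's Lemma \ref{lemma:rank_one_inverse}) together with $\tr(rr^*\mathbf{M}) = r^*\mathbf{M}r$, and the bound follows from $|1+r^*\mathbf{M}r| \geq \Im(r^*\mathbf{M}r) = v\,r^*\mathbf{M}\mathbf{M}^*r$ combined with the Cauchy--Schwarz estimate $|r^*\mathbf{M}\mathbf{B}\mathbf{M}r| \leq \|\mathbf{B}\|\,r^*\mathbf{M}\mathbf{M}^*r$. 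You correctly isolate the one genuinely load-bearing hypothesis, namely that $\mathbf{A}$ is Hermitian, which makes the resolvent $\mathbf{M}$ normal so that $\|\mathbf{M}r\| = \|\mathbf{M}^*r\|$ and the numerator and denominator share the common factor $r^*\mathbf{M}\mathbf{M}^*r$; the degenerate case $r=0$ is handled since $\mathbf{M}$ is invertible. Nothing is missing.
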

%---------------------------------------------------------------------------------------------------------
\begin{lemma}[\citet{Silverstein:Bai:1995}, Lemma 8.10]
\label{lemma:moments_of_quadratic_forms}
Let $\mathbf{A}$ be an $n\times n$ non-random matrix and $X=(X_1, \ldots, X_n)^{T}$ be a random vector of
independent entries. Assume that $\mE [X_{j}]=0,$ $\mE[|X_{j}|^2]=1$ and $\mE[|X_{j}|^{\ell}]\leq \nu_{\ell}$.
Then, for any integer $\alpha\geq 2$,
\begin{equation*}
\mE\left[|X^{*}\mathbf{A}X-\tr(\mathbf{A})|^{\alpha}\right]
\leq C_{\alpha}\left(\nu_{2\alpha}\tr((\mathbf{A}\mathbf{A}^{*})^{\alpha/2})
+(\nu_{4}\tr(\mathbf{A}\mathbf{A}^*))^{\alpha/2})\right),
\end{equation*}
where $C_{\alpha}$ is a constant depending on $\alpha$ only, and for any real function $f$ on
$\mathbb{R}$,  $\tr(f(\mathbf{A}^{*}\mathbf{A})) = \sum_{i=1}^n f(\lambda_i(\mathbf{A}^*\mathbf{A}))$
where $\lambda_i(\mathbf{A}^*\mathbf{A})$ is the $i$-th largest eigenvalue.
\end{lemma}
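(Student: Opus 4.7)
The plan is to follow the classical Bai--Silverstein argument based on a martingale decomposition in combination with Burkholder's and Rosenthal's inequalities. Writing $a_{ij}$ for the entries of $\mathbf{A}$, split the centered quadratic form as
\[
X^*\mathbf{A}X - \tr(\mathbf{A})
= D_n + O_n,
\qquad
D_n = \sum_{i=1}^n a_{ii}(|X_i|^2-1),
\quad
O_n = \sum_{i\neq j} \bar X_i\, a_{ij}\, X_j,
\]
and bound the two pieces separately; it suffices to show that each is dominated by the right-hand side up to constants depending only on $\alpha$.

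For the diagonal part $D_n$, the summands are independent and centered. I would apply Rosenthal's inequality to obtain
\[
\mE[|D_n|^\alpha]
\le C_\alpha\Bigl(\sum_i |a_{ii}|^\alpha\, \mE\bigl[\bigl||X_i|^2-1\bigr|^\alpha\bigr]
+ \Bigl(\sum_i |a_{ii}|^2\, \mE\bigl[\bigl||X_i|^2-1\bigr|^2\bigr]\Bigr)^{\alpha/2}\Bigr).
\]
The first bracket is controlled using $\mE[||X_i|^2-1|^\alpha] \le 2^{\alpha}(\nu_{2\alpha}+1)$ and the elementary comparison $\sum_i|a_{ii}|^\alpha\le \tr((\mathbf{A}\mathbf{A}^*)^{\alpha/2})$ (diagonal entries are majorized by singular values, and $\ell^\alpha$-norms dominate for $\alpha\ge 2$). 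The second bracket is controlled by $\nu_4^{\alpha/2}(\sum_i|a_{ii}|^2)^{\alpha/2}\le (\nu_4\,\tr(\mathbf{A}\mathbf{A}^*))^{\alpha/2}$, which matches the stated form.

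For the off-diagonal part $O_n$, I would introduce the filtration $\mathcal{F}_k=\sigma(X_1,\ldots,X_k)$ and the martingale differences $M_k = \mE[O_n\mid \mathcal{F}_k] - \mE[O_n\mid \mathcal{F}_{k-1}] = \bar X_k Y_k + X_k \bar Z_k$, where $Y_k=\sum_{j<k} a_{kj}X_j$ and $Z_k=\sum_{i<k}\bar a_{ki}X_i$. Burkholder's inequality then gives
\[
\mE[|O_n|^\alpha]
\le C_\alpha\Bigl\{\mE\Bigl[\Bigl(\sum_{k=1}^n \mE\bigl[|M_k|^2\mid \mathcal{F}_{k-1}\bigr]\Bigr)^{\alpha/2}\Bigr]
+ \sum_{k=1}^n \mE[|M_k|^\alpha]\Bigr\}.
\]
The conditional-variance term reduces, after taking conditional expectations over $X_k$, to a bound in terms of $\sum_{k,j<k}|a_{kj}|^2\le \tr(\mathbf{A}\mathbf{A}^*)$ multiplied by $\nu_4^{\alpha/2}$, giving the $(\nu_4\,\tr(\mathbf{A}\mathbf{A}^*))^{\alpha/2}$ contribution. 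The remainder $\sum_k\mE[|M_k|^\alpha]$ is expanded as $\mE[|Y_k|^\alpha]$ and $\mE[|Z_k|^\alpha]$ weighted by $\nu_{2\alpha}$; these are themselves linear forms in independent centered variables (i.e.\ a centered quadratic form with a nilpotent matrix), so the desired bound follows by an induction on $\alpha$, the base case $\alpha=2$ being a direct variance computation.

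The main obstacle is the off-diagonal piece: the induction on $\alpha$ must be executed so that the combinatorial factors coming from iterated applications of Burkholder's inequality stay absorbed in a single constant $C_\alpha$, and so that all bounds ultimately collapse into the two terms $\nu_{2\alpha}\tr((\mathbf{A}\mathbf{A}^*)^{\alpha/2})$ and $(\nu_4\,\tr(\mathbf{A}\mathbf{A}^*))^{\alpha/2}$. The key algebraic identity making this possible is $\sum_{k}\sum_{j_1,\ldots,j_{\alpha/2}<k}|a_{kj_1}\cdots a_{kj_{\alpha/2}}|\le \tr((\mathbf{A}\mathbf{A}^*)^{\alpha/2})$, which controls the highest-order term produced by the induction and yields exactly the Schatten-type norm appearing in the statement.
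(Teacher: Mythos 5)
The paper does not prove this lemma at all: it is quoted verbatim from the cited source (Silverstein and Bai), so there is no internal proof to compare against. Your plan is essentially the standard proof from that source --- split into diagonal and off-diagonal parts, apply Rosenthal's inequality to the independent diagonal sum, and handle the off-diagonal bilinear form by the martingale-difference decomposition with Burkholder's inequality plus an induction on the moment order --- and the outline is sound. One concrete correction: the ``key algebraic identity'' you state at the end, $\sum_{k}\sum_{j_1,\ldots,j_{\alpha/2}<k}|a_{kj_1}\cdots a_{kj_{\alpha/2}}|\le \tr\big((\mathbf{A}\mathbf{A}^{*})^{\alpha/2}\big)$, is false as written (take a single row with all entries $n^{-1/2}$: the left side is $n^{\alpha/4}$ while the right side is $1$); the moduli must be squared, giving $\sum_{k}\big(\sum_{j<k}|a_{kj}|^{2}\big)^{\alpha/2}\le\sum_{k}\big((\mathbf{A}\mathbf{A}^{*})_{kk}\big)^{\alpha/2}\le \tr\big((\mathbf{A}\mathbf{A}^{*})^{\alpha/2}\big)$, which is what the Rosenthal bound for the linear forms $Y_k$, $Z_k$ actually produces. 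Beyond that, the induction is only asserted: you still need to check that the intermediate moments collapse into $\nu_{2\alpha}$ and $\nu_{4}$ alone (via $\nu_{\ell}\le\nu_{2\alpha}$ for $\ell\le 2\alpha$, using $\nu_{2\alpha}\ge 1$ from $\mE[|X_j|^{2}]=1$), and that the halving step is handled for odd $\alpha$; these are routine but should be written out if this were to stand as a proof rather than a plan.
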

%---------------------------------------------------------------------------------------------------------
%\begin{lemma}[\citet{Vershynin:2011}, Proposition 5.16]
%\label{lem:vershynin_berstein_tail}
%Let $X_{1}, \ldots, X_{n}$ be independent centered sub-exponential random variables, and $K=\max_{j}\|X_{j}\|_{\psi_{1}}$,
%where $\|X\|_{\psi_1}=\sup_{\alpha\geq 1}\alpha^{-1}(\mE[|X|^\alpha])^{1/\alpha}$. Then, for every $a=(a_1, \cdots, a_{n})\in \mR^{n}$
%and every $t\geq 0$, the Bernstein-type inequality
%\[
%\mP\bigg(\bigg|\sum_{j=1}^{n}a_{j}X_{j}\bigg|>t\bigg)
%\leq 2\exp\left\{-c\min\left(\frac{t^2}{K^2\|a\|_{2}^2},
%\frac{t}{K\|a\|_{\infty}}\right)\right\}
%\]
%holds, where $c>0$ is an absolute constant.
%\end{lemma}
%---------------------------------------------------------------------------------------------------------
\begin{lemma}[\citet{Bai:Silverstein:2010}, Theorem A.43]
\label{lemma:rank_inequality}
Let $\mathbf{A}$ and $\mathbf{B}$ be two $p\times p$ Hermitian matrices. Then, $\|F^\mathbf{A} - F^\mathbf{B}\|\leq \frac{1}{p}\mbox{rank}(\mathbf{A}-\mathbf{B})$, where $\parallel f \parallel$ means $\sup_x|f(x)|$.
\end{lemma}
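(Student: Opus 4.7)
The plan is to prove the rank inequality via the Courant--Fischer min--max principle, converting a low-rank perturbation bound on individual eigenvalues into a uniform bound on the counting (distribution) functions. Write the eigenvalues of $\mathbf{A}$ and $\mathbf{B}$ in increasing order as $\lambda_1(\mathbf{A})\leq\cdots\leq\lambda_p(\mathbf{A})$ and $\lambda_1(\mathbf{B})\leq\cdots\leq\lambda_p(\mathbf{B})$, and set $r=\mathrm{rank}(\mathbf{A}-\mathbf{B})$. The ESDs can be expressed through the counting functions $N_\mathbf{A}(x)=\#\{j\colon\lambda_j(\mathbf{A})\leq x\}$ via $F^\mathbf{A}(x)=p^{-1}N_\mathbf{A}(x)$, and analogously for $\mathbf{B}$. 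So the task reduces to showing $|N_\mathbf{A}(x)-N_\mathbf{B}(x)|\leq r$ for every $x\in\mathbb{R}$.

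First I would establish the key interlacing bound
\[
\lambda_{j-r}(\mathbf{B})\leq\lambda_j(\mathbf{A})\leq\lambda_{j+r}(\mathbf{B}),
\qquad 1\leq j\leq p,
\]
with the convention that $\lambda_j=-\infty$ for $j\leq 0$ and $\lambda_j=+\infty$ for $j>p$. This is the standard consequence of Weyl's perturbation inequality applied to the decomposition $\mathbf{A}=\mathbf{B}+(\mathbf{A}-\mathbf{B})$: since $\mathbf{A}-\mathbf{B}$ is Hermitian of rank at most $r$, it has at most $r$ strictly positive and at most $r$ strictly negative eigenvalues, so its $(r+1)$-st largest eigenvalue is $\leq 0$ and its $(r+1)$-st smallest is $\geq 0$. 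Plugging these into Weyl's inequality $\lambda_{j+k-1}(\mathbf{X}+\mathbf{Y})\leq\lambda_j(\mathbf{X})+\lambda_k(\mathbf{Y})$ (and the symmetric lower version) yields the displayed chain. Alternatively, one can prove the interlacing directly: if $V$ is the kernel of $\mathbf{A}-\mathbf{B}$, then $\dim V\geq p-r$, and on $V$ the quadratic forms of $\mathbf{A}$ and $\mathbf{B}$ coincide; applying Courant--Fischer restricted to $V$ gives the bound immediately.

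From the interlacing I would derive the counting function bound. Fix $x\in\mathbb{R}$ and let $j=N_\mathbf{A}(x)$, so $\lambda_j(\mathbf{A})\leq x<\lambda_{j+1}(\mathbf{A})$ (with obvious modifications at the endpoints). By the left half of the interlacing, $\lambda_{j-r}(\mathbf{B})\leq\lambda_j(\mathbf{A})\leq x$, hence $N_\mathbf{B}(x)\geq j-r=N_\mathbf{A}(x)-r$. Swapping the roles of $\mathbf{A}$ and $\mathbf{B}$ (note $\mathrm{rank}(\mathbf{B}-\mathbf{A})=r$) gives the reverse inequality, so $|N_\mathbf{A}(x)-N_\mathbf{B}(x)|\leq r$. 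Taking the supremum over $x$ and dividing by $p$ completes the proof:
\[
\|F^\mathbf{A}-F^\mathbf{B}\|=\sup_{x\in\mathbb{R}}|F^\mathbf{A}(x)-F^\mathbf{B}(x)|\leq\frac{r}{p}=\frac{1}{p}\mathrm{rank}(\mathbf{A}-\mathbf{B}).
\]

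The only genuinely nontrivial step is the interlacing bound; once that is in hand, the passage to the distribution function is purely combinatorial. I do not anticipate any real obstacle, since both ingredients are classical consequences of the min--max principle; the only care needed is to handle boundary indices ($j\leq r$ or $j>p-r$) via the $\pm\infty$ convention so that the counting argument goes through uniformly in $x$.
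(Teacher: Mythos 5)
Your proof is correct. The paper does not prove this lemma at all---it is quoted verbatim from \citet{Bai:Silverstein:2010} (Theorem A.43) as a black-box auxiliary result---so there is no in-paper argument to compare against; your interlacing-plus-counting-function derivation is in fact the standard proof given in that reference. The only blemish is cosmetic: you order the eigenvalues increasingly but then quote Weyl's inequality in its decreasing-order form $\lambda_{j+k-1}(\mathbf{X}+\mathbf{Y})\leq\lambda_j(\mathbf{X})+\lambda_k(\mathbf{Y})$; either fix one convention throughout or use your alternative argument (restricting Courant--Fischer to the kernel of $\mathbf{A}-\mathbf{B}$, which has dimension at least $p-r$), which sidesteps the issue entirely. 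The boundary handling via the $\pm\infty$ convention is exactly what is needed for the counting argument to be uniform in $x$.
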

%---------------------------------------------------------------------------------------------------------
\begin{lemma}[\citet{Bai:Silverstein:2010}, Theorem A.44]
\label{lemma:rank_inequality_rectangular}
Let $\mathbf{A}$ and $\mathbf{B}$ be two $p\times n$ complex matrices with ESD's $F^{\mathbf{A}}$ and $F^\mathbf{B}$. Then,
\[
\|F^{\mathbf{A}\mathbf{A}^{*}}-F^{\mathbf{B}\mathbf{B}^{*}}\|
\leq \frac{1}{p}\mathrm{rank}(\mathbf{A}-\mathbf{B}).
\]
More generally, if $\mathbf{C}$ and $\mathbf{D}$ are Hermitian matrices of orders $p\times p$ and $n\times n$ respectively, then,
\[
\|F^{\mathbf{C}+\mathbf{A}\mathbf{D}\mathbf{A}^{*}}-F^{\mathbf{C}+\mathbf{B}\mathbf{D}\mathbf{B}^{*}}\|
\leq \frac{1}{p}\mathrm{rank}(\mathbf{A}-\mathbf{B}).
\]
\end{lemma}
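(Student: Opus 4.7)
The plan is to reduce both assertions to eigenvalue interlacing estimates. Once one knows that for every threshold $x$ the counts $\#\{i : \lambda_i \leq x\}$ for the two matrices in question differ by at most $r:=\mathrm{rank}(\mathbf{A}-\mathbf{B})$, the desired $r/p$ bound on the Kolmogorov (sup-norm) distance between the ESDs is immediate from the definition of the empirical distribution function.

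For the first assertion I would invoke Weyl's inequality for singular values of rectangular matrices,
\begin{equation*}
\sigma_{i+j-1}(\mathbf{A}) \leq \sigma_i(\mathbf{B}) + \sigma_j(\mathbf{A}-\mathbf{B}),
\end{equation*}
applied with $j=r+1$. Because $\sigma_{r+1}(\mathbf{A}-\mathbf{B})=0$, this yields $\sigma_{i+r}(\mathbf{A}) \leq \sigma_i(\mathbf{B})$, and swapping $\mathbf{A}\leftrightarrow\mathbf{B}$ gives the reverse inequality. Translating squared singular values to the eigenvalues of $\mathbf{A}\mathbf{A}^{*}$ and $\mathbf{B}\mathbf{B}^{*}$ (padded with zeros when $p>n$) arranged in decreasing order produces $\lambda_{i+r}(\mathbf{A}\mathbf{A}^{*}) \leq \lambda_i(\mathbf{B}\mathbf{B}^{*})$ together with the symmetric inequality. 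A standard counting argument then shows that for every $x\in\mathbb{R}$ the two counts $pF^{\mathbf{A}\mathbf{A}^{*}}(x)$ and $pF^{\mathbf{B}\mathbf{B}^{*}}(x)$ differ by at most $r$, proving the first bound.

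For the more general claim involving $\mathbf{C}+\mathbf{A}\mathbf{D}\mathbf{A}^{*}$, I would pass through a Courant--Fischer argument rather than through singular values, since $\mathbf{A}\mathbf{D}\mathbf{A}^{*}-\mathbf{B}\mathbf{D}\mathbf{B}^{*}$ may have rank up to $2r$, which would only give a weaker $2r/p$ bound. Factor $\mathbf{A}-\mathbf{B}=\mathbf{U}\mathbf{V}^{*}$ with $\mathbf{U}\in\mathbb{C}^{p\times r}$, and set $V_\perp=\{v\in\mathbb{C}^p : \mathbf{U}^{*}v=0\}$, a subspace of dimension $p-r$. The critical observation is that $\mathbf{A}^{*}v=\mathbf{B}^{*}v$ for $v\in V_\perp$, so the two quadratic forms $v^{*}(\mathbf{C}+\mathbf{A}\mathbf{D}\mathbf{A}^{*})v$ and $v^{*}(\mathbf{C}+\mathbf{B}\mathbf{D}\mathbf{B}^{*})v$ agree on $V_\perp$. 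Restricting the min--max formula for $\lambda_k(\mathbf{C}+\mathbf{A}\mathbf{D}\mathbf{A}^{*})$ to subspaces contained in $V_\perp$ (admissible when $p-k+1\leq p-r$, i.e., $k\geq r+1$) and then invoking Cauchy interlacing for the restriction of $\mathbf{C}+\mathbf{B}\mathbf{D}\mathbf{B}^{*}$ to $V_\perp$ yields
\begin{equation*}
\lambda_k(\mathbf{C}+\mathbf{A}\mathbf{D}\mathbf{A}^{*}) \leq \lambda_{k-r}(\mathbf{C}+\mathbf{B}\mathbf{D}\mathbf{B}^{*}), \qquad k\geq r+1,
\end{equation*}
together with the symmetric inequality, and the counting argument from the first part applies verbatim.

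The delicate bookkeeping step is in the second part: one must match the dimensions in the min--max formula to the codimension $r$ of $V_\perp$ so that exactly $r$ indices are lost on each side and Cauchy interlacing absorbs the remaining gap. Once this is set up, both claims come down to manipulations of the variational characterisation of eigenvalues; the first assertion is in fact a clean special case in which Weyl's inequality for singular values short-circuits the variational argument.
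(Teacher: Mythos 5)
Your proof is correct. Note, however, that the paper does not prove this statement at all: it is quoted verbatim as Theorem A.44 of \citet{Bai:Silverstein:2010} and used as a black box, so there is no in-paper argument to compare against. Your outline is a legitimate self-contained derivation. For the first bound, the Weyl inequality $\sigma_{i+r}(\mathbf{A})\le\sigma_i(\mathbf{B})$ (and its symmetric counterpart) together with the zero-padding of squared singular values and the counting argument is exactly right, and you correctly identify why the second bound cannot be obtained by naively applying the Hermitian rank inequality (Lemma \ref{lemma:rank_inequality}) to $\mathbf{A}\mathbf{D}\mathbf{A}^*-\mathbf{B}\mathbf{D}\mathbf{B}^*$, whose rank can be $2r$. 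The Courant--Fischer argument on the codimension-$r$ subspace $V_\perp=\ker(\mathbf{U}^*)$, where the two quadratic forms coincide because $(\mathbf{A}-\mathbf{B})^*v=\mathbf{V}\mathbf{U}^*v=0$, combined with Cauchy interlacing for the compression, gives $\lambda_k(\mathbf{C}+\mathbf{A}\mathbf{D}\mathbf{A}^*)\le\lambda_{k-r}(\mathbf{C}+\mathbf{B}\mathbf{D}\mathbf{B}^*)$ for $k\ge r+1$, and the dimension bookkeeping you flag ($p-k+1\le p-r$) is handled correctly. The reference proof in Bai and Silverstein is in the same spirit (a variational/interlacing argument, with the rectangular case handled via a block-Hermitian reduction), so your route is essentially a streamlined standard proof rather than a genuinely different one; the only thing it buys is that the first assertion drops out of singular-value Weyl inequalities without passing through the general second statement, whereas one could also obtain it as the special case $\mathbf{C}=\mathbf{0}$, $\mathbf{D}=\mathbf{I}_n$ of your second argument.
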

%---------------------------------------------------------------------------------------------------------
\begin{lemma}[\citet{Bai:Silverstein:2010}, Corollary A.40]
\label{lem:levy_distance_normal_matrices}
Let $\mathbf{A}$ and $\mathbf{B}$ be two $n\times n$ normal matrices with ESD's $F^{\mathbf{A}}$ and $F^\mathbf{B}$. Then, $L^{3}(F^\mathbf{A},F^\mathbf{B}) \leq n^{-1}\tr\big((\mathbf{A}-\mathbf{B})(\mathbf{A}-\mathbf{B})^{*}\big)$,
where $L(F,G)$ denotes the L\'{e}vy distance between distribution functions $F$ and $G$.
\end{lemma}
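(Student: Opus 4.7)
The plan is to deduce the inequality from the Hoffman--Wielandt theorem together with a simple pigeonhole argument relating the Lévy distance to the eigenvalue differences. Since the Lévy distance is defined between distribution functions on the real line, one implicitly reduces to the case where $\mathbf{A}$ and $\mathbf{B}$ are normal with real spectra (e.g.\ Hermitian); the general complex-eigenvalue case would require reinterpreting the ESD. Arrange the eigenvalues in increasing order, $\lambda_1(\mathbf{A})\leq \cdots \leq \lambda_n(\mathbf{A})$ and $\lambda_1(\mathbf{B})\leq \cdots \leq \lambda_n(\mathbf{B})$. A standard rearrangement lemma gives $\sum_i |\lambda_i(\mathbf{A})-\lambda_i(\mathbf{B})|^2 = \min_{\pi}\sum_i |\lambda_i(\mathbf{A})-\lambda_{\pi(i)}(\mathbf{B})|^2$, and for normal matrices the Hoffman--Wielandt inequality bounds the latter minimum by $\|\mathbf{A}-\mathbf{B}\|_F^2 = \tr((\mathbf{A}-\mathbf{B})(\mathbf{A}-\mathbf{B})^*)$. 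So the key analytic input is
\[
\sum_{i=1}^n |\lambda_i(\mathbf{A})-\lambda_i(\mathbf{B})|^2 \;\leq\; \tr\big((\mathbf{A}-\mathbf{B})(\mathbf{A}-\mathbf{B})^*\big).
\]

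Next, set $\varepsilon = L(F^{\mathbf{A}},F^{\mathbf{B}})$ and assume $\varepsilon>0$ (otherwise the claim is trivial). By the definition of the Lévy distance, for any $\varepsilon'<\varepsilon$ there exists $x_0\in\mathbb{R}$ such that, without loss of generality, $F^{\mathbf{A}}(x_0-\varepsilon')-\varepsilon' > F^{\mathbf{B}}(x_0)$. Translating into counts via the definition $F^{\mathbf{A}}(x)=n^{-1}\#\{i:\lambda_i(\mathbf{A})\leq x\}$, this says that the number of indices $i$ with $\lambda_i(\mathbf{A})\leq x_0-\varepsilon'$ exceeds the number of indices $i$ with $\lambda_i(\mathbf{B})\leq x_0$ by at least $n\varepsilon'$.

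The pigeonhole step: because the eigenvalues are sorted in the same direction, there must exist at least $\lceil n\varepsilon'\rceil$ indices $i$ for which simultaneously $\lambda_i(\mathbf{A})\leq x_0-\varepsilon'$ and $\lambda_i(\mathbf{B}) > x_0$. For each such $i$, $|\lambda_i(\mathbf{A})-\lambda_i(\mathbf{B})|>\varepsilon'$. Combining with the Hoffman--Wielandt bound yields
\[
n\varepsilon' \cdot (\varepsilon')^2 \;\leq\; \sum_{i=1}^n |\lambda_i(\mathbf{A})-\lambda_i(\mathbf{B})|^2 \;\leq\; \tr\big((\mathbf{A}-\mathbf{B})(\mathbf{A}-\mathbf{B})^*\big),
\]
so $(\varepsilon')^3 \leq n^{-1}\tr((\mathbf{A}-\mathbf{B})(\mathbf{A}-\mathbf{B})^*)$ for every $\varepsilon'<\varepsilon$. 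Letting $\varepsilon'\uparrow\varepsilon$ yields the claim.

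The only mild obstacle is the Hoffman--Wielandt inequality itself (which, for normal matrices, requires either a direct variational proof or an appeal to the classical theorem); the rest is just counting and passing to the limit. If one insisted on handling genuinely complex normal spectra, one would additionally need to verify that the sorted pairing of real parts still minimizes the relevant $\ell^2$ cost, but for the Hermitian applications in this paper this is automatic.
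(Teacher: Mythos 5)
Your argument is correct. Note that the paper does not prove this lemma at all: it is quoted verbatim from \citet{Bai:Silverstein:2010} (Corollary A.40/A.41) in the appendix of technical lemmas, so there is no in-paper proof to compare against; what you have reconstructed is essentially the standard argument behind that corollary, namely the Hoffman--Wielandt inequality combined with the rearrangement fact that the sorted pairing minimizes the $\ell^2$ matching cost, followed by the counting step showing that a L\'{e}vy discrepancy of size $\varepsilon'$ forces more than $n\varepsilon'$ sorted eigenvalue pairs to differ by more than $\varepsilon'$. The one point worth keeping in mind is that your reduction to real spectra genuinely narrows the statement: for general normal matrices Bai and Silverstein work with the two-dimensional ESD on $\mathbb{C}$ and a correspondingly generalized L\'{e}vy distance, but since the paper only ever applies the lemma to Hermitian matrices ($\mathbf{C}_\tau$ and $\mathbf{C}_\tau^{\mathrm{tr}}$ in Section \ref{sec:proof:lin_proc}), your version suffices for every use made of it here.
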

\begin{lemma}[\citet{Bai:Silverstein:2010}, Theorem A.45]
\label{lemma:Levy_norm_inequality}
Let $\mathbf{A}$ and $\mathbf{B}$ be two $p\times p$ Hermitian matrices. Then, $L(F^\mathbf{A}, F^\mathbf{B})\leq \|\mathbf{A}-\mathbf{B}\|$.
\end{lemma}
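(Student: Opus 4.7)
My plan is to reduce the bound on the L\'evy distance to Weyl's perturbation inequality for Hermitian matrices. Let $\lambda_1(\mathbf{A}) \geq \cdots \geq \lambda_p(\mathbf{A})$ and $\lambda_1(\mathbf{B}) \geq \cdots \geq \lambda_p(\mathbf{B})$ denote the ordered real eigenvalues of $\mathbf{A}$ and $\mathbf{B}$, and set $\delta = \|\mathbf{A}-\mathbf{B}\|$ (operator norm). Since $\mathbf{A}$ and $\mathbf{B}$ are Hermitian, Weyl's inequality gives the pointwise bound $|\lambda_i(\mathbf{A}) - \lambda_i(\mathbf{B})| \leq \delta$ for every $i=1,\ldots,p$.

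Next I would translate this eigenvalue-by-eigenvalue bound into a bound on the empirical distribution functions. Fix $x \in \mathbb{R}$. If $\lambda_i(\mathbf{B}) \leq x$, then $\lambda_i(\mathbf{A}) \leq \lambda_i(\mathbf{B}) + \delta \leq x + \delta$, so $\{i \colon \lambda_i(\mathbf{B}) \leq x\} \subseteq \{i \colon \lambda_i(\mathbf{A}) \leq x+\delta\}$, which immediately yields $F^{\mathbf{B}}(x) \leq F^{\mathbf{A}}(x+\delta)$. By symmetry (swapping $\mathbf{A}$ and $\mathbf{B}$), I also get $F^{\mathbf{A}}(x-\delta) \leq F^{\mathbf{B}}(x)$. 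Together,
\[
F^{\mathbf{A}}(x-\delta) \leq F^{\mathbf{B}}(x) \leq F^{\mathbf{A}}(x+\delta)
\qquad\text{for all } x \in \mathbb{R}.
\]

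This is in fact a strictly stronger statement than the one demanded by the definition of the L\'evy distance, because no additive $\delta$ is needed on the outside of $F^{\mathbf{A}}$; in particular, $F^{\mathbf{A}}(x-\delta)-\delta \leq F^{\mathbf{B}}(x) \leq F^{\mathbf{A}}(x+\delta)+\delta$ holds trivially. Taking the infimum over $\epsilon$ satisfying the defining inequalities of $L(F^{\mathbf{A}},F^{\mathbf{B}})$ then gives $L(F^{\mathbf{A}},F^{\mathbf{B}}) \leq \delta = \|\mathbf{A}-\mathbf{B}\|$, as claimed.

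There is no real obstacle here: the essential analytic content is Weyl's inequality, which is a standard consequence of the min-max characterization of Hermitian eigenvalues, and the rest is a purely combinatorial comparison of the level sets of the two ordered eigenvalue sequences. The only point worth a line of care is the choice of monotone pairing of the eigenvalues (both sorted in the same order), so that the pointwise bound $|\lambda_i(\mathbf{A})-\lambda_i(\mathbf{B})|\leq \delta$ translates cleanly into the inclusion of sub-level sets used above.
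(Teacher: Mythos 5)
Your proof is correct: Weyl's inequality for matched ordered eigenvalues of Hermitian matrices, followed by the sub-level-set inclusion, is precisely the standard argument for this bound, and the step from $F^{\mathbf{A}}(x-\delta)\leq F^{\mathbf{B}}(x)\leq F^{\mathbf{A}}(x+\delta)$ to $L(F^{\mathbf{A}},F^{\mathbf{B}})\leq\delta$ is handled properly under the paper's definition of the L\'evy distance. The paper itself gives no proof --- it simply cites this as Theorem A.45 of \citet{Bai:Silverstein:2010} --- and your argument coincides with the textbook one, so there is nothing further to compare.
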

%---------------------------------------------------------------------------------------------------------
\begin{lemma}[\citet{Geronimo:Hill:2003}, Lemma 3]
\label{lem:Uniform_convergence_analytic_limit}
Let $\mathcal{F}$ be a family of functions analytic in an open connected set $\mathcal{D}$.
If for each compact set $K$ in $\mathcal{D}$ there is a constant $M(K)$ such that
\begin{equation}
|f(z)|\leq M(K)~~ \text{for all}~f\in \mathcal{F}~\text{and}~z\in K,
\end{equation}
then every sequence in $\mathcal{F}$ has a subsequence that converges uniformly on compact subsets
of $\mathcal{D}$ to a function analytic in $\mathcal{D}$.
\end{lemma}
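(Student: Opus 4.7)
The plan is to prove this as a classical consequence of Cauchy's integral formula and the Arzelà--Ascoli theorem; this is essentially the content of Montel's theorem on normal families. The key observation is that the uniform boundedness hypothesis on compact subsets automatically upgrades to \emph{equicontinuity} on compact subsets, via the derivative bound coming from the Cauchy integral formula.

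First I would establish equicontinuity. Fix a compact set $K \subset \mathcal{D}$ and choose $\delta > 0$ small enough that the $3\delta$-thickening $K_{3\delta} := \{z \in \mathcal{D}\colon \mathrm{dist}(z,K) \leq 3\delta\}$ is still a compact subset of $\mathcal{D}$; this is possible since $\mathcal{D}$ is open. For any $z_0 \in K$ and $f \in \mathcal{F}$, Cauchy's integral formula applied to the circle of radius $2\delta$ around $z_0$ yields
\begin{equation*}
|f'(z_0)| \leq \frac{1}{2\pi} \oint_{|w-z_0|=2\delta} \frac{|f(w)|}{|w-z_0|^2}|dw| \leq \frac{M(K_{3\delta})}{2\delta}.
\end{equation*}
Hence for all $f \in \mathcal{F}$ and all $z_1,z_2 \in K$ with $|z_1-z_2| < \delta$, the segment joining them lies in $K_{3\delta}$ and one obtains $|f(z_1)-f(z_2)| \leq M(K_{3\delta})\delta^{-1} \cdot |z_1-z_2|$. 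This gives a uniform Lipschitz bound on $K$, hence equicontinuity of $\mathcal{F}$ on $K$.

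Next I would exhaust $\mathcal{D}$ by an increasing sequence of compact sets $K_1 \subset K_2 \subset \cdots$ with $K_m \subset \mathrm{int}(K_{m+1})$ and $\bigcup_m K_m = \mathcal{D}$ (standard since $\mathcal{D}$ is open in $\mathbb{C}$). Given any sequence $(f_j) \subset \mathcal{F}$, apply the Arzelà--Ascoli theorem (Lemma \ref{lemma:Arzela_Ascoli_theorem}, referenced in the paper) on $K_1$, using uniform boundedness and the equicontinuity just established, to extract a subsequence converging uniformly on $K_1$. Apply Arzelà--Ascoli again to this subsequence on $K_2$ to get a further subsequence uniformly convergent on $K_2$, and so on. The diagonal subsequence then converges uniformly on every $K_m$, and hence on every compact subset of $\mathcal{D}$ (since any compact set is contained in some $K_m$).

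Finally I would identify the limit as an analytic function by invoking Weierstrass's theorem: if a sequence of analytic functions converges uniformly on compact subsets of an open set, the limit is analytic there. This follows either by checking that Cauchy's integral formula passes to the limit, or by Morera's theorem (uniform convergence preserves vanishing of contour integrals over triangles). The main subtlety in the whole argument is not any single step but the diagonal extraction: one must be careful to produce a \emph{single} subsequence that works simultaneously for all compact subsets. Everything else is essentially bookkeeping on top of the two classical ingredients (Cauchy's derivative bound and Arzelà--Ascoli).
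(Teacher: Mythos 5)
Your proof is correct. Note, however, that the paper does not prove this lemma at all: it is stated in the technical appendix as a cited result (Lemma 3 of Geronimo and Hill, 2003), i.e.\ as Montel's theorem on normal families, and is used as a black box in the diagonal-subsequence argument of Section 5.4. What you have written is the standard textbook proof of that theorem: the Cauchy integral formula on a slightly enlarged compact set gives a uniform bound on $f'$, hence a uniform Lipschitz bound and equicontinuity on each compact $K$; an exhaustion of $\mathcal{D}$ by compacts together with Arzel\`a--Ascoli and a diagonal extraction produces a single subsequence converging uniformly on all compacta; and Weierstrass's (or Morera's) theorem shows the limit is analytic. All three steps are sound. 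The only cosmetic imprecision is the constant in the Lipschitz bound: with circles of radius $2\delta$ centered at points of the $\delta$-thickening of $K$ (which is where the connecting segment lives, since $K$ need not be convex), the derivative bound is $M(K_{3\delta})/(2\delta)$, so your stated constant $M(K_{3\delta})/\delta$ is simply a harmless enlargement. Since the paper supplies no argument of its own, there is nothing to compare against beyond observing that your proof correctly establishes the cited fact.
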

%---------------------------------------------------------------------------------------------------------
\begin{lemma}[Arzela--Ascoli]
\label{lemma:Arzela_Ascoli_theorem}
A sequence of continuous functions on a compact support converges uniformly to a continuous function if
they are equicontinuous and converge pointwise on a dense subset of the support.
\end{lemma}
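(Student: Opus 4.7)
The plan is to show that the hypotheses of pointwise convergence on a dense subset together with equicontinuity upgrade to uniform convergence on the whole compact support, by exploiting compactness to reduce to a finite collection of test points and then using equicontinuity to transfer information from the dense subset to arbitrary points. First I would fix $\varepsilon>0$ and invoke equicontinuity to select $\delta>0$ such that $d(x,y)<\delta$ implies $|f_n(x)-f_n(y)|<\varepsilon/3$ uniformly in $n$. Then, since the support $K$ is compact, it can be covered by finitely many open balls $B(x_1,\delta),\dots,B(x_N,\delta)$; by density of the given subset $D\subset K$, I may shift each center and assume $x_1,\dots,x_N\in D$.

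Next I would use the pointwise convergence hypothesis. Since $(f_n(x_i))_{n\geq 1}$ is a Cauchy sequence in $\mathbb{R}$ (resp.\ $\mathbb{C}$) for each of the finitely many $x_i\in D$, there exists $N_0$ such that $|f_n(x_i)-f_m(x_i)|<\varepsilon/3$ for all $n,m\geq N_0$ and all $i=1,\dots,N$. For an arbitrary $x\in K$, pick an index $i(x)$ with $d(x,x_{i(x)})<\delta$; the triangle inequality then gives
\[
|f_n(x)-f_m(x)| \leq |f_n(x)-f_n(x_{i(x)})| + |f_n(x_{i(x)})-f_m(x_{i(x)})| + |f_m(x_{i(x)})-f_m(x)| < \varepsilon
\]
for all $n,m\geq N_0$, uniformly in $x\in K$. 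Hence $(f_n)$ is uniformly Cauchy, and by completeness of the target space it converges uniformly on $K$ to some limit $f$.

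Finally, since each $f_n$ is continuous and the convergence is uniform, $f$ is continuous on $K$ by the standard three-$\varepsilon$ argument (or equivalently, because equicontinuity of $(f_n)$ passes to the limit).

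The main obstacle, if any, is the replacement of the covering centers by points of $D$: one must check that this can be done without inflating the required $\delta$ unduly. This is handled by first choosing an open cover by balls of radius $\delta/2$, then replacing each center by a nearby point of $D$ within distance $\delta/2$, which is possible by density; the resulting balls of radius $\delta$ centered at points of $D$ still cover $K$. Everything else is a bookkeeping exercise with the triangle inequality, and no subtler tools beyond compactness, density and equicontinuity are needed.
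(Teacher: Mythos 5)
Your argument is correct and is the canonical proof of this statement: equicontinuity plus compactness reduces matters to finitely many test points, which can be taken in the dense subset, and pointwise convergence there combined with the triangle inequality yields the uniform Cauchy property; you also correctly handle the small subtlety of recentering the covering balls at points of the dense set. The paper states this lemma without proof as a classical fact (it appears only in the appendix of technical lemmas), so there is no in-paper argument to compare against; the only cosmetic remark is that you use uniform equicontinuity, which on a compact metric space follows from pointwise equicontinuity, so no generality is lost.
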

%---------------------------------------------------------------------------------------------------------
\begin{lemma}[\citet{Liu:Aue:Paul:2015}, Lemma S.13]
\label{lemma:prob_measure_convergence}
Suppose that $(P_{n})$ is a tight sequence of Borel probability measures with corresponding
Stieltjes transforms $(s_{n}(z))$. If $s_{n}(z)\to s(z)$ for all $z\in\mC^{+}$, then
$\lim_{v\to\infty}\mathbf{i}vs(\mathbf{i}v)=-1$ and thus $s(z)$ is a Stieltjes transform of a Borel probability measure.
\end{lemma}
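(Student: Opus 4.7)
The plan is to use tightness to realize $s$ as the Stieltjes transform of an actual Borel probability measure, and then to read off $\lim_{v\to\infty} \ic v\, s(\ic v) = -1$ directly from that representation by dominated convergence. The three tools I will use are Prokhorov's theorem, the elementary fact that the kernel $\lambda\mapsto (\lambda-z)^{-1}$ is bounded and continuous on $\mR$ for each fixed $z\in\mC^+$ (so that weak convergence of probability measures upgrades to pointwise convergence of their Stieltjes transforms), and the injectivity of the Stieltjes transform on finite Borel measures via the inversion formula.

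First I would invoke Prokhorov's theorem: since $(P_n)$ is tight, any subsequence has a further subsequence $(P_{n_k})$ converging weakly to some Borel probability measure $P$ on $\mR$. Fix $z=x+\ic v\in\mC^+$. Because $|(\lambda-z)^{-1}|\le 1/v$ for all $\lambda\in\mR$ and the map $\lambda\mapsto (\lambda-z)^{-1}$ is continuous, weak convergence gives
\[
s_{n_k}(z) \;=\; \int \frac{dP_{n_k}(\lambda)}{\lambda-z} \;\longrightarrow\; \int \frac{dP(\lambda)}{\lambda-z} \;=\; s_P(z).
\]
Combined with the hypothesis $s_{n}(z)\to s(z)$ for every $z\in\mC^+$, this forces $s(z)=s_P(z)$ on $\mC^+$. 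Since by the Stieltjes inversion formula the map $P\mapsto s_P$ is injective on Borel probability measures, \emph{every} subsequential weak limit of $(P_n)$ coincides with this $P$; in particular $s=s_P$ is unambiguously the Stieltjes transform of a Borel probability measure.

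To conclude, using $s=s_P$ I would write
\[
\ic v\, s(\ic v) \;=\; \int \frac{\ic v}{\lambda-\ic v}\, dP(\lambda) \;=\; -\int \frac{dP(\lambda)}{1+\ic\lambda/v}.
\]
The integrand has modulus $(1+\lambda^2/v^2)^{-1/2}\le 1$ for all $\lambda\in\mR$ and all $v>0$, and converges pointwise to $1$ as $v\to\infty$. Dominated convergence therefore yields $\lim_{v\to\infty}\ic v\, s(\ic v) = -\int dP(\lambda) = -1$, which completes the argument.

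The only subtle point is that pointwise convergence $s_n\to s$ on $\mC^+$ by itself does not guarantee that $s$ is a Stieltjes transform at all (mass could escape to infinity), and so the tightness hypothesis is doing essential work: it is exactly what rules out this escape and lets Prokhorov's theorem produce the probability measure $P$ whose existence, together with injectivity of the inverse Stieltjes transform, pins down $s=s_P$ unambiguously. Beyond this structural observation, no hard estimate is needed.
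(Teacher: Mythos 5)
Your proof is correct. Note that the paper itself does not prove this lemma---it imports it verbatim as Lemma S.13 of \citet{Liu:Aue:Paul:2015}---so the comparison is with the standard argument behind that citation rather than with a proof printed here. The phrasing of the statement (``then $\lim_{v\to\infty}\ic v s(\ic v)=-1$ \emph{and thus} $s$ is a Stieltjes transform'') suggests the intended route: first establish the limit condition directly from tightness via the tail estimate
\[
\big|\ic v\,s_n(\ic v)+1\big|=\Big|\int\frac{\lambda}{\lambda-\ic v}\,dP_n(\lambda)\Big|\leq \frac{M}{v}+\sup_n P_n(\{|\lambda|>M\}),
\]
and then invoke the Geronimo--Hill criterion (Lemma \ref{lemma:sufficient_nessessary_condition_limit_stieltjes} of this paper) to conclude that $s$ is a Stieltjes transform. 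You instead go the other way: Prokhorov's theorem produces a subsequential weak limit $P$, the bounded continuous kernel $(\lambda-z)^{-1}$ upgrades weak convergence to convergence of Stieltjes transforms so that $s=s_P$, and the limit condition then follows by dominated convergence. Both arguments are sound and both use tightness in an essential way; yours buys a slightly stronger conclusion (identification of $s$ with $s_P$ and, with the injectivity remark, weak convergence of the full sequence $P_n\to P$) at the cost of invoking Prokhorov and the inversion formula, whereas the direct estimate is more elementary and self-contained given that Lemma \ref{lemma:sufficient_nessessary_condition_limit_stieltjes} is already on hand. One small remark: the injectivity step is not actually needed for the stated conclusion---a single convergent subsequence already forces $s=s_P$ on all of $\mC^+$---so that part of your argument is a harmless bonus rather than a required ingredient.
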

%---------------------------------------------------------------------------------------------------------
\begin{lemma}[\citet{Geronimo:Hill:2003}, Theorem 1]
\label{lemma:sufficient_nessessary_condition_limit_stieltjes}
Suppose that $(P_{n})$ are real Borel probability measures (with mass 1) with corresponding Stieltjes
transforms $(s_{n}(z))$. If\/ $\lim_{n\to\infty}s_{n}(z)=s(z)$ for all $z$ with $\Im(z)>0$, then there
exists a Borel probability measure $P$ with Stieltjes transform $s_{P}=s$ if and only if
\begin{equation}
\lim_{v\to \infty}\mathbf{i}vs(\mathbf{i}v)=-1
\end{equation}
in which case $P_n\to P$ in distribution.
\end{lemma}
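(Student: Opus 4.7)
The plan is to prove the two implications separately, relying on Helly's selection theorem and the Stieltjes inversion formula.

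For the ``only if'' direction, I would assume $P_n \to P$ in distribution for some Borel probability measure $P$. Since, for each fixed $z\in\mathbb{C}^+$, the function $\lambda\mapsto 1/(\lambda-z)$ is bounded and continuous on $\mathbb{R}$, the Portmanteau theorem gives $s_n(z)\to s_P(z)$, so $s(z)=s_P(z)$ on $\mathbb{C}^+$. A direct calculation yields
\[
-\mathbf{i}v\, s_P(\mathbf{i}v) = \int \frac{v^2}{\lambda^2+v^2}\,dP(\lambda) - \mathbf{i}\int \frac{v\lambda}{\lambda^2+v^2}\,dP(\lambda),
\]
and dominated convergence drives the real part to $P(\mathbb{R})=1$ and the imaginary part to $0$ as $v\to\infty$. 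Hence $\lim_{v\to\infty}\mathbf{i}v\,s(\mathbf{i}v)=-1$.

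For the ``if'' direction, the strategy is a subsequence argument. By Helly's selection theorem, every subsequence of $(P_n)$ admits a further subsequence $(P_{n_k})$ converging vaguely to some sub-probability measure $P^\ast$ on $\mathbb{R}$. Because $\lambda\mapsto 1/(\lambda-z)$ is bounded, continuous, and vanishes at infinity for each $z\in\mathbb{C}^+$, vague convergence transfers to the Stieltjes transforms: $s_{n_k}(z)\to s_{P^\ast}(z)$. Combined with the hypothesis $s_n(z)\to s(z)$, this forces $s_{P^\ast}=s$ on $\mathbb{C}^+$. The same calculation as above shows that $-\mathbf{i}v\, s_{P^\ast}(\mathbf{i}v)\to P^\ast(\mathbb{R})$ as $v\to\infty$, and the assumption $\lim_{v\to\infty}\mathbf{i}v\,s(\mathbf{i}v)=-1$ then pins down $P^\ast(\mathbb{R})=1$, so $P^\ast$ is in fact a probability measure.

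Finally, I would invoke the Stieltjes inversion formula to conclude that a Borel measure is uniquely determined by its Stieltjes transform on $\mathbb{C}^+$; in particular, every vaguely convergent subsequence of $(P_n)$ has the same limit $P:=P^\ast$. Since every subsequence of $(P_n)$ admits a further subsequence converging weakly (not merely vaguely, thanks to the no-escape-of-mass conclusion $P^\ast(\mathbb{R})=1$) to the common probability measure $P$, the full sequence $(P_n)$ converges to $P$ in distribution, and $s=s_P$ is the Stieltjes transform of $P$. The main obstacle is the careful passage from vague to weak convergence of the subsequential limits: tightness of $(P_n)$ is not postulated and must be extracted entirely from the asymptotic condition $\mathbf{i}v\,s(\mathbf{i}v)\to -1$, which is precisely the analytic statement that no mass escapes to infinity.
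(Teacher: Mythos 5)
The paper does not actually prove this statement: it is imported verbatim as Theorem~1 of \citet{Geronimo:Hill:2003} and used as a black box, so there is no internal proof to compare against. Your argument is, however, a correct and essentially self-contained proof of the cited result, and it follows the standard route (which is also the one taken in the reference): Helly selection to extract vaguely convergent subsequences, transfer of vague convergence to the Stieltjes transforms via the fact that $\lambda\mapsto(\lambda-z)^{-1}$ lies in $C_0(\mathbb{R})$, identification of every subsequential limit's transform with $s$, recovery of the total mass of the subsequential limit from $\lim_{v\to\infty}\mathbf{i}v\,s(\mathbf{i}v)$, and uniqueness via Stieltjes inversion to glue the subsequences back together. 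You correctly isolate the one genuinely delicate point, namely that tightness is not assumed and must be manufactured from the condition $\mathbf{i}v\,s(\mathbf{i}v)\to-1$, which rules out escape of mass in the vague limits. One small logical quibble: for the ``only if'' direction the hypothesis is merely that $s=s_P$ for some Borel probability measure $P$, so the opening appeal to the Portmanteau theorem (which presupposes $P_n\to P$ in distribution) is unnecessary; the dominated-convergence computation showing $-\mathbf{i}v\,s_P(\mathbf{i}v)\to P(\mathbb{R})=1$ is all that is needed there, and you do carry it out correctly.
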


%%%%%%%%%%%%%%%%%%%%%%%%%%%%%%%%%%%
\section{Proof of Corollary \ref{cor:LSD_linear_process_B}}
\label{sec:proof_cor_linear_process}

In view of Lemma \ref{lemma:rank_inequality_rectangular} and a truncation argument analogous to
that in Section \ref{sec:proof:lin_proc}, without loss of generality,
attention can be restricted to the matrix
\[
\mathbf{C}_{\tau}^{B} = \sqrt{\frac np}\bigg(\bar{\mathbf{S}}_{\tau}^{B}
- \frac{1}{2}\sum_{\ell=0}^{q_p-\tau} (\mathbf{B}_\ell \mathbf{B}_{\ell+\tau}^*+\mathbf{B}_{\ell+\tau} \mathbf{B}_\ell^*)\bigg),
\]
where $\bar{\mathbf{S}}_{\tau}^{B} = \frac{1}{n-\tau}\bar{\mathbf{X}}^{B}\mathbf{D}_\tau(\bar{\mathbf{X}}^{B})^*$ with $\mathbf{D}_\tau = \frac{1}{2}(\mathbf{L}^\tau + (\mathbf{L}^{\tau})^T)$,
$\bar{\mathbf{X}}^{B} = \sum_{\ell=0}^{q_p} \mathbf{B}_\ell \mathbf{Z} \tilde{\mathbf{L}}^\ell$
%$+ \sum_{\ell=1}^{q_\beta} \mathbf{B}_\ell \mathbf{Z}_{[-q]} \mathbf{L}^{\ell-q_\beta}$,
and $q_p = \lceil p^{1/4}\rceil \leq p^{1/\beta}$ since $\beta \in [0,4)$. Define $\bar{\mathbf{X}}^{A} = \sum_{\ell=0}^{q_p} \mathbf{A}_\ell \mathbf{Z} \tilde{\mathbf{L}}^\ell$ and $\mathbf{C}_{\tau}^{A}=\sqrt{n/p}(\bar{\mathbf{S}}_{\tau}^A
- \sum_{\ell=0}^{q_p-\tau} \mathbf{A}_\ell \mathbf{A}_{\ell+\tau})$ where $\bar{\mathbf{S}}_{\tau}^A
= \frac{1}{n-\tau}\bar{\mathbf{X}}^A\mathbf{D}_\tau(\bar{\mathbf{X}}^A)^*$. It suffices to show that
the distance between the ESDs of $\mathbf{C}_\tau^{B}$ and $\mathbf{C}_\tau^{A}$ converge to zero almost surely under conditions
{\bf B3} or {\bf B4} and {\bf B1}, {\bf B2}, {\bf A1}--{\bf A5}.

First, to prove the result under condition {\bf B3}, by Lemma \ref{lemma:rank_inequality},
it suffices to show that
\begin{equation}\label{eq:rank_diff_C_asymmetric}
\frac{1}{p}\mbox{rank}\left(\mathbf{C}_{\tau}^{B} - \mathbf{C}_{\tau}^{A} \right) \to 0 ~~\mbox{a.s.}
\end{equation}
In this direction, first note that,
\begin{align*}
\frac{1}{p}\mbox{rank}(\bar{\mathbf{S}}_{\tau}^{B} - \bar{\mathbf{S}}_{\tau}^{A})
\leq& \frac{2}{p}\mbox{rank}(\bar{\mathbf{X}}^B - \bar{\mathbf{X}}^A) \\
\leq& \frac{2}{p}\sum_{\ell=0}^{q_p} \mbox{rank}(\mathbf{B}_\ell-\mathbf{A}_\ell) \leq
\frac{2}{p}\sum_{\ell=0}^{\lceil p^{1/\beta}\rceil} \mbox{rank}(\mathbf{B}_\ell-\mathbf{A}_\ell) ~\to~0,
\end{align*}
where the last condition is by {\bf B3}. Also,
%defining the $p\times (q_\beta+1)p$ matrices ${\cal B}_\beta = [\mathbf{B}_0:\cdots:\mathbf{B}_{q_\beta}]$
%and ${\cal A}_\beta = [\mathbf{A}_0:\cdots:\mathbf{A}_{q_\beta}]$,
\begin{align*}
& \frac{1}{p} \mbox{rank}\left(\frac{1}{2}\sum_{\ell=0}^{q_p-\tau} (\mathbf{B}_\ell \mathbf{B}_{\ell+\tau}^*
+\mathbf{B}_{\ell+\tau} \mathbf{B}_\ell^*) - \sum_{\ell=0}^{q_p - \tau} \mathbf{A}_\ell \mathbf{A}_{\ell +\tau}\right) \\
&\leq \frac{2}{p}  \mbox{rank}(\sum_{\ell=0}^{q_p - \tau} (\mathbf{B}_\ell - \mathbf{A}_\ell)\mathbf{B}_{\ell+\tau}^*
+ \sum_{\ell=0}^{q_p - \tau} \mathbf{A}_\ell (\mathbf{B}_{\ell+\tau} - \mathbf{A}_{\ell+\tau})^*)
\leq \frac{4}{p}\sum_{\ell=0}^{q_p} \mbox{rank}(\mathbf{B}_\ell-\mathbf{A}_\ell) \to 0.
\end{align*}
Combining the last two displays, (\ref{eq:rank_diff_C_asymmetric}) follows.

Now, to prove the result under {\bf B4}, by Lemma \ref{lemma:Levy_norm_inequality},
it suffices to show that
\begin{equation}\label{eq:norm_diff_C_asymmetric}
\| \mathbf{C}_{\tau}^{B} - \mathbf{C}_{\tau}^{A}\| \to 0 ~~\mbox{a.s.}
\end{equation}
where $\| \cdot \|_F$ denotes the Frobenius norm. As a first step, note that since
$\| \mathbf{L}\| \leq 1$, where $\mathbf{L}$ is the lag operator,
\begin{align*}
\sqrt{\frac{n}{p}} \| \bar{\mathbf{S}}_{\tau}^{B} - \bar{\mathbf{S}}_{\tau}^A \|
\leq&  \frac{\sqrt{n}}{\sqrt{p}(n-\tau)} \max\{\| \bar{\mathbf{X}}^{B} \|, \| \bar{\mathbf{X}}^{A}\|\}
\| \bar{\mathbf{X}}^{B} -  \bar{\mathbf{X}}^{A}\| \\
\leq& \frac{\sqrt{n}}{\sqrt{p}(n-\tau)}\|\mathbf{Z}\|^2\sum_{\ell=0}^{q_p} (\| \mathbf{B}_\ell \|+\|\mathbf{A}_\ell\|)
\sum_{\ell=0}^{q_p}  \|\mathbf{B}_\ell - \mathbf{A}_\ell\| \\
\leq& \frac{n}{n-\tau} \|\frac{1}{n}  \mathbf{Z}\mathbf{Z}^*\| \left(\sum_{\ell=0}^\infty \bar{b}_\ell + \sum_{\ell=0}^\infty \bar{a}_\ell\right)
 \sqrt{\frac{n}{p}}  \sum_{\ell=0}^{\lceil p^{1/\beta}\rceil}  \|\mathbf{B}_\ell - \mathbf{A}_\ell\| ~\to~0 ~~\mbox{a.s.}
\end{align*}
Here the last line follows from assumptions {\bf A4}, {\bf A5}, {\bf B1},
the fact that $\|\frac{1}{n} \mathbf{Z}\mathbf{Z}^*\| \leq 1+\epsilon$ a.s.
for large $n$, for any given $\epsilon > 0$, and assumption {\bf B4}. Next,
\begin{align*}
 \sqrt{\frac{n}{p}} &\|\frac{1}{2}\sum_{\ell=0}^{q_p-\tau} (\mathbf{B}_\ell \mathbf{B}_{\ell+\tau}^*+\mathbf{B}_{\ell+\tau} \mathbf{B}_\ell^*) - \sum_{\ell=0}^{q_p - \tau} \mathbf{A}_\ell \mathbf{A}_{\ell +\tau} \| \\
&\leq (\max_{\ell}\|\mathbf{B}_\ell\| + \max_{\ell}\|\mathbf{A}_\ell\|) \sqrt{\frac{n}{p}}
\sum_{\ell=0}^{q_p}  \|\mathbf{B}_\ell - \mathbf{A}_\ell\| \to 0,
\end{align*}
again by {\bf A4}, {\bf A5} {\bf B1} and {\bf B4}. Combining the last two displays, (\ref{eq:norm_diff_C_asymmetric}) is obtained.

%%%%%%%%%%%%%%%%%%%%%%%%%%%%%%%%%%%

%%%%%%%%%%%%%%%%%%%%%%%%%%%%%%%%%%%%%

\section{Proof of Lemma \ref{lem:tightness}}\label{sec:proof_lemma_tightness}

In view of Lemma \ref{lem:aux:2} and the truncation arguments in
Sections \ref{sec:proof:lin_proc} and \ref{sec:proof:non-gauss}, it suffices
to show that $(F^{\bar{\mathbf{C}}_\tau})$ is a tight sequence,
where $\bar{\mathbf{C}}_\tau = \sqrt{n/p}(\bar{\mathbf{S}}_\tau - \mathbb{E}[\bar{\mathbf{S}}_\tau])$ and
$\bar{\mathbf{S}}_\tau = \frac{1}{n} \bar{\mathbf{X}} \tilde{\mathbf{D}}_\tau \bar{\mathbf{X}}^*$ where $\bar{\mathbf{X}}
= \sum_{\ell=0}^{q_p} \mathbf{A}_\ell \mathbf{Z} \tilde{\mathbf{L}}^\ell$ and $\tilde{\mathbf{D}}_\tau
= \frac{1}{2}(\tilde{\mathbf{L}}^\tau + \tilde{\mathbf{L}}^{-\tau})$, with $q_p = \lceil p^{1/4}\rceil$ and the
$Z_{tj}$ satisfying {\bf Z1}, {\bf Z2} and $|Z_{tj}|\leq n^{1/4}\epsilon_p$ where $\epsilon_p > 0$ is
such that $\epsilon_p \to 0$ and $p^{1/4}\epsilon_p \to \infty$.
This is established by showing that $\frac{1}{p} \tr(\bar{\mathbf{C}}_\tau^2) = \int x^2 dF^{\bar{\mathbf{C}}_\tau}$ is bounded almost surely.
which in turn is shown by verifying that $p^{-1}\mathbb{E}[\tr(\bar{\mathbf{C}}_\tau^2)]$ is bounded from above and
\begin{equation}\label{eq:sum_var_trace_C_tau_2}
\sum_{p\geq 1} \mathbb{E}\bigg[\bigg(\frac{1}{p} \tr(\bar{\mathbf{C}}_\tau^2) - \frac{1}{p} \mathbb{E}\big[\tr(\bar{\mathbf{C}}_\tau^2)\big]\bigg)^2\bigg]
< \infty,
\end{equation}
the result whereby follows from the Borel-Cantelli lemma.

Define, $\mathbf{E}^{\ell\ell'} = \tilde{\mathbf{L}}^\ell \tilde{\mathbf{D}}_\tau \tilde{\mathbf{L}}^{-\ell}$,
$\mathbf{U}^{\ell\ell'} = \mathbf{Z}\mathbf{E}^{\ell\ell'}\mathbf{Z}^*- \tr(\mathbf{E}^{\ell\ell'})\mathbf{I}_p$
and $\mathbf{G}^{\ell\ell'} = \mathbf{A}_\ell\mathbf{A}_{\ell'}$. Observe that
\begin{equation}\label{eq:trace_E_ell_ell_prime}
\tr(\mathbf{E}^{\ell\ell'})
= \frac{1}{2}\tr(\tilde{\mathbf{L}}^{\ell-\ell'+\tau}) + \frac{1}{2}\tr(\tilde{\mathbf{L}}^{\ell-\ell'-\tau})
= \frac{n}{2}(\delta_0(\ell'-\ell-\tau) + \delta_0(\ell-\ell'-\tau)).
\end{equation}
Then,
\[
\bar{\C}_\tau
%= \sqrt{\frac{n}{p}}(\bar{\mathbf{S}}_\tau - \mathbb{E}(\bar{\mathbf{S}}_\tau))
= \frac{1}{\sqrt{np}}\sum_{\ell_1=0}^{q_p}\sum_{\ell_2=0}^{q_p}
 \mathbf{A}_{\ell_1} (\mathbf{Z}\mathbf{E}^{\ell_1\ell_2}\mathbf{Z}^* - \mathbb{E}[\mathbf{Z}\mathbf{E}^{\ell_1\ell_2}\mathbf{Z}^*])\mathbf{A}_{\ell_2}
= \frac{1}{\sqrt{np}} \sum_{\ell_1=0}^{q_p}\sum_{\ell_2=0}^{q_p} \mathbf{A}_{\ell_1}\mathbf{U}^{\ell_1\ell_2}\mathbf{A}_{\ell_2}
\]
and hence
\begin{align}
\label{eq:trace_C_bar_tau_2}
\frac{1}{p} \tr(\bar{\C}_\tau^2) =& \frac{1}{np^2}\sum_{\ell_1=0}^{q_p}\sum_{\ell_2=0}^{q_p}\sum_{\ell_3=0}^{q_p}\sum_{\ell_4=0}^{q_p}
\tr(\mathbf{U}^{\ell_1\ell_2} \mathbf{A}_{\ell_2}\mathbf{A}_{\ell_3} \mathbf{U}^{\ell_3\ell_4} \mathbf{A}_{\ell_4}\mathbf{A}_{\ell_1})
\nonumber\\
=& \frac{1}{np^2}\sum_{\ell_1=0}^{q_p}\sum_{\ell_2=0}^{q_p}\sum_{\ell_3=0}^{q_p}\sum_{\ell_4=0}^{q_p}
\sum_{i_1=1}^p\sum_{i_2=1}^p\sum_{i_3=1}^p\sum_{i_4=1}^p \mathbf{U}_{i_1i_2}^{\ell_1\ell_2} \mathbf{G}_{i_2i_3}^{\ell_2\ell_3}
\mathbf{U}_{i_3i_4}^{\ell_3\ell_4} \mathbf{G}_{i_4i_1}^{\ell_4\ell_1}.
\end{align}
Let the $i$-th row of $\mathbf{Z}$, written as an $1\times n$ vector,
be denoted by $\mathbf{z}_i^* = (Z_{i1},\cdots,Z_{in}) = (\mathbf{z}_i^R)^T + \mathbf{i}(\mathbf{z}_i^I)^T$
where $\mathbf{z}_i^R = (\Re(Z_{i1}),\ldots,\Re(Z_{in}))$ and $\mathbf{z}_i^I = (\Im(Z_{i1}),\ldots,\Im(Z_{in}))$.
Thus, $\mathbf{z}_i = \mathbf{z}_i^R - \mathbf{i} \mathbf{z}_i^I$, and hence
\begin{align}
\label{eq:U_ell_ell_prime_i_i_prime}
\mathbf{U}_{ii'}^{\ell\ell'} =& \mathbf{z}_i^* \mathbf{E}^{\ell\ell'}\mathbf{z}_{i'} - \tr(\mathbf{E}^{\ell\ell'}) \delta_0(i-i')\nonumber\\
=& \left((\mathbf{z}_i^R)^T \mathbf{E}^{\ell\ell'} \mathbf{z}_{i'}^R - \frac{1}{2}\tr(\mathbf{E}^{\ell\ell'}) \delta_0(i-i') \right)
+ \left((\mathbf{z}_i^I)^T \mathbf{E}^{\ell\ell'} \mathbf{z}_{i'}^I - \frac{1}{2}\tr(\mathbf{E}^{\ell\ell'}) \delta_0(i-i')\right) \\
&
- \mathbf{i} (\mathbf{z}_i^R)^T \mathbf{E}^{\ell\ell'} \mathbf{z}_{i'}^I + \mathbf{i}  (\mathbf{z}_{i}^I)^T
\mathbf{E}^{\ell\ell'} \mathbf{z}_{i'}^R\nonumber\\
=& u_{ii'}^{\ell\ell'} + v_{ii'}^{\ell\ell'} - \mathbf{i} x_{ii'}^{\ell\ell'} + \mathbf{i} y_{ii'}^{\ell\ell'},
\end{align}
say. Notice that $x_{ii'}^{\ell\ell'} = y_{i'i}^{\ell\ell'}$. Further,
expectation of each of the terms in the last line of (\ref{eq:U_ell_ell_prime_i_i_prime}) is zero, which follows from
\begin{equation}\label{eq:z_i_quad_expectation}
\mathbb{E}[\mathbf{z}_i^R(\mathbf{z}_i^R)^T] = \mathbb{E}[\mathbf{z}_i^I(\mathbf{z}_i^I)^T]
= \frac{1}{2}\mathbf{I}_p,
\qquad \mathbb{E}[\mathbf{z}_i^R(\mathbf{z}_i^I)^T] = \mathbf{0}_{p\times p}
\end{equation}
and the independence of $\mathbf{z}_i$. It can then deduced that
\[
\mathbb{E}[\mathbf{U}_{i_1i_2}^{\ell_1\ell_2}\mathbf{U}_{i_1i_2}^{\ell_3\ell_4}] = 0
\qquad\mbox{and}\qquad
\mathbb{E}[\mathbf{U}_{i_1i_2}^{\ell_1\ell_2}\mathbf{U}_{i_2i_1}^{\ell_3\ell_4}]
= \tr(\mathbf{E}^{\ell_1\ell_2}\mathbf{E}^{\ell_3\ell_4}),
\qquad i_1\neq i_2,
\]
while
\[
\mathbb{E}[\mathbf{U}_{ii}^{\ell_1\ell_2}\mathbf{U}_{ii}^{\ell_3\ell_4}]
= \tr(\mathbf{E}^{\ell_1\ell_2}\mathbf{E}^{\ell_3\ell_4}) + (\mu_4-2)\sum_{j=1}^n
\mathbf{E}_{jj}^{\ell_1\ell_2}\mathbf{E}_{jj}^{\ell_3\ell_4},
\]
where $\mu_4 = \mathbb{E}|Z_{11}|^4$.
%$\mu_4^R = \mathbb{E}(\Re(Z_{11})^4)$ and $\mu_4^I = \mathbb{E}(\Im(Z_{11})^4)$.
Note also that
$\sum_{j=1}^n \mathbf{E}_{jj}^{\ell_1\ell_2}\mathbf{E}_{jj}^{\ell_3\ell_4}$ is zero except when either
$|\ell_1-\ell_2|=\tau$  or $|\ell_3-\ell_4|=\tau$.

Now $\frac{1}{p} \mathbb{E}[\tr(\bar{\C}_\tau^2)]$ can be computed. Recalling that
that $\mathbb{E}[\mathbf{U}_{ii'}^{\ell\ell'}] = 0$, and using (\ref{eq:trace_C_bar_tau_2})
and (\ref{eq:z_i_quad_expectation}), and independence of $\mathbf{z}_i$, it follows that
\begin{align*}
& \frac{1}{p} \mathbb{E}[\tr(\bar{\C}_\tau^2)] \\
&= \frac{1}{np^2}
\sum_{\ell_1=0}^{q_p}\sum_{\ell_2=0}^{q_p}\sum_{\ell_3=0}^{q_p}\sum_{\ell_4=0}^{q_p}
\sum_{i_1=1}^p \sum_{i_2\neq i_1}^p \left( \mathbb{E}[\mathbf{U}_{i_1i_2}^{\ell_1\ell_2}\mathbf{U}_{i_1i_2}^{\ell_3\ell_4}]
\mathbf{G}_{i_2i_1}^{\ell_2\ell_3}\mathbf{G}_{i_2i_1}^{\ell_4\ell_1} +
\mathbb{E}[\mathbf{U}_{i_1i_2}^{\ell_1\ell_2}\mathbf{U}_{i_2i_1}^{\ell_3\ell_4}] \mathbf{G}_{i_2i_2}^{\ell_2\ell_3}\mathbf{G}_{i_1i_1}^{\ell_4\ell_1}\right) \\
&\quad + \sum_{\ell_1=0}^{q_p}\sum_{\ell_2=0}^{q_p}\sum_{\ell_3=0}^{q_p}\sum_{\ell_4=0}^{q_p}
\sum_{i=1}^p \mathbb{E}[\mathbf{U}_{ii}^{\ell_1\ell_2}\mathbf{U}_{ii}^{\ell_3\ell_4}]
\mathbf{G}_{ii}^{\ell_2\ell_3}\mathbf{G}_{ii}^{\ell_4\ell_1}.
\end{align*}
From this, (\ref{eq:trace_E_ell_ell_prime}), the calculations above and recalling {\bf A4} and {\bf A5}, it follows that
$\frac{1}{p}\mathbb{E}[\tr(\bar{\C}_\tau^2)]$ is bounded from above.

A more involved calculation, involving the computation of
$\mathbb{E}[\mathbf{U}_{i_1i_2}^{\ell_1\ell_2}\mathbf{U}_{i_3i_4}^{\ell_3\ell_4}\mathbf{U}_{i_5i_6}^{\ell_5\ell_6}\mathbf{U}_{i_7i_8}^{\ell_7\ell_8}]$,
where the indices $i_k$ are paired, with several applications of Lemma \ref{lemma:moments_of_quadratic_forms}
(for dealing with terms where the same index $i_k$ appears at least six times),
proves that $\mathbb{E}[(\frac{1}{p}\tr({\C}_\tau^2) - \frac{1}{p}\mathbb{E}(\tr({\C}_\tau^2)))^2]
\leq M/p^2$ for some finite constant $M$, which implies (\ref{eq:sum_var_trace_C_tau_2}) and completes the proof.
Detailed calculations are omitted due to space constraints.

%\textcolor{red}{[More details necessary ??]}

%%%%%%%%%%%%%%%%%%%%%%%%%%%%%%%%%%%%%%%%%

%%%%%%%%%%%%%%%%%%%%%%%%%%%%%%%%%%%%%%%%%%%%%

\section{Auxiliary results for Section \ref{proof:thm_maq:det_eq}}
\label{sec:aux:deterministic}

%%%%%%%%%%%%%%%%%%%%%%%%%%%%%%%%%%%%%%%%%%%%%

\subsection{Estimation of $\mE[|\eta_{k}|]$}
\label{subsec:aux:deterministic:1}

In order to verify $\mE[|\eta_{k}|]\to 0$, it suffices to show that $\mE[|\eta_{k}|^2]\to 0$, since $(\mE[|\eta_{k}|])^2 \leq \mE[|\eta_{k}|^2]$. Indeed,
\begin{align*}
\mE[|\eta_{k}|^2]
&= \mE\bigg[\bigg|\sqrt{\frac{n}{p}}(v_{k}^{*}\mathbf{\Delta}_{k}v_k-\tilde\sigma_{\tau, k})\bigg|^{2}\bigg] \\
%&=\frac{n}{p}\mE\big[\left| v_k^*\mathbf{\Delta}_k v_k-\tilde\sigma_{\tau, k}\right|^{2}\big] \\
&= \frac{1}{np} \mE\bigg[\bigg|\sum_{t=1}^{n}\cos(\tau \nu_{t})\psi(\bs{\alpha}_{k}, \nu_{t})|\tilde{Z}_{kt}|^2
- \sum_{t=1}^{n}\cos(\tau \nu_{t})\psi(\bs{\alpha}_{k}, \nu_{t})\bigg|^{2}\bigg] \\
&= \frac{1}{np}\mE\bigg[\bigg\{\bigg(\sum_{t=1}^{n}\cos(\tau \nu_{t})\psi(\bs{\alpha}_{k}, \nu_{t})\bigg)
(|\tilde{Z}_{kt}|^2-1)\bigg\}^2\bigg] \\
&=\frac{1}{np}\mE \bigg[\sum_{t=1}^{n}\sum_{t^\prime=1}^{n}\cos(\tau \nu_{t})\cos(\tau \nu_{t^\prime})\psi(\bs{\alpha}_{k}, \nu_{t})\psi(\bs{\alpha}_{k}, \nu_{t^\prime})(|\tilde{Z}_{kt}|^2-1)(|\tilde{Z}_{kt^\prime}|^2-1)\bigg] \\
&=\frac{1}{np} \sum_{t=1}^{n}\cos^2(\tau \nu_{t})\psi^2(\bs{\alpha}_{k}, \nu_{t}) \mE\big[|\tilde{Z}_{kt}|^2-1\big]^2 \\
&=\frac{\mu_4-1}{np} \sum_{t=1}^{n}\cos^2(\tau \nu_{t})\psi^2(\bs{\alpha}_{k}, \nu_{t}) \\
&\leq\frac{C}{p}
\end{align*}
for some constant $C>0$, where $\mu_4=E[|\tilde Z_{kt}|^4]$.

%%%%%%%%%%%%%%%%%%%%%%%%%%%%%%%%%%%%%%%%%%%%%

\subsection{Estimation of $\max_kd_k^{2,2}$}
\label{subsec:estimation_on_d_4}

Let $\mathbf{J}_{k}=\frac{1}{p} \tr(\mathbf{R}_{(k)}(z)\mathbf{\Gamma}_{\tau,k}^n(\bs \alpha_{k}))-\mE[p^{-1} \tr(\mathbf{R}_{(k)}(z)\mathbf{\Gamma}_{\tau,k}^n(\bs \alpha_{k}))]$. It follows from \eqref{eq:mc-diarmid-2}, that, for all $y>0$,
\[
\mP(|\mathbf{J}_{k}|^2>y)\leq 4 \exp\bigg(-\frac{c_0py}{q^2}\bigg),
\]
for some $c_0 > 0$. Thus, setting $\tilde{c}_0 = c_0 pq^{-2}$,
\begin{align*}
\mE[|\mathbf{J}_{k}|^2]
&= \int y\mP(|\mathbf{J}_{k}|^2>y)dy \\
&\leq\frac{4}{\tilde{c}_0^2}\int_{0}^{\infty} \tilde{c}_0^2 y \exp(-\tilde{c}_0y)dy = \frac{4}{\tilde{c}_0^2}\Gamma(2) = \frac{4q^4}{c_0^2p^2},
\end{align*}
where $\Gamma(\cdot)$ is the Gamma function. The right-hand side goes to zero if $p\to\infty$. This continues to hold even if the the MA order $q$ grows at a rate satisfying $q^2 = o(p)$. Consequently, $\max_k d_k^{2,2}\to 0$ under \eqref{eq:mod_dim}, as required.

%%%%%%%%%%%%%%%%%%%%%%%%%%%%%%%%%%%%%%%%%%%%%

\subsection{Estimation of $\max_k d_k^{2,3}$}
\label{subsec:bound_on_d_31}

Throughout this subsection the following fact is repeatedly used:
\[
|\psi( \mathbf{a}, \nu_{t})|
=\bigg|\sum_{\ell,\ell^\prime=0}^{\infty} f_\ell(\mathbf{a})f_{\ell^\prime}(\mathbf{a})e^{\ic (\ell-\ell^\prime)\nu_t}\bigg|
\leq \sum_{\ell=0}^{\infty}\bar{a}_{\ell}\sum_{\ell^{\prime}=0}^{\infty}\bar{a}_{\ell^{\prime}}
\leq L^2_{1},
\]
which holds, since $|f_{\ell}(\mathbf{a})|\leq \|\mathbf{A}_{\ell}\|\leq \bar{a}_{\ell}$ and $\sum_{\ell=0}^{\infty}\bar{a}_{\ell}\leq L_1$ by assumptions {\bf A4} and {\bf A5}.

Let $w_k=\sqrt{n/p}\mathbf{V}_{k}\Delta_{\tau}v_k$ and recall that $\mathbf{V}_{k}^*=[v_1,\ldots,v_{k-1},0, v_{k+1}, \ldots,v_p]$ and $v_k = n^{-1/2}\mathcal{G}_k\tilde Z_k$, where $\mathcal{G}_k = \mathrm{diag}(g(\bs \alpha_{k}, \nu_{t})\colon t=1,\ldots,n)$ and $\tilde{Z}_k$ is the $n\times 1$ column vector with entries $\tilde{Z}_{kt}$. Thus,
\begin{align*}
w_k^*\mathbf{R}_{(k)}(z)w_k
&=\frac{n}{p}v_k^*\Delta_{\tau}\mathbf{V}_{k}^{*}\mathbf{R}_{(k)}(z)\mathbf{V}_{k}\Delta_{\tau}v_k \\
&=\frac{n}{p}\tr\left(\mathbf{R}_{(k)}(z)\mathbf{V}_k\Delta_{\tau}v_kv_k^*\Delta_{\tau} \mathbf{V}_k^*\right) \\
&=\frac{1}{p}\tr\left(\mathbf{R}_{(k)}(z)\mathbf{V}_k\Delta_{\tau}\mathbf{G}_{k}\Delta_{\tau}\mathbf{V}_k^*\right)
+ R_{k}^{(1)},
\end{align*}
where
\begin{align}
\label{eq:R_k_1}
R_{k}^{(1)}
&=\frac{1}{p}\tr\left(\Delta_\tau\mathbf{V}_k^*\mathbf{R}_{(k)}(z)\mathbf{V}_k\Delta_\tau(nv_kv_k^*-\mathbf{G}_k)\right) \nonumber \\
&= \frac{1}{p}\sum_{j \neq k} (\mathbf{R}_{(k)}(z))_{jj} v_j^* \Delta_\tau \mathbf{U}_k \Delta_\tau v_j,
\end{align}
and $\mathbf{U}_k = n v_kv_k^* - \mathbf{G}_k$ and $\mathbf{G}_{k} = n\mathbf{\Sigma}_{k,v} = n \mathbb{E}[v_k v_k^*] = \mathrm{diag}(\psi(\bs\alpha_k,\nu_t)\colon t=1,\ldots,n)$. Define the $(j,j^\prime)$th element of $\mathbf{Q}_{k}=\mathbf{V}_{k}\Delta_{\tau}\mathbf{G}_{k}\Delta_{\tau}\mathbf{V}_{k}^{*}$ as $\mathbf{Q}_{k}(j,j^\prime)=v_j^*\Delta_{\tau}\mathbf{G}_{k}\Delta_{\tau} v_{j^\prime}$, and notice that $\mathbf{Q}_{k}(k,k)=0$. Then,
\begin{align}
\label{eq:R_k_decomp}
R_k
&= w_k^* \mathbf{R}_{(k)}(z) w_k - \frac{1}{p} \tr(\mathbf{R}_{(k)}(z) \mathbf{\Gamma}_{\tau,k}^n(\bs\alpha_k)) \nonumber \\
&= \frac{1}{p}\tr\left(\mathbf{R}_{(k)}(z)\left\{\mathbf{Q}_{k}-\mathbf{\Gamma}_{\tau,k}^n(\bs{\alpha_{k}})\right\}\right) + R_k^{(1)}\nonumber\\
&= R_k^{(2)} + R_k^{(1)},
\end{align}
where
\begin{align}
\label{eq:R_k_2_decomp}
R_k^{(2)}
&=\frac{1}{p}\tr\left(\mathbf{R}_{(k)}(z)\left\{\mathbf{Q}_{k}-\mathbf{\Gamma}_{\tau,k}^n(\bs{\alpha_{k}})\right\}\right) \nonumber \\
&=\frac{1}{p}\sum_{j\neq k} (\mathbf{R}_{(k)}(z))_{jj}
(v_j^{*}\mathbf{\Delta}_{\tau}\mathbf{G}_{k}\mathbf{\Delta}_{\tau}v_j - \gamma_{\tau, k}^n(\bs \alpha_{j}))
+ \frac{1}{p}\sum_{j\ne j^\prime \ne k}(\mathbf{R}_{(k)}(z))_{j^\prime j}(v_j^*\mathbf{\Delta}_{\tau}\mathbf{G}_{k}\mathbf{\Delta}_{\tau}v_{j^\prime}) \nonumber \\
&= R_k^{2,1} + R_k^{2,2}
\end{align}
with
\begin{align*}
R_k^{2,1}
&=\frac{1}{p}\sum_{j\ne k}\left(\mathbf{R}_{(k)}(z)\right)_{jj}
\bigg(\frac{1}{n}\sum_{t=1}^{n}\cos^2(\tau\nu_t)\psi(\bs \alpha_{k}, \nu_t)\psi(\bs \alpha_{j},\nu_t)(|\tilde{Z}_{jt}|^2-1)\bigg), \\
R_k^{2,2}
&=\frac{1}{p}\sum_{j\ne j^\prime\ne k} \left(\mathbf{R}_{(k)}(z)\right)_{j^\prime j}
\bigg(\frac{1}{n}\sum_{t=1}^{n}\cos^2(\tau\nu_t)\psi(\bs \alpha_{k}, \nu_t)g(\bs \alpha_{j^\prime}, \nu_t)\overline{g(\bs \alpha_{j}, \nu_t)}\tilde{Z}_{j^\prime t}\overline{\widetilde{Z}}_{jt}\bigg).
\end{align*}
Using independence of $\tilde{Z}_{j1},\ldots,\tilde{Z}_{jn}$, $\mathbb{E}[\tilde{Z}_{jt}]=0$ and $\mathbb{E}[|\tilde{Z}_{jt}|^2]=1$, it follows that
\begin{equation}
\label{eq:E_R_k_21_bound}
\mathbb{E}\big[|R_k^{2,1}|^2\big]
= \frac{1}{n^2p^2}\sum_{j\neq k} |(\mathbf{R}_{(k)}(z))_{jj}|^2 \sum_{t=1}^n (c_{\tau,jk}(\nu_t))^2
\mathbb{E}\big[(|\tilde{Z}_{it}|^2-1)^2\big]
\leq \frac{C}{\Im(z)^2} \frac{1}{np},
\end{equation}
where $c_{\tau,jk}(\nu) = \cos^2(\tau\nu)\psi(\bs \alpha_{k}, \nu)\psi(\bs \alpha_{j},\nu)$ and the inequality results from $\| \mathbf{R}_{(k)}(z)\| \leq \Im(z)^{-1}$. Similarly,
\begin{align}
\label{eq:E_R_k_22_bound}
\mathbb{E}\big[|R_k^{2,2}|^2\big]
=& \frac{1}{n^2p^2}\sum_{j\neq j^\prime \neq k} |(\mathbf{R}_{(k)}(z))_{j^\prime j}|^2 \sum_{t=1}^n |c_{\tau,j^\prime jk}(\nu_t)|^2\mathbb{E}[|\tilde{Z}_{j^\prime t}|^2]\mathbb{E}[|\tilde{Z}_{jt}|^2] \nonumber\\
&+ \frac{1}{n^2p^2}\sum_{j\neq j^\prime \neq k} (\mathbf{R}_{(k)}(z))_{j^\prime j}\overline{(\mathbf{R}_{(k)}(z))_{jj^\prime}} \sum_{t=1}^n c_{\tau,j^\prime jk}(\nu_t)\overline{c_{\tau,jj^\prime k}(\nu_t)} \mathbb{E}[|\tilde{Z}_{j^\prime t}|^2]\mathbb{E}[|\overline{\tilde{Z}}_{jt}|^2] \nonumber\\
\leq& \frac{C}{\Im(z)^2} \frac{1}{n},
\end{align}
where $c_{\tau,j^\prime jk}(\nu) = \cos^2(\tau\nu)\psi(\bs \alpha_{k}, \nu_t)g(\bs \alpha_{j^\prime}, \nu)\overline{g(\bs \alpha_{j}, \nu)}$, making also use of the fact that $\mathbb{E}(\overline{\tilde Z}_{jt})^2 = 1$ since the real and imaginary parts of $\tilde Z_{jt}$ are independent $N(0,1/2)$ random variables.

Next, consider
\begin{align}
\label{eq:E_R_k_1_expansion}
\mathbb{E}[|R_k^{(1)}|^2]
=& \frac{1}{p^2} \sum_{j \neq k} |(\mathbf{R}_{(k)}(z))_{jj}|^2
\mathbb{E}\big[(v_j^*\mathbf{\Delta}_\tau \mathbf{U}_k \mathbf{\Delta}_\tau v_j)^2\big] \nonumber \\
&+ \frac{1}{p^2} \sum_{j \neq j^\prime \neq k} (\mathbf{R}_{(k)}(z))_{jj} \overline{(\mathbf{R}_{(k)}(z))_{j^\prime j^\prime}} \mathbb{E}\big[v_j^*\mathbf{\Delta}_\tau \mathbf{U}_k \mathbf{\Delta}_\tau v_j v_{j^\prime}^*\mathbf{\Delta}_\tau \mathbf{U}_k \mathbf{\Delta}_\tau v_{j^\prime}\big]
\nonumber\\
=& \frac{1}{n^2p^2} \sum_{j \neq k} |(\mathbf{R}_{(k)}(z))_{jj}|^2
\mathbb{E}\big[(\tilde Z_j^*{\mathcal{G}}_j^* \mathbf{\Delta}_\tau \mathbf{U}_k \mathbf{\Delta}_\tau {\mathcal{G}}_j\tilde Z_j)^2\big] \nonumber\\
& + \frac{1}{n^2p^2} \sum_{j \neq j^\prime \neq k} (\mathbf{R}_{(k)}(z))_{jj} \overline{(\mathbf{R}_{(k)}(z))_{j^\prime j^\prime}} \mathbb{E}\big[\tr({\mathcal{G}}_j^*\mathbf{\Delta}_\tau \mathbf{U}_k \mathbf{\Delta}_\tau {\mathcal{G}}_j)\tr({\mathcal{G}}_{j^\prime}^*\mathbf{\Delta}_\tau \mathbf{U}_k \mathbf{\Delta}_\tau {\mathcal{G}}_{j^\prime})\big].
\end{align}
Define $\mathbf{B}_{jk,\tau} = \mathcal{G}_j^* \mathbf{\Delta}_\tau \mathbf{U}_k \mathbf{\Delta}_\tau \mathcal{G}_j$ and the $(s,t)$th element of $\mathbf{B}_{jk,\tau}$ by $b_{jk,\tau}(s,t)$ for $1\leq s,t \leq n$. Observe that $\mathbf{B}_{jk,\tau}^* = \mathbf{B}_{jk,\tau}$. Also,
\begin{align*}
\tr(\mathbf{B}_{jk,\tau})
&= \tilde Z_k^* \mathcal{G}_k^*\mathbf{\Delta}_\tau \mathcal{G}_j \mathcal{G}_j^* \mathbf{\Delta}_\tau \mathcal{G}_k \tilde Z_k - \tr(\mathcal{G}_k^*\mathbf{\Delta}_\tau  \mathcal{G}_j \mathcal{G}_j^* \mathbf{\Delta}_\tau \mathcal{G}_k) \\
&= \sum_{t=1}^n c_{\tau,jk}(\nu_t) (|\tilde Z_{kt}|^2 - 1) \\
&= \sum_{t=1}^n b_{jk,\tau}(t,t).
\end{align*}
Thus, for $j\neq j^\prime\neq k$,
\begin{align}
\label{eq:E_tr_B_i_tr_B_j}
\mathbb{E}
&\big[\tr(\mathcal{G}_j^*\mathbf{\Delta}_\tau \mathbf{U}_k \mathbf{\Delta}_\tau \mathcal{G}_j)
\tr(\mathcal{G}_{j^\prime}^*\mathbf{\Delta}_\tau \mathbf{U}_k \mathbf{\Delta}_\tau \mathcal{G}_{j^\prime})\big]
\nonumber\\
&= \mathbb{E}\big[\tr(\mathbf{B}_{jk,\tau})\tr(\mathbf{B}_{j^\prime k,\tau})\big] \nonumber\\
&= \mathbb{E}\bigg[%\bigg(
\sum_{t=1}^n c_{\tau,jk}(\nu_t) (|\tilde Z_{kt}|^2 - 1)%\bigg)
%\bigg(
\sum_{t^\prime=1}^n c_{\tau,j^\prime k}(\nu_{t^\prime}) (|\tilde Z_{kt^\prime}|^2 - 1)%\bigg)
\bigg] \nonumber\\
&= \sum_{t=1}^n c_{\tau,jk}(\nu_t)c_{\tau,j^\prime k}(\nu_t) \mathbb{E}\big[(|\tilde Z_{kt}|^2 - 1)^2\big]
\leq C n
\end{align}
for some $C > 0$ uniformly in $j,j^\prime,k$ using the independence of the $\tilde Z_{k1},\ldots,\tilde Z_{k,n}$ and that $\mathbb{E}[|\tilde Z_{kt}|^2] = 1$ for all $t$. Utilizing the same arguments, for $j\neq k$,
\begin{align}
\label{eq:E_Z_i_B_ik_Z_i_square}
\mathbb{E}&
\big[(\tilde Z_j^* \mathbf{B}_{jk,\tau} \tilde Z_j)^2 \big| \mathbf{U}_k\big] \nonumber\\
&= \mathbb{E}\bigg[\bigg(\sum_{t=1}^n\sum_{t^\prime=1}^n b_{jk,\tau}(t,t^\prime) \tilde Z_{jt^\prime} \overline{\tilde Z_{jt}}\bigg)^2\Big|\mathbf{U}_k\bigg] \nonumber\\
&= \sum_{t=1}^n\sum_{t^\prime=1}^n \left[b_{jk,\tau}^2(t,t^\prime) \mathbb{E}\big[(\tilde Z_{jt^\prime})^2(\overline{\tilde Z_{jt}})^2\big]
+ b_{jk,\tau}(t,t^\prime) b_{jk,\tau}(t^\prime,t)\mathbb{E}\big[|\tilde Z_{jt^\prime}|^2|\tilde Z_{jt}|^2\big]\right] \nonumber\\
&= \sum_{t=1}^n b^2_{jk,\tau}(t,t)\mathbb{E}[|\tilde Z_{jt}|^4-1]
+ \sum_{t=1}^n\sum_{t^\prime=1}^n b_{jk,\tau}(t,t^\prime) b_{jk,\tau}(t^\prime,t) \mathbb{E}[|\tilde Z_{jt^\prime}|^2]\mathbb{E}[|\tilde Z_{jt}|^2] \nonumber\\
&= (\mu_4 -1) \sum_{t=1}^n b^2_{jk,\tau}(t,t)  + \tr((\mathbf{B}_{jk,\tau})^2),
\end{align}
where $\mu_4 = \mathbb{E}[|\tilde Z_{jt}|^4]$, noting that the last step makes use of $\mathbf{B}_{jk,\tau}^* = \mathbf{B}_{jk,\tau}$. Now,
\begin{align}
\mathbb{E}\bigg[\sum_{t=1}^n (b_{jk,\tau}(t,t))^2 \bigg]
&= \sum_{t=1}^n c^2_{\tau,jk}(\nu_t) \mathbb{E}\big[(|\widetilde Z_{kt}|^2 - 1)^2\big] \nonumber\\
&= (\mu_4 - 1) \sum_{t=1}^n c^2_{\tau,jk}(\nu_t) \leq C n,
\end{align}
for some $C > 0$ uniformly in $j$ and $k$.

Finally, define $\mathbf{F}_{jk,\tau} = \mathcal{G}_k^*\mathbf{\Delta}_\tau \mathcal{G}_j\mathcal{G}_j^* \mathbf{\Delta}_\tau \mathcal{G}_k$. Observe that $\mathbf{F}_{jk,\tau}$ is a diagonal matrix with $(t,t)$th element
$c_{\tau,jk}(\nu_t)$ for $t=1,\ldots,n$. Thus,
\begin{align}
\label{eq:E_tr_B_ik_square}
\mathbb{E}[\tr((\mathbf{B}_{jk,\tau})^2)]
&= \mathbb{E}[\tr(\mathbf{U}_k \mathbf{\Delta}_\tau \mathcal{G}_j\mathcal{G}_j^* \mathbf{\Delta}_\tau \mathbf{U}_k \mathbf{\Delta}_\tau \mathcal{G}_j\mathcal{G}_j^*\mathbf{\Delta}_\tau)]
\nonumber\\
&= \mathbb{E}\left[\tr\left((\mathcal{G}_k \tilde Z_k \tilde Z_k^* \mathcal{G}_k^*
- \mathcal{G}_k \mathcal{G}_k^*)
 \mathbf{\Delta}_\tau \mathcal{G}_j\mathcal{G}_j^* \mathbf{\Delta}_\tau (\mathcal{G}_k \tilde Z_k \tilde Z_k^* \mathcal{G}_k^* - \mathcal{G}_k \mathcal{G}_k^*) \mathbf{\Delta}_\tau \mathcal{G}_j \mathcal{G}_j^* \mathbf{\Delta}_\tau \right)\right] \nonumber\\
&= \mathbb{E}\left[\left(\tilde Z_k^* \mathcal{G}_k^* \mathbf{\Delta}_\tau \mathcal{G}_j \mathcal{G}_j^* \mathbf{\Delta}_\tau \mathcal{G}_k \tilde Z_k\right)^2\right]
- \tr\left[\left( \mathcal{G}_k^* \mathbf{\Delta}_\tau \mathcal{G}_j \mathcal{G}_j^* \mathbf{\Delta}_\tau
\mathcal{G}_k\right)^2\right] \nonumber\\
&= \mathbb{E}\bigg[\bigg(\sum_{t=1}^n c_{\tau,jk}(\nu_t)|\tilde Z_{kt}|^2 \bigg)^2\bigg]
- \sum_{t=1}^n c^2_{\tau,jk}(\nu_t) \nonumber\\
&= \sum_{t=1}^n c^2_{\tau,jk}(\nu_t) \mathbb{E}[|\widetilde Z_{kt}|^4 - 1]
+ \sum_{t\neq t^\prime} c_{\tau,jk}(\nu_t)c_{\tau,jk}(\nu_{t^\prime})
\mathbb{E}[|\tilde Z_{kt}|^2] \mathbb{E}[|\tilde Z_{kt^\prime}|^2] \nonumber\\
&= (\mu_4 - 1) \sum_{t=1}^n c^2_{\tau,jk}(\nu_t) + \bigg(\sum_{t=1}^n c_{\tau,jk}(\nu_t)\bigg)^2 \nonumber\\
&\leq C n^2,
\end{align}
for some $C > 0$ uniformly in $j$ and $k$.

Combining (\ref{eq:E_R_k_1_expansion})--(\ref{eq:E_tr_B_ik_square}) leads to
\begin{equation}
\label{eq:E_R_k_1_bound}
\mathbb{E}[|R_k^{(1)}|^2] \leq \frac{C_1}{\Im(z)^2} \frac{1}{p} +  \frac{C_2}{\Im(z)^2} \frac{1}{n}
\end{equation}
for some constants $C_1, C_2 > 0$ uniformly in $k$. Thus, noticing that $d_k^{2,3} = \mathbb{E}[|R_k|^2]$ and combining \eqref{eq:R_k_decomp}--\eqref{eq:E_R_k_22_bound} and \eqref{eq:E_R_k_1_bound}, it follows that
\[
\max_k d_k^{2,3} \leq \frac{1}{\Im(z)^2} \left(\frac{C_1^\prime}{p} + \frac{C_2^\prime}{n}\right)
\]
for some $C_1^\prime,C_2^\prime > 0$, so that $\max_k d_k^{2,3}$ is asymptotically negligible.

%%%%%%%%%%%%%%%%%%%%%%%%%%%%%%%%%%%%%%%%%%%%%

\section{Equicontinuity of $\beta_{\tau,p}(z,\mathbf{a})$}
\label{sec:app:equicont}

%%%%%%%%%%%%%%%%%%%%%%%%%%%%%%%%%%%%%%%%%%%%%

In this subsection,  it is verified that $\beta_{\tau,p}(z,\mathbf{a})$ is uniformly equicontinuous in $\mathbf{a}$.
Observe that
\begin{eqnarray*}
\mathcal{R}_{\tau}(\mathbf{a}_{1}, \mathbf{b})- \mathcal{R}_{\tau}(\mathbf{a}_{2}, \mathbf{b})&=&\frac{1}{2\pi}\int_{0}^{2\pi}\cos^2(\tau\nu)\psi(\mathbf{b},\nu)(\psi(\mathbf{a}_1,\nu)-\psi(\mathbf{a}_2,\nu))d\nu\\
&=&\frac{1}{4\pi}\int_{0}^{2\pi}\psi(\mathbf{b},\nu)(\psi(\mathbf{a}_1,\nu)-\psi(\mathbf{a}_2,\nu))d\nu\\
&+& \frac{1}{4\pi}\int_{0}^{2\pi}\cos(2\tau\nu)\psi(\mathbf{b},\nu)(\psi(\mathbf{a}_1,\nu)-\psi(\mathbf{a}_2,\nu))d\nu\\
&=& K_4+K_5.
\end{eqnarray*}
Recall that, by {\bf A6}, for each $\ell \geq 1$, $f_{\ell}$ is a Lipschitz function satisfying, for any $\mathbf{a}_{1}, \mathbf{a}_{2}\in \mathbb{R}^{m_0}$,
\begin{equation}
|f_{\ell}(\mathbf{a}_{1})-f_{\ell}(\mathbf{a}_{2})|\leq C \ell^{r_0}\|\mathbf{a}_{1}-\mathbf{a}_{2}\|
\end{equation}
for some $C > 0$ and some integer $r_0 \geq 4$ as in {\bf A5}. Therefore,
\begin{align*}
K_4
&=
\frac{1}{4\pi}\!\int_{0}^{2\pi}\!\!\!
\bigg(\!
\sum_{\ell_{1}=0}^{\infty}\sum_{\ell_{1}^{\prime}=0}^{\infty}
\big[
f_{\ell_1}(\mathbf{a}_{1})f_{\ell_{1}^{\prime}}(\mathbf{a}_1)-f_{\ell_1}(\mathbf{a}_{2})f_{\ell_{1}^{\prime}}(\mathbf{a}_2)
\big]
e^{\mathbf{i}(\ell_1-\ell_1^{\prime})\nu}
\!\bigg)\!\!
\bigg(\!
\sum_{\ell_{2}=0}^{\infty}\sum_{\ell_{2}^{\prime}=0}^{\infty}
f_{\ell_{2}}(\mathbf{b})f_{\ell_{2}^{\prime}}(\mathbf{b})e^{\mathbf{i}(\ell_2-\ell_2^{\prime})\nu}
\!\bigg)
d\nu\\
&=
\frac{1}{4\pi}\int_{0}^{2\pi}
\sum_{\ell_{1}=0}^{\infty}\sum_{\ell_{1}^{\prime}=0}^{\infty}\sum_{\ell_{2}=0}^{\infty}\sum_{\ell_{2}^{\prime}=0}^{\infty}
f_{\ell_1}(\mathbf{a}_{1})\big[f_{\ell_{1}^{\prime}}(\mathbf{a}_{1})-f_{\ell_{1}^{\prime}}(\mathbf{a}_{2})\big]
f_{\ell_2}(\mathbf{b})f_{\ell_{2}^{\prime}}(\mathbf{b})e^{\mathbf{i}(\ell_1-\ell_1^{\prime}+\ell_2-\ell_2^{\prime})\nu}
d\nu \\
&\quad+\frac{1}{4\pi}\int_{0}^{2\pi}
\sum_{\ell_{1}=0}^{\infty}\sum_{\ell_{1}^{\prime}=0}^{\infty}\sum_{\ell_{2}=0}^{\infty}\sum_{\ell_{2}^{\prime}=0}^{\infty}
f_{\ell_1^{\prime}}(\mathbf{a}_{2})\big[f_{\ell_{1}}(\mathbf{a}_{1})-f_{\ell_{1}}(\mathbf{a}_{2})\big]
f_{\ell_2}(\mathbf{b})f_{\ell_{2}^{\prime}}(\mathbf{b})e^{\mathbf{i}(\ell_1-\ell_1^{\prime}+\ell_2-\ell_2^{\prime})\nu}
d\nu \\
&=
\sum_{\ell_{1}=m}^{\infty}\sum_{\ell^\prime_2=m}^{\infty}\sum_{m=0}^{\infty}
f_{\ell_1}(\mathbf{a}_{1})
\big[
f_{\ell_1-m}(\mathbf{a}_1)-f_{\ell_1-m}(\mathbf{a}_1)
\big]
f_{\ell^\prime_2}(\mathbf{b})f_{\ell^\prime_2-m}(\mathbf{b}) \\
&\quad+
\sum_{\ell_{1}=m}^{\infty}\sum_{\ell^\prime_2=m}^{\infty}\sum_{m=0}^{\infty}
f_{\ell_1-m}(\mathbf{a}_{1})\big[f_{\ell_1}(\mathbf{a}_1)-f_{\ell_1}(\mathbf{a}_1)\big]f_{\ell^\prime_2}(\mathbf{b})f_{\ell^\prime_2-m}(\mathbf{b})
\end{align*}
and
\begin{align*}
K_5
&=\frac{1}{4\pi}\int_{0}^{2\pi}\cos(2\tau\nu)
\bigg(
\sum_{\ell_{1}=0}^{\infty}\sum_{\ell_{1}^{\prime}=0}^{\infty}
\big[
f_{\ell_1}(\mathbf{a}_{1})f_{\ell_{1}^{\prime}}(\mathbf{a}_1)-f_{\ell_1}(\mathbf{a}_{2})f_{\ell_{1}^{\prime}}(\mathbf{a}_2)
\big]
e^{\mathbf{i}(\ell_1-\ell_1^{\prime})\nu}
\bigg) \\
& \qquad\qquad \times\bigg(
\sum_{\ell_{2}=0}^{\infty}\sum_{\ell_{2}^{\prime}=0}^{\infty}
f_{\ell_{2}}(\mathbf{b})f_{\ell_{2}^{\prime}}(\mathbf{b})e^{\mathbf{i}(\ell_2-\ell_2^{\ell})\nu}
\bigg)d\nu\\
&=\frac{1}{4\pi}\int_{0}^{2\pi}\cos(2\tau\nu)
\sum_{\ell_{1}=0}^{\infty}\sum_{\ell_{1}^{\prime}=0}^{\infty}\sum_{\ell_{2}=0}^{\infty}\sum_{\ell_{2}^{\prime}=0}^{\infty}
f_{\ell_1}(\mathbf{a}_{1})
\big[
f_{\ell_{1}^{\prime}}(\mathbf{a}_{1})-f_{\ell_{1}^{\prime}}(\mathbf{a}_{2})
\big]
f_{\ell_2}(\mathbf{b})f_{\ell_{2}^{\prime}}(\mathbf{b})e^{\mathbf{i}(\ell_1-\ell_1^{\prime}+\ell_2-\ell_2^{\prime})\nu}
d\nu \\
&\quad+
\frac{1}{4\pi}\int_{0}^{2\pi}\cos(2\tau\nu)
\sum_{\ell_{1}=0}^{\infty}\sum_{\ell_{1}^{\prime}=0}^{\infty}\sum_{\ell_{2}=0}^{\infty}\sum_{\ell_{2}^{\prime}=0}^{\infty}
f_{\ell_1^{\prime}}(\mathbf{a}_{2})
\big[f_{\ell_{1}}(\mathbf{a}_{1})-f_{\ell_{1}}(\mathbf{a}_{2})\big]
f_{\ell_2}(\mathbf{b})f_{\ell_{2}^{\prime}}(\mathbf{b})e^{\mathbf{i}(\ell_1-\ell_1^{\prime}+\ell_2-\ell_2^{\prime})\nu}d\nu \\
&=
\sum_{\ell_{1}=m+\tau}^{\infty}\sum_{\ell^\prime_2=m+\tau}^{\infty}\sum_{m=0}^{\infty}
f_{\ell_1}(\mathbf{a}_{1})
\big[f_{\ell_1-m-\tau}(\mathbf{a}_1)-f_{\ell_1-m-\tau}(\mathbf{a}_1)\big]
f_{\ell^\prime_2}(\mathbf{b})f_{\ell^\prime_2-m-\tau}(\mathbf{b}) \\
&\quad+
\sum_{\ell_{1}=m+\tau}^{\infty}\sum_{\ell^\prime_2=m+\tau}^{\infty}\sum_{m=0}^{\infty}
f_{\ell_1-m-\tau}(\mathbf{a}_{1})
\big[f_{\ell_1}(\mathbf{a}_1)-f_{\ell_1}(\mathbf{a}_1)\big]
f_{\ell^\prime_2}(\mathbf{b})f_{\ell^\prime_2-m-\tau}(\mathbf{b}).
\end{align*}
Therefore, by the Lipschitz properties of the $f_{\ell}$'s,
\begin{align*}
|K_4|&
\leq
\sum_{\ell_{1}=m}^{\infty}\sum_{\ell_2^\prime=m}^{\infty}\sum_{m=0}^{\infty}
|f_{\ell_1}(\mathbf{a}_{1})||f_{\ell_1-m}(\mathbf{a}_1)-f_{\ell_1-m}(\mathbf{a}_1)|
|f_{\ell_2^\prime}(\mathbf{b})||f_{\ell_2^\prime-m}(\mathbf{b})|\\
&\quad+
\sum_{\ell_{1}=m}^{\infty}\sum_{\ell_2^\prime=m}^{\infty}\sum_{m=0}^{\infty}
|f_{\ell_1-m}(\mathbf{a}_{1})||f_{\ell_1}(\mathbf{a}_1)-f_{\ell_1}(\mathbf{a}_1)|
|f_{\ell_2^\prime}(\mathbf{b})||f_{\ell_2^\prime-m}(\mathbf{b})| \\
&\leq C \sum_{\ell_{1}=m}^{\infty}\sum_{\ell_2^\prime=m}^{\infty}\sum_{m=0}^{\infty}
|\ell_1-m|^{r_0}|f_{\ell_1}(\mathbf{a}_{1})||f_{\ell_2^\prime}(\mathbf{b})|
|f_{\ell_2^\prime-m}(\mathbf{b})|\|\mathbf{a}_{1}-\mathbf{a}_2\| \\
&\quad+
C\sum_{\ell_{1}=m}^{\infty}\sum_{\ell_2^\prime=m}^{\infty}\sum_{m=0}^{\infty}
|\ell_1|^{r_0} |f_{\ell_1-m}(\mathbf{a}_{1})||f_{\ell_2^\prime}(\mathbf{b})|
|f_{\ell^\prime_2-m}(\mathbf{b})|\|\mathbf{a}_1-\mathbf{a}_2\| \\
%&\leq
%C\sum_{\ell_{1}=0}^{\infty}\sum_{\ell_2^\prime=0}^{\infty}\sum_{m=0}^{\infty}
%|\ell_1|^{r_0}|f_{\ell_1-m}(\mathbf{a}_{1})||f_{\ell_2^\prime-m}(\mathbf{b})||f_{\ell_2^\prime}(\mathbf{b})|
%\|\mathbf{a}_{1}-\mathbf{a}_2\| \\
%&\quad+
%C\sum_{\ell_{1}=0}^{\infty}\sum_{\ell_2^\prime=0}^{\infty}\sum_{m=0}^{\infty}
%|\ell_1-m|^{r_0}|f_{\ell_1}(\mathbf{a}_{1})||f_{\ell_2^\prime-m}(\mathbf{b})||f_{\prime_2^\prime}(\mathbf{b})|
%\|\mathbf{a}_1-\mathbf{a}_2\| \\
&=K_{41}+K_{42}.
\end{align*}
Using {\bf A4} and {\bf A5}, one obtains the  bound
\begin{align*}
K_{42}&=
C\sum_{\ell_{1}=0}^{\infty}\sum_{\ell_2^\prime=0}^{\infty}\sum_{m=0}^{\infty}
|(\ell_1-m)-(\ell_2^\prime-m)+\ell_2^\prime|^{r_0} |f_{\ell_1-m}(\mathbf{a}_{1})|
|f_{\ell_2^\prime-m}(\mathbf{b})||f_{\ell^\prime_2}(\mathbf{b})|\|\mathbf{a}_{1}-\mathbf{a}_2\| \\
&\leq
C3^{r_0-1}\|\mathbf{a}_{1}-\mathbf{a}_2\|
\sum_{\ell_{1}=0}^{\infty}\sum_{\ell_2^\prime=0}^{\infty}\sum_{m=0}^{\infty}
\left[
|\ell_1-m|^{r_0}\bar{a}_{\ell_1-m}\bar{a}_{\ell_2\prime-m}\bar{a}_{\ell_2^\prime}
+|\ell_2^{\prime}-m|^{r_0}\bar{a}_{\ell_2^{\prime}-m}\bar{a}_{\ell_1-m}\bar{a}_{\ell_2^\prime}
\right.\\
& \qquad\qquad
\left.
+ |\ell_2^\prime|^{r_0} \bar{a}_{\ell_{2}^{\prime}}\bar{a}_{\ell_1-m}\bar{a}_{\ell_2^{\prime}-m}\right]\\
&\leq
C L_1^2L_{r_0+1}\|\mathbf{a}_{1}-\mathbf{a}_2\|.
\end{align*}
Similarly, it can be shown that
$K_{41}\leq CL_1^2L_{r_0+1}\|\mathbf{a}_1-\mathbf{a}_2\|$, thus implying
$K_{4}\leq  C L_1^2L_{r_0+1} \|\mathbf{a}_1-\mathbf{a}_2\|$.
Following the same steps yields also that
\begin{equation*}
K_{5}\leq  C L_1^2L_{r_0+1} \|\mathbf{a}_1-\mathbf{a}_2\|
\end{equation*}
and hence, for some constant $C_0 > 0$, and for all $\mathbf{b} \in \mathbb{R}^{m_0}$,
\begin{equation*}
|\mathcal{R}_{\tau}(\mathbf{a}_1,\mathbf{b})-\mathcal{R}(\mathbf{a}_2,\mathbf{b})|\leq C_0 L_1^2L_{r_0+1} \|\mathbf{a}_1-\mathbf{a}_2\|.
\end{equation*}
This shows that $\mathcal{R}_{\tau}(\mathbf{a},\mathbf{b})$ is Lipschitz with with respect each variable
with a bounded Lipschitz constant $C_0$.
Thus, the equiconinuity of $\beta_{\tau,p}(z,\mathbf{a})$, for any $z$ with $\Im(z) = v > 0$, follows from
\begin{align*}
|\beta_{\tau,p}(z,\mathbf{a}_1) - \beta_{\tau,p}(z,\mathbf{a}_2)|
%\left|\frac{1}{p}\mbox\mathrm{tr}[(\widetilde{\mathbf{C}}_{\tau}-zI)^{-1}
%\Gamma_{\tau}(\mathbf{a}_1)]-\frac{1}{p}\mbox\mathrm{tr}[(\widetilde{\mathbf{C}}_{\tau}-zI)^{-1}\Gamma_{\tau}(\mathbf{a}_2)]\right|\\
&\leq \|(\widetilde{\mathbf{C}}_{\tau}-zI)^{-1}(\Gamma_{\tau}(\mathbf{a}_1)-\Gamma_{\tau}(\mathbf{a}_2))\|\\
&\leq \frac{C_0}{v}\|\mathbf{a}_1-\mathbf{a}_2\|,
\end{align*}
observing that $\mathbf{\Gamma}_\tau(a)=\mathrm{diag}(\mathcal{R}_\tau(\mathbf{a},\bs\alpha_j)\colon j=1,\ldots,p)$.

%%%%%%%%%%%%%%%%%%%%%%%%%%%%%%
\section{Auxiliary results for Section \ref{sec:proof:lin_proc}}
\label{app:lin_proc}
%%%%%%%%%%%%%%%%%%%%%%%%%%%%%%

As a first step, an inequality is derived for bounding discrete convolutions of the sequence
$(\bar{a}_\ell\colon \ell\in\mathbb{N})$ that appears in assumptions {\bf A4} and {\bf A5}.

\begin{lemma}
\label{lem:app:D1}
Let $(\bar{a}_\ell\colon \ell\in\mathbb{N})$ be as in {\bf A4} and {\bf A5} and $r_0$ be as in {\bf A5}. Then, for
$r \leq r_0$,
\begin{equation}
\label{eq:a_sum_convolution_bound}
\bigg|\sum_{\ell=k}^{\infty}\bar{a}_{\ell}\bar{a}_{\ell+u}\bigg|
\leq
\frac{2^{r+1}}{1+|u|^{r}}
\bigg(\sum_{\ell=k}^{\infty}\ell^{r}\bar{a}_{\ell}\bigg)
\bigg(\sum_{\ell=k}^{\infty}\bar{a}_{\ell}\bigg)
\end{equation}
for any $k\in\mathbb{N}_0$ and $u\in\mathbb{Z}$.
\end{lemma}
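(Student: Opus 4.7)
The strategy is to reduce to the case $u \geq 0$ by the substitution $\ell \mapsto \ell - u$ (which transforms the sum at shift $-|u|$ into one at shift $+|u|$), and then to produce the polynomial decay factor $1/(1+|u|^r)$ from the elementary observation that $\ell + (\ell+u) \geq u$ for $u \geq 0$ forces $\max(\ell,\ell+u) \geq u/2$. The case $u = 0$ follows directly from $\sum_{\ell \geq k}\bar{a}_\ell^2 \leq (\sup_\ell \bar{a}_\ell)\sum_{\ell \geq k}\bar{a}_\ell \leq B^2$, since for $k \geq 1$ one has $\ell^r \geq 1$, so $B \leq A$ and hence $B^2 \leq AB \leq 2^{r+1}AB$. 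The case $r = 0$ is similarly immediate because $A = B$.

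For the main case $u \geq 1$ and $r \geq 1$, the key ingredient is the Jensen (power-mean) lower bound
\[
\ell^r + (\ell+u)^r \;\geq\; 2\bigg(\frac{\ell+(\ell+u)}{2}\bigg)^r \;=\; \frac{(2\ell+u)^r}{2^{r-1}} \;\geq\; \frac{u^r}{2^{r-1}},
\]
which rearranges to $1 \leq \frac{2^{r-1}}{u^r}\bigl(\ell^r + (\ell+u)^r\bigr)$. Multiplying through by $\bar{a}_\ell \bar{a}_{\ell+u}$ and summing over $\ell \geq k$ gives
\[
\sum_{\ell=k}^\infty \bar{a}_\ell \bar{a}_{\ell+u}
\;\leq\; \frac{2^{r-1}}{u^r}\bigg[\sum_{\ell=k}^\infty \ell^r \bar{a}_\ell \bar{a}_{\ell+u}
+ \sum_{\ell=k}^\infty (\ell+u)^r \bar{a}_\ell \bar{a}_{\ell+u}\bigg].
\]

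Each of the two resulting sums is bounded by $AB$ via a one-line factorization. For the first, nonnegativity of $(\bar{a}_m)$ and $\ell + u \geq k$ give $\bar{a}_{\ell+u} \leq \sum_{m=k}^\infty \bar{a}_m = B$ uniformly in $\ell$, so $\sum \ell^r \bar{a}_\ell \bar{a}_{\ell+u} \leq B\sum_{\ell \geq k}\ell^r \bar{a}_\ell = AB$. For the second, the substitution $m = \ell + u$ recasts the sum as $\sum_{m \geq k+u} m^r \bar{a}_{m-u}\bar{a}_m$, which the same argument bounds by $AB$. Combining yields $\sum_{\ell \geq k}\bar{a}_\ell \bar{a}_{\ell+u} \leq \frac{2^r}{u^r}AB$, and the elementary inequality $1+u^r \leq 2u^r$ (for $u \geq 1$) upgrades this to the advertised bound $\frac{2^{r+1}}{1+u^r}AB$.

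The main obstacle is obtaining precisely the constant $2^{r+1}$ together with the denominator $1+|u|^r$ rather than merely $|u|^r$. The cruder geometric estimate $\ell^r + (\ell+u)^r \geq (u/2)^r$, based on $\max(\ell,\ell+u) \geq u/2$, would cost an extra factor of $2$ relative to the Jensen form displayed above; it is exactly that saved factor which permits the final estimate $\frac{2^r}{u^r} \leq \frac{2^{r+1}}{1+u^r}$ to hold with the stated constant. Since Assumption {\bf A5} guarantees $r_0$ is an integer and the nontrivial range is $r \in \{1,\ldots,r_0\}$, no interpolation to non-integer exponents is needed.
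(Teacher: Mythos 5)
Your argument is correct and arrives at the paper's bound, but by a genuinely different and more elementary route. The paper works on the Fourier side: it writes $\sum_{\ell\geq k}\bar{a}_\ell\bar{a}_{\ell+u}$ as the $u$-th Fourier coefficient of $\psi_{[k]}(\theta)=\big|\sum_{\ell\geq k}\bar{a}_\ell e^{\ic\ell\theta}\big|^2$ via Plancherel, integrates by parts $r$ times to extract the factor $|u|^{-r}$, and then bounds $\sup_\theta|\psi_{[k]}^{(r)}(\theta)|$ using $|\ell_1-\ell_2|^r\leq 2^{r-1}(\ell_1^r+\ell_2^r)$, reaching the same intermediate estimate $2^r|u|^{-r}AB$ that you obtain. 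Your proof inserts the equivalent pointwise inequality $u^r\leq 2^{r-1}\big(\ell^r+(\ell+u)^r\big)$ directly into the sum and factorizes each of the two resulting sums as $AB$; this bypasses the Fourier detour, avoids having to justify the $r$-fold term-by-term differentiation of $\psi_{[k]}$, and, unlike the paper's write-up, explicitly disposes of the cases $u=0$ and $r=0$ and of the passage from $|u|^{-r}$ to $(1+|u|^r)^{-1}$. Two caveats, both shared with the paper, are worth recording. First, your reduction of $u<0$ to $u>0$ by reindexing is exact only if the sum is understood to run over those $\ell$ with \emph{both} $\ell\geq k$ and $\ell+u\geq k$; if one instead adopts the convention $\bar{a}_m=0$ for $m<0$ (as in Appendix F) and allows the second index to drop below $k$, the reindexed sum starts at $\max(k-|u|,0)$ rather than $k$, and the bound in terms of $A=\sum_{\ell\geq k}\ell^r\bar{a}_\ell$ and $B=\sum_{\ell\geq k}\bar{a}_\ell$ can actually fail (e.g.\ $k=5$, $u=-3$, $\bar{a}_2=1$, $\bar{a}_5=\epsilon$ small). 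The paper's Plancherel identity silently enforces the same two-sided restriction, so both proofs establish the same symmetric version of the inequality. Second, your step $B\leq A$ in the $u=0$ case uses $\ell^r\geq1$ and therefore requires $k\geq1$ (for $k=0$ and $r\geq1$ the $\ell=0$ term is lost from $A$ but not from $B$); this degenerate corner is equally outside the reach of the paper's argument and is immaterial in the applications.
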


\begin{proof}
Fix $k\in\mathbb{N}_0$ and $u\in\mathbb{Z}$. Plancherel's identity and integration by parts ($r$ times) yields that
\begin{align*}
\sum_{\ell=k}^{\infty}\bar{a}_{\ell}\bar{a}_{\ell+u}
&=\frac{1}{2\pi} \int_{0}^{2\pi} \bigg|\sum_{\ell=k}^{\infty}e^{\ic \ell \theta}\bar{a}_{\ell}\bigg|^2 e^{\ic u\theta}d\theta \\
&= \frac{1}{2\pi}\int_{0}^{2\pi}\psi_{[k]}(\theta)e^{\ic u \theta}d\theta\\
%&= \frac{\ic}{u} \frac{1}{2\pi} \int_{0}^{2\pi}\psi^{'}_{[q]}(\theta)e^{\ic u \theta}d\theta \\
&= \frac{\ic^{r}}{u^r} \frac{1}{2\pi}\int_{0}^{2\pi} \psi_{[k]}^{(r)}(\theta)e^{\ic u\theta}d\theta,
\end{align*}
where
\begin{equation}
\psi_{[k]}(\theta)
= \bigg|\sum_{\ell=k}^{\infty}e^{\ic \ell \theta}\bar{a}_{\ell}\bigg|^2
= \sum_{\ell_1=q+1}^{\infty}\sum_{\ell_2=q+1}^{\infty} e^{\ic (\ell_1-\ell_2)\theta}\bar{a}_{\ell_1}\bar{a}_{\ell_2}
\end{equation}
and, for $r\geq 0$,
\begin{equation*}
\psi_{[k]}^{(r)}(\theta)
= \ic^{r}\sum_{\ell_1=k}^{\infty}\sum_{\ell_2=k}^{\infty}(\ell_1-\ell_2)^{r} e^{\ic (\ell_1-\ell_2)\theta}\bar{a}_{\ell_1}\bar{a}_{\ell_2}.
\end{equation*}
Since
\begin{equation*}
\sup_{\theta \in [0,2\pi]}|\psi_{[k]}^{(r)}(\theta)|
\leq 2^{r-1} \sum_{\ell_1=k}^{\infty}\sum_{\ell_2=k}^{\infty}(\ell_1^{r}+\ell^{r}_2)\bar{a}_{\ell_1} \bar{a}_{\ell_2}
\leq 2^{r} \bigg(\sum_{\ell=k}^{\infty}\ell^{r}\bar{a}_{\ell}\bigg)
\bigg(\sum_{\ell=k}^{\infty}\bar{a}_{\ell}\bigg),
\end{equation*}
the assertion of the lemma follows.
\end{proof}

%%%%%%%%%%%%%%%%%%%%%%%%%%%%%%
\subsection{Bounding $\mE[\|\mathbf{S}_{\tau, 1}-\mE [\mathbf{S}_{\tau, 1}]\|_{F}^{2}]$
and $\mE[\|\mathbf{S}_{\tau, 2}-\mE [\mathbf{S}_{\tau, 2}]\|_{F}^{2}]$}
\label{app:lin_proc:1}
%%%%%%%%%%%%%%%%%%%%%%%%%%%%%%

Notice first that, for $i=1,2,3$, $\mE[\|\mathbf{S}_{\tau, i}-\mE [\mathbf{S}_{\tau, i}]\|_{F}^{2}]= \mE[\|\mathbf{S}_{\tau, i}\|^{2}_{F}]-\|\mE[\mathbf{S}_{\tau,i}]\|^2_{F}$. Moreover $\mE [\mathbf{S}_{\tau,1}] = (2n)^{-1}\mE[ \sum_{t=1}^{n-\tau}(\bar{X}_{t}{X^\mathrm{tr}_{t+\tau}}^*+{X^\mathrm{tr}_{t+\tau}}\bar{X}^{*}_{t})]$ and
\begin{align*}
\|\mE [\mathbf{S}_{\tau,1}]\|^2_{F}
&= \left(1-\frac{\tau}{n}\right)^2
\sum_{m'=\max\{-\tau, q+1\}}^{q-\tau}\sum_{m=\max\{-\tau, q+1\}}^{q-\tau}
\tr(\mathbf{A}_{m'+\tau}\mathbf{A}_{m+\tau}\mathbf{A}_{m}\mathbf{A}_{m'}) = 0, \\
\|\mE [\mathbf{S}_{\tau,2}]\|^2_{F}
&= \left(1-\frac{\tau}{n}\right)^2
\sum_{m'=\max\{\tau, q+1\}}^{q+\tau}\sum_{m=\max\{\tau, q+1\}}^{q+\tau}
\tr(\mathbf{A}_{m'-\tau}\mathbf{A}_{m-\tau}\mathbf{A}_{m}\mathbf{A}_{m'}).
\end{align*}
Since the arguments for bounding $\mE[\|\mathbf{S}_{\tau,2}\|^2_{F}]$ are similar, the focus is here on bounding $\mE[\|\mathbf{S}_{\tau,1}\|^2_{F}$]. The key decomposition is
\begin{align*}
\mE\big[\|\mathbf{S}_{\tau,1}\|^2_{F}\big]
=& \frac{1}{2n^2}\sum_{t=1}^{n-\tau}\sum_{s=1}^{n-\tau}
\Re\mE \big[{X^\mathrm{tr}_{t+\tau}}^{*}X^\mathrm{tr}_{s+\tau}\bar{X}^{*}_{s}\bar{X}_{t}\big]
+ \frac{1}{2n^2} \sum_{t=1}^{n-\tau}\sum_{s=1}^{n-\tau}
\Re\mE \big[{X^\mathrm{tr}_{t+\tau}}^{*}X^\mathrm{tr}_{s}\bar{X}^{*}_{s+\tau}\bar{X}_{t}\big] \\
=&\frac{1}{2n^2}
\sum_{t=1}^{n-\tau}\sum_{s=1}^{n-\tau}\sum_{\ell=0}^{q}\sum_{\ell^\prime=0}^{q}\sum_{m=q+1}^{\infty}\sum_{m^\prime=q+1}^{\infty}
\Re\mE\big[
Z^{*}_{t+\tau-\ell}\mathbf{A}_{\ell}\mathbf{A}_{\ell^\prime}Z_{s+\tau-\ell^\prime}Z_{s-m}^{*}\mathbf{A}_{m}\mathbf{A}_{m^\prime}Z_{t-m^\prime}
\big] \\
&+ \frac{1}{2n^2}
\sum_{t=1}^{n-\tau}\sum_{s=1}^{n-\tau}\sum_{\ell=0}^{q}\sum_{\ell^\prime=0}^{q}\sum_{m=q+1}^{\infty}\sum_{m^\prime=q+1}^{\infty}
\Re\mE \big[
Z^{*}_{t+\tau-\ell}\mathbf{A}_{\ell}\mathbf{A}_{\ell^\prime}Z_{s-\ell^\prime}Z_{s+\tau-m}^{*}\mathbf{A}_{m}\mathbf{A}_{m^\prime}Z_{t-m^\prime}
\big] \\
=& Q_1 + Q_2.
\end{align*}
By independence of the $Z_{ij}$, the summands in $Q_1$ and $Q_2$ are non-zero only if the indices of the $Z_{j}$ pair up, giving four types of pairs that contribute to the summands in $Q_1$, namely
\begin{enumerate}\itemsep-.2cm
\item $t+\tau-\ell = s+\tau-\ell^\prime\ne s-m =t-m^\prime$, that is, $t=s+\ell-\ell^\prime=s+m^\prime-m$ and $\ell^\prime \ne m+\tau$;
\item $t+\tau-\ell = s-m \ne s+\tau-\ell^\prime = t-m^\prime$, that is, $t=s+\ell-m-\tau =s+m^\prime-\ell^\prime+\tau$ and $\ell^\prime \ne m+\tau$;
\item $t+\tau-\ell = t-m^\prime \ne s-m = s+\tau -\ell^\prime$;
\item $t+\tau-\ell = s+\tau-\ell^\prime= s-m =t-m^\prime$, that is, $t=s+\ell-\ell^\prime=s+m^\prime-m$ and $\ell^\prime = m+\tau$.
\end{enumerate}
The corresponding terms are labeled $K_{1,1}, K_{1,2}, K_{1,3}$ and $K_{1,4}$. The individual contributions of these terms can be given as follows. First, setting $u_1=\ell-\ell^\prime=m^\prime-m$
\[
K_{1,1}
= \frac{1}{2n^2}\sum_{s=1}^{n-\tau}\sum_{\ell^\prime=0}^{q}\sum_{\stackrel{m=q+1}{m\neq\ell^\prime-\tau}}^{\infty}
\sum_{u_1=\max\{1-s, -\ell^\prime,q+1-m\}}^{\min\{n-\tau-s,q-\ell^\prime\}}
\tr(\mathbf{A}_{\ell^\prime}\mathbf{A}_{\ell^\prime+u_1})\tr(\mathbf{A}_{m}\mathbf{A}_{m+u_1}).
\]
Second, setting $u_2 = \ell-m-\tau = m^\prime-\ell^\prime+\tau$,
\begin{align*}
K_{1,2}
=& \frac{1}{2n^2}\sum_{s=1}^{n-\tau}\sum_{\ell^\prime=0}^{q}\sum_{\stackrel{m=q+1}{m\neq \ell^\prime-\tau}}^{\infty}
\sum_{u_2 = \max\{1-s,-(m+\tau),q+1+\tau-\ell^\prime\}}^{\min\{n-\tau-s,q-(m+\tau)\}} \\
& \qquad\times\tr(\mathbf{A}_{m+u_{2}+\tau}\mathbf{A}_{m}\mathbf{A}_{\ell^\prime}\mathbf{A}_{\ell^\prime+u_{2}-\tau})
\mE[\bar{Z}_{s-m,1}^2]\mE[Z_{s+\tau-\ell^\prime,1}^2].
\end{align*}
Third,
\begin{align*}
K_{1,3}
=&  \frac{1}{2n^{2}}
\sum_{s=1}^{n-\tau}\sum_{t=1}^{n-\tau}\sum_{m=\max\{-\tau, q+1\}}^{q-\tau}
\sum_{\stackrel{m^\prime=\max\{-\tau, q+1\}}{m^\prime\ne m+t-s}}^{q-\tau}
\tr(\mathbf{A}_{m^\prime+\tau}\mathbf{A}_{m^\prime}\mathbf{A}_{m+\tau}\mathbf{A}_{m}) \\
=& \frac{1}{2n^{2}}\sum_{s=1}^{n-\tau}\sum_{t=1}^{n-\tau}
\sum_{m=\max\{-\tau, q+1\}}^{q-\tau}\sum_{m^\prime=\max\{-\tau, q+1\}}^{q-\tau}
\tr(\mathbf{A}_{m^\prime+\tau}\mathbf{A}_{m+\tau}\mathbf{A}_{m}\mathbf{A}_{m^\prime}) \\
& \qquad+ \frac{1}{2n^{2}}\sum_{s=1}^{n-\tau}\sum_{t=1}^{n-\tau}\sum_{m=\max\{-\tau, q+1\}}^{q-\tau}
\tr(\mathbf{A}_{m+t-s+\tau}\mathbf{A}_{m+\tau}\mathbf{A}_{m}\mathbf{A}_{m+t-s}) \\
=& K_{1,3}^{(1)} + K_{1,3}^{(2)}.
\end{align*}
Observe that $K_{1,3}^{(1)}$ coincides with $(1/2)\mE[\|S_{\tau,1}\|_{F}^{2}]$. Finally,
\begin{align*}
K_{1,4}
=& \frac{1}{2n^2}\sum_{s=1}^{n-\tau}
\sum_{m=\max\{-\tau,q+1\}}^{q-\tau}\sum_{u=\max\{1-s,q+1-m,-(m+\tau)\}}^{\min\{n-\tau-s,q-(m+\tau)\}}
\bigg[\tr(\mathbf{A}_{m+\tau}\mathbf{A}_{m+\tau+u})\tr(\mathbf{A}_{m}\mathbf{A}_{m+u}) \\
& \qquad + \sum_{k=1}^{p}(\mathbf{A}_{m+\tau}\mathbf{A}_{m+\tau+u})_{kk}(\mathbf{A}_{m}\mathbf{A}_{m+u})_{kk}
\big(\mE\big[|Z_{s-m, k}|^4\big]-1\big)\bigg]
\end{align*}
These quantities are bounded using the basic bound $\|\mathbf{A}_\ell\|\leq\bar{a}_\ell$. In the
following, let $\bar{a}_\ell = 0$ for $\ell < 0$ and denote by $C$ a generic positive constant. For $\tau \geq 0$,
\begin{align*}
|K_{1,1}|
\leq& \frac{1}{2n^2}\sum_{s=1}^{n-\tau}\sum_{\ell^\prime=0}^{q}\sum_{\stackrel{m=q+1}{m\neq \ell^\prime-\tau}}^{\infty}
\sum_{u_1=\max\{1-s, -\ell^\prime,q+1-m\}}^{\min\{n-\tau-s,q-\ell^\prime\}}
p^2 \bar{a}_{\ell^\prime}\bar{a}_{\ell^\prime+u_1} \bar{a}_{m}\bar{a}_{m+u_1} \\
\leq& \frac{p^2}{2n} \sum_{u=-\infty}^\infty \bigg(\sum_{\ell^\prime=0}^{\infty} \bar{a}_{\ell^\prime}\bar{a}_{\ell^\prime+u}\bigg)
\bigg(\sum_{m=q+1}^\infty \bar{a}_{m}\bar{a}_{m+u}\bigg)\\
\leq&  2^{r_0+1}\frac{p^2}{n} \sum_{u=-\infty}^\infty  \frac{1}{(1+|u|^{r_0})^2}
\bigg(\sum_{\ell=0}^{\infty}\ell^{r_0}\bar{a}_{\ell}\bigg)\bigg(\sum_{\ell=0}^{\infty}\bar{a}_{\ell}\bigg)
\bigg(\sum_{m=q+1}^{\infty}m^{r_0}\bar{a}_{m}\bigg)\bigg(\sum_{m=q+1}^{\infty}\bar{a}_{m}\bigg) \\
\leq& C L_1 L_{r_0+1} \frac{p^2}{n}
\bigg(\sum_{m=q+1}^{\infty}m^{r_0}\bar{a}_{m}\bigg)\bigg(\sum_{m=q+1}^{\infty}\bar{a}_{m}\bigg),
\end{align*}
where the third inequality follows from (\ref{eq:a_sum_convolution_bound}).  Next,
\begin{align*}
|K_{1,2}|
\leq& \frac{1}{2n^2}\sum_{s=1}^{n-\tau}\sum_{\ell^\prime=0}^{q}\sum_{\stackrel{m=q+1}{m\neq \ell^\prime-\tau}}^{\infty}
\sum_{u_2 = \max\{1-s,-(m+\tau),q+1+\tau-\ell^\prime\}}^{\min\{n-\tau-s,q-(m+\tau)\}}
p \bar{a}_{m+u_{2}+\tau}\bar{a}_{m}\bar{a}_{\ell^\prime}\bar{a}_{\ell^\prime+u_{2}-\tau} \\
\leq& \frac{p}{2n} \sum_{u=-\infty}^\infty \bigg(\sum_{\ell^\prime=0}^{\infty} \bar{a}_{\ell^\prime}\bar{a}_{\ell^\prime+u-\tau}\bigg)
\bigg(\sum_{m=q+1}^\infty \bar{a}_{m}\bar{a}_{m+u+\tau}\bigg)\\
\leq& C L_1 L_{r_0+1} \frac{p}{n} \bigg(\sum_{m=q+1}^{\infty}m^{r_0}\bar{a}_{m}\bigg)\bigg(\sum_{m=q+1}^{\infty}\bar{a}_{m}\bigg).
\end{align*}
Also,
\begin{align*}
|K_{1,3}^{(2)}|
\leq& \frac{1}{2n^{2}}\sum_{s=1}^{n-\tau}\sum_{t=1}^{n-\tau}\sum_{m=\max\{-\tau, q+1\}}^{q-\tau}
p\bar{a}_{m+t-s+\tau}\bar{a}_{m+\tau}\bar{a}_{m}\bar{a}_{m+t-s} \\
\leq& \frac{p}{2n} \sum_{u=-\infty}^\infty \sum_{m = q+1}^\infty \bar{a}_{m+\tau+u}\bar{a}_{m+\tau}\bar{a}_{m}\bar{a}_{m+u} \\
\leq& \frac{p}{2n} \sum_{u=-\infty}^\infty \bigg(\sum_{m = q+1}^\infty  \bar{a}_{m+\tau+u}\bar{a}_{m+\tau}\bigg)
\bigg(\sum_{m = q+1}^\infty  \bar{a}_{m+u}\bar{a}_{m}\bigg) \\
\leq& C \frac{p}{n} \bigg(\sum_{m=q+1}^\infty m^{r_0} \bar{a}_m\bigg)^2\bigg(\sum_{m=q+1}^\infty \bar{a}_m\bigg)^2.
\end{align*}
Finally,
\begin{align*}
|K_{1,4}|
\leq& \frac{1}{2n^2}\sum_{s=1}^{n-\tau} \sum_{m=\max\{-\tau,q+1\}}^{q-\tau}\sum_{u=\max\{1-s,q+1-m,-(m+\tau)\}}^{\min\{n-\tau-s,q-(m+\tau)\}}
\big[p^2 + p \big(\mE\big[|Z_{11}|^4\big]-1\big)\big] \bar{a}_{m+\tau+u}\bar{a}_{m+\tau}\bar{a}_{m}\bar{a}_{m+u}  \\
\leq& C\frac{p^2}{n} \sum_{u=-\infty}^\infty \sum_{m=q+1}^\infty \bar{a}_{m+\tau+u}\bar{a}_{m+\tau}\bar{a}_{m}\bar{a}_{m+u} \\
\leq& C \frac{p^2}{n} \bigg(\sum_{m=q+1}^\infty m^{r_0} \bar{a}_m\bigg)^2\bigg(\sum_{m=q+1}^\infty \bar{a}_m\bigg)^2.
\end{align*}
For any $\tau \geq 0$, the above calculations yield the bound
\begin{equation}\label{eq:S_1_diff_F_norm_bound}
\mE\big[\|\mathbf{S}_{\tau,1} - \mE[ \mathbf{S}_{\tau,1}]\|_F^2\big] \leq
C \frac{p^2}{n} \bigg(\sum_{m=q+1}^\infty m^{r_0} \bar{a}_m\bigg)\bigg(\sum_{m=q+1}^\infty \bar{a}_m\bigg)
\bigg[1+\bigg(\sum_{m=q+1}^\infty m^{r_0} \bar{a}_m\bigg)\bigg(\sum_{m=q+1}^\infty \bar{a}_m\bigg) \bigg],
\end{equation}
for some constant $C$. In can be checked that the same bound applies
to $\mE[\| \mathbf{S}_{\tau,2} - \mE[\mathbf{S}_{\tau,2}]\|_F^2]$ as well.

%%%%%%%%%%%%%%%%%%%%%%%%%%%%%%
\subsection{Bounding $\mE[\|\mathbf{S}_{\tau, 3}-\mE [\mathbf{S}_{\tau, 3}]\|_{F}^{2}]$}
\label{app:lin_proc:2}
%%%%%%%%%%%%%%%%%%%%%%%%%%%%%%

Note first that $\mE[ \mathbf{S}_{\tau,3}] = (2n)^{-1} \sum_{t=1}^{n-\tau}\mE[\bar{X}_{t}\bar{X}_{t+\tau}^*
+\bar{X}_{t+\tau}\bar{X}^{*}_{t}]$
and
\[
\|\mE [\mathbf{S}_{\tau,3}]\|^2_{F}
= \left(1-\frac{\tau}{n}\right)^2
\sum_{m^\prime=\max\{q+1-\tau,q+1\}}^{\infty}\sum_{m=\max\{q+1-\tau,q+1\}}^{\infty}
\tr(\mathbf{A}_{m^\prime+\tau}\mathbf{A}_{m+\tau}\mathbf{A}_{m}\mathbf{A}_{m^\prime}).
\]
Moreover,
\begin{align*}
\mE\big[\|\mathbf{S}_{\tau,3}\|^2_{F}\big]
=& \frac{1}{2n^2}\sum_{t=1}^{n-\tau}\sum_{s=1}^{n-\tau}
\Re\mE\big[{\bar{X}_{t+\tau}}^{*}\bar{X}_{s+\tau}\bar{X}^{*}_{s}\bar{X}_{t}\big]
+ \frac{1}{2n^2}\sum_{t=1}^{n-\tau}\sum_{s=1}^{n-\tau}
\Re\mE \big[{\bar{X}_{t+\tau}}^{*}\bar{X}_{s}\bar{X}^{*}_{s+\tau}\bar{X}_{t}\big] \\
=&\frac{1}{n^2}\sum_{t=1}^{n-\tau}\sum_{s=1}^{n-\tau}
\sum_{\ell=q+1}^{\infty}\sum_{\ell^\prime=q+1}^{\infty}\sum_{m=q+1}^{\infty}\sum_{m^\prime=q+1}^{\infty}
\Re\mE \big[Z^{*}_{t+\tau-\ell}\mathbf{A}_{\ell}\mathbf{A}_{\ell^\prime}Z_{s+\tau-\ell^\prime}
Z_{s-m}^{*}\mathbf{A}_{m}\mathbf{A}_{m^\prime}Z_{t-m^\prime}\big] \\
& + \frac{1}{2n^2} \sum_{t=1}^{n-\tau}\sum_{s=1}^{n-\tau}
\sum_{\ell=q+1}^{\infty}\sum_{\ell^\prime=q+1}^{\infty}\sum_{m=q+1}^{\infty}\sum_{m^\prime=q+1}^{\infty}
\Re\mE \big[Z^{*}_{t+\tau-\ell}\mathbf{A}_{\ell}\mathbf{A}_{\ell^\prime}Z_{s-\ell^\prime}
Z_{s+\tau-m}^{*}\mathbf{A}_{m}\mathbf{A}_{m^\prime}Z_{t-m^\prime}\big] \\
= & R_1 + R_2,
\end{align*}
where $R_j = T_{j,1}+T_{j,2}+T_{j,3}+T_{j,4}$, $j=1,2$, with
\begin{align*}
T_{1,1}
=& \frac{1}{2n^2}\sum_{s=1}^{n-\tau}\sum_{\ell^\prime=q+1}^{\infty}\sum_{\stackrel{m=q+1}{m\neq \ell^\prime-\tau}}^{\infty}
\sum_{u_1=\max\{1-s, q+1-\ell^\prime,q+1-m\}}^{n-\tau-s}
\tr(\mathbf{A}_{\ell^\prime}\mathbf{A}_{\ell^\prime+u_{1}})\tr(\mathbf{A}_{m}\mathbf{A}_{m+u_1}), \\[.2cm]
T_{1,2}
=& \frac{1}{2n^2}\sum_{s=1}^{n-\tau}\sum_{\ell^\prime=q+1}^{\infty}\sum_{\stackrel{m=q+1}{m\neq \ell^\prime-\tau}}^{\infty}
\sum_{u_{2}=\max\{1-s, q+1-(m+\tau),q+1+\tau-\ell^\prime\}}^{n-\tau-s} \\
&  \qquad\times\tr(\mathbf{A}_{m+u_{2}+\tau}\mathbf{A}_{m}\mathbf{A}_{\ell^\prime}\mathbf{A}_{\ell^\prime+u_{2}-\tau})
\mE[\bar{Z}_{s-m,1}^2]\mE[Z_{s+\tau-l',1}^2], \\[.2cm]
T_{1,3}
=& \frac{1}{2n^{2}}\sum_{s=1}^{n-\tau}\sum_{t=1}^{n-\tau}\sum_{m=\max\{q+1-\tau, q+1\}}^{\infty}
\sum_{\stackrel{m^\prime=\max\{q+1-\tau, q+1\}}{m^\prime\ne m+t-s}}^{\infty}
\tr(\mathbf{A}_{m^\prime+\tau}\mathbf{A}_{m^\prime}\mathbf{A}_{m+\tau}\mathbf{A}_{m}) \\
=& \frac{1}{2n^{2}}\sum_{s=1}^{n-\tau}\sum_{t=1}^{n-\tau}\sum_{m=\max\{q+1-\tau, q+1\}}^{\infty}
\sum_{m^\prime=\max\{q+1-\tau, q+1\}}^{\infty}
\tr(\mathbf{A}_{m^\prime+\tau}\mathbf{A}_{m+\tau}\mathbf{A}_{m}\mathbf{A}_{m^\prime}) \\
& + \frac{1}{2n^{2}}\sum_{s=1}^{n-\tau}\sum_{t=1}^{n-\tau}\sum_{m=\max\{q+1-\tau, q+1\}}^{\infty}
\tr(\mathbf{A}_{m+t-s+\tau}\mathbf{A}_{m+\tau}\mathbf{A}_{m}\mathbf{A}_{m+t-s}) \\
=& T_{1,3}^{(1)} + T_{1,3}^{(2)}, \\[.2cm]
T_{1,4}
=& \frac{1}{2n^2}\sum_{s=1}^{n-\tau}\sum_{m=\max\{-\tau,q+1\}}^{q-\tau}\sum_{u=\max\{1-s,q+1-m,q+1-(m+\tau)\}}^{n-\tau-s}
\bigg[\tr(\mathbf{A}_{m+\tau}\mathbf{A}_{m+\tau+u})\tr(\mathbf{A}_{m}\mathbf{A}_{m+u}) \\
& + \sum_{k=1}^{p}(\mathbf{A}_{m+\tau}\mathbf{A}_{m+\tau+u})_{kk}(\mathbf{A}_{m}\mathbf{A}_{m+u})_{kk}
\big(\mE[|Z_{s-m, k}|^4]-1\big)\bigg]
\end{align*}
and $T_{1,3}^{(1)} = \frac{1}{2}\|\mE[\mathbf{S}_{\tau,3}]\|^2$. The corresponding quantities $T_{2,j}$, $j=1,\ldots,4$, can be expressed similarly.

Using calculations as in case of $K_{1,j}$, $j=1,\ldots,4$, it follows that
\begin{align*}
|T_{1,1}| \leq& C\frac{p^2}{n}\bigg( \sum_{m=q+1}^{\infty}m^{r_0}\bar{a}_{m}\bigg)\bigg(\sum_{m=q+1}^{\infty}\bar{a}_{m}\bigg) \\
|T_{1,2}| \leq& C\frac{p}{n} \bigg(\sum_{m=q+1}^{\infty}m^{r_0}\bar{a}_{m}\bigg)\bigg(\sum_{m=q+1}^{\infty}\bar{a}_{m}\bigg) \\
|T_{1,3}^{(2)}| \leq& C \frac{p}{n} \bigg(\sum_{m=q+1}^{\infty}m^{r_0}\bar{a}_{m}\bigg)^2\bigg(\sum_{m=q+1}^{\infty}\bar{a}_{m}\bigg)^2 \\
|T_{1,4}| \leq& C \frac{p^2}{n} \bigg(\sum_{m=q+1}^{\infty}m^{r_0}\bar{a}_{m}\bigg)^2\bigg(\sum_{m=q+1}^{\infty}\bar{a}_{m}\bigg)^2,
\end{align*}
with similar bounds for $T_{2,j}$, $j=1,\ldots,4$. Therefore, for any $\tau \geq 0$,
\begin{equation}
\label{eq:S_3_diff_F_norm_bound}
\mE\big[\|\mathbf{S}_{\tau,3} - \mE[ \mathbf{S}_{\tau,3}]\|_F^2\big]
\leq C \frac{p^2}{n} \bigg(\sum_{m=q+1}^\infty m^{r_0} \bar{a}_m\bigg)\bigg(\sum_{m=q+1}^\infty \bar{a}_m\bigg)
\bigg[1+ \bigg(\sum_{m=q+1}^\infty m^{r_0} \bar{a}_m\bigg)\bigg(\sum_{m=q+1}^\infty \bar{a}_m\bigg) \bigg]
\end{equation}
for some constant $C$.

Finally, observe that $(\sum_{m=q+1}^\infty m^{r_0} \bar{a}_m)(\sum_{m=q+1}^\infty \bar{a}_m)
\leq L_1 L_{r_0+1}$. Then, using that $q = \lceil p^{1/4} \rceil$ the bound
\begin{align*}
\sum_{p=1}^\infty
& \frac{n}{p^2} \frac{p^2}{n} \bigg(\sum_{m=q+1}^\infty m^{r_0} \bar{a}_m\bigg)\bigg(\sum_{m=q+1}^\infty \bar{a}_m\bigg) \\
&\leq L_{r_0+1} \sum_{p=1}^\infty \sum_{m=q+1}^\infty \bar{a}_m \\
&\leq L_{r_0+1} \sum_{m=0}^\infty \bar{a}_m \sum_{p=1}^\infty \mathbf{1}_{\{p^{1/4} \leq m\}} \\
&\leq L_{r_0+1} \sum_{m=0}^\infty m^4 \bar{a}_m ~\leq~ L_{r_0+1}L_{5} < \infty.
\end{align*}
This completes the proof of (\ref{eq:series_summation}) by virtue
of (\ref{eq:S_1_diff_F_norm_bound}) and (\ref{eq:S_3_diff_F_norm_bound}).

%%%%%%%%%%%%%%%%%%%%%%%%%%%%%%
\section{Proving that the expression in (\ref{eq:difference_gaussian_stieltjes_nongaussian_stieltjes})
converges to zero}
\label{app:non-gauss}
%%%%%%%%%%%%%%%%%%%%%%%%%%%%%%

Let $\mathbf{Z}=[Z_{1-q}\colon\cdots\colon Z_n]$ be the $p\times(n+q)$ matrix of innovations $Z_t$ with truncated, centered and rescaled Gaussian variables $Z_{t}$. Denote the real and imaginary parts
\begin{equation*}
Z^{\mathbf{R}}_{1,1-q}, Z^{\mathbf{R}}_{2, 1-q}, \ldots, Z^{\mathbf{R}}_{p,1-q}, Z^{\mathbf{R}}_{1,2-q}, \ldots, Z^{\mathbf{R}}_{p, 2-q},\ldots Z^{\mathbf{R}}_{1,n}, \ldots, Z^{\mathbf{R}}_{p,n}~~~ \mbox{by} ~~~ Y^{\mathbf{R}}_{1}, \ldots, Y^{\mathbf{R}}_{p\times(n+q)},
\end{equation*}
\begin{equation*}
Z^{\mathbf{I}}_{1,1-q}, Z^{\mathbf{I}}_{2, 1-q}, \ldots Z^{\mathbf{I}}_{p,1-q}, Z^{\mathbf{I}}_{1,2-q}, \ldots, Z^{\mathbf{I}}_{p, 2-q},\ldots, Z^{\mathbf{I}}_{1,n}, \ldots, Z^{\mathbf{I}}_{p,n}~~~ \mbox{by} ~~~ Y^{\mathbf{I}}_{1}, \ldots, Y^{\mathbf{I}}_{p\times(n+q)},
\end{equation*}
respectively. Also denote
\begin{equation*}
W^{\mathbf{R}}_{1,1-q}, W^{\mathbf{R}}_{2, 1-q}, \ldots, W^{\mathbf{R}}_{p,1-q}, W^{\mathbf{R}}_{1,2-q}, \ldots, W^{\mathbf{R}}_{p, 2-q},\ldots, W^{\mathbf{R}}_{1,n}, \ldots, W^{\mathbf{R}}_{p,n}~~~ \mbox{by} ~~~ \Y^{\mathbf{R}}_{1}, \ldots, \Y^{\mathbf{R}}_{p\times(n+q)},
\end{equation*}
\begin{equation*}
W^{\mathbf{I}}_{1,1-q}, W^{\mathbf{I}}_{2, 1-q}, \ldots, W^{\mathbf{I}}_{p,1-q}, W^{\mathbf{I}}_{1,2-q}, \ldots, W^{\mathbf{I}}_{p, 2-q},\ldots, W^{\mathbf{I}}_{1,n}, \ldots, W^{\mathbf{I}}_{p,n}~~~ \mbox{by} ~~~ \Y^{\mathbf{I}}_{1}, \ldots, \Y^{\mathbf{I}}_{p\times(n+q)}.
\end{equation*}
%Note that $(Y^{\mathbf{R}}_{t}, Y^{\mathbf{I}}_{t})$ are  corresponding to the $(t-p\lfloor t/p\rfloor)$ entry of the original complex random vector, %say, $(Z^{\mathbf{R}}_{t-p\lfloor \frac{t}{p}\rfloor, \lfloor \frac{t}{p}\rfloor-q}, Z^{\mathbf{I}}_{t-p\lfloor \frac{t}{p}\rfloor, \lfloor %\frac{t}{p}\rfloor-q})$ if $t \ne ap$ where $a\in \{1,2,\ldots, n+q\}$. Else, $(Y^{\mathbf{R}}_{t}, Y^{\mathbf{I}}_{t})$ is indeed %$(Z^{\mathbf{R}}_{p, a-q},Z^{\mathbf{R}}_{p, a-q})$ for $t = ap$.
Let $\bar{m}_n = p(n+q)$. Note that $((Y_k^{\mathbf{R}}, Y_k^{\mathbf{I}}) : k=1,\ldots,\bar{m_n})$, is a reordering of
the variables $((Z_{jt}^\mathbf{R},Z_{jt}^\mathbf{I}): j=1,\ldots,p; t=1-q,\ldots,n)$ by stacking the columns of the matrix,
and similarly for $\{(\tilde Y_k^{\mathbf{R}}, Y_k^{\mathbf{I}}) : k=1,\ldots,\bar{m_n}\}$. This order relationship
is assumed throughout. Define,
\[
 T_{k} = (Y^{\mathbf{R}}_1, \ic Y^{\mathbf{I}}_1, \ldots, Y^{\mathbf{R}}_{k}, \ic Y^{\mathbf{I}}_{k}, \Y^{\mathbf{R}}_{k+1} ,\ic \Y^{\mathbf{I}}_{k+1},\ldots, \Y^{\mathbf{R}}_{\bar{m}_n}, \ic \Y^{\mathbf{I}}_{\bar{m}_n}), ~~~\mbox{for}~k=1,\ldots,\bar{m}_n-1,
\]
and
\[
T_0 = (Y_1^{\mathbf{R}},Y_1^{\mathbf{I}},\ldots,Y_{\bar{m}_n}^{\mathbf{R}},Y_{\bar{m}_n}^{\mathbf{I}}),
\qquad T_{\bar{m}_n} = (\tilde Y_1^{\mathbf{R}},\tilde Y_1^{\mathbf{I}},\ldots,\tilde Y_{\bar{m}_n}^{\mathbf{R}},\tilde Y_{\bar{m}_n}^{\mathbf{I}}).
\]
Introduce
\begin{eqnarray*}
 T^{\text{bridge}}_{k} &=& (Y^{\mathbf{R}}_1,\ic Y^{\mathbf{I}}_1, \ldots, Y^{\mathbf{R}}_{k-1},\ic Y^{\mathbf{I}}_{k-1}, 0, \ic Y^{\mathbf{I}}_{k}, \Y^{\mathbf{R}}_{k+1} ,\ic \Y^{\mathbf{I}}_{k+1},\ldots, \Y^{\mathbf{R}}_{\bar{m}_n}, \ic \Y^{\mathbf{I}}_{\bar{m}_n}),\\
 \hat{T}^{\text{bridge}}_{k} &=& (Y^{\mathbf{R}}_1,\ic Y^{\mathbf{I}}_1, \ldots, Y^{\mathbf{R}}_{k-1},\ic Y^{\mathbf{I}}_{k-1}, 0, \ic \tilde Y^{\mathbf{I}}_{k}, \Y^{\mathbf{R}}_{k+1} ,\ic \Y^{\mathbf{I}}_{k+1},\ldots, \Y^{\mathbf{R}}_{\bar{m}_n}, \ic \Y^{\mathbf{I}}_{\bar{m}_n}),\\
T^{0}_{k} &=&  (Y^{\mathbf{R}}_1, \ic Y^{\mathbf{I}}_1, \ldots, Y^{\mathbf{R}}_{k-1},\ic Y^{\mathbf{I}}_{k-1}, 0, 0, \Y^{\mathbf{R}}_{k+1} ,\ic \Y^{\mathbf{I}}_{k+1},\ldots, \Y^{\mathbf{R}}_{\bar{m}_n}, \ic \Y^{\mathbf{I}}_{\bar{m}_n}).
\end{eqnarray*}
Suppose that, for a fixed $z\in \mC^+$,  $f$ is a function of $2\bar{m}_n$ variables defined as
\begin{equation}
f(\mathbf{y}) = \frac{1}{p}\tr(\mathbf{C}_{\tau}(\mathbf{y})-zI)^{-1},
\end{equation}
where we loosely use $\mathbf{C}_{\tau}(\mathbf{y})$ to mean the symmetrized lag-$\tau$ sample autocovariance obtained
by the columns of the $p\times(n+q)$ matrix constructed by appropriately reorganizing the elements of the $2\bar{m}_n \times 1$ vector
$\mathbf{y}$ so that $(2k-1)$-th and $(2k)$-th coordinates form the real and ($\ic$ times) imaginary part of the
entries of the data matrix for each $k=1,\ldots,\bar{m}_n$. With an appropriate reorganization scheme, we can write
$f(T_{0})=p^{-1}\tr(\mathbf{C}^{\prime}_{\tau}-zI)^{-1}$ and $f(T_{\bar{m}_n})=p^{-1}\tr(\mathbf{C}_{\tau}-zI)^{-1}$. Therefore, (\ref{eq:difference_gaussian_stieltjes_nongaussian_stieltjes}) can be written as a telescoping sum involving one-by-one replacements of random variables $(Y_k^{\mathbf{R}},Y_k^{\mathbf{I}})$ with $(\tilde Y_k^{\mathbf{R}},\tilde Y_k^{\mathbf{I}})$, that is,
\begin{equation}\label{eq:mean_diff_telescoping}
\mE\left[\frac{1}{p}\tr(\mathbf{C}_\tau-zI)^{-1}\right]
-\mE\left[\frac{1}{p}\tr(\mathbf{C}_{\tau}^{\prime}-zI)^{-1}\right]
= \sum_{k= 1}^{\bar{m}_n}\mE\big[f(T_{k})-f(T_{k-1})\big].
\end{equation}
In the following, we use $\p_k^{r}$ and $\bar\p_k^{r}$
to denote the $r$-th order partial derivative with respect to the $(2k-1)$-th coordinate and $(2k)$-th coordinate,
respectively.

Define, for $\xi \in [0,1]$,
\begin{eqnarray*}
T_{k}^{(1)}(\xi) &=&  (Y^{\mathbf{R}}_1,\ic Y^{\mathbf{I}}_1, \ldots, \xi Y^{\mathbf{R}}_{k}, \ic Y^{\mathbf{I}}_{k}, \Y^{\mathbf{R}}_{k+1} ,\ic \Y^{\mathbf{I}}_{k+1},\ldots, \Y^{\mathbf{R}}_{\bar{m}_n}, \ic \Y^{\mathbf{I}}_{\bar{m}_n})\\
T_{k}^{(2)}(\xi) &=& (Y^{\mathbf{R}}_1,\ic Y^{\mathbf{I}}_1, \ldots, 0, \ic \xi Y^{\mathbf{I}}_{k}, \Y^{\mathbf{R}}_{k+1} ,\ic \Y^{\mathbf{I}}_{k+1},\ldots, \Y^{\mathbf{R}}_{\bar{m}_n}, \ic \Y^{\mathbf{I}}_{\bar{m}_n})\\
\hat{T}_{k}^{(1)}(\xi) &=&  (Y^{\mathbf{R}}_1,\ic Y^{\mathbf{I}}_1, \ldots, \xi \tilde Y^{\mathbf{R}}_{k}, \ic \Y^{\mathbf{I}}_{k},
\Y^{\mathbf{R}}_{k+1} ,\ic \Y^{\mathbf{I}}_{k+1},\ldots, \Y^{\mathbf{R}}_{\bar{m}_n}, \ic \Y^{\mathbf{I}}_{\bar{m}_n}) \\
\hat{T}_{k}^{(2)}(\xi) &=& (Y^{\mathbf{R}}_1,\ic Y^{\mathbf{I}}_1, \ldots, 0, \ic \xi \Y^{\mathbf{I}}_{k}, \Y^{\mathbf{R}}_{k+1} ,\ic \Y^{\mathbf{I}}_{k+1},\ldots, \Y^{\mathbf{R}}_{\bar{m}_n}, \ic \Y^{\mathbf{I}}_{\bar{m}_n}).
\end{eqnarray*}
Since $f$ is a smooth function of its arguments (being a Stieltjes transform evaluated at $z\in \mC^+$),
a third-order Taylor expansion gives
\begin{eqnarray}\label{eq:f_T_k_bridge}
f(T_{k}) &=& f(T_{k}^{\text{bridge}})+Y^{\mathbf{R}}_{k}
\p_{k} f(T^{\text{bridge}}_{k})+\frac{1}{2}(Y^{\mathbf{R}}_{k})^{2}\p_{k}^{2}f(T_k^{\text{bridge}})
+\frac{1}{6}(Y^{\mathbf{R}}_{k})^{3}\int_{0}^{1}(1-\xi)^2\p_{k}^{3}f\big(T_{k}^{(1)}(\xi)\big)d\xi, \nonumber\\
 f(T_{k}^{\text{bridge}}) &=& f(T_{k}^{0})+(\ic Y^{\mathbf{I}}_{k})\bar\p_{k} f(T_{k}^{0})+ \frac{1}{2}(\ic Y^{\mathbf{I}}_{k})^{2}\bar\p_{k}^{2}f(T_k^{0})+\frac{1}{6}(\ic Y^{\mathbf{I}}_{k})^{3}\int_{0}^{1}(1-\xi)^2\bar\p_{k}^{3}f(T_{k}^{(2)}(\xi))d\xi,
\end{eqnarray}
and
\begin{eqnarray}\label{eq:f_T_k_bridge_mixed}
\p_{k} f(T^{\text{bridge}}_{k}) &=& \p_{k}f(T_k^0) + (\ic Y_k^{\mathbf{I}})\bar\p_{k}\p_{k}f(T_k^0)
+ \frac{1}{2}(\ic Y_k^{\mathbf{I}})^2 \int_0^1 (1-\xi) \bar\p_{k}^2\p_{k}f(T_k^{(2)}(\xi)) d\xi \nonumber\\
\p_{k}^2 f(T^{\text{bridge}}_{k}) &=& \p_{k}^2f(T_k^0) + (\ic Y_k^{\mathbf{I}}) \int_0^1
\bar\p_{k}\p_{k}^2f(T_k^{(2)}(\xi)) d\xi.
\end{eqnarray}
Similarly, one derives the expansion for $f(T_{k-1})$ as
\begin{eqnarray}\label{eq:f_T_k_hat_bridge}
f(T_{k-1}) &=& f(\hat{T}_{k}^{\text{bridge}})+\Y^{\mathbf{R}}_{k}\p_{k} f(\hat{T}^{\text{bridge}}_{k})+\frac{1}{2}(\Y^{\mathbf{R}}_{k})^{2}\p_{k}^{2}f(\hat{T}_k^{\text{bridge}})
+\frac{1}{6}(\Y^{\mathbf{R}}_{k})^{3}\int_{0}^{1}(1-\xi)^2\p_{k}^{3}f(\hat{T}_{k}^{(1)}(\xi))d\xi,\nonumber\\
f(\hat{T}_{k}^{\text{bridge}}) &=&  f(T_{k}^{0})+(\ic \Y^{\mathbf{I}}_{k})\bar\p_{k} f(T_{k}^{0})+ \frac{1}{2}(\ic \Y^{\mathbf{I}}_{k})^{2}\bar\p_{k}^{2}f(T_k^{0})+\frac{1}{6}
(\ic \Y^{\mathbf{I}}_{k})^{3}\int_{0}^{1}(1-\xi)^2\bar\p_{k}^{3}f(\hat{T}_{k}^{(2)}(\xi))d\xi,
\end{eqnarray}
and
\begin{eqnarray}\label{eq:f_T_k_hat_bridge_mixed}
\p_{k} f(\hat{T}^{\text{bridge}}_{k}) &=& \p_{k}f(T_k^0) + (\ic \tilde Y_k^{\mathbf{I}})\bar\p_{k}\p_{k}f(T_k^0)
+ \frac{1}{2}(\ic \tilde Y_k^{\mathbf{I}})^2 \int_0^1 (1-\xi) \bar\p_{k}^2\p_{k}f(\hat{T}_k^{(2)}(\xi)) d\xi \nonumber\\
\p_{k}^2 f(\hat{T}^{\text{bridge}}_{k}) &=& \p_{k}^2f(T_k^0) + (\ic \tilde Y_k^{\mathbf{I}}) \int_0^1
\bar\p_{k}\p_{k}^2f(\hat{T}_k^{(2)}(\xi)) d\xi.
\end{eqnarray}

By {\bf T2}, the $(Y^{\mathbf{R}}_{k}, Y^{\mathbf{I}}_{k})$ and the $(\Y^{\mathbf{R}}_{k},\Y^{\mathbf{I}}_{k})$
are independent and each has independent real and imaginary parts with zero mean and equal variance. Therefore,
from the expansions in (\ref{eq:f_T_k_bridge}), (\ref{eq:f_T_k_bridge_mixed}), (\ref{eq:f_T_k_hat_bridge})
and (\ref{eq:f_T_k_hat_bridge_mixed}), it follows that bounding
(\ref{eq:mean_diff_telescoping}) is equivalent to bounding
\begin{align}
\label{eq:remainder_complex}
&\sum_{k=1}^{\bar{m}_n} \frac{1}{6}\int_{0}^{1}(1-\xi)^2\mE\left[(Y^{\mathbf{R}}_{k})^3\p_{k}^{3}(f(T_{k}^{(1)}(\xi))
-(\Y^{\mathbf{R}}_{k})^{3}\p_{k}^3f(\hat{T}_{k}^{(1)}(\xi))\right]d\xi \nonumber\\
&\quad + \sum_{k=1}^{\bar{m}_n} \frac{1}{6}\int_{0}^{1}(1-\xi)^2\mE\left[(\ic Y^{\mathbf{I}}_{k})^3\p_{k}^{3}(f(T_{k}^{(2)}(\xi))-(\ic \Y^{\mathbf{I}}_{k})^{3}\p_{k}^3f(\hat{T}_{k}^{(2)}(\xi))\right]d\xi  + \Delta_n,
\end{align}
where
\begin{eqnarray*}
\Delta_n &=& \sum_{k=1}^{\bar{m}_n} \frac{1}{2} \int_0^1 (1-\xi) \mathbb{E}\left[Y_k^{\mathbf{R}}(\ic Y_k^{\mathbf{I}})^2 \bar\p_{k}^2\p_{k}f(T_k^{(2)}(\xi)) - \tilde Y_k^{\mathbf{R}}(\ic \tilde Y_k^{\mathbf{I}})^2 \bar\p_{k}^2\p_{k}f(\hat{T}_k^{(2)}(\xi))\right] d\xi
\\
&& + \sum_{k=1}^{\bar{m}_n} \frac{1}{2} \int_0^1 \mathbb{E}\left[(Y_k^{\mathbf{R}})^2(\ic Y_k^{\mathbf{I}})
\bar\p_{k}\p_{k}^2f(T_k^{(2)}(\xi))
- (\tilde Y_k^{\mathbf{R}})^2(\ic \tilde Y_k^{\mathbf{I}}) \bar\p_{k}\p_{k}^2f(\hat{T}_k^{(2)}(\xi))\right] d\xi.
\end{eqnarray*}
Derivation of upper bounds for each of the above terms follows the same pattern and, for simplicity, only arguments
for the real valued case are provided, whereupon the mixed derivative terms are absent.
%The super-script is omitted for notational simplicity.
It should, moreover, be emphasized that the Gaussianity of the $Z_{jt}$ is not used in the proofs of this section
as only moment conditions are invoked, so that the notation $Z_{jt}$ could be used for either $Z_{jt}$ or $W_{jt}$,
noticing that their role will be the same when using the bounds for expected values of $\mE[(Y_{k}^{c})^3\p_{k}^{3}(f(T_{k}^{(1)}(\xi))]$ and $\mE[(\tilde{Y}_{k}^c)^{3}\p_{k}^3f(\hat{T}_{k}^{(1)}(\xi))]$, where $c$ is either $\mathbf{R}$ or $\mathbf{I}$.
Due to the simplification afforded by the expansion (\ref{eq:remainder_complex}) in terms of the real and
imaginary parts of the random variables, in the following, without loss of generality, we treat $Z_{jt}$'s to be real valued
and focus on bounding the expression on the first line of (\ref{eq:remainder_complex}).
This will require straightforward modification of the definitions of $T_k^{(1)}(\xi)$ and $\hat{T}_k^{(1)}(\xi)$.
The corresponding versions for the real valued case are $T_k^{(1)}(\xi) = (Y_1,\ldots,Y_{k-1},\xi Y_k,\tilde
Y_{k+1},\ldots,\tilde Y_{\bar{m}_n})$ and $\hat{T}_k^{(1)}(\xi) = (Y_1,\ldots,Y_{k-1},\xi \tilde Y_k,\tilde
Y_{k+1},\ldots,\tilde Y_{\bar{m}_n})$, where we omit the superscript $\mathbf{R}$ since it is superfluous.

Thus, it remains to obtain an expression for $\p_i^3 f(T_{i}^{(1)}(\xi))$, where $f$ is treated as
a function over $\mathbb{R}^{\bar{m}_n}$ and $\p_i$ denotes partial derivative with respect to the $i$-th coordinate.
For the rest of this section, $(j,k)$ denotes the pair of indices such that $Z_{jk}$ is mapped into $Y_i$ in the mapping from
$\{Z_{lt}:l=1,\ldots,p;t=1-q,\ldots,n\}$ to $(Y_1,\ldots,Y_{\bar{m}_n})$,
Throughout, unless otherwise specified, index $i$ and hence $(j,k)$, are kept fixed. We also redefine $T_i =
(Y_1,\ldots,Y_i,\tilde Y_{i+1},\ldots,Y_{\bar{m}_n})$.
Let the resolvent of $\mathbf{C}_{\tau}^{(i)} \equiv \mathbf{C}_{\tau}(T_i)$ be denoted by $G_{\tau}^{(i)}(z) = (\mathbf{C}_{\tau}^{(i)}-zI)^{-1}$.
Thus, we can write
\begin{equation}\label{eq:third_order_derivative_resolvent}
\p^3 f(T_i^{(1)}(\xi)) = \frac{1}{p}\tr\left(\frac{\p^3 G_{\tau}^{(i)}}{\p Z_{jk}^3}\right)\left|_{Z_{jk}=\xi Y_i}\right.,
\end{equation}
while recalling that $Y_i = Z_{jk}$. In the following, we drop the superscript from $G_\tau^{(i)}$
for notational simplicity.

By direct computation, we obtain
\begin{align}
\label{eq:expression_third_derivative_C_tau_Z}
\frac{\p^3 G_{\tau}}{\p Z_{jk}^{3}}
=& -\frac{\p^3 \mathbf{C}_{\tau}}{\p Z^3_{jk}}G_{\tau}^{2}+2\frac{\p^2 \mathbf{C}_{\tau}}{\p Z_{jk}^2}G_{\tau}\frac{\p \mathbf{C}_{\tau}}{\p Z_{jk}}G_{\tau}^2+2\frac{\p^2 \mathbf{C}_{\tau}}{\p Z^2_{jk}}G_{\tau}\frac{\p \mathbf{C}_{\tau}}{\p Z_{jk}}G^{2}_{\tau}\nonumber\\
&- 2\frac{\p \mathbf{C}_{\tau}}{\p Z_{jk}}\frac{\p \mathbf{C}_{\tau}}{\p Z_{jk}}G^{2}_{\tau}\frac{\p \mathbf{C}_{\tau}}{\p Z_{jk}}G^{2}_{\tau}+2\frac{\p \mathbf{C}_{\tau}}{\p Z_{jk}}G_{\tau}\frac{\p^2 \mathbf{C}_{\tau}}{\p Z_{jk}^2}G^{2}_{\tau}- 4 \frac{\p \mathbf{C}_{\tau}}{\p Z_{jk}}G_{\tau}\frac{\p \mathbf{C}_{\tau}}{\p Z_{jk}}G_{\tau}\frac{\p \mathbf{C}_{n}}{\p Z_{jk}}G^{2}_{\tau}\nonumber\\
=& 6\frac{\p^2 \mathbf{C}_{\tau}}{\p Z^2_{jk}}G_{\tau}\frac{\p \mathbf{C}_{\tau}}{\p Z_{jk}}G^{2}_{\tau}
- 4\frac{\p \mathbf{C}_{\tau}}{\p Z_{jk}}G_{\tau}\frac{\p \mathbf{C}_{\tau}}{\p Z_{jk}}G_{\tau}\frac{\p
\mathbf{C}_{\tau}}{\p Z_{jk}}G^{2}_{\tau} -2\frac{\p \mathbf{C}_{\tau}}{\p Z_{jk}}\frac{\p \mathbf{C}_{\tau}}{\p Z_{jk}}G^{2}_{\tau}\frac{\p \mathbf{C}_{\tau}}{\p Z_{jk}}G^{2}_{\tau}.
\end{align}
Then, defining $\mathcal{L}^{(1)}_{\tau,k} := \{\ell\colon \max(0, 1-k-\tau)\leq \ell \leq \min(q, n-\tau-k)\}$ and $\mathcal{L}^{(2)}_{\tau,k} := \{\ell\colon \max(0, 1-k+\tau)\leq \ell \leq \min(q, n-k)\}$,
\begin{align*}
\frac{\p \mathbf{C}_{\tau}}{\p Z_{jk}}
&= \frac{1}{2\sqrt{pn}}
\sum_{\ell\in\mathcal{L}^{(1)}_{\tau,k}}\left(\mathbf{A}_{\ell}e_{j}X_{\ell+k+\tau}^{*}+X_{\ell+k+\tau}e_{j}^{*}\mathbf{A}_{\ell}\right)
+\frac{1}{2\sqrt{pn}}
\sum_{l\in\mathcal{L}^{(2)}_{\tau,k}}\left(\mathbf{A}_{\ell}e_{j}X_{\ell+k-\tau}^{*}+X_{\ell+k-\tau}e_{j}^{*}\mathbf{A}_{\ell}\right), \\
\frac{\p^2 \mathbf{C}_{\tau}}{\p Z^2_{jk}}
&= \frac{1}{\sqrt{pn}}\sum_{\ell\in \mathcal{I}_{\tau,k}^{(1)}}\left(\mathbf{A}_{\ell}e_{j}e_{j}^{*}\mathbf{A}_{\ell+\tau}\right)
+\frac{1}{\sqrt{pn}}\sum_{\ell\in \mathcal{I}_{\tau,k}^{(2)}}\left(\mathbf{A}_{\ell-\tau}e_{j}e_{j}^{*}\mathbf{A}_{\ell}\right), \\
\frac{\p^3 \mathbf{C}_{\tau}}{\p Z^3_{jk}}
&=0,
\end{align*}
in which $ \mathcal{I}_{\tau,k}^{(1)} := \mathcal{L}^{(1)}_{\tau,k}\cap \{\ell\colon 0\leq \ell+\tau \leq q\}$ and
$\mathcal{I}_{\tau,k}^{(2)} := \mathcal{L}^{(2)}_{\tau,k}\cap \{\ell\colon 0\leq \ell-\tau \leq q\}$.
Notice that the size of the index set
$\mathcal{I}_{\tau,k}^{(i)}, i =1,2$, is at most $q+1$. Define
\[
L^{(1)}_{\tau,k}
=\sum_{\ell\in\mathcal{L}^{(1)}_{\tau,k}}(\xi_{\ell}X_{\ell+k+\tau}^{*}+X_{\ell+k+\tau}\xi_{\ell}^{*}),
\qquad\mbox{and}\qquad
L^{(2)}_{\tau,k}
=\sum_{\ell\in\mathcal{L}^{(2)}_{\tau,k}}(\xi_{\ell}X_{\ell+k-\tau}^{*}+X_{\ell+k-\tau}\xi_{\ell}^{*}),
\]
where $\xi_{\ell} = \xi_{\ell,j} = \mathbf{A}_{\ell}e_{j},$ the $j$th column of $\mathbf{A}_{\ell}$. Then
\begin{equation*}
\frac{\p \mathbf{C}_{\tau}}{\p Z_{jk}} = \frac{1}{2\sqrt{pn}}(L^{(1)}_{\tau,k}+L^{(2)}_{\tau,k}).
\end{equation*}
It follows that
\begin{align}
\label{eq:tr_deriv_C_tau_first}
\frac{1}{p}\tr\bigg(\frac{\p C_{\tau}}{\p Z_{jk}}G_{\tau}\frac{\p^2 C_{\tau}}{\p Z^2_{jk}}G^2_{\tau}\bigg)
%=& \frac{1}{2np^2}\tr\bigg(\sum_{l^\prime\in \mathcal{I}^{(1)}_{\tau,k}} %(L^{(1)}_{\tau,k}+L^{(2)}_{\tau,k})G_{\tau}\xi_{l^\prime}\xi_{l^\prime+\tau}^{*}G_{\tau}^{2}\bigg)\nonumber\\
%&+\frac{1}{2np^2}\tr\bigg(\sum_{l^\prime\in \mathcal{I}^{(2)}_{\tau,k}}
%(L^{(1)}_{\tau,k}+L^{(2)}_{\tau,k})G_{\tau}\xi_{l^\prime}\xi_{l^\prime-\tau}^{*}G_{\tau}^{2}\bigg)\nonumber\\
= \eta_{1}(n)+\eta_{2}(n)+\eta_{3}(n)+\eta_{4}(n),
\end{align}
where
\begin{align*}
\eta_{1}(n)
&=\frac{1}{2np^2}\sum_{\ell^\prime\in \mathcal{I}_{\tau,k}^{(1)}}\xi_{\ell^\prime+\tau}^{*}G_{\tau}^2L_{\tau,k}^{(1)}G_{\tau}\xi_{\ell^\prime},\\
\eta_{2}(n)
&=\frac{1}{2np^2}\sum_{\ell^\prime\in \mathcal{I}_{\tau,k}^{(1)}}\xi_{\ell^\prime+\tau}^{*}G_{\tau}^2L_{\tau,k}^{(2)}G_{\tau}\xi_{\ell^\prime},\\
\eta_{3}(n)
&=\frac{1}{2np^2}\sum_{\ell^\prime\in \mathcal{I}_{\tau,k}^{(2)}}\xi_{\ell^\prime-\tau}^{*}G_{\tau}^2L_{\tau,k}^{(1)}G_{\tau}\xi_{\ell^\prime},\\
\eta_{4}(n)
&=\frac{1}{2np^2}\sum_{\ell^\prime\in \mathcal{I}_{\tau,k}^{(2)}}\xi_{\ell^\prime-\tau}^{*}G_{\tau}^2L_{\tau,k}^{(2)}G_{\tau}\xi_{\ell^\prime}.
\end{align*}
We bound $|\eta_l(n)|$ by using the fact that for any matrix $\mathbf{B}$ and vectors $a$ and $b$ such that $a^{*}\mathbf{B}b$, $|a^{*}\mathbf{B}b|\leq \|\mathbf{B}\|(a^{*}a)^{1/2}(b^{*}b)^{1/2}$, and moreover that, $\|\xi_{\ell}\|= \|\xi_{\ell,j}\|=\|\mathbf{A}_{\ell}e_{j}\|\leq \|\mathbf{A}_{\ell}\|\leq \bar{a}_{\ell}$ and $\sum_{\ell=0}^{\infty}\bar{a}_{\ell}^r\leq L_{1+r}<\infty$, for $r=0,1$. Then,
\begin{align*}
|\eta_{1}(n)|
=& \bigg|\frac{1}{2np^2}\sum_{\ell\in \mathcal{L}^{(1)}_{\tau,k}}\sum_{\ell^\prime\in\mathcal{I}_{\tau,k}^{(1)}}
\xi_{\ell^\prime+\tau}^{*}G_{\tau}^2(\xi_{\ell}X_{\ell+k+\tau}^{*}+X_{\ell+k+\tau}\xi_{\ell}^{*})G_{\tau}\xi_{\ell^\prime}\bigg| \\
\leq& \bigg|\frac{1}{2np^2}\sum_{\ell\in \mathcal{L}^{(1)}_{\tau,k}}\sum_{\ell^\prime\in\mathcal{I}_{\tau,k}^{(1)}}
\xi_{\ell^\prime+\tau}^{*}G_{\tau}^2\xi_{\ell}X_{\ell+k+\tau}^{*}G_{\tau}\xi_{\ell^\prime}\bigg|
+\bigg|\frac{1}{2np^2}\sum_{\ell\in \mathcal{L}^{(1)}_{\tau,k}}\sum_{\ell^\prime\in\mathcal{I}_{\tau,k}^{(1)}}
\xi_{\ell^\prime+\tau}^{*}G_{\tau}^2X_{\ell+k+\tau}\xi_{\ell}^{*}G_{\tau}\xi_{\ell^\prime}\bigg|\\
\leq& \frac{1}{2np^2}\sum_{\ell\in \mathcal{L}^{(1)}_{\tau,k}}\sum_{\ell^\prime\in\mathcal{I}_{\tau,k}^{(1)}}
\big|\xi_{\ell^\prime+\tau}^{*}G_{\tau}^2\xi_{\ell}\big|\big|X_{\ell+k+\tau}^{*}G_{\tau}\xi_{\ell^\prime}\big|
+\frac{1}{2np^2}\sum_{\ell\in \mathcal{L}^{(1)}_{\tau,k}}\sum_{\ell^\prime\in\mathcal{I}_{\tau,k}^{(1)}}
\big|\xi_{\ell^\prime+\tau}^{*}G_{\tau}^2X_{\ell+k+\tau}\big|\big|\xi_{\ell}^{*}G_{\tau}\xi_{\ell^\prime}\big| \\
\leq & \frac{1}{2np^2v^2}\sum_{\ell\in \mathcal{L}^{(1)}_{\tau,k}}\sum_{\ell^\prime\in\mathcal{I}_{\tau,k}^{(1)}}
\bar{a_{\ell}}\bar{a}_{\ell^\prime+\tau}|X_{\ell+k+\tau}^{*}G_{\tau}\xi_{\ell^\prime}|
+\frac{1}{2np^2v}\sum_{\ell\in \mathcal{L}^{(1)}_{\tau,k}}\sum_{\ell^\prime\in\mathcal{I}_{\tau,k}^{(1)}}
\bar{a}_{\ell}\bar{a}_{\ell^\prime}|\xi_{\ell^\prime+\tau}^{*}G_{\tau}^2X_{\ell+k+\tau}| \\
\leq& \frac{1}{np^2v^{3}}\sum_{\ell\in \mathcal{L}^{(1)}_{\tau,k}}\sum_{\ell^\prime\in\mathcal{I}_{\tau,k}^{(1)}}
\bar{a}_{\ell}\bar{a}_{\ell^\prime}\bar{a}_{\ell^\prime+\tau}\|X_{\ell+k+\tau}\| \\
\leq& \frac{L^2_{1}}{np^2v^3}\sum_{\ell\in\mathcal{L}^{(1)}_{\tau,k}}\bar{a}_{\ell}\|X_{\ell+k+\tau}\|,
\end{align*}
where the last inequality holds since $\sum_{\ell^\prime}\bar{a}_{\ell^\prime}\bar{a}_{\ell^\prime+\tau}\leq (\sum_{\ell^\prime}\bar{a}_{\ell^\prime})(\sum_{\ell^\prime}\bar{a}_{\ell^\prime+\tau})\leq L^2_{1}$.  Similar calculations
show that for $l=1,2,3,4$,
\begin{equation}\label{eq:eta_1to4_bound}
|\eta_l(n)| \leq \frac{L^2_{1}}{np^2v^3}\sum_{\ell\in\mathcal{L}^{(s)}_{\tau,k}}\bar{a}_{\ell}\|X_{\ell+k-(-1)^s\tau}\|
\end{equation}
for $s=1,2,1,2$, respectively.

The second term (without the multiplying constant) on the RHS of (\ref{eq:third_order_derivative_resolvent}) can be expressed as follows.
\begin{equation}
\label{eq:tr_deriv_C_tau_second}
\frac{1}{p}\tr\left(\frac{\p \mathbf{C}_{\tau}}{\p Z_{jk}}G_{\tau}\frac{\p \mathbf{C}_{\tau}}{\p Z_{jk}}G_{\tau}\frac{\p \mathbf{C}_{\tau}}{\p Z_{jk}}G_{\tau}^2\right)
=\frac{1}{8p^{5/2}n^{3/2}}\sum_{r,s,t \in \{1,2\}} \tr\big(G_{\tau}L^{(r)}_{\tau,k}G_{\tau}L^{(s)}_{\tau,k}G_{\tau}L^{(t)}_{\tau,k}G_{\tau}\big)
=\sum_{l=5}^{12}\eta_l(n),
\end{equation}
where, for each $l=5,\ldots,12$, $\eta_l(n)$ is of the form
\[
\eta_{l}(n)
=\frac{1}{8p^{5/2}n^{3/2}}\tr(G_{\tau}L^{(r)}_{\tau,k}G_{\tau}L^{(s)}_{\tau,k}G_{\tau}L^{(t)}_{\tau,k}G_{\tau})
\]
where $r,s,t \in \{1,2\}$.
%\begin{align*}
%\eta_{5} (n)
%&=\frac{1}{8p^{5/2}n^{3/2}}\tr(G_{\tau}L^{(1)}_{\tau,k}G_{\tau}L^{(1)}_{\tau,k}G_{\tau}L^{(1)}_{\tau,k}G_{\tau}), \\
%\eta_{6}(n)
%&=\frac{1}{8p^{5/2}n^{3/2}}\tr(G_{\tau}L^{(1)}_{\tau,k}G_{\tau}L^{(1)}_{\tau,k}G_{\tau}L^{(2)}_{\tau,k}G_{\tau}), \\
%\eta_{7}(n)
%&=\frac{1}{8p^{5/2}n^{3/2}}\tr(G_{\tau}L^{(1)}_{\tau,k}G_{\tau}L^{(2)}_{\tau,k}G_{\tau}L^{(1)}_{\tau,k}G_{\tau}), \\
%\eta_{8}(n)
%&=\frac{1}{8p^{5/2}n^{3/2}}\tr(G_{\tau}L^{(2)}_{\tau,k}G_{\tau}L^{(1)}_{\tau,k}G_{\tau}L^{(2)}_{\tau,k}G_{\tau}), \\
%\eta_{9}(n)
%&=\frac{1}{8p^{5/2}n^{3/2}}\tr(G_{\tau}L^{(2)}_{\tau,k}G_{\tau}L^{(2)}_{\tau,k}G_{\tau}L^{(1)}_{\tau,k}G_{\tau}), \\
%\eta_{10}(n)
%&=\frac{1}{8p^{5/2}n^{3/2}}\tr(G_{\tau}L^{(2)}_{\tau,k}G_{\tau}L^{(2)}_{\tau,k}G_{\tau}L^{(2)}_{\tau,k}G_{\tau}).
%\end{align*}
Similarly, the third term (without the multiplying constant) on the RHS of (\ref{eq:third_order_derivative_resolvent}) is
\begin{equation}
\label{eq:tr_deriv_C_tau_third}
\frac{1}{p}\tr\left(\frac{\p \mathbf{C}_{\tau}}{\p Z_{jk}}\frac{\p \mathbf{C}_{\tau}}{\p Z_{jk}}G_{\tau}^2\frac{\p \mathbf{C}_{\tau}}{\p Z_{jk}}G_{\tau}^2\right)
=\frac{1}{8p^{5/2}n^{3/2}}\sum_{r,s,t \in \{1,2\}} \tr\big(L^{(r)}_{\tau,k}L^{(s)}_{\tau,k}G_{\tau}^2L^{(t)}_{\tau,k}G_{\tau}^2\big)
=\sum_{l=13}^{20} \eta_l(n),
\end{equation}
where for each $l=13,\ldots,20$, $\eta_{l}(n)$ is of the form
\[
\eta_{l}(n) = \frac{1}{8p^{5/2}n^{3/2}} \tr\big(L^{(r)}_{\tau,k}L^{(s)}_{\tau,k}G_{\tau}^2L^{(t)}_{\tau,k}G_{\tau}^2\big)
\]
where $r,s,t \in \{1,2\}$.

Since rank$(L_{\tau,k}^{(s)}) \leq 2(q+1)$, by using the fact that for any $p\times p$ matrix
$\mathbf{B}$, $|\tr(\mathbf{B})| \leq \mbox{rank}(\mathbf{B})\|\mathbf{B}\|$, we obtain that
for each $l=5,\ldots,20$,
\begin{equation}\label{eq:eta_5to20_bound}
|\eta_{l}(n)|
\leq\frac{(q+1)}{p^{5/2}n^{3/2}v^{4}}
\bigg(\sum_{\ell\in \mathcal{L}_{\tau,k}^{(r)}}\bar{a}_{\ell}\|X_{\ell+k-(-1)^r\tau}\|\bigg)
\bigg(\sum_{\ell\in \mathcal{L}_{\tau,k}^{(s)}}\bar{a}_{\ell}\|X_{\ell+k-(-1)^s\tau}\|\bigg)
\bigg(\sum_{\ell\in \mathcal{L}_{\tau,k}^{(t)}}\bar{a}_{\ell}\|X_{\ell+k-(-1)^t\tau}\|\bigg).
\end{equation}
for specific combinations of $r,s,t\in \{1,2\}$. To complete the proof, the following two lemmas are needed.

\begin{lemma}
\label{lem:generalized_holder_inequality}
Let $X_1,\ldots,X_m$ be random variables defined in a probability space
$(\Omega, \mathcal{F}, \mP)$. Let  $r\in (0,\infty)$ and $p_1, \ldots, p_{m}>0$ be real number such that
$\sum_{i=1}^{m} 1/p_{i} = 1/r$. Then,
\[
\bigg(\mE \prod_{i=1}^{m} |X_{i}|^{r}\bigg)^{1/r}
\leq \prod_{i=1}^{m}\big(\mE|X_{i}|^{p_{i}}\big)^{1/p_{i}}.
\]
\end{lemma}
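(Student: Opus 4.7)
The plan is to reduce to the classical two-variable H\"older inequality and then to use induction on $m$. The main idea is that the extra parameter $r$ can be absorbed by a change of variables, so the core task is to prove the $r=1$ version.

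First, I would set $Y_i = |X_i|^r$ and $q_i = p_i/r$. Then $q_i > 0$ and $\sum_{i=1}^m 1/q_i = r\sum_{i=1}^m 1/p_i = 1$, and the desired inequality becomes
\[
\mE\prod_{i=1}^m Y_i \;\leq\; \prod_{i=1}^m \bigl(\mE\, Y_i^{q_i}\bigr)^{1/q_i},
\]
since raising the right-hand side to the power $1/r$ recovers $\prod_{i=1}^m (\mE|X_i|^{p_i})^{1/p_i}$. So it suffices to establish the inequality in the special case $r=1$ with nonnegative random variables $Y_1,\ldots,Y_m$ and conjugate exponents $q_1,\ldots,q_m$ summing reciprocally to one.

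For $m=2$ this is the classical H\"older inequality, which I would take as known (it is a consequence of Young's inequality $ab \leq a^p/p + b^q/q$ applied pointwise after normalization). For the inductive step, assume the result for $m-1$ factors. Given $q_1,\ldots,q_m$ with $\sum 1/q_i = 1$, define $q'$ by $1/q' = \sum_{i=2}^m 1/q_i$, so that $1/q_1 + 1/q' = 1$. Applying two-variable H\"older to $Y_1$ and $\prod_{i=2}^m Y_i$ gives
\[
\mE\prod_{i=1}^m Y_i \;\leq\; \bigl(\mE\, Y_1^{q_1}\bigr)^{1/q_1}\Bigl(\mE\Bigl(\prod_{i=2}^m Y_i\Bigr)^{q'}\Bigr)^{1/q'}.
\]
Writing $(\prod_{i=2}^m Y_i)^{q'} = \prod_{i=2}^m Y_i^{q'}$ and noting that the exponents $q_i/q'$ for $i=2,\ldots,m$ are positive with $\sum_{i=2}^m q'/q_i = 1$, the induction hypothesis applied to the variables $Y_i^{q'}$ yields
\[
\mE\prod_{i=2}^m Y_i^{q'} \;\leq\; \prod_{i=2}^m \bigl(\mE\, Y_i^{q'\cdot (q_i/q')}\bigr)^{q'/q_i}
\;=\; \prod_{i=2}^m \bigl(\mE\, Y_i^{q_i}\bigr)^{q'/q_i}.
\]
Raising to the power $1/q'$ and combining with the previous display gives the claim for $m$ factors.

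There is no substantive obstacle here; the only small care needed is keeping track of which exponents combine with which factors, and handling the trivial degenerate cases where some $\mE\, Y_i^{q_i}$ is zero or infinite (in which case the inequality holds vacuously). Undoing the substitution $Y_i = |X_i|^r$ at the end yields the stated form.
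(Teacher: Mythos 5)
Your proof is correct and matches the paper's intent: the paper dismisses this lemma with the single remark that it is ``a straightforward application of H\"older's inequality,'' and your reduction to the case $r=1$ followed by induction on $m$ via the two-variable H\"older inequality is precisely the standard way to make that remark rigorous. The exponent bookkeeping ($\sum_{i=2}^m q'/q_i = 1$) and the treatment of degenerate cases are both handled correctly.
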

Proof of Lemma \ref{lem:generalized_holder_inequality} is a straightforward application of H\"{o}lder's inequality.

\begin{lemma}
\label{lem:estimation_X_M}
Let $Z_{jt}$'s be independent with $\mathbb{E}(Z_{11}) = 0$, $\mathbb{E}|Z_{11}|^2=1$, $\mathbb{E}|Z_{11}|^4 \leq \mu_4 < \infty$
and $|Z_{11}| \leq n^{1/4}\epsilon_p$. Also, let $X_t = \sum_{\ell=0}^q \mathbf{A}_\ell Z_{t-\ell}$
where $\|\mathbf{A}_\ell\| \leq \bar{a}_\ell$ for all $\ell$ with $\sum_{\ell=0}^q \bar{a}_\ell \leq L_1 < \infty$.
%For $k=1,2$, $\mE\|X_{t}\|^{2k} \leq \bar{C}_k L_1^{2k} p^k$ and for $k\geq 3$,
Then, for integers $k \geq 1$,
\[
\mE\|X_{t}\|^{2k} \leq \bar{C}_k L_1^{2k} \left(p^{2k} + \mu_4^{k/2} p^{k/2} +  \mu_4 p (n^{1/4}\epsilon_p)^{(2k-4)_+}\right)
\]
where $\bar{C}_k$'s are positive constants that only depend on $k$ and $(x)_+ = \max\{x,0\}$ for $x\in \mathbb{R}$.
\end{lemma}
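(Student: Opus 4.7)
The plan is to reduce the vector moment $\mE\|X_t\|^{2k}$ to the single-vector moment $\mE\|Z_0\|^{2k}$ via Minkowski's inequality, and then to bound the latter using the truncation hypothesis $|Z_{11}| \leq n^{1/4}\epsilon_p$ together with Lemma~\ref{lemma:moments_of_quadratic_forms}.

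The first step exploits that $X_t = \sum_{\ell=0}^q \mathbf{A}_\ell Z_{t-\ell}$ is a sum of $q+1$ random vectors. Applying the triangle inequality in $\ell^2_p$ pathwise and then Minkowski's inequality in $L^{2k}(\Omega)$, together with the operator-norm bound $\|\mathbf{A}_\ell Z_{t-\ell}\|\leq \bar{a}_\ell\|Z_{t-\ell}\|$ and $\sum_\ell \bar{a}_\ell \leq L_1$, yields
\[
\mE\|X_t\|^{2k} \leq L_1^{2k}\,\mE\|Z_0\|^{2k},
\]
since the innovations $Z_{t-\ell}$ are identically distributed. This eliminates the dependence on $q$ and reduces the problem to a single vector.

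The second step is to bound $\mE\|Z_0\|^{2k} = \mE[(Z_0^*Z_0)^k]$. I would write $Z_0^*Z_0 = p + \eta$ with $\eta := Z_0^*Z_0 - p$ having mean zero, use the elementary bound $(p+\eta)^k \leq 2^{k-1}(p^k+|\eta|^k)$, and then apply Lemma~\ref{lemma:moments_of_quadratic_forms} with $\mathbf{A}=\mathbf{I}_p$ and $\alpha=k$ (treating $k=1$ by Cauchy--Schwarz) to obtain $\mE|\eta|^k \leq C_k(\nu_{2k} p + (\nu_4 p)^{k/2})$. The truncation hypothesis enters crucially here: pulling a factor $|Z_{11}|^{2k-4}$ out of $|Z_{11}|^{2k}$ for $k\geq 3$ gives $\nu_{2k} \leq \mu_4(n^{1/4}\epsilon_p)^{2k-4}$, while for $k\in\{1,2\}$ the factor $(n^{1/4}\epsilon_p)^{(2k-4)_+}$ equals $1$ and the bound $\nu_{2k}\leq \mu_4$ is immediate (noting $1\leq\mu_4$ by Cauchy--Schwarz). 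Combining gives
\[
\mE\|Z_0\|^{2k} \leq C'_k\big(p^k + \mu_4 p\,(n^{1/4}\epsilon_p)^{(2k-4)_+} + \mu_4^{k/2} p^{k/2}\big),
\]
and multiplying through by $L_1^{2k}$ and trivially weakening $p^k \leq p^{2k}$ yields the stated bound.

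There is no deep conceptual obstacle; this is essentially a second-moment-type calculation with a truncation plugged in to tame the high-order moments. The main care is bookkeeping: unifying the treatment of all $k\geq 1$ so that the factor $(n^{1/4}\epsilon_p)^{(2k-4)_+}$ appears uniformly, and tracking how $\bar{C}_k$ depends on $k$ through the constants $C_k$ supplied by Lemma~\ref{lemma:moments_of_quadratic_forms} and the binomial expansion of $(p+\eta)^k$.
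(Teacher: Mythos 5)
Your proposal is correct and follows essentially the same route as the paper: reduce $\mE\|X_t\|^{2k}$ to $L_1^{2k}\,\mE\|Z_0\|^{2k}$ (the paper expands the $2k$-fold product and applies its generalized H\"older inequality, Lemma \ref{lem:generalized_holder_inequality}, where you invoke Minkowski in $L^{2k}$ --- an equivalent step), and then control $\mE\|Z_0\|^{2k}$ by centering $\|Z_0\|^2$ at $p$ and applying Lemma \ref{lemma:moments_of_quadratic_forms} with the truncation bound $\nu_{2k}\leq\mu_4(n^{1/4}\epsilon_p)^{(2k-4)_+}$. Your bookkeeping for $k\in\{1,2\}$ and the weakening $p^k\leq p^{2k}$ are consistent with the stated bound, so there is nothing to fix.
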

\begin{proof}
First, we consider the case of $k=1$.
\[
\mE[\|X_{t}\|^{2}]
=\mE \bigg[X^{*}_{t}X_{t}\bigg]
=\mE \bigg[\sum_{\ell=0}^{q}\sum_{m=0}^{q}Z^{*}_{t-\ell}\mathbf{A}^{*}_{\ell}\mathbf{A}_{m}Z_{t-m}\bigg]
=\mE \bigg[\sum_{\ell=0}^{q}Z_{t-\ell}^{*}\mathbf{A}^{*}_{\ell}\mathbf{A}_{\ell}Z_{t-\ell}\bigg]
=\sum_{\ell=0}^{q}\tr(\mathbf{A}^2_{\ell}).
\]
Then, by the fact that $\sum_{\ell=0}^{q}\bar{a}_{\ell}\leq L_1<\infty$,
\begin{equation*}
\frac{1}{p}\mE[\|X_{t}\|^2]
= \sum_{\ell=0}^{q}\frac{1}{p}\tr(\mathbf{A}^2_{\ell})
\leq \sum_{\ell=0}^{q}\|\mathbf{A}_{\ell}\|^2
\leq \sum_{\ell=0}^{q}\bar{a}^2_{\ell}
\leq \bigg(\sum_{\ell=0}^{q}\bar{a}_{\ell}\bigg)^2
<L^2_{1}.
\end{equation*}
This proves the result for $k=1$.
Next,
\[
\|X_t\| \leq \sum_{\ell=0}^q \|\mathbf{A}_\ell\| \|Z_{t-\ell}| \leq \sum_{\ell=0}^q \bar{a}_{\ell}  \|Z_{t-\ell}|,
\]
and hence, for $k\geq 2$,
\begin{eqnarray}\label{eq:expectation_norm_X_t_bound}
\mathbb{E}\|X_t\|^{2k} &\leq& \sum_{\ell_1=0}^q\cdots\sum_{\ell_{2k}=0}^q\left(\prod_{j=1}^{2k} \bar{a}_{\ell_j}\right)
\mathbb{E}\left(\prod_{j=1}^{2k} \|Z_{t-\ell_j}\|\right) \nonumber\\
&\leq& L_1^{2k} \max_{1\leq j \leq 2k} \mathbb{E}\left(\prod_{j=1}^{2k} \|Z_{t-\ell_j}\|\right) ~\leq~ L_1^{2k}
\mathbb{E}\|Z_1\|^{2k},
\end{eqnarray}
where the last inequality follows from an application of
Lemma \ref{lem:generalized_holder_inequality} and the fact that $Z_j$'s are i.i.d.
Also, for $k \geq 2$, $\mathbb{E}|Z_{11}|^{2k} \leq \mu_4 (n^{1/4}\epsilon_p)^{2k-4}$. Thus,
by Lemma \ref{lemma:moments_of_quadratic_forms}, we have
\begin{equation}\label{eq:expectation_norm_Z_1_bound}
\mathbb{E}|\|Z_1\|^2 - p|^k \leq C_k \left[\mu_4 p (n^{1/4}\epsilon_p)^{2k-4} + \mu_4^{k/2} p^{k/2}\right].
\end{equation}
The result follows from (\ref{eq:expectation_norm_X_t_bound}), (\ref{eq:expectation_norm_Z_1_bound}) and
the independence of the $Z_{jt}$'s.
\end{proof}

%For $k=2$,
%\begin{align*}
%\frac{1}{p^2}\mE[\|X_{t}\|^{4}]
%=& \frac{1}{p^2}\mE\bigg[\sum_{\ell_1=1}^{\infty}\sum_{m_1=1}^{\infty}\sum_{\ell_2=1}^{\infty}\sum_{m_2=1}^{\infty}
%Z^{*}_{t-\ell_1}\mathbf{A}^{*}_{\ell_1}\mathbf{A}_{m_1}Z_{t-m_1}Z^{*}_{t-\ell_2}\mathbf{A}^{*}_{\ell_2}\mathbf{A}_{m_2}Z_{t-m_2}\bigg] \\
%=&\frac{3}{p^2}\sum_{\ell_1=1}^{\infty}\sum_{\ell_2=1}^{\infty}\tr(\mathbf{A}^2_{\ell_1})\tr(\mathbf{A}^2_{\ell_2})
%+\frac{1}{p^2}\sum_{\ell=1}^{\infty}[\tr(\mathbf{A}^{2}_{\ell})]^2\\
%\leq& 4L^4_{1}.
%\end{align*}
%This completes the proof.

\vskip.1in
In the following we use $M$ to indicate a generic positive finite constant whose value changes from one expression
to another.  Recalling (\ref{eq:eta_1to4_bound}) and applying Lemmas \ref{lem:estimation_X_M} and \ref{lem:generalized_holder_inequality} we
obtain that for each $l=1,\ldots,4$
\begin{align}
\label{eq:E_Z_jk_eta_1to4_bound}
\mE \left[\left|Z_{jk}^3\eta_l(n)\right|\right]
\leq& \frac{L^2_{1}}{np^2v^{3}}\mE\sum_{\ell\in\mathcal{L}^{(1)}_{\tau,k}}\bar{a}_{\ell}|Z_{jk}|^3\|X_{\ell+k-(-1)^s\tau}\| \nonumber\\
\leq&\frac{L^2_{1}}{np^2v^3}\sum_{\ell\in\mathcal{L}^{(1)}_{\tau,k}}\bar{a}_{\ell}(\mE|Z_{jk}|^4)^{3/4}(\mE\|X_{\ell+k+\tau}\|^{4})^{1/4}
~\leq~ \frac{M}{np^{3/2}v^3},
\end{align}
for some $s \in \{1,2\}$,  where the second inequality holds by $\sum_{\ell\in\mathcal{L}^{(s)}_{\tau,k}}\bar{a}_{\ell}\leq L_1$.
%Similar calculation shows that
%\begin{equation}
%\label{eq:E_Z_jk_eta_1to4_bound}
%\mE[|Z^3_{jk}\eta_{l}(n)|]\leq\frac{M}{np^{3/2}v^3}, \qquad\mbox{for}~l=1,\ldots,4.
%\end{equation}

Next, for $l=5,\ldots,20$, by  (\ref{eq:eta_5to20_bound}) and Lemma \ref{lem:generalized_holder_inequality},
for some $r,s,t \in \{1,2\}$,
\begin{align}
\label{eq:E_Z_jk_eta_5to20_bound}
\mE&\left[|Z_{jk}^{3}\eta_{l}(n)|\right] \nonumber\\
&\leq \frac{(q+1)}{p^{5/2}n^{3/2}v^{4}}\mE\bigg[
\sum_{\ell_1\in\mathcal{L}^{(i)}_{\tau,k}}\sum_{\ell_2\in\mathcal{L}^{(j)}_{\tau,k}}\sum_{\ell_3\in\mathcal{L}^{(r)}_{\tau,k}}
\bar{a}_{\ell_1}\bar{a}_{\ell_2}\bar{a}_{\ell_3}|Z_{jk}|^3\|X_{\ell_1+k+\tau}\|\|X_{\ell_2+k+\tau}\|\|X_{\ell_3+k+\tau}\| \bigg] \nonumber\\
&\leq \frac{(q+1)}{p^{5/2}n^{3/2}v^{4}}
\sum_{\ell_1\in\mathcal{L}^{(r)}_{\tau,k}}\sum_{\ell_2\in\mathcal{L}^{(s)}_{\tau,k}}\sum_{\ell_3\in\mathcal{L}^{(t)}_{\tau,k}}
\bar{a}_{\ell_1}\bar{a}_{\ell_2}\bar{a}_{\ell_3}\big(\mE\big[|Z_{jk}|^{6}\big]\big)^{1/2} \nonumber\\
&\qquad\qquad\times
\Big[\big(\mE[\|X_{\ell_1+k-(-1)^r\tau}\|^6]\big)^{1/3}\big(\mE[\|X_{\ell_2+k-(-1)^s\tau}\|^6]\big)^{1/3}
\big(\mE[\|X_{\ell_3+k-(-1)^t\tau}\|^6]\big)^{1/3}\Big]^{1/2} \nonumber\\
%&\leq \frac{M(q+1)\epsilon_{p}^2}{pn},
&\leq \frac{M(q+1)}{p^{5/2}n^{3/2} v^4} n^{1/4}\epsilon_p\max\{p^{3/2}, p^{1/2} n^{1/4}\epsilon_{p}\}
\end{align}
where the last inequality holds because of $|Z_{jk}|\leq n^{1/4}\epsilon_{p}$ and Lemma \ref{lem:estimation_X_M}.
%Similarly, for $l=5,\ldots,20,$ we obtain
%\begin{equation*}
%\mE[|Z^3_{jk}\eta_{l}(n)|]\leq \frac{M}{pn} \max\left\{\epsilon^2_{p}\frac{(q+1)}{p},\epsilon_p \frac{(q+1)}{n^{1/4}}\right\}.
%\end{equation*}
Finally, combining (\ref{eq:third_order_derivative_resolvent}), (\ref{eq:expression_third_derivative_C_tau_Z}),
(\ref{eq:tr_deriv_C_tau_first}), (\ref{eq:tr_deriv_C_tau_second}), (\ref{eq:tr_deriv_C_tau_third}),
(\ref{eq:E_Z_jk_eta_1to4_bound}) and (\ref{eq:E_Z_jk_eta_5to20_bound}) and using the fact that $\xi \in [0,1]$,
we can concluded that
\begin{eqnarray*}
&& \sum_{k=1}^{\bar{m}_n}\int_{0}^{1}(1-\xi)^{2}\left(\mE\big[\big|(Y_{k}^\mathbf{R})^{3}\p_{k}^{3}f(T_{k}^{(1)}(\xi))\big|\big]
+\mE\big[\big|(\tilde{Y}_{k}^{\mathbf{R}})^{3}\p_{k}^{3}f(\hat{T}_k^{(1)}(\xi))\big|\big]\right)d\xi \\
&\leq& M \max\{\frac{1}{v^3},\frac{1}{v^4}\} \max\{\epsilon_p \frac{(q+1)}{n^{1/4}},\epsilon^2_{p}\frac{(q+1)}{p},p^{-1/2} \}\to 0
\end{eqnarray*}
by the fact that $q = O(p^{1/4})$ and $p = o(n)$. This completes the proof that (\ref{eq:mean_diff_telescoping})
converges to the zero when $Z_{jt}$'s are real valued. Proof in the complex valued case follows from this fact,
and the discussion in the paragraph where equation (\ref{eq:remainder_complex}) appears.

%%%%%%%%%%%%%%%%%%%%%%%%%%%%%%%%%%%

\end{document}